\renewcommand{\a}{\alpha}
\renewcommand{\b}{\beta}
\newcommand{\e}{\epsilon}
\newcommand\w{\omega}
\newcommand\N{\mathbb N}
\newcommand\Z{\mathbb Z}
\newcommand\R{\mathbb R}
\renewcommand\P{\mathbb{P}}
\newcommand\E{\mathbb{E}}
\renewcommand\le{\leqslant}
\renewcommand\ge{\geqslant}
\newcommand{\cross}{\times}
\newcommand{\norm}[1]{\left|#1\right|}
\newcommand{\Xh}{\overline{X}^h}
\newcommand{\Xb}{\partial X}
\newcommand{\Xg}{X \cup \Xb}
\newcommand{\xq}{\Xh_F}
\newcommand{\xr}{\Xh_\infty}
\newcommand{\seq}[1]{(#1)_{n \in \N}}
\newcommand{\seqz}[1]{(#1)_{n \in \Z}}
\newcommand{\gp}[3]{{(#2\cdot#3)_{#1}}}
\newcommand\bg{\mathcal{O}}
\newcommand\rmu{\check \mu}
\newcommand\rnu{\check \nu}
\newcommand\tmu{\widetilde \mu}
\newcommand\tnu{\widetilde \nu}
\newcommand\mun[1]{ { \mu_{#1} } }
\newcommand\tmun[1]{\tmu_{#1}}
\newcommand\supp{\text{supp}}
\newtheorem{theorem}{Theorem}[section]
\newtheorem{corollary}[theorem]{Corollary}
\newtheorem{lemma}[theorem]{Lemma}
\newtheorem{proposition}[theorem]{Proposition}
\newtheorem*{prop}{Proposition \ref{prop:hyperbolic}}
\newtheorem{definition}[theorem]{Definition}
\newtheorem{claim}[theorem]{Claim}
\newtheorem{example}[theorem]{Example}
\title{Random walks on weakly hyperbolic groups}
\author{Joseph Maher, Giulio Tiozzo}
\subjclass[2010]{60G50, 20F67, 57M60}
\begin{document}

\begin{abstract}
Let $G$ be a countable group which acts by isometries on a separable,
but not necessarily proper, Gromov hyperbolic space $X$.  We say the
action of $G$ is weakly hyperbolic if $G$ contains two independent
hyperbolic isometries.  We show that a random walk on such $G$ converges to
the Gromov boundary almost surely. We apply the convergence result to
show linear progress and linear growth of translation length, without
any assumptions on the moments of the random walk. 

If the action is acylindrical, and the random walk has finite entropy
and finite logarithmic moment, we show that the Gromov boundary with
the hitting measure is the Poisson boundary.
\end{abstract}

\maketitle


\section{Introduction}

We say a geodesic metric space $X$ is \emph{Gromov hyperbolic}, or
\emph{$\delta$-hyperbolic}, if there is a number $\delta \ge 0$ for
which every geodesic triangle in $X$ satisfies the \emph{$\delta$-slim
  triangle} condition, i.e. any side is contained in a
$\delta$-neighbourhood of the other two sides. Throughout this paper
we will assume that the space $X$ is separable, i.e. it contains a
countable dense set, but we will not assume that $X$ is proper or
locally compact.  For example, any countable simplicial complex
satisfies these conditions.

Let $G$ be a countable group which acts by isometries on $X$.  We say
the action of $G$ on $X$ is \emph{non-elementary} if $G$ contains a
pair of hyperbolic isometries with disjoint fixed points in the Gromov
boundary. We say $G$ is \emph{weakly hyperbolic} if it admits a
non-elementary action by isometries on some Gromov hyperbolic space
$X$.  In this case, a natural boundary for the group is given by the
Gromov boundary $\partial X$ of $X$, which however need not be
compact.

Several widely studied group actions are weakly hyperbolic in this
sense, in particular:

\begin{itemize}
\item Hyperbolic and relatively hyperbolic groups; 
\item Mapping class groups, acting on the curve complex; 
\item Out($F_n$) acts on various Gromov hyperbolic simplicial
complexes, for example the complex of free factors or the complex of
free splittings;
\item Right-angled Artin groups acting on their extension graphs; 
\item Finitely generated subgroups of the Cremona group.
\end{itemize}
In particular, all acylindrically hyperbolic groups are weakly
hyperbolic, see Section \ref{section:discussion} for further
discussion and more examples.

In this paper, we shall consider random walks on weakly hyperbolic groups, 
constructed by choosing products of random group elements.
A probability distribution $\mu$ on $G$ determines a
random walk on $G$, by taking the product
$$w_n := g_1 g_2 \dots g_n$$
where the $g_i$ are independent identically distributed elements of
$G$, with distribution $\mu$.  A choice of basepoint $x_0 \in X$
determines an orbit map sending $g \mapsto g x_0$, and we can project
the random walk on $G$ to $X$ by considering the sequence $(w_n
x_0)_{n \in \mathbb{N}}$, which we call a \emph{sample path}.
We say a measure $\mu$ on $G$ is \emph{non-elementary} if 
the semi-group generated by its support is a non-elementary subgroup of $G$. 

\subsection{Results}

The first result we establish is that sample paths converge almost surely in the Gromov boundary:

\begin{theorem} \label{theorem:convergence} %
Let $G$ be a countable group of isometries of a
separable Gromov hyperbolic space $X$, and let $\mu$ be a 
non-elementary probability
distribution on $G$. 
Then, for any basepoint $x_0 \in X$, almost every sample path
$(w_n x_0)_{n \in \N}$ converges to a point
$\omega_+ \in \partial X$.
The resulting hitting measure $\nu$ is non-atomic, and is the
unique $\mu$-stationary measure on $\partial X$.
\end{theorem}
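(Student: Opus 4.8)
The plan is to prove the almost sure convergence statement first, by hand, and then to deduce the properties of the hitting measure; the fundamental obstacle throughout is that $X$ is not proper, so that $\Xg$ is not compact and there is no soft (Markov--Kakutani, weak-$*$) argument available to produce limits or stationary measures. I would instead exploit the criterion that $\seq{w_n x_0}$ converges in $\partial X$ precisely when $(w_i x_0 \cdot w_j x_0)_{x_0}\to\infty$ as $i,j\to\infty$, and control the Gromov products directly. The first ingredient is a contraction property extracted from non-elementarity: after replacing $\mu$ by a convolution power I may assume $\supp\mu$ generates a non-elementary semigroup, hence contains two independent loxodromic isometries $a,b$, and the point of having \emph{two} independent loxodromics is a uniform pushing property --- there are $R_0,\ell_0>0$ such that for every $z\in\Xg$ some $c\in\{a^{\pm N},b^{\pm N}\}$ (with $N$ large) has $z$ outside the $R_0$-basin of its repelling point, so that $cz$ sits deep in the shadow $\s_{x_0}(cx_0,R_0):=\{\,z'\in\Xg:(cx_0\cdot z')_{x_0}\ge d(x_0,cx_0)-R_0\,\}$ and gains at least $\ell_0$ in distance from $x_0$. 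Grouping the increments into blocks, this gives a constant $p>0$ and a block length $L$ so that, conditionally on the past, with probability at least $p$ the next block of $L$ steps pushes the current position into a new, deeper shadow (a ``half-space'') that moves it at least $\ell_0$ further from $x_0$.

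Summing the per-block gains, which are bounded below and independent across blocks, the law of large numbers already gives $\liminf_n d(x_0,w_nx_0)/n>0$ almost surely --- in particular transience and linear progress with no moment assumption on $\mu$ --- and at the same time yields that $(w_m x_0\cdot w_n x_0)_{x_0}\to\infty$ \emph{in probability} as $m,n\to\infty$, by combining the nested half-spaces with the standard $\delta$-hyperbolic fact that two points lying deep inside a common shadow $\s_{x_0}(y,R)$ have Gromov product at least $d(x_0,y)-R-O(\delta)$.

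Upgrading convergence in probability to almost sure convergence is the main obstacle. One must show that along almost every sample path the half-spaces above become genuinely nested, i.e. that the path does not, at arbitrarily late times, make a large backward excursion that recrosses an earlier half-space; because $\mu$ has no moments a single increment can be arbitrarily large, so a naive term-by-term Borel--Cantelli estimate fails and one has to be careful. I would work first along a sufficiently sparse sequence of times $n_k$, where the summability needed for Borel--Cantelli can be arranged since the half-spaces may be taken arbitrarily deep, conclude that $\seq{w_{n_k}x_0}$ is almost surely a Gromov sequence and hence converges to a point $\omega_+\in\partial X$ --- equivalently, the nested shadows have a single common point of the horofunction compactification $\Xh$ --- and then bound the probability that the path ever leaves the relevant shadow between consecutive times $n_k$, summed over $k$, to conclude that the full sequence $w_n x_0$ converges to $\omega_+$ almost surely.

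Finally, let $\nu$ be the law of $\omega_+$. Stationarity $\nu=\mu*\nu$ is immediate from the i.i.d. structure, since the limit along the shifted path is $g_1^{-1}\omega_+$. If $\nu$ had atoms, the nonempty finite set $A$ of points of maximal $\nu$-mass would, by $\mu$-stationarity, satisfy $g^{-1}A=A$ for $\mu$-a.e. $g$, so the closed subgroup of such $g$ has full $\mu$-measure, contains $\supp\mu$, hence is non-elementary, yet fixes the finite set $A\subset\partial X$ --- impossible, since two independent loxodromics have north--south dynamics with disjoint fixed point pairs; so $\nu$ is non-atomic, and the same argument shows every $\mu$-stationary measure on $\partial X$ is non-atomic. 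For uniqueness, given any $\mu$-stationary $\nu'$ the measure-valued process $\big((w_n)_*\nu'\big)_n$ is a martingale, hence converges almost surely to some $\nu'_\infty$ with $\E[\nu'_\infty]=\nu'$; using that along almost every sample path $w_n$ is strongly contracting --- it maps the complement of a shrinking neighbourhood of the limit $\check\omega$ of the reversed walk (a single, path-dependent boundary point, of $\nu'$-measure zero by non-atomicity) into a shrinking neighbourhood of $\omega_+$ --- one gets $\nu'_\infty=\delta_{\omega_+}$ almost surely, whence $\nu'=\E[\delta_{\omega_+}]=\nu$.
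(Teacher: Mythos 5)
Your plan is a genuinely different route from the paper's, but it has a central gap that I do not believe can be repaired without introducing essentially the tools you are trying to avoid.

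First, the opening premise — that because $X$ is not proper ``there is no soft (Markov--Kakutani, weak-$*$) argument available to produce \ldots stationary measures'' — is not correct, and missing this is the crux. The paper's key observation is that even though $\Xg$ may fail to be compact, the \emph{horofunction compactification} $\Xh$, embedded in the space of $1$-Lipschitz functions vanishing at $x_0$ with the topology of pointwise convergence, is always compact, Hausdorff and metrizable when $X$ is separable (Tychonoff). The soft argument then works perfectly well on $\Xh$: one gets a $\mu$-stationary measure $\nu$ on $\Xh$, shows $\nu(\xq)=0$ so $\nu$ lives on the infinite horofunctions $\xr$, pushes forward through the local-minimum map $\phi:\xr\to\Xb$, applies martingale convergence to $w_n\nu$, and then uses the shadow/weak-convexity geometry to identify the limit as a Dirac mass and to conclude $w_nx_0\to\lambda(\omega)$. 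The measure $\nu$ is constructed \emph{before} any drift or convergence statement; drift is a corollary, not a prerequisite.

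Your plan reverses this order, and this is where it breaks. Two concrete problems. (i) The claim that ``per-block gains \ldots are bounded below and independent across blocks'' is false. With probability at least $p$ (conditionally on the past) a block of length $L$ executes a ping-pong push and gains $\ell_0$; but on the complementary event the block can be \emph{any} element of $G$, and with no moment hypothesis on $\mu$ the quantity $d_X(x_0,w_{(m+1)L}x_0)-d_X(x_0,w_{mL}x_0)$ has no lower bound and is not integrable. You cannot invoke the law of large numbers for the distance increments, and the gains from good blocks can be destroyed by bad ones. (ii) More seriously, the Borel--Cantelli step requires a quantitative bound of the form ``the probability that the walk ever re-enters the complement of a shadow of distance parameter $r$ tends to $0$ as $r\to\infty$'' — this is precisely Proposition \ref{prop:hitting} in the paper, whose proof uses Proposition \ref{prop:shadow bound} (decay of $\nu(\overline{S})$ in the distance parameter), which in turn uses the non-atomic hitting measure $\nu$ on $\Xb$. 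That is, the nesting/recurrence estimate you need for a.s.\ convergence is downstream of the very convergence you are trying to prove; you would have to produce it from bare ping-pong, and nothing in the plan supplies the uniform transience estimate for excursions between the sparse times $n_k$. In the paper, these estimates are exactly what the horofunction compactification buys you for free. (For context: the ``persistence'' block argument you sketch is very close to what the paper does in Section \ref{section:applications} to prove linear drift, but there it is explicitly fed by Proposition \ref{prop:hitting}, not a replacement for the convergence proof.)

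The final paragraph (stationarity of the hitting measure, non-atomicity via a maximal-weight atom argument, uniqueness via the martingale $w_n\nu'$ converging to $\delta_{\omega_+}$) is correct and is essentially what the paper does in Lemma \ref{lemma:non-atomic} and at the end of Section \ref{section:convergence} using the integral formula $\nu'=\int_\Omega \nu'_\omega\,d\P(\omega)$. One small caution there: your north–south argument for contraction needs Lemma \ref{lemma:action} (a single exceptional boundary point, along a subsequence), and the dominated convergence requires $\nu'$ non-atomic, which you correctly establish first. So the ``downstream'' parts of the plan are sound; the gap is entirely in producing a.s.\ convergence without a stationary measure on a genuine compactification of $X$.
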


We use the convergence to the boundary result to show the following
linear progress, or positive drift, result.
Recall that a measure $\mu$ has finite first moment if $\int_G d_X(x_0, g x_0) \ d\mu(g) < \infty$.

\begin{theorem} \label{theorem:linear progress} %
Let $G$ be a countable group of isometries of a separable
Gromov hyperbolic space $X$, and let $\mu$ be a non-elementary
probability distribution on $G$, and $x_0 \in X$ a basepoint.
Then there
is a constant $L > 0$ such that for almost every
sample path we have 
\[ \liminf_{n \to \infty} \frac{d_X(x_0, w_n x_0)}{n} = L > 0. \]
Furthermore, if $\mu$ has finite first moment, then the limit
%
\[ \lim_{n \to \infty} \frac{d_X(x_0, w_n x_0)}{n}  = L > 0   \]
exists almost surely.
Finally, if the support of $\mu$ is
bounded in $X$, then there are constants $c < 1, K$ and $L > 0$ such that
\[ \P( d_X(x_0, w_n x_0) \le Ln ) \le K c^n  \]
for all $n$.
\end{theorem}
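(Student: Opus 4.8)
The plan is to reduce the claim to an exponential bound on how slowly the Gromov product $(w_n x_0 \cdot \omega_+)_{x_0}$ grows, where $\omega_+$ is the almost sure limit supplied by Theorem~\ref{theorem:convergence}, and then to obtain that bound by a block decomposition together with a Chernoff-type estimate. Set $R := \sup_{g \in \supp \mu} d_X(x_0, g x_0)$, which is finite by hypothesis, so that $d_X(x_0, w_n x_0) \le Rn$ for every sample path. Since in a $\delta$-hyperbolic space $d_X(x_0, w_n x_0) \ge (w_n x_0 \cdot \omega_+)_{x_0} - O(\delta)$, it suffices to produce $\e > 0$, $c < 1$ and $K$ with $\P\big( (w_n x_0 \cdot \omega_+)_{x_0} \le \e n \big) \le K c^n$, and then take $L = \e/2$ (adjusting $K$ to absorb finitely many small $n$).

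The first step is a one-window lemma: there exist $N \in \N$, $L_1 > 0$ and $p_0 \in (0,1)$ such that, for every $h \in G$, conditionally on $\{ w_{kN} = h \}$, with probability at least $p_0$ the increment $v := g_{kN+1} \cdots g_{(k+1)N}$ is a ``Schottky'' element: it is a hyperbolic isometry, $d_X(x_0, v x_0) \ge L_1$, and it satisfies a transversality property ensuring that the shadows it determines are properly nested, in a way robust to the ambient configuration $h$. To prove this I would use Theorem~\ref{theorem:convergence} twice: the forward walk converges to $\omega_+ \in \partial X$, while running the reflected walk shows the ``past'' converges to a point $\omega_-$, and non-atomicity of the hitting measure forces $\omega_+ \ne \omega_-$ almost surely, so $(w_n x_0 \cdot \check w_n x_0)_{x_0}$ stays bounded. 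Combined with the existence of a hyperbolic isometry in the semigroup generated by $\supp \mu$, a standard closing/ping-pong argument upgrades such a configuration to a genuine Schottky element; since the whole configuration is witnessed in finitely many steps, it occurs with probability bounded below once $N$ is fixed large enough, and the conditioning on $h$ is harmless because displacements in the support are bounded.

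Next I would run the chaining argument. Partition $\{1, \dots, n\}$ into $M := \lfloor n/N \rfloor$ blocks of length $N$ and call block $j$ \emph{good} if its increment is a Schottky element as above; by the one-window lemma the number $G_n$ of good blocks up to time $n$ stochastically dominates a $\mathrm{Binomial}(M, p_0)$ variable. The nesting of shadows coming from the transversality property, together with the standard local-to-global (``guessing geodesics'') estimate in $\delta$-hyperbolic spaces, then yields a bound of the form $(w_n x_0 \cdot \omega_+)_{x_0} \ge c_1 L_1 G_n - c_2$ with $c_1 > 0$ and $c_2$ depending only on $\delta$, $N$, $L_1$: each good block forces $\omega_+$ to lie in a shadow that the path has already entered a definite amount, and this contribution is not erased by later blocks. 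Choosing $\e > 0$ so small that $\{ (w_n x_0 \cdot \omega_+)_{x_0} \le \e n \} \subseteq \{ G_n \le \tfrac{p_0}{2} M \}$, the Chernoff bound for the dominating binomial gives the desired exponential decay.

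The step I expect to be the main obstacle is the ``persistence'' embodied in the inequality $(w_n x_0 \cdot \omega_+)_{x_0} \ge c_1 L_1 G_n - c_2$: since $X$ is not assumed proper, one has no compactness of balls and must argue purely with Gromov products and nested shadows, ensuring that progress recorded in a good block genuinely survives arbitrarily long stretches of uncontrolled later behaviour, and that the transversality condition can be chosen so as not to spoil the conditional independence needed for the stochastic domination. A secondary technical point is extracting the quantitative one-window lemma — positive probability in a \emph{uniformly bounded} number of steps — from the merely asymptotic statement of Theorem~\ref{theorem:convergence} together with non-atomicity of the hitting measure; this requires a compactness argument for the boundary behaviour of the walk phrased via Gromov products rather than via geodesic rays.
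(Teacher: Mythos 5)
Your proposal has two genuine gaps, one of scope and one at the heart of the argument. First, scope: your opening line sets $R := \sup_{g \in \supp\mu} d_X(x_0, g x_0)$ and calls it ``finite by hypothesis'', but the first statement of the theorem carries \emph{no} moment assumption whatsoever -- bounded support is hypothesised only in the final exponential-decay statement. So even if the rest of your argument worked, it would address only that last statement and would say nothing about the $\liminf$ statement for a general non-elementary $\mu$, nor about the finite-first-moment statement. The paper's proof of the first part is built precisely to avoid moment hypotheses: it defines \emph{persistent} subsegments of the $k$-step walk via shadow conditions \eqref{eq:c1}--\eqref{eq:c3} on the whole past and future of the path, shows each occurs with probability bounded below using the decay of hitting probabilities of shadows (Proposition \ref{prop:hitting}, which needs no moments), proves that distinct persistent segments contribute disjoint subsegments of definite length to a geodesic from $x_0$ to $w_{kn}x_0$, and then applies Kingman's subadditive ergodic theorem to the \emph{count} of persistent segments -- a bounded, hence integrable, random variable -- to get positivity; the finite-first-moment case then follows by applying Kingman directly to $d_X(x_0, w_n x_0)$, and the bounded-support case is quoted from \cite{Maher_exp}.

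Second, the key quantitative step of your scheme, the inequality $\gp{x_0}{w_n x_0}{\omega_+} \ge c_1 L_1 G_n - c_2$ with $G_n$ the number of blocks whose increment is a fixed Schottky element, is asserted rather than proved, and as stated it is false: whether the progress made in a block ``is not erased'' depends on the alignment of that increment with the incoming direction of the path at time $kN$, i.e.\ on the past of the walk, not only on the block's own increments. If you build this alignment into the definition of a good block (as the paper does via conditions \eqref{eq:c2} and \eqref{eq:c3}), then goodness is no longer a function of disjoint blocks of increments, the stochastic domination by a $\mathrm{Binomial}(M,p_0)$ fails, and the Chernoff step collapses; if instead goodness depends only on the block, you need a genuine Schottky \emph{family} and a pigeonhole/ping-pong argument showing a definite proportion of good blocks make unerasable progress uniformly over all ambient configurations -- exactly the point you flag as the main obstacle and leave unresolved. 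Similarly, your one-window lemma (a single $v$, with $p_0$ uniform over every $h$) does not follow from almost-sure convergence plus non-atomicity by the compactness argument you sketch, since neither $X$ nor $\partial X$ is compact here; the uniform estimates playing this role in the paper come instead from the decay of hitting measures of shadows (Propositions \ref{prop:shadow bound} and \ref{prop:hitting}). As it stands, the proposal leaves the central difficulty open and proves neither the first nor the second statement of the theorem.
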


Note that the constants $L$, $K$, and $c$ depend on the choice of the measure $\mu$. Moreover,  the first statement also implies that 
\[ \lim_{n \to \infty} \P ( d_X(x_0, w_n x_0) \le Ln ) = 0 \]
(for a possibly different constant $L$). If we assume that $\mu$ has finite first moment with respect to the
distance function $d_X$, then we obtain the following geodesic
tracking result.

\begin{theorem} \label{theorem:sublinear} %
Let $G$ be a countable group which acts by isometries on a separable
Gromov hyperbolic space $X$ with basepoint $x_0$, and let $\mu$ be 
non-elementary 
probability distribution on $G$, with finite first moment.
Then for almost every sample path $\seq{w_n x_0}$ there is a quasigeodesic
ray $\gamma$ 
which tracks the sample path sublinearly, i.e.
\[  \lim_{n \to \infty} \frac{ d_X( w_n x_0, \gamma) }{n} = 0, \text{
  almost surely.}   \]
If the support of $\mu$ is bounded in $X$, then in fact the tracking
is logarithmic,  i.e.
\[  \limsup_{n \to \infty} \frac{ d_X( w_n x_0, \gamma) }{ \log n} <
\infty, \text{ almost surely.}  \]
\end{theorem}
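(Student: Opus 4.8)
The plan is to derive the tracking statements from the almost sure convergence of Theorem~\ref{theorem:convergence} and the linear progress estimates of Theorem~\ref{theorem:linear progress}, via a single quantitative bound on how far $w_n x_0$ strays from a ray toward $\omega_+$. Fix the almost sure limit $\omega_+\in\partial X$ of the sample path. Since $X$ need not be proper I would not expect a genuine geodesic ray from $x_0$ to $\omega_+$, but a quasigeodesic ray $\gamma$ still exists: the Gromov products $(w_m x_0\cdot w_n x_0)_{x_0}$ tend to infinity, so one can pass to a subsequence along which the sample path is a uniform quasigeodesic with limit $\omega_+$, and take $\gamma$ through it. By stability of quasigeodesics in $\delta$-hyperbolic spaces, for every $y\in X$ one has $d_X(y,\gamma)=d_X(x_0,y)-(y\cdot\omega_+)_{x_0}+O(1)$ (the nearest-point projection of $y$ onto $\gamma$ sits at distance $(y\cdot\omega_+)_{x_0}$ from $x_0$, up to bounded error). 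Thus the theorem reduces to the bound $d_X(x_0,w_n x_0)-(w_n x_0\cdot\omega_+)_{x_0}=o(n)$ a.s., and to its logarithmic analogue when $\supp\mu$ is bounded. Since $d_X(x_0,w_n x_0)/n\to L$ by Theorem~\ref{theorem:linear progress} and $(w_n x_0\cdot\omega_+)_{x_0}\le d_X(x_0,w_n x_0)$, this is the same as the matching lower bound $(w_n x_0\cdot\omega_+)_{x_0}/n\to L$ a.s.

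The lower bound is where the random walk structure enters. The key identity is
\[ (w_n x_0\cdot w_m x_0)_{x_0}=d_X(x_0,w_n x_0)-(x_0\cdot w_m x_0)_{w_n x_0},\qquad m>n, \]
combined with $(x_0\cdot w_m x_0)_{w_n x_0}=\big(w_n^{-1}x_0\cdot(g_{n+1}\cdots g_m)x_0\big)_{x_0}$ after applying the isometry $w_n^{-1}$. Letting $m\to\infty$: the increments $g_{n+1},g_{n+2},\dots$ are independent of $w_n$ and drive a random walk whose orbit converges almost surely to a boundary point $\check\omega_n\sim\nu$ independent of $w_n$, while $(w_n x_0\cdot\omega_+)_{x_0}\ge\liminf_m(w_n x_0\cdot w_m x_0)_{x_0}-2\delta$. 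Hence
\[ d_X(x_0,w_n x_0)-(w_n x_0\cdot\omega_+)_{x_0}\le\big(w_n^{-1}x_0\cdot\check\omega_n\big)_{x_0}+C(\delta), \]
and the problem reduces to bounding the ``backtracking'' Gromov product $\big(w_n^{-1}x_0\cdot\check\omega_n\big)_{x_0}$: I must show it is $o(n)$ almost surely, and $O(\log n)$ almost surely when $\supp\mu$ is bounded.

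To bound the backtracking, condition on $w_n$; then $\check\omega_n$ is a fresh $\nu$-distributed point, and $\{(w_n^{-1}x_0\cdot\check\omega_n)_{x_0}\ge r\}$ is, coarsely, the event that $\check\omega_n$ falls in the shadow, seen from $x_0$, of the ball of radius $d_X(x_0,w_n x_0)-r$ about $w_n^{-1}x_0$; its conditional probability is $\nu$ of that shadow. Here I would invoke the shadow estimates for the hitting measure established en route to Theorem~\ref{theorem:convergence}: $\nu$ of the shadow of $B(y,R)$ tends to $0$ as $d_X(x_0,y)-R\to\infty$ uniformly in $y$, and decays exponentially in $d_X(x_0,y)-R$ when $\supp\mu$ is bounded. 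Together with the bounds of Theorem~\ref{theorem:linear progress} pinning $d_X(x_0,w_n x_0)$ near $Ln$ (with exponentially small exceptional probabilities in the bounded-support case), taking $r=C\log n$ and summing over $n$ yields, via Borel--Cantelli, that $(w_n^{-1}x_0\cdot\check\omega_n)_{x_0}\le C\log n$ eventually, giving the logarithmic tracking. In the general case this argument only gives $\tfrac1n(w_n^{-1}x_0\cdot\check\omega_n)_{x_0}\to0$ in probability; one upgrades to an almost sure statement using stationarity of the walk (the relevant $\limsup$ is a shift-invariant, hence almost surely constant, quantity) together with an approximate subadditivity of the deviation along shifted walks, or equivalently by producing, almost surely, ``good times'' $n_1<n_2<\cdots$ at which $w_{n_k}x_0$ lies within bounded distance of $\gamma$ and with $n_{k+1}-n_k=o(n_k)$, after which sublinear tracking follows since $d_X(w_n x_0,\gamma)\le d_X(w_n x_0,w_{n_k}x_0)+O(1)\le\sum_{i=n_k+1}^{n}d_X(x_0,g_i x_0)+O(1)$ is $o(n)$ by the law of large numbers applied to the first moment of $\mu$.

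The main obstacle is exactly the backtracking estimate of the previous paragraph. Controlling $(w_n^{-1}x_0\cdot\check\omega_n)_{x_0}$ forces one to quantify how the hitting measure spreads around the boundary, which I can only expect to do at an exponential rate under a boundedness hypothesis — this is precisely why the general statement is merely sublinear while the bounded-support statement is logarithmic — and it requires carefully handling both the independence of the forward-tail boundary point $\check\omega_n$ from $g_1,\dots,g_n$ and, in the general case, the passage from a convergence-in-probability statement to an almost sure one that is uniform in $n$.
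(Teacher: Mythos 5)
Your bounded-support half is essentially the paper's own argument: you write the deviation from the ray as $d_X(w_nx_0,\gamma)=(w_n^{-1}x_0\cdot w_n^{-1}\omega_+)_{x_0}+O(\delta)$, note that $w_n^{-1}\omega_+$ depends only on the increments after time $n$, hence is independent of $w_n$ and distributed like $\nu$, invoke the exponential decay of shadows \eqref{eq:shadow decay}, take $D=\kappa\log n$ and apply Borel--Cantelli. That part is correct (you do not even need Theorem \ref{theorem:linear progress} there, since the shadow's distance parameter is $D$ itself).

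The genuine gap is in the finite-first-moment case, and it sits exactly where you flag it: the upgrade from the uniform-in-$n$ tightness bound $\P\bigl((w_n^{-1}x_0\cdot\check\omega_n)_{x_0}\ge r\bigr)\le f(r)$ to an \emph{almost sure} sublinear bound. Shift-invariance of $\limsup d_X(w_nx_0,\gamma)/n$ plus ergodicity only gives that this limsup is a.s.\ constant, not that the constant is zero, and convergence in probability cannot be converted to an a.s.\ statement by Borel--Cantelli without summability. Your fallback, producing ``good times'' $n_k$ with $w_{n_k}x_0$ uniformly close to $\gamma$ and $n_{k+1}-n_k=o(n_k)$, is sound as a reduction (positive density of good times plus the law of large numbers for $\sum d_X(x_0,g_ix_0)$ does finish the proof), but you never produce the good times: the events $\{d_X(w_nx_0,\gamma)\le D\}$ are not independent, and for the one-sided ray from $x_0$ to $\omega_+$ the deviation at time $n$ is \emph{not} of the form $F(T^n\omega)$ for a fixed function $F$, because the basepoint is not shifted; so Birkhoff's theorem does not apply directly, and ``approximate subadditivity of the deviation'' is not available either (the deviation is not subadditive). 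The paper's resolution is precisely to pass to the bi-infinite path space and the two-sided quasigeodesics: it sets $f(\omega)=\sup\{d_X(x_0,\gamma):\gamma\in\Gamma(\omega_-,\omega_+)\}$, which is a.s.\ finite because $\nu$ and $\check\nu$ are non-atomic, is equivariant so that $f(T^n\omega)$ bounds the deviation of $w_nx_0$ from $\gamma(\omega_-,\omega_+)$, and satisfies $|f(T\omega)-f(\omega)|\le d_X(x_0,g_1x_0)\in L^1$ by the finite first moment; the ergodic sublinearity lemma (Lemma \ref{lemma:tiozzo}) then gives $f(T^n\omega)/n\to 0$ a.s. Some such ergodic input on the two-sided space (Tiozzo's lemma, or the positive-density recurrence argument the paper runs in Section \ref{section:poisson}) is the missing ingredient in your proposal; without it the sublinear statement is not proved.
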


Finally, we investigate the growth rate of translation length of group
elements arising from the sample paths.  

\begin{theorem} \label{theorem:translation} %
Let $G$ be a countable group which acts by isometries on a separable
Gromov hyperbolic space $X$, and let $\mu$ be a
non-elementary probability distribution on $G$.
Then the translation length $\tau(w_n)$ of the group element $w_n$ grows at least
linearly in $n$, i.e.
\[ \P( \tau(w_n) \le L n ) \to 0 \text{ as } n \to \infty,  \]
for some constant $L$ strictly greater than zero.

If the support of $\mu$ is bounded in $X$, then there are constants
$c < 1, K$ and $L > 0$, such that
\[ \P( \tau(w_n) \le Ln ) \le K c^n \]
for all $n$.
\end{theorem}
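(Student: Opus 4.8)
The plan is to combine a deterministic criterion for an isometry to be hyperbolic with controlled translation length, the exponential linear-progress estimate of Theorem~\ref{theorem:linear progress}, and an exponential bound on the Gromov product $(w_n^{-1} x_0 \cdot w_n x_0)_{x_0}$ — the last being the only genuinely new input. First I would recall the standard hyperbolicity criterion: there are constants $C_1, C_2$ depending only on $\delta$ such that, for every isometry $g$ of $X$, if $d_X(x_0, g x_0) \ge 2(g^{-1}x_0 \cdot g x_0)_{x_0} + C_1 \delta$ then $g$ is hyperbolic and $\tau(g) \ge d_X(x_0, g x_0) - 2(g^{-1}x_0 \cdot g x_0)_{x_0} - C_2 \delta$. (Under this hypothesis the bi-infinite orbit $(g^m x_0)_{m \in \Z}$ is a uniform quasigeodesic, since by equivariance all of its consecutive Gromov products equal $(g^{-1}x_0 \cdot g x_0)_{x_0}$, which is small compared with the step length $d_X(x_0, g x_0)$; a local-to-global lemma for broken geodesics then applies, and one tracks through the constants.) Assume $\supp \mu$ is bounded, let $L_0 > 0$, $c_0 < 1$ be as in Theorem~\ref{theorem:linear progress}, and fix a small $\epsilon > 0$. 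Off the event $\{ d_X(x_0, w_n x_0) \le L_0 n \} \cup \{ (w_n^{-1} x_0 \cdot w_n x_0)_{x_0} \ge \epsilon n \}$ the criterion applied to $g = w_n$ gives $\tau(w_n) \ge L_0 n - 2\epsilon n - C_2 \delta \ge (L_0 - 3\epsilon) n$ once $n$ is large; so, taking $\epsilon = L_0 / 8$ and $L = L_0 - 3\epsilon > 0$, it suffices to prove
\begin{equation}\label{eq:key-product}
\P\big( (w_n^{-1} x_0 \cdot w_n x_0)_{x_0} \ge \epsilon n \big) \le K_1 c_1^n
\end{equation}
for suitable $K_1$ and $c_1 < 1$; the finitely many small $n$ are absorbed by enlarging the constant.

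\emph{The two independent half-walks.} To prove \eqref{eq:key-product} I would exploit the fact that this Gromov product compares the \emph{beginning} of the forward sample path with the \emph{beginning} of the time-reversed one, and that these are carried by disjoint, hence independent, blocks of increments. Write $n = 2k$ (the odd case is identical) and $w_n = a b$ with $a = g_1 \cdots g_k$, $b = g_{k+1} \cdots g_n$, independent. Observe that $w_n^{-1} x_0 = g_n^{-1} \cdots g_1^{-1} x_0$ is the endpoint of a $\check\mu$-random walk of length $n$; let $z := g_n^{-1} \cdots g_{k+1}^{-1} x_0$ be the position of that walk at time $k$, which depends only on $g_{k+1}, \dots, g_n$ and hence is independent of $a$. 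Since $\check\mu$ is again non-elementary with bounded support, Theorem~\ref{theorem:linear progress} together with the exponential no-backtracking estimate underlying the logarithmic tracking of Theorem~\ref{theorem:sublinear}, applied to both $\mu$ and $\check\mu$, produces an event $E_n$ with $\P(E_n^c) \le K_2 c_2^n$ on which $d_X(x_0, a x_0) \ge \tfrac{L_0}{2} n$, $d_X(x_0, z) \ge \tfrac{L_0}{2} n$, and the sample paths do not backtrack, so that the initial segments of length $2\epsilon n$ of $[x_0, w_n x_0]$ and $[x_0, a x_0]$ are $2\delta$-fellow travellers, as are those of $[x_0, w_n^{-1} x_0]$ and $[x_0, z]$.

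\emph{Conclusion via shadows.} On $E_n$, if $(w_n^{-1} x_0 \cdot w_n x_0)_{x_0} \ge \epsilon n$ then $[x_0, w_n x_0]$ and $[x_0, w_n^{-1} x_0]$ fellow-travel for length $\ge \epsilon n$, hence so do $[x_0, a x_0]$ and $[x_0, z]$ for length $\ge \epsilon n / 2$, i.e. $(a x_0 \cdot z)_{x_0} \ge \epsilon n / 2$. Conditioning on $g_{k+1}, \dots, g_n$ fixes the point $z \in X$, while $a$ remains an independent $\mu$-walk of length $k$; the event now says that $a x_0$ lies in the shadow $\{\, y : (y \cdot z)_{x_0} \ge \epsilon n / 2 \,\}$, and by the exponential shadow estimate for bounded-support non-elementary walks — uniform over $m$ and over the centre $z \in X \cup \partial X$, part of the same package of estimates behind Theorems~\ref{theorem:linear progress} and~\ref{theorem:sublinear} — this has conditional probability at most $K_3 c_3^{\,\epsilon n / 2}$. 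Averaging and adding $\P(E_n^c)$ yields \eqref{eq:key-product}, and with it the theorem.

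\emph{Main obstacle.} The deterministic criterion is standard and the linear-progress input is Theorem~\ref{theorem:linear progress}, so the crux is \eqref{eq:key-product}, and within it the uniform-in-$z$ exponential shadow estimate: it must hold even when $z$ points in a direction that is \emph{typical} for the walk, which is precisely where boundedness of $\supp\mu$ is essential and where one leans on the exponential decay of hitting-measure shadows. A secondary difficulty is the careful isolation of the two half-walks — ensuring that \emph{both} $[x_0, w_n x_0]$ and the time-reversed $[x_0, w_n^{-1} x_0]$ genuinely follow, near $x_0$, the sub-walks determined by disjoint increment blocks — which forces one to have the exponential no-backtracking estimates for $\mu$ and for $\check\mu$ in hand; the rest is bookkeeping of constants and of the ``with overwhelming probability'' events.
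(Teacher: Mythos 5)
Your argument for the bounded-support half is essentially the paper's own: you use Proposition~\ref{prop:hyperbolic} as the deterministic input, split $w_n$ at the midpoint into two independent half-walks, observe that $z = u_k^{-1}x_0$ (in the paper's notation $u_m^{-1}x_0$) is independent of $a = w_k$, and then control $\gp{x_0}{w_n^{-1}x_0}{w_n x_0}$ by a shadow estimate on the independent pair. The paper phrases the geometric step as the four-point Gromov-product approximation of Lemma~\ref{lemma:gp approx} rather than your fellow-traveller picture, but the content is the same, and the key probabilistic inputs you invoke (Lemmas~\ref{lemma:gp upper} and~\ref{lemma:gp lower} in exponential form) are exactly the ones the paper uses, in their exponential version by appeal to the earlier estimates for bounded support.

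The genuine gap is that you have only proved the \emph{second} assertion of the theorem. The first assertion, $\P(\tau(w_n)\le Ln)\to 0$ with \emph{no} moment assumption on $\mu$, is actually the main technical content of this theorem in the paper, and your proposal does not touch it. You begin ``Assume $\supp\mu$ is bounded'' and every estimate thereafter uses exponential decay: exponential linear progress from Theorem~\ref{theorem:linear progress}, an exponential no-backtracking bound, and an exponential shadow estimate. None of these is available when $\mu$ has no moment hypothesis; all you have in general is the $\liminf$-version of linear progress (so $\P(d_X(x_0,w_nx_0)\ge Ln)\to 1$, without a rate) and the qualitative shadow bound $\sup_{S\in Sh(x_0,r)}\mu_n(S)\le f(r)$ with $f(r)\to 0$ as $r\to\infty$, uniform over $n$ (Corollary~\ref{cor:mun estimate}). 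The paper runs precisely your midpoint/independence argument with these weaker, rate-free bounds: Lemma~\ref{lemma:gp upper} gives $\gp{x_0}{u_m^{-1}x_0}{w_mx_0}\le l(n)$ with probability $\to 1$ for \emph{any} $l(n)\to\infty$, and Lemma~\ref{lemma:gp lower} gives $\gp{x_0}{w_mx_0}{w_nx_0}\ge ln$ with probability $\to 1$ for any $l<L/2$, both using only $f(r)\to 0$, not exponential decay. If you want a complete proof, you need to carry out this non-exponential version of your argument first, and only then add the remark that under bounded support every step upgrades to exponential decay. A secondary point: the exponential shadow bound ``uniform over the centre $z$'' that you invoke is not a free-standing fact you can cite; in the paper that uniformity is exactly what Corollary~\ref{cor:mun estimate} and its exponential analogue in \cite{Maher_exp} provide, and you should say so rather than waving at ``the same package of estimates.''
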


Recall that the translation length $\tau(g)$ of an isometry $g$ of $X$ is defined as
\[ \tau(g) := \lim_{n \to \infty} \tfrac{1}{n} d_X(x_0, g^n x_0). \]
As an element with non-zero translation length is a hyperbolic (= loxodromic)
isometry, this shows that the probability that a random walk of length
$n$ gives rise to a hyperbolic isometry tends to one as $n$ tends to
infinity.

\medskip
\emph{The Poisson boundary of acylindrically hyperbolic groups.}
A special class of weakly hyperbolic groups are the \emph{acylindrically
hyperbolic} groups. In this case, we show that we may identify the Gromov boundary 
$(\Xb, \nu)$ with the Poisson boundary.

Recall that a group $G$ acts acylindrically on a Gromov hyperbolic
space $X$, if for every $K \ge 0$ there are numbers $R$ and $N$, which
both depend on $K$, such that for any pair of points $x$ and $y$ in
$X$, with $d_X(x, y) \ge R$, there are at most $N$ group elements $g$
in $G$ such that $d_X(x, gx) \le K$ and $d_X(y, gy) \le K$.

\begin{theorem} \label{theorem:poisson} %
Let $G$ be a countable group of isometries which acts acylindrically
on a separable Gromov hyperbolic space $X$, let $\mu$ be a
non-elementary probability distribution on $G$ with finite entropy and
finite logarithmic moment, and let $\nu$ be the hitting measure on
$\Xb$.  Then $(\partial X, \nu)$ is the Poisson boundary of $(G,
\mu)$.
\end{theorem}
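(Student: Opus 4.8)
The plan is to verify Kaimanovich's strip criterion for the $\mu$-boundary $(\Xb,\nu)$. Using the gauge $g\mapsto d_X(x_0,gx_0)$ on $G$, the hypotheses of the criterion are: finite entropy of $\mu$ (given); a $G$-equivariant, measurable assignment of a nonempty ``strip'' $S(\xi_-,\xi_+)\subseteq G$ to $\rnu\otimes\nu$-almost every pair $(\xi_-,\xi_+)\in\Xb\times\Xb$; and, along almost every two-sided sample path with forward and backward limits $\omega_+,\omega_-$, the growth condition
\[
\tfrac1n\log\#\bigl(S(\omega_-,\omega_+)\cap\{g\in G: d_X(x_0,gx_0)\le d_X(x_0,w_nx_0)\}\bigr)\longrightarrow 0 .
\]
The finite logarithmic moment is exactly the moment condition attached to this gauge, and by Borel--Cantelli it already gives $\tfrac1n\log^+ d_X(x_0,w_nx_0)\to 0$ almost surely. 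Theorem~\ref{theorem:convergence}, applied to $\mu$ and to $\rmu$, shows that $\nu$ and $\rnu$ are non-atomic and that $(\Xb,\nu)$ is a $\mu$-boundary, so the whole content of the theorem is that this $\mu$-boundary is maximal, which is what the criterion produces.

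For the strips I would take the ``coarse geodesic'' between the two boundary points, defined purely via Gromov products:
\[
S(\xi_-,\xi_+):=\{\,g\in G:\ (\xi_-\mid\xi_+)_{gx_0}\le C\,\},
\]
with $C=C(\delta)$ a constant depending only on the hyperbolicity constant. This says that from the vantage point $gx_0$ the two ends $\xi_\pm$ point in coarsely opposite directions, i.e.\ $gx_0$ lies near any geodesic joining $\xi_-$ to $\xi_+$; crucially it makes no reference to a bi-infinite geodesic in $X$, which need not exist since $X$ is not assumed proper. $G$-invariance of the Gromov product gives $h\cdot S(\xi_-,\xi_+)=S(h\xi_-,h\xi_+)$, and measurability in $(\xi_-,\xi_+)$ is routine.

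Non-emptiness of the strips for $\rnu\otimes\nu$-a.e.\ pair I would get from the two-sided random walk: the increments determining $\omega_+$ are independent of those determining $\omega_-$, so $(\omega_-,\omega_+)$ has law $\rnu\otimes\nu$, and since $\nu,\rnu$ are non-atomic, $\omega_-\ne\omega_+$ almost surely, whence $(\omega_-\mid\omega_+)_{x_0}<\infty$ and a non-empty coarse geodesic between $\omega_-$ and $\omega_+$ exists. By the convergence and linear-progress results (Theorems~\ref{theorem:convergence} and~\ref{theorem:linear progress}) together with hyperbolicity, the two-sided orbit path $(w_nx_0)_{n\in\Z}$ is, coarsely, a quasigeodesic line from $\omega_-$ to $\omega_+$, so it meets a bounded neighbourhood of this coarse geodesic; hence $w_k\in S(\omega_-,\omega_+)$ for some $k$, and the strips are a.e.\ non-empty.

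The heart of the proof is the size estimate: for $\rnu\otimes\nu$-a.e.\ $(\xi_-,\xi_+)$ there are constants $A,B$, depending only on $\delta$, on $(\xi_-\mid\xi_+)_{x_0}$, and on the acylindricity function, with
\[
\#\bigl(S(\xi_-,\xi_+)\cap\{g\in G:\ d_X(x_0,gx_0)\le r\}\bigr)\le A+Br\qquad(r\ge0).
\]
This is where acylindricity is used and where non-properness of $X$ must be defeated: without acylindricity a bounded neighbourhood of a quasigeodesic may contain infinitely many orbit points. The argument is to realise the coarse geodesic as a quasiline, cut the part of it within distance $r$ of $x_0$ into $O(r)$ sub-segments of a fixed length $L_0$, and bound the number of $g\in S(\xi_-,\xi_+)$ with $gx_0$ near a given sub-segment: for two such elements $g,h$, the element $g^{-1}h$ moves $x_0$ a bounded amount $K$ and, because $gx_0,hx_0$ both lie near the quasiline and near each other, hyperbolic geometry forces $g^{-1}h$ to coarsely preserve a long sub-segment of the quasiline; taking $L_0$ larger than the acylindricity radius $R(K)$, acylindricity caps the number of such $g^{-1}h$ by $N(K)$, and summing over the $O(r)$ sub-segments gives the bound. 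Combined with $\tfrac1n\log^+ d_X(x_0,w_nx_0)\to0$ this makes the growth condition of the strip criterion hold, and so $(\Xb,\nu)$ is the Poisson boundary of $(G,\mu)$. I expect the delicate steps to be, in order: (i) the quantitative ``coarse alignment forces coarse stabilisation, then invoke acylindricity'' step of the size estimate; (ii) the soft but genuinely non-trivial fact that the two-sided path is coarsely quasigeodesic and therefore meets its own coarse geodesic, needed for non-emptiness, which is subtle precisely because $X$ is non-proper and, under only a logarithmic moment, the increments are unbounded; and (iii) setting up the strip assignment to be simultaneously measurable and $G$-equivariant in this non-proper setting.
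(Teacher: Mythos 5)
The fatal step is exactly the one you call the heart of the proof: the linear-growth estimate for your strips $S(\xi_-,\xi_+)=\{g : \gp{gx_0}{\xi_-}{\xi_+}\le C\}$. The claim that for two strip elements $g,h$ with $gx_0,hx_0$ close together ``hyperbolic geometry forces $g^{-1}h$ to coarsely preserve a long sub-segment of the quasiline'' is unsupported and false: the only thing you know about $f=hg^{-1}$ is where it sends the single point $gx_0$; it maps the quasiline from $\xi_-$ to $\xi_+$ to a quasiline from $f\xi_-$ to $f\xi_+$, which need not fellow-travel the original anywhere beyond that one point. Acylindricity only bounds the number of isometries that move \emph{two points at distance at least $R(K)$} each by at most $K$, and your setup produces only one coarsely fixed point, so the hypothesis cannot be invoked. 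Worse, with this choice of strip the counting condition of the strip criterion genuinely fails for acylindrical actions: for the mapping class group acting on the curve complex, take $g$ in the strip and $T$ a Dehn twist about a curve disjoint from $gx_0$; then the elements $h_n=T^ng$ satisfy $d_X(h_nx_0,gx_0)\le 2$, so (after enlarging $C$ by $2$) infinitely many of them lie in the strip inside a single bounded ball, and $|S(\xi_-,\xi_+)\cap B_G(1,r)|$ is infinite. This is precisely why the naive ``all orbit points near a geodesic between the endpoints'' strip is rejected in Section \ref{section:poisson}: non-properness of the orbit map, not of $X$ alone, is the obstruction, and acylindricity by itself does not make neighbourhoods of quasigeodesics contain finitely many orbit points.

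The paper's repair is to build the second far-away aligned point into the definition of the strip. One fixes a group element $v$ and constants $K,R$ and takes $S(\alpha,\beta)=\bg_{K,R,v}(\alpha,\beta)$, the set of $g$ with $d_X(gx_0,gvx_0)\ge R$ such that $\alpha$ and $\beta$ lie in the closures of the opposite shadows $S_{gvx_0}(gx_0,K)$ and $S_{gx_0}(gvx_0,K)$; this is still $G$-equivariant and measurable. For two such elements $g$ and $g'=fg$ with orbit points in a common ball of radius $4K$, both a quasigeodesic $\gamma$ from $\alpha$ to $\beta$ and its translate $f\gamma$ pass near $fgx_0$ and $fgvx_0$, and a fellow-travelling argument shows $f$ moves \emph{both} points by at most $22K$; acylindricity then caps the number of such $f$ by $N(22K)$ (Lemma \ref{prop:local bound}), and covering the relevant piece of $\gamma$ by $O(r)$ balls gives linear growth (Proposition \ref{p:linear}). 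Non-emptiness of the strips is not obtained by showing the two-sided orbit path is a quasigeodesic line (which is not proved in the paper and is not needed); instead, Proposition \ref{prop:positive} gives positive hitting measure to the relevant shadows, so $\P(1\in\bg(\omega))>0$, and ergodicity of the shift yields infinitely many $n$ with $w_n\in\bg(\omega)$ almost surely (Proposition \ref{prop:non-empty}). Your remaining ingredients (finite entropy, the use of the logarithmic moment to get $\tfrac1n\log d_X(x_0,w_nx_0)\to0$, equivariance and measurability) match the paper, but without the two-point ``bounded geometry'' structure the acylindricity hypothesis never enters correctly and the size estimate collapses.
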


\subsection{Examples and discussion} \label{section:discussion}

\qquad \newline

\emph{Word hyperbolic groups}. The simplest example of a weakly hyperbolic group is a (finitely generated) 
Gromov hyperbolic group acting on its Cayley graph, which by definition is a
$\delta$-hyperbolic space, and so any non-elementary Gromov hyperbolic
group is weakly hyperbolic.  
Convergence of sample paths to the Gromov boundary in this case is due to Kaimanovich 
\cite{kai94}, who also shows that the Gromov boundary may be identified with the Poisson boundary
and the hitting measure is the unique $\mu$-stationary measure.
Note that in this case the space is locally compact, and the boundary is compact.


\emph{Relatively hyperbolic groups.} The Cayley graph of a relatively hyperbolic group is
$\delta$-hyperbolic with respect to an infinite generating set, and so
these groups are also weakly hyperbolic, but in this case the space on
which the group acts need not be proper. In this case, convergence to
the boundary was shown by Gautero and Math{\'e}us \cite{gm}, who also
covered the case of groups acting on $\R$-trees. More recently, group actions on 
(locally infinite) trees have been considered by Malyutin and Svetlov \cite{ms}.

There are then groups which are weakly hyperbolic, but not relatively
hyperbolic, the two most important examples being the mapping class
groups of surfaces, and Out($F_n$).

\emph{Mapping class groups.}
The mapping class group Mod($S$) of a surface $S$ of genus $g$ with
$p$ punctures acts on the curve complex $C(S)$, which is a locally
infinite simplicial complex.  As shown by Masur and Minsky \cite{mm1},
the curve complex is $\delta$-hyperbolic, and moreover, the action is
acylindrical, by work of Bowditch \cite{bowditch}, so we can apply our
techniques to get convergence and the Poisson boundary.

Convergence to the boundary of the curve complex also follows from
work of Kaimanovich and Masur \cite{km} and Klarreich
\cite{klarreich}, using the action of Mod($S$) on Teichm\"uller space
(which is locally compact, but not hyperbolic).  Indeed, Kaimanovich
and Masur show that random walks on the mapping class group converge
to points in Thurston's compactification of Teichm\"uller space
$\mathcal{PMF}$, and then Klarreich (see also Hamenst\"adt
\cite{hamenstadt}) shows the relation between $\mathcal{PMF}$ and the
boundary of the curve complex.  Our approach does not use fine
properties of Teichm\"uller geometry.  A third approach is to consider
the action of Mod($S)$ on Teichm\"uller space with the Weil-Petersson
metric, which is a non-proper CAT(0) space.  By work of Bestvina,
Bromberg and Fujiwara \cite{bbf}, this space has finite telescopic
dimension, and one may then apply the results of Bader, Duchesne and
L\'ecureux \cite{bdl}.

We remark that $C(S)$ does not possess a CAT(0) metric, since it is
homotopic to a wedge of spheres (Harer \cite{harer}); in fact,
Kapovich and Leeb \cite{kapovich-leeb} showed that the mapping class
group (of genus at least $3$) does not act freely cocompactly on a
CAT(0) space, though it is still open as to whether there is a proper
CAT(0) space on which the mapping class group acts by isometries;
Bridson \cite{bridson} showed that any such action must have elliptic
or parabolic Dehn twists.

\emph{Out($F_n$).}
The outer automorphism group of a non-abelian free group, Out($F_n$),
acts on a number of distinct Gromov hyperbolic spaces, as shown by
Bestvina and Feighn \cites{bestvina-feighn, bestvina-feighn2} and
Handel and Mosher \cite{hm}, and so is weakly hyperbolic. 
Similarly to the case of Mod($S$), convergence
to the boundary also follows by considering 
the action of Out($F_n$) on the (locally compact)
outer space, as shown by Horbez \cite{horbez}.

\emph{Right-angled Artin groups.}  
A right-angled Artin group acts by simplicial isometries on its
extension graph, which has infinite diameter as long as the group does
not split as a non-trivial direct product, and is not quasi-isometric
to $\Z$. Kim and Koberda showed that the extension graph is a
(non-locally compact) quasi-tree \cite{kim-koberda}, and in fact the
action is acylindrical \cite{kim-koberda2}.

\emph{Finitely generated subgroups of the Cremona group.} %
Manin \cite{manin} showed that the Cremona group acts faithfully by
isometries on an infinite-dimensional hyperbolic space, known as the
Picard-Manin space, which is not separable. However, any
finite-generated subgroup preserves a totally geodesic closed
subspace, which is separable, see for example Delzant and Py
\cite{delzant-py}.

\emph{Acylindrically hyperbolic groups.} %
The definition of an acylindrical group action is due to Sela
\cite{sela} for trees, and Bowditch \cite{bowditch} for general metric
spaces, see Osin \cite{osin} for a discussion and several examples of
acylindrical actions on hyperbolic spaces.  As every acylindrically
hyperbolic group is also weakly hyperbolic, this gives a number of
additional examples of weakly hyperbolic groups which are not
necessarily relatively hyperbolic; for example, all one relator groups
with at least three generators, see \cite{osin} for many other
examples.

\emph{Isometries of CAT(0) spaces.} Even though not all CAT(0) spaces are hyperbolic, the two theories overlap in many cases. 
For isometries of general CAT(0) spaces, Karlsson and Margulis \cite{km} 
proved boundary convergence and identified the Poisson boundary.
More recently, boundaries of
CAT(0) cube complexes have been studied by Nevo and Sageev \cite{nevo-sageev}, and
(not necessarily proper) CAT(0) spaces of finite telescopic dimension by Bader, Duchesne and L\'ecureux \cite{bdl}.

\medskip

Once we have proved convergence to the boundary, we apply
this to show positive drift.
In the locally compact case,
positive drift results go back to Guivarc'h \cite{guivarch}.
In particular, when the space on which $G$ acts is proper, 
positive drift follows from non-amenability of the group, 
but this need not be the case for non-proper spaces.
In the curve complex case, linear progress is due to Maher \cite{Maher_linear}.

We then show the sublinear tracking results, using work of Tiozzo
\cite{tiozzo}. Sublinear tracking can be thought of as a
generalization of Oseledec's multiplicative ergodic theorem
\cite{oseledec}. In our context, these results go back to Guivarc'h
\cite{guivarch}, and are known for groups of isometries of CAT(0) spaces by Karlsson and
Margulis \cite{km}, and for Teichm\"uller space by Duchin
\cite{duchin}. 
Sublinear tracking on
hyperbolic groups is due to Kaimanovich \cite{kai87, kai94}; 
moreover, 
Karlsson and Ledrappier
\cites{karlsson-ledrappier, karlsson-ledrappier2} proved a law 
of large numbers on general (proper) metric spaces using 
horofunctions.

Note that these results can be used to prove convergence to the boundary 
once one knows that the drift is positive. 
In the above-mentioned cases, the space is meant to be proper, so 
positive drift follows from non-amenability of the group, 
while a new argument is needed in general.

In this paper we give an argument for the non-proper weakly
hyperbolic case, where sublinear tracking and positive drift
\emph{follow} from convergence to the boundary.  Note that in our
approach we use horofunctions, and indeed
\cite{karlsson-ledrappier} can be used to simplify our proofs if
one assumes positive drift.
Recently (after the first version of this paper appeared), Mathieu and Sisto \cite{mathieu-sisto} 
 provided a different argument for positive drift in the acylindrical case.

Logarithmic tracking was previously
known for random walks on trees, due to Ledrappier \cite{ledrappier},
on hyperbolic groups, due to Blach\`ere, Ha\"issinsky and Mathieu
\cite{bhm}, and on relatively hyperbolic groups, due to Sisto
\cite{sisto2}.

Finally we show that the translation length grows linearly, which in
particular shows that the probability that a random walk gives rise to
a hyperbolic element tends to one. This generalizes earlier work of
Rivin \cite{rivin}, Kowalski \cite{kowalski}, Maher \cite{Mah} and
Sisto \cite{sisto}.

The methods in this paper build on previous work of Calegari and Maher
\cite{cm}, which showed convergence results with stronger conditions
on $X$ and $\mu$.

\subsection{Outline of the argument}

To explain the argument in the proof of Theorem \ref{theorem:convergence}, we briefly remind the reader of the standard argument for convergence
to the boundary for a random walk on a group $G$ acting on a locally
compact $\delta$-hyperbolic space $X$. 
The argument
ultimately goes back to Furstenberg \cite{Furstenberg}, who developed it for Lie groups.

\emph{Measures on the Gromov boundary.}
Let $\mu$ be the probability distribution on $G$ generating the random walk.
The first step is to find a $\mu$-stationary measure $\nu$ on
the Gromov boundary $\Xb$, and then apply the martingale convergence
theorem to show that for almost every sample path $\omega =
\seq{w_n}$, the sequence of measures $\seq{ w_n \nu }$ converges to
some measure $\nu_\omega$ in $\mathcal{P}(\Xb)$, the space of
probability measures on $\Xb$. One then uses geometric properties of
the action of $G$ on $X$ to argue that $\nu_\omega$ is a
$\delta$-measure $\delta_\lambda$ for some point $\lambda \in \Xb$,
almost surely, and that the image of the sample path under the orbit
map $\seq{w_n x_0}$ converges to $\lambda$.

This argument uses local compactness in an essential way in the first
step. For a locally compact hyperbolic space $X$, the Gromov
boundary $\Xb$ is compact, as is $X \cup \Xb$. The space of
probability measures on $\Xb$ is also compact, and so 
the existence of a $\mu$-invariant measure on $X$
just follows from taking weak limits.
In the non-locally compact case, the Gromov boundary $\Xb$, and $X
\cup \Xb$, need not be compact, as seen in the following example.

\begin{example}[Countable wedge of rays] \label{ex:wedge} %
A ray is a half line $\R_+ = \{ x \in \R : x \ge 0 \}$, with basepoint
$0$. Let $X$ be the wedge product of countably many rays.  This space
is a tree, and so is $\delta$-hyperbolic, and is not locally compact
at the basepoint. The Gromov boundary is homeomorphic to $\N$ with the
discrete metric, and is not compact.
\end{example}

\emph{The horofunction boundary.} %
In order to address this issue, we shall consider the horofunction
boundary of $X$, which was also initially developed by Gromov
\cite{bgs}, and has proved a useful tool in studying random walks, see
for example Karlsson-Ledrappier \cite{karlsson-ledrappier,
  karlsson-ledrappier2} and Bjorklund \cite{bjorklund}.  We now give a
brief description of this construction, giving full details in Section
\ref{section:horofunction}.
 
Let $X$ be a metric space, and $x_0$ a basepoint.  For each point $x$
in $X$, one defines the horofunction $\rho_x$ determined by $x$ to be
the function $\rho_x : X \to \mathbb{R}$
\[ \rho_x(z) := d_X(x, z) - d_X(x, x_0).  \]
This gives an embedding of $X$ in the space $C(X)$ of (Lipschitz-)
continuous functions on $X$, which we shall consider with the
compact-open topology (we emphasize that we use uniform convergence on
compact sets, not uniform convergence on bounded sets). With this
topology, the closure of $\rho(X)$ in $C(X)$ is compact,
even if $X$ is not locally compact; it is called the
\emph{horofunction compactification} of $X$ and denoted by $\Xh$. In
particular, there is a $\mu$-stationary measure $\nu$ on $\Xh$.

We now consider a basic but fundamental example in detail.

\begin{example}[$\R$] \label{ex:R} %
Consider $X = \R$, with the usual metric. In this case the
horofunction boundary $\Xh$ consists of $\rho(X)$ together
with precisely two additional functions, namely
$\rho_{\infty}(x) := -x$, and $\rho_{-\infty}(x) := x$.
\end{example}

This example turns out to be very important in our case; 
indeed, if $X$ is Gromov hyperbolic, then the
restriction of an arbitrary horofunction to a geodesic is equal (up to a bounded additive error)
to one of the horofunctions described above, i.e. $\rho_x$ or $\rho_{\pm
  \infty}$.

In Example \ref{ex:wedge}, the horofunction boundary equals the
Gromov boundary as a set, but the topology is different: namely, any sequence of
horofunctions $\seq{ \rho_{x_n} }$ corresponding to a sequence of
points $\seq{x_n}$ which leaves every compact set converges to the
horofunction $\rho_{x_0}$ associated to the basepoint $x_0$.

The Gromov boundary may be recovered from the horofunction boundary by
identifying functions which differ by a bounded amount, but in general
the horofunction boundary may be larger than the Gromov boundary, and
is not a quasi-isometry invariant of the space.  

\begin{example}
Consider $X = \Z \cross \Z / 2 \Z$, with the $L^1$-metric, $d_X((x,
i), (y, j)) = \norm{x-y} + \norm{i-j}$. Then the sequences $\rho_{n,
  0}$ and $\rho_{n, 1}$ have different values on $(0, 1)$, and so
converge to distinct horofunctions, and in fact in this case the
horofunction boundary consists of the product of the Gromov
boundary with $\Z / 2 \Z$.
\end{example}

We shall distinguish two different types of horofunctions. We say a
horofunction $h$ is \emph{finite} if $\inf_{x\in X} h(x) > - \infty$,
and is \emph{infinite} if $\inf h = - \infty$.  This partitions $\Xh$
into two subsets: we shall write $\xq$ for the set of finite
horofunctions, and $\xr$ for the set of infinite horofunctions.

\emph{The local minimum map.} %
We shall now construct a map from the horofunction boundary to the
Gromov boundary.  Recall that the restriction of a horofunction $h$ to
a geodesic $\gamma$ in $X$ is coarsely equal to one of the standard
horofunctions on $\R$: in particular, it has (coarsely) at most one
local minimum on $\gamma$.  Thus, if $h$ is bounded below on $\gamma$,
we can map $h$ to the location where it attains its minimum, getting a
map $\phi : \xq \to X$. On the other hand, if the horofunction is not
bounded below, then we can pick a sequence $(x_n)$ of points for which
the value of the horofunction tends to $-\infty$; it turns out that
such a sequence converges to a unique point in the Gromov boundary,
and the limit is independent of the choice of $(x_n)$.  Thus, we can
extend $\phi$ to a map $\phi : \Xh \to X \cup \partial X$.  We show
that this map is continuous on $\xr$ and $G$-equivariant, and that the
stationary measure $\nu$ is supported on the infinite horofunctions
$\xr$. Therefore the stationary probability measure $\nu$ on $\Xh$
restricts to a probability measure on $\xr$, and pushes forward to a
$\mu$-stationary probability measure $\tnu$ on $\Xb$.  We may then
complete the argument using with the geometric properties of the
action of $G$ on $\Xb$.

\emph{Plan of the paper.} %
In Section \ref{section:background}, we review some useful material
about Gromov hyperbolic spaces, and fix notation. In Section
\ref{section:horofunction}, we develop the properties of the
horofunction boundary that we will use, including the local minimum
map, and the behaviour of shadows. In these initial sections we give
complete proofs in the non-proper case of certain statements that are
already known in the proper case. In Section
\ref{section:convergence}, we use the horofunction boundary to show
that almost every sample path converges to the Gromov boundary.  In
Section \ref{section:applications}, we use the convergence to the
boundary result to show results on positive drift, sublinear tracking,
and the growth rate of translation distance, and then finally in
Section \ref{section:poisson} we show that if the action of $G$ is
acylindrical, and $\mu$ has finite entropy, then the Gromov boundary
with the hitting measure is the Poisson boundary for the random walk.

\subsection{Acknowledgements}

We would like to thank Jason Behrstock, Danny Calegari, Romain Dujardin, Camille
Horbez, Vadim Kaimanovich, Anders Karlsson, Andrei Malyutin and Samuel Taylor for
helpful conversations. 
The first author gratefully acknowledges the support of the Simons Foundation and PSC-CUNY.

\section{Background on $\delta$-hyperbolic spaces} \label{section:background}

Let $X$ be a Gromov hyperbolic space, i.e. a geodesic metric space
which satisfies the $\delta$-slim triangles condition. 
We will not assume that $X$ is \emph{proper}, i.e. that closed balls are compact, 
but we will always assume that it is \emph{separable}, i.e. that it contains a dense countable subset.
We shall write
$d_X$ for the metric on $X$, and $B_X(x, r)$ for the closed ball of
radius $r$ about the point $x$ in $X$. 
We shall now recall a few facts on the geometry of $X$.

\subsection{Notation}

We shall write $f(x) =  O(\delta)$ to mean that the absolute value of
the function $f$ is bounded by a number which only depends on
$\delta$, though this need not be a linear multiple of $\delta$.
Similarly, we shall write $A = B + O(\delta)$ to mean
that the difference between $A$ and $B$ is bounded by a
constant, which depends only on $\delta$.

\subsection{Coarse geometry}

Recall that the Gromov product in a metric space is defined to be
\[ \gp{x_0}{x}{y} := \tfrac{1}{2} ( d_X(x_0, x) + d_X(x_0, y) - d_X(x,
y) ).  \]
In a $\delta$-hyperbolic space, for all points $x_0, x$ and $y$, the
Gromov product $\gp{x_0}{x}{y}$ is equal to the distance from $x_0$ to
a geodesic from $x$ to $y$, up to an additive error of at most
$\delta$: if we write $[x,y]$ for a choice of geodesic from $x$ to
$y$, then
\begin{equation} \label{eq:gp estimate} %
d_X(x_0, [x, y]) = \gp{x_0}{x}{y} + O(\delta),  
\end{equation}
see e.g. \cite{bh}*{III.H 1.19}.
Moreover, for any three points $x, y, z \in X$ one has the following inequality
\begin{equation} 
\label{E:gpineq}
\gp{x_0}{x}{y} \geqslant \min \{ \gp{x_0}{x}{z}, \gp{x_0}{y}{z} \} - \delta,
\end{equation}
which we shall refer to as the triangle inequality for the Gromov
product.

We now recall the definition of the Gromov boundary of $X$, which
we shall write as $\Xb$.  We say that a sequence $\seq{x_n} \subseteq X$ is a \emph{Gromov sequence}
if $\gp{x_0}{x_m}{x_n}$ tends to infinity as $\min
\{ m, n \}$ tends to infinity. We say that two Gromov sequences $\seq{x_n}$
and $\seq{y_n}$ are equivalent if $\gp{x_0}{x_n}{y_n}$
tends to infinity as $n$ tends to infinity.
The \emph{Gromov boundary} $\partial X$ is defined as the set of equivalence classes of Gromov sequences.

We can extend the Gromov product to the boundary by
\[ \gp{x_0}{x}{y} = \sup \liminf_{m,n \to \infty}
\gp{x_0}{x_m}{y_n}, \]
where the supremum is taken over all sequences $(x_m)_{m \in \N} \to
x$ and $(y_n)_{n \in \N} \to y$. 
With this definition, the triangle inequality \eqref{E:gpineq}
also holds for any three points $x,y, z$ in $X \cup \Xb$,
but with a larger additive constant $O(\delta)$ instead of $\delta$ 
(see e.g. \cite{bh}*{III.H Remark 3.17(4)}).

The Gromov product on the boundary may be used to define a complete
metric on $\Xb$, see Bridson and Haefliger \cite{bh}*{III.H.3} for the
proper case, and V{\"a}is{\"a}l{\"a} \cite{vaisala} for the non-proper
case.  Moreover, the space $X \cup \partial X$ can be equipped with a
topology such that the relative topologies on both $X$ and $\Xb$ are
equal respectively to the usual topology on $X$, and the
above-mentioned metric topology on $\partial X$.

If $U \subset X$ we shall write $\overline{U}^\delta$ for the closure
of $U$ in $\Xg$.  If $X$ is proper, then $\Xb$ is
compact, but it need not be compact if $X$ is not proper. However, a
bounded set does not have limit points in the Gromov boundary,
i.e. for $B_X(x_0, r) = \{x \in X : d_X(x_0, x) \le r \}$ we have
$\overline{B_X(x_0, r)}^\delta = B_X(x_0, r)$.

\subsection{Quasigeodesics} \label{section:qg}

Let $I$ be a connected subset of $\R$, and let $X$ be a metric
space. A $(Q, c)$-quasigeodesic is a (not necessarily continuous) map
$\gamma \colon I \to X$ such that for all $s$ and $t$ in $I$,
\[ \frac{1}{Q}\norm{t - s} -c \le d_X(\gamma(s), \gamma(t)) \le Q
\norm{t - s} + c.  \]
If $I = \R$, then we will call the quasigeodesic $\gamma$ a
\emph{bi-infinite quasigeodesic}. If $I = [0, \infty)$, then we shall
call $\gamma$ a \emph{quasigeodesic ray based at $\gamma(0)$}. If the
metric space is $\delta$-hyperbolic, then quasigeodesics have the
following stability property, which is often referred to as the Morse
Lemma.

\begin{proposition} \label{prop:morse} %
Let $X$ be a $\delta$-hyperbolic space.  Given numbers $Q$ and $c$,
there is a number $L$ such that for any two points $x$ and $y$ in $X
\cup \partial X$, any two $(Q, c)$-quasigeodesics connecting $x$ and
$y$ are contained in $L$-neighbourhoods of each other.
\end{proposition}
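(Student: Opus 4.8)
The statement to prove is the Morse Lemma (Proposition~\ref{prop:morse}): in a $\delta$-hyperbolic space, any two $(Q,c)$-quasigeodesics with the same endpoints in $X \cup \partial X$ stay within uniformly bounded Hausdorff distance of each other. Since the paper explicitly allows $X$ to be non-proper, I must be careful that the proof does not secretly use properness (e.g. taking limits of geodesics). The cleanest route is: first prove the "fellow-traveller with a genuine geodesic" version for quasigeodesic \emph{segments} with endpoints in $X$, and then bootstrap to the case of infinite rays and bi-infinite quasigeodesics and to endpoints on $\partial X$ by an exhaustion/limiting argument that only uses the already-established segment bound.

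The plan for the segment case is the standard Morse argument. Let $\gamma \colon [a,b] \to X$ be a continuous (after the usual taming: replace a possibly discontinuous quasigeodesic by a continuous one at bounded cost, as in \cite{bh}*{III.H.1.11}) $(Q,c)$-quasigeodesic and $[x,y]$ a geodesic between the same endpoints. First I would bound $\sup_{p \in \gamma} d_X(p, [x,y])$: letting $D$ be this supremum (finite since we can first pass to a finite subdivision), pick $p_0 \in \gamma$ realizing (nearly) the sup, walk along $\gamma$ distance roughly $2D$ in each direction to points $u, v \in \gamma$, project $u,v$ to nearby points on $[x,y]$, and consider the quadrilateral; slimness of the two triangles forces $p_0$ to be within $O(\delta)$ of $[x,y]$ unless $D$ is bounded by a constant depending only on $Q, c, \delta$ — the quasigeodesic inequality controls $d_X(u,v) \asymp 4D$ while the geodesic path through the projections has length $\le 2D + O(\delta)$, giving a contradiction for large $D$. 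This yields $\gamma \subset N_{L_1}([x,y])$ with $L_1 = L_1(Q,c,\delta)$. Then I would get the reverse inclusion $[x,y] \subset N_{L_2}(\gamma)$: any point $z \in [x,y]$ lies on a geodesic subsegment whose endpoints are the nearest points on $[x,y]$ to suitable points of $\gamma$, and a short connectedness/IVT argument on the function $t \mapsto d_X(\gamma(t), z\text{-side})$ shows $\gamma$ cannot "jump over" $z$ by more than $L_1 + O(\delta)$. Combining, a $(Q,c)$-quasigeodesic segment is within $L_0(Q,c,\delta)$ of any geodesic with the same endpoints; by the triangle inequality for Hausdorff distance, two such quasigeodesics are within $2L_0$ of each other.

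To pass to rays and bi-infinite quasigeodesics and to ideal endpoints: given two $(Q,c)$-quasigeodesics $\gamma_1, \gamma_2$ asymptotic to the same point(s) of $X \cup \partial X$, I would exhaust them by finite subsegments $\gamma_i|_{[-n,n]}$ with endpoints $\gamma_i(\pm n)$. The endpoints of $\gamma_1|_{[-n,n]}$ and $\gamma_2|_{[-n,n]}$ converge to the same boundary points, hence for large $n$ their Gromov products are large; a standard fact (the triangle inequality \eqref{E:gpineq} for the extended Gromov product, plus \eqref{eq:gp estimate}) gives that a geodesic between $\gamma_1(n)$ and $\gamma_1(-n)$ and one between $\gamma_2(n)$ and $\gamma_2(-n)$ fellow-travel on the "middle" portion within $O(\delta)$ of their shared near-endpoints. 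Applying the segment bound $L_0$ to each $\gamma_i|_{[-n,n]}$ against these geodesics and letting $n \to \infty$ gives that every point of $\gamma_1$ is within $L_0 + O(\delta)$ of $\gamma_2$ and vice versa, with a constant $L$ depending only on $Q, c, \delta$. Crucially, no compactness of $X$ or of $\partial X$ is invoked: all limiting is done with divergence of Gromov products, which is precisely how $\partial X$ was defined in the non-proper setting.

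The main obstacle is purely the first inclusion $\gamma \subset N_{L_1}([x,y])$ done honestly without properness: one must ensure $D := \sup_{p \in \gamma} d_X(p,[x,y])$ is a priori finite before running the "large $D$ gives a contradiction" argument, which I would handle by first establishing the bound for finitely many points of $\gamma$ (so $D < \infty$ trivially, being a max over a finite set after subdividing $[a,b]$ finely using the upper Lipschitz bound $Q|t-s|+c$) and then noting the bound is independent of the subdivision. Everything else is bookkeeping with \eqref{E:gpineq}, \eqref{eq:gp estimate}, and the definition of $\partial X$; I will not grind through the explicit constants, citing \cite{bh}*{III.H.1.7} for the template of the Euclidean-style argument and adapting it to keep track only of dependence on $(Q,c,\delta)$.
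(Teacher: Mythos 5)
The paper does not actually prove Proposition \ref{prop:morse}; it is quoted as the standard Morse Lemma (valid without properness), with the surrounding text pointing to Bridson--Haefliger \cite{bh} and to V\"ais\"al\"a / Kapovich--Benakli for the non-proper setting and for quasigeodesics with ideal endpoints. So your proposal has to stand or fall on its own, and as written it has a genuine gap at the core step, the bound on $D = \sup_{p \in \gamma} d_X(p,[x,y])$ for a quasigeodesic segment. Your claimed contradiction --- ``$d_X(u,v) \asymp 4D$ while the geodesic path through the projections has length $\le 2D + O(\delta)$'' --- is not correct: the path $u \to u' \to v' \to v$ has length $2D + d_X(u',v')$, and nothing forces the two projections $u',v'$ to be $O(\delta)$-close; indeed $d_X(u',v')$ can perfectly well be comparable to $4D$ (a quasigeodesic running roughly parallel to $[x,y]$ at distance $D$ produces exactly this picture, with no contradiction for any value of $D$). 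Moreover $p_0$ lies on $\gamma$, not on any side of your quadrilateral, so slimness of the two triangles says nothing about $p_0$ directly. More fundamentally, a single application of slimness together with linear bookkeeping cannot yield a bound on $D$ that is independent of the length of the quasigeodesic: the mechanism that makes the Morse Lemma true is the exponential divergence of geodesics. In the template you cite, \cite{bh}*{III.H.1.7}, the key input is the preceding lemma that a point $x_0$ on a geodesic lies within $\delta\,|\log_2 \ell(c)| + 1$ of any rectifiable path $c$ with the same endpoints; one applies it to the concatenation $[y,y'] \cup \gamma|_{[y',z']} \cup [z',z]$, which avoids the open ball $B(x_0, D)$ while having length linear in $D$ (after taming), and the comparison ``exponential versus linear'' is what bounds $D$. (Equivalently one can use the coarse contraction property of nearest-point projection to geodesics, but some such genuinely hyperbolic input is indispensable.) Note also that the standard argument bounds first $\sup_{x \in [x,y]} d_X(x, \gamma)$ and only then deduces the reverse inclusion by the connectedness argument you sketch; your order can be made to work, but only after the exponential-divergence step is in place.

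The remainder of your plan is sound in outline and does address the issue the paper cares about: the taming step, the reverse inclusion via connectedness, and the passage to rays, bi-infinite quasigeodesics and ideal endpoints by exhausting with finite subsegments and using divergence of Gromov products (so that geodesics between the far-out endpoints of $\gamma_1$ and $\gamma_2$ fellow-travel on their middle portions within $O(\delta)$) avoids any use of properness or compactness, which is exactly how the non-proper case is handled in the references. So the fix needed is local but essential: replace the single-quadrilateral linear comparison by the logarithmic estimate of \cite{bh}*{III.H.1.6} (or an equivalent contraction statement), after which the constants depend only on $Q$, $c$ and $\delta$ as required.
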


We shall refer to a choice of constant $L$ in Proposition
\ref{prop:morse} above as a Morse constant for the quasi-geodesic
constants $Q$ and $c$. 

For any choice of basepoint $x_0$ and every point $x$ in the boundary
there is a quasigeodesic ray based at $x_0$ which converges to the
point $x$, and any two points in the boundary are connected by a
bi-infinite quasigeodesic. In fact, the quasigeodesics may be chosen
to have quasigeodesic constants $Q$ and $c$ bounded above in terms of
the hyperbolicity constant $\delta$, independently of the choice of
basepoint or boundary points, see e.g. Kapovich and Benakli
\cite{kapovich-benakli}.  By choosing $Q$ and $c$ sufficiently large,
we may assume that we have chosen these constants so that at least one
of the $(Q, c)$-quasigeodesics is continuous, see e.g. Bridson
and Haefliger \cite{bh}*{III.H.1}.

\subsection{Nearest point projection}

We will use the fact that in a $\delta$-hyperbolic space nearest point projection onto a geodesic
$\gamma$ is coarsely well defined, i.e.\ there is a constant $K_2$,
which only depends on $\delta$, such that if $p$ and $q$ are nearest
points on $\gamma$ to $x$, then $d_X(p,q) \le K_2$.

We will make use of the following \emph{reverse triangle inequality}.

\begin{proposition} \label{prop:reverse triangle} %
Let $\gamma$ be a geodesic in $X$, $y \in X$ a point, and $p$ a nearest point
projection of $y$ to $\gamma$. Then for any $z \in
\gamma$ we have 
\begin{equation} \label{eq:reverse triangle} %
 d_X(y, z) = d_X(y, p) + d_X(z, p) + O(\delta), 
\end{equation}
and furthermore, any geodesic from $z$ to $y$ passes within distance
$O(\delta)$ of $p$.
\end{proposition}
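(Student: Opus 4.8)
The plan is to prove the reverse triangle inequality \eqref{eq:reverse triangle} by combining the $\delta$-slim triangle condition with the Gromov product estimate \eqref{eq:gp estimate}, and then deduce the statement about geodesics passing near $p$ as a corollary. First I would fix a geodesic $[y,z]$ and consider the geodesic triangle with vertices $y$, $z$, and $p$ (using the sub-geodesic of $\gamma$ from $p$ to $z$ as one side). The key claim is that the Gromov product $\gp{z}{y}{p}$ is $O(\delta)$: indeed, by \eqref{eq:gp estimate}, $\gp{z}{y}{p}$ is within $O(\delta)$ of $d_X(z,\gamma')$ for a geodesic $\gamma'$ from $y$ to $p$, and since $p$ is a nearest point projection of $y$ to $\gamma$, points on $\gamma$ far from $p$ are far from $y$, which forces $y$ (and hence $\gamma'$, by slimness) to stay close to $\gamma$ near $p$ but not near $z$; more directly, the nearest-point property gives $d_X(y, w) \ge d_X(y,p)$ for all $w \in \gamma$, and a standard computation with the triangle inequality shows $\gp{z}{y}{p} = \tfrac12(d_X(z,y) + d_X(z,p) - d_X(p,y)) = O(\delta)$. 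Unwinding the definition of the Gromov product then gives precisely $d_X(y,z) = d_X(y,p) + d_X(z,p) + O(\delta)$.

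To establish the bound $\gp{z}{y}{p} = O(\delta)$ rigorously, I would argue as follows. Let $p'$ be a nearest point on $\gamma$ to an arbitrary point $m$ on a geodesic $[y,z]$; by $\delta$-slimness $m$ is within $\delta$ of $[y,p] \cup [p,z] \subseteq [y,p] \cup \gamma$. If $m$ is within $\delta$ of $\gamma$ then $d_X(y,m) \ge d_X(y,\gamma) - \delta \ge d_X(y,p) - \delta$ by the nearest-point hypothesis. On the other hand, the point on $[y,z]$ nearest to $y$ at distance exactly $d_X(y,p)$ along it (if it exists) realizes $\gp{z}{y}{p}$ up to $O(\delta)$ via \eqref{eq:gp estimate}. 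Chasing these inequalities — the cleanest route is to bound $d_X(y,[p,z])$ from below by $d_X(y,p) - O(\delta)$ using that $[p,z] \subseteq \gamma$ up to $O(\delta)$-fellow-travelling (Morse Lemma, Proposition \ref{prop:morse}, though here $[p,z]$ is an honest geodesic so we only need that a geodesic between two points of $\gamma$ fellow-travels $\gamma$, which follows from slimness) — yields $d_X(z,[y,p]) \le O(\delta)$, i.e. $\gp{z}{y}{p} = O(\delta)$ by \eqref{eq:gp estimate} applied with basepoint $z$.

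For the second assertion, that any geodesic $[z,y]$ passes within $O(\delta)$ of $p$: once we know $\gp{z}{y}{p} = O(\delta)$ and symmetrically (or by the same argument with $y$ and $z$ swapped in the roles, noting $p$ projects $y$) that the relevant Gromov products are small, the point on $[z,y]$ at distance $d_X(z,p)$ from $z$ is within $O(\delta)$ of $p$. Concretely: by \eqref{eq:gp estimate}, $d_X(p, [y,z]) = \gp{p}{y}{z} + O(\delta)$, and $\gp{p}{y}{z} = \tfrac12(d_X(p,y) + d_X(p,z) - d_X(y,z)) = O(\delta)$ by the reverse triangle inequality just proved. Hence $d_X(p,[y,z]) = O(\delta)$, as claimed.

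The main obstacle I anticipate is pinning down the estimate $\gp{z}{y}{p} = O(\delta)$ cleanly without circularity — the nearest-point projection hypothesis enters exactly here, and one must be careful to use only slimness and the coarse well-definedness of projection (constant $K_2$) rather than properness. The rest is bookkeeping with \eqref{eq:gp estimate} and the definition of the Gromov product; the two displayed conclusions are essentially two applications of \eqref{eq:gp estimate} with different basepoints once the single key inequality is in hand.
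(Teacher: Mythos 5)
There is a genuine error here: you have the basepoint of the key Gromov product the wrong way round, and the quantity you claim is small is not small. You assert $\gp{z}{y}{p}=\tfrac12\bigl(d_X(z,y)+d_X(z,p)-d_X(p,y)\bigr)=O(\delta)$, equivalently $d_X(z,[y,p])=O(\delta)$. This is false in general: already in a tree, if $z$ lies on $\gamma$ far from $p$, then $d_X(z,y)=d_X(z,p)+d_X(p,y)$ exactly, so $\gp{z}{y}{p}=d_X(z,p)$, which is unbounded, and $z$ is nowhere near the segment $[y,p]$. Moreover, even granting your claim, ``unwinding'' it gives $d_X(y,z)=d_X(y,p)-d_X(z,p)+O(\delta)$, with the wrong sign on $d_X(z,p)$, not the reverse triangle inequality. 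The quantity that is actually $O(\delta)$ (and is equivalent to \eqref{eq:reverse triangle}) is the product based at $p$, namely $\gp{p}{y}{z}$. The correct route is the one you mention in passing but then misuse: since $[p,z]\subseteq\gamma$ contains $p$, the nearest-point hypothesis gives $d_X(y,[p,z])=d_X(y,p)$; applying \eqref{eq:gp estimate} \emph{with basepoint $y$} yields $\gp{y}{p}{z}\ge d_X(y,p)-O(\delta)$, and expanding this Gromov product gives $d_X(y,z)\ge d_X(y,p)+d_X(p,z)-O(\delta)$, the upper bound being the ordinary triangle inequality. This is exactly the paper's proof, so the intended argument is salvageable, but as written the chain ``bound $d_X(y,[p,z])$ from below $\Rightarrow d_X(z,[y,p])\le O(\delta)$'' is a non sequitur resting on a false statement.

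Your third paragraph is fine once \eqref{eq:reverse triangle} is correctly established: $d_X(p,[y,z])=\gp{p}{y}{z}+O(\delta)=O(\delta)$ is a legitimate (and clean) way to get the ``furthermore'' clause, and it differs from the paper, which instead observes that $[y,p]\cup[p,z]$ is a $(1,2\delta)$-quasigeodesic and invokes stability (Proposition \ref{prop:morse}). So the second half is a valid alternative; the first half needs the basepoint fixed as above before the proof is correct.
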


\begin{proof}
The upper bound for $d_X(y, z)$ is immediate from the usual triangle
inequality.  To prove the lower bound, by the definition of nearest
point projection,
\begin{align*}
d_X(y, p) & = d_X(y, [p, z]). \\
\intertext{Recall that by \eqref{eq:gp estimate},}
d_X(y, p) & \leqslant (p \cdot z)_y + \delta, \\
\intertext{and writing out the Gromov product, we get}
d_X(y, p) & = \frac{1}{2} \Big( d_X(y, p) + d_X(y, z) - d_X(p, z) \Big) +
\delta, 
\end{align*}
which yields 
$$d_X(y, z) \geqslant  d_X(y, p) + d_X(p, z) - 2 \delta, $$
as required. This implies that a path consisting of $[y,p] \cup [p,
z]$ is a $(1, 2 \delta)$-quasigeodesic, and so by stability of
quasigeodesics in a $\delta$-hyperbolic space, this path is contained
in an $O(\delta)$-neighbourhood of any geodesic $[y, z]$ from $y$ to
$z$, so in particular, the distance from $p$ to a $[y, z]$ is at most
$O(\delta)$.
\end{proof}

Finally we show that if two points $x$ and $y$ in $X$ have nearest
point projections $p_x$ and $p_y$ to a geodesic $\gamma$, and $p_x$
and $p_y$ are sufficiently far apart, then the path $[x, p_x] \cup
[p_x, p_y] \cup [p_y, y]$ is a quasigeodesic, and in fact has the same
length as a geodesic from $x$ to $y$, up to an additive error
depending only on $\delta$.

\begin{proposition} \label{prop:npp2}
Let $\gamma$ be a geodesic in $X$, and let $x$ and $y$ be two points
in $X$ with nearest points $p_x$ and $p_y$ respectively on
$\gamma$. Then if $d_X(p_x, p_y) \ge O(\delta)$, then
\[ d_X(x, y) = d_X(x, p_x) + d_X(p_x, p_y) + d_X(p_y, y) + O(\delta),  \]
and furthermore, any geodesic from $x$ to $y$ passes within an
$O(\delta)$-neighbourhood of both $p_x$ and $p_y$.
\end{proposition}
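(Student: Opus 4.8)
The plan is to reduce Proposition \ref{prop:npp2} to two applications of the reverse triangle inequality (Proposition \ref{prop:reverse triangle}), glued together along the sub-geodesic $[p_x,p_y]$ of $\gamma$. First I would fix geodesics $[x,p_x]$, $[p_y,y]$ and the segment $[p_x,p_y]\subseteq\gamma$, and consider the concatenation $\eta=[x,p_x]\cup[p_x,p_y]\cup[p_y,y]$. The key observation is that $p_x$ is a nearest point on $\gamma$ (hence on $[p_x,p_y]$) to $x$, so by the "furthermore" clause of Proposition \ref{prop:reverse triangle} any geodesic $[x,p_y]$ passes within $O(\delta)$ of $p_x$; symmetrically any geodesic $[p_x,y]$ passes within $O(\delta)$ of $p_y$. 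Combined with the distance estimate $d_X(x,p_y)=d_X(x,p_x)+d_X(p_x,p_y)+O(\delta)$ and $d_X(p_x,y)=d_X(p_x,p_y)+d_X(p_y,y)+O(\delta)$ from the same proposition, this already gives the length of $\eta$ as $d_X(x,p_x)+d_X(p_x,p_y)+d_X(p_y,y)$, and the task is to show this coarsely equals $d_X(x,y)$ and that a genuine geodesic $[x,y]$ fellow-travels $p_x$ and $p_y$.

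The cleanest route is to show directly that $\eta$ is a $(1,O(\delta))$-quasigeodesic, using the hypothesis $d_X(p_x,p_y)\ge O(\delta)$ for a suitably large threshold. For this I would verify the quasigeodesic inequality on the three pieces and across the two junctions $p_x$, $p_y$. The only nontrivial points are the two corners: at $p_x$ I need $d_X(x,w)$ to be comparable to $d_X(x,p_x)+d_X(p_x,w)$ for $w$ on the segment $[p_x,p_y]$ beyond $p_x$, which is exactly the content of Proposition \ref{prop:reverse triangle} applied with geodesic $\gamma$, point $y=x$, projection $p=p_x$, and $z=w\in\gamma$; symmetrically at $p_y$. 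The requirement that $d_X(p_x,p_y)$ exceed a $\delta$-dependent constant ensures the two $O(\delta)$ "fuzzy corners" do not overlap, so the lower bound $\tfrac1Q|t-s|-c\le d_X(\eta(s),\eta(t))$ holds uniformly: for $s,t$ in different pieces the distance is bounded below by the longer of the two reverse-triangle contributions minus $O(\delta)$. Once $\eta$ is a $(1,O(\delta))$-quasigeodesic, stability of quasigeodesics (Proposition \ref{prop:morse}) gives that any geodesic $[x,y]$ lies in an $O(\delta)$-neighbourhood of $\eta$ and conversely; in particular $[x,y]$ passes $O(\delta)$-close to both $p_x$ and $p_y$, and comparing lengths $d_X(x,y)=\mathrm{length}(\eta)+O(\delta)=d_X(x,p_x)+d_X(p_x,p_y)+d_X(p_y,y)+O(\delta)$.

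I expect the main obstacle to be bookkeeping rather than anything deep: making sure a single threshold "$d_X(p_x,p_y)\ge O(\delta)$" simultaneously controls both corners and keeps all the accumulated additive errors absorbed into one $O(\delta)$, and in particular checking the lower bound of the quasigeodesic inequality in the case where $s$ and $t$ straddle the whole middle segment (i.e. $\eta(s)\in[x,p_x]$ and $\eta(t)\in[p_y,y]$). Here one wants $d_X(\eta(s),\eta(t))\ge d_X(\eta(s),p_x)+d_X(p_x,p_y)+d_X(p_y,\eta(t))-O(\delta)$, which should follow by first applying the reverse triangle inequality at $p_x$ (to the point $\eta(s)$ and $z=p_y\in\gamma$) to get $d_X(\eta(s),p_y)\ge d_X(\eta(s),p_x)+d_X(p_x,p_y)-O(\delta)$, then applying it again at $p_y$ (now with $\gamma$ replaced by a geodesic realizing the previous step, using that $p_y$ is within $O(\delta)$ of it), iterating the projection argument. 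An alternative that avoids the iteration is to invoke Proposition \ref{prop:reverse triangle} twice in parallel: $d_X(x,y)\ge d_X(x,p_x)+d_X(p_x,y)-O(\delta)$ and $d_X(p_x,y)\ge d_X(p_x,p_y)+d_X(p_y,y)-O(\delta)$, summing to the desired lower bound, and then getting the fellow-traveling of $[x,y]$ with $p_x,p_y$ from the "furthermore" clauses together with slimness of the triangle $x,p_x,y$. Either way the heart of the matter is already in Proposition \ref{prop:reverse triangle}, and this proposition is essentially its two-sided amalgam.
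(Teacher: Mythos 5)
First, a remark on the comparison: the paper does not actually prove Proposition \ref{prop:npp2}; it cites it as well known (Proposition 3.4 of \cite{Maher_linear}), so your argument has to stand on its own. Your framing is sound in outline: the upper bound is the triangle inequality, the concatenation $\eta=[x,p_x]\cup[p_x,p_y]\cup[p_y,y]$ being a $(1,O(\delta))$-quasigeodesic would give everything via Proposition \ref{prop:morse}, and the two ``corner'' cases are genuinely instances of Proposition \ref{prop:reverse triangle}, since $p_x$ is an exact nearest point of $\gamma$ for every point of $[x,p_x]$ (and likewise for $p_y$). The gap is exactly at the step you flag, the straddling case $\eta(s)\in[x,p_x]$, $\eta(t)\in[p_y,y]$, and neither of your two treatments closes it. The ``parallel'' alternative is not a legitimate invocation of Proposition \ref{prop:reverse triangle}: in $d_X(x,y)\ge d_X(x,p_x)+d_X(p_x,y)-O(\delta)$ neither $x$ nor $y$ lies on $\gamma$, and $p_x$ is not known to be a (coarse) nearest-point projection of either point onto a geodesic containing the other. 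Indeed this inequality is false without the hypothesis $d_X(p_x,p_y)\ge O(\delta)$ (take $y$ close to $x$), so it cannot follow from a proposition that makes no such assumption --- it is essentially the statement being proved. The ``iteration'' route stalls one step later: after the correct first application giving $d_X(\eta(s),p_y)=d_X(\eta(s),p_x)+d_X(p_x,p_y)+O(\delta)$, the second application needs the nearest-point projection of $\eta(t)$ onto the auxiliary geodesic $[\eta(s),p_y]$ to lie within $O(\delta)$ of $p_y$; saying ``$p_y$ is within $O(\delta)$ of it'' gives nothing ($p_y$ is an endpoint of that geodesic), and establishing where $\eta(t)$ projects is precisely where the largeness of $d_X(p_x,p_y)$ must be used. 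So the heart of the proposition --- the absence of shortcutting, equivalently $\gp{p_x}{x}{y}=O(\delta)$ --- is assumed rather than proved.

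The missing ingredient is a short hyperbolicity argument using the defining property of the projections. For instance, consider the geodesic quadrilateral with sides $[x,p_x]$, $[p_x,p_y]\subseteq\gamma$, $[p_y,y]$, $[x,y]$. If $m\in[p_x,p_y]$ lies within $2\delta$ of a point $a\in[x,p_x]$, then $d_X(x,p_x)\le d_X(x,m)\le d_X(x,a)+2\delta$ forces $d_X(a,p_x)\le 2\delta$, hence $d_X(m,p_x)\le 4\delta$; symmetrically for $[p_y,y]$. Since quadrilaterals are $2\delta$-slim, once $d_X(p_x,p_y)$ exceeds a definite multiple of $\delta$, a point of $[p_x,p_y]$ at distance about $5\delta$ from $p_x$ (resp.\ $p_y$) must be $2\delta$-close to $[x,y]$, which is the ``furthermore'' clause. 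The distance formula then follows by taking $u\in[x,y]$ with $d_X(u,p_x)=O(\delta)$ and writing $d_X(x,y)=d_X(x,u)+d_X(u,y)\ge d_X(x,p_x)+d_X(p_x,y)-O(\delta)$, and estimating $d_X(p_x,y)$ by Proposition \ref{prop:reverse triangle}. With such a lemma inserted, your quasigeodesic/Morse packaging (or this direct summation) completes the proof; without it, the essential case remains unproved.
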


This is well known, see e.g. \cite{Maher_linear}*{Proposition
  3.4}.

Given a point $x \in X$ and a number $R > 0$ the \emph{shadow}
$S_{x_0}(x, R)$ is defined to be
\[ S_{x_0}(x, R) := \{ y \in X : \gp{x_0}{x}{y} \ge d_X(x_0, x) - R
\}.  \] 
There are a number of similar definitions in the literature, and we
emphasize that we define shadows to be subsets of $X$, rather than
subsets of say $X \cup \Xb$ or $\Xb$.  We will refer to the quantity
$d_X(x_0, x) - R$ as the \emph{distance parameter} of the shadow, and
it is equal to the distance from $x_0$ to the shadow, up to an
additive error depending only on $\delta$.

By the triangle inequality for the Gromov product \eqref{E:gpineq}, 
for any two points $y$ and $z$ in the closure of a shadow
$\overline{S_{x_0}(x, R)}^\delta$, there is a lower bound on their
Gromov product
\begin{equation} \label{eq:gp min} %
\gp{x_0}{y}{z} \ge d_X(x, x_0) - R + O(\delta).
\end{equation}

We now show that the nearest point projection of a shadow $S_{x}(y,
R)$ to a geodesic $[x, y]$ is contained in a bounded neighbourhood of
the intersection of the shadow with $[x, y]$, and the same result
holds for the complement of the shadow.

\begin{proposition} \label{prop:npp shadow}
Let $z$ be a point in the shadow $S_{x}(y, R)$, let $\gamma$ be a
geodesic from $x$ to $y$, and let $p$ be a nearest point to $z$ on
$\gamma$. Then 
\begin{equation} \label{eq:npp shadow} 
d_X(y, p) \le R + O(\delta). 
\end{equation}
If $z \not \in S_{x}(y, R)$, then 
\begin{equation} \label{eq:npp shadow complement} 
d_X(y, p) \ge R + O(\delta). 
\end{equation}
\end{proposition}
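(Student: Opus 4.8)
The plan is to translate everything into Gromov products and distances and apply the reverse triangle inequality of Proposition~\ref{prop:reverse triangle}. Write $d := d_X(x,y)$. By definition, $z \in S_x(y,R)$ means $\gp{x}{y}{z} \ge d - R$, and expanding the Gromov product this is equivalent to $d_X(x,z) + d_X(y,z) - d_X(x,z) \cdots$; more usefully, $\gp{x}{y}{z} = \tfrac12(d_X(x,y) + d_X(x,z) - d_X(y,z))$, so the shadow condition reads $d_X(y,z) \le d_X(x,z) - d + 2R$, equivalently $d_X(x,z) - d_X(y,z) \ge d - 2R$. The key geometric input is that $p$, being the nearest point projection of $z$ to the geodesic $\gamma = [x,y]$, satisfies the reverse triangle inequality both towards $x$ and towards $y$: by Proposition~\ref{prop:reverse triangle}, $d_X(z,x) = d_X(z,p) + d_X(p,x) + O(\delta)$ and $d_X(z,y) = d_X(z,p) + d_X(p,y) + O(\delta)$. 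Since $p \in \gamma = [x,y]$ we also have $d_X(p,x) + d_X(p,y) = d$.

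Subtracting the two reverse-triangle identities gives $d_X(z,x) - d_X(z,y) = d_X(p,x) - d_X(p,y) + O(\delta)$. Combining with $d_X(p,x) + d_X(p,y) = d$, we solve for $d_X(p,y)$: namely $2\, d_X(p,y) = d - \big(d_X(z,x) - d_X(z,y)\big) + O(\delta)$, i.e.
\[
d_X(y,p) = \tfrac12\big( d - d_X(z,x) + d_X(z,y) \big) + O(\delta) = \big( d_X(x,y) - \gp{x}{y}{z} \cdot 2 \big)\big/2 \; ,
\]
more cleanly: $d_X(y,p) = \big( d_X(x,y) - (d_X(x,z) - d_X(y,z)) \big)/2 + O(\delta)$, and the quantity $(d_X(x,z)-d_X(y,z))$ is exactly $2\gp{x}{y}{z} - d_X(x,y)$. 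So in fact $d_X(y,p) = d_X(x,y) - \gp{x}{y}{z} + O(\delta)$. Now the shadow hypothesis $\gp{x}{y}{z} \ge d_X(x,y) - R$ immediately gives $d_X(y,p) \le R + O(\delta)$, which is \eqref{eq:npp shadow}; and the complementary hypothesis $\gp{x}{y}{z} < d_X(x,y) - R$ (equivalently $z \notin S_x(y,R)$, up to the coarse ambiguity in the definition) gives $d_X(y,p) \ge R + O(\delta)$, which is \eqref{eq:npp shadow complement}.

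The only real subtlety, and the step I expect to need the most care, is the book-keeping of additive $O(\delta)$ errors: the nearest point projection $p$ is only coarsely well-defined (different nearest points differ by $K_2 = O(\delta)$), Proposition~\ref{prop:reverse triangle} already carries an $O(\delta)$, and the extended Gromov product estimate \eqref{eq:gp estimate} contributes another. One must check that all of these absorb into a single $O(\delta)$ and, in particular, that the same works uniformly whether or not $p$ is near the endpoints $x$ or $y$ — when $d_X(y,p)$ is itself $O(\delta)$ the reverse triangle inequality towards $y$ is trivial but harmless, and when it is large the estimate is genuine. Since both desired inequalities are themselves stated only up to $O(\delta)$, none of this loses anything, so the argument goes through cleanly.
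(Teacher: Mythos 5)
Your proof is correct and follows essentially the same route as the paper: expand the shadow condition into $d_X(x,z) - d_X(y,z) \ge d_X(x,y) - 2R$, then apply the reverse triangle inequality of Proposition~\ref{prop:reverse triangle} at the nearest point projection $p$ together with $d_X(p,x) + d_X(p,y) = d_X(x,y)$ to extract $d_X(y,p)$. The only difference is presentational — you spell out the two applications of the reverse triangle inequality and the subtraction explicitly, while the paper compresses these into one line — and your concluding remarks about the $O(\delta)$ book-keeping are accurate but do not introduce anything beyond what the paper's argument already absorbs.
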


\begin{proof}
If $z$ lies in the shadow $S_{x}(y, R)$,
\[ \gp{x}{y}{z} \ge d_X(x, y) - R. \]
Using the definition of the Gromov product, we may rewrite this as
\[ d_X(x, z) - d_X(z, y) \ge d_X(x, y) - 2R, \]
and then using \eqref{eq:reverse triangle}, and the fact that $x, p$
and $y$ lie in that order on a common geodesic, gives
\[ R + O(\delta) \ge d_X(p, y), \]
as required. If $z$ does not lie in $S_x(y, R)$, then the same
argument works, with the opposite inequality.
\end{proof}

As a consequence, the complement of a shadow is almost a shadow:

\begin{corollary} \label{C:shadowcompl}
The complement of the shadow $S_x(y, R)$ is contained in the shadow $S_y(x, \widetilde{R})$, where $\widetilde{R} = d_X(x, y) - R + O(\delta)$.
\end{corollary}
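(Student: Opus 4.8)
The plan is to unwind the definition of the shadow and exploit the elementary identity relating the two Gromov products based at $x$ and at $y$. First I would observe that for any three points $x, y, z \in X$, writing out the Gromov products directly from the definition gives the exact equality
\[ \gp{x}{y}{z} + \gp{y}{x}{z} = d_X(x, y), \]
since the $d_X(x,z)$ and $d_X(y,z)$ contributions cancel and the two copies of $\tfrac12 d_X(x,y)$ add up. This holds with no error term and uses nothing about hyperbolicity.

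Next, take a point $z \in X \setminus S_x(y, R)$. By definition of the shadow this means $\gp{x}{y}{z} < d_X(x, y) - R$; substituting the identity above, $d_X(x, y) - \gp{y}{x}{z} < d_X(x, y) - R$, so $\gp{y}{x}{z} > R$. It then remains to check that this puts $z$ into $S_y(x, \widetilde{R})$ for $\widetilde{R} = d_X(x, y) - R + O(\delta)$: membership in $S_y(x, \widetilde{R})$ requires $\gp{y}{x}{z} \ge d_X(y, x) - \widetilde{R} = R - O(\delta)$, which is immediate from $\gp{y}{x}{z} > R$. Since the corollary only claims the inclusion of the complement of $S_x(y,R)$ into $S_y(x, \widetilde{R})$, this finishes the argument. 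One should note that when $R \ge d_X(x,y)$ the shadow $S_x(y,R)$ is all of $X$ and the statement is vacuous, so the possibly non-positive value of $\widetilde{R}$ in that range is harmless.

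I expect essentially no obstacle here: the statement is a formal consequence of the definitions, and in fact one could take $\widetilde{R} = d_X(x,y) - R$ exactly, the $O(\delta)$ slack being retained only for uniformity with the way shadows are manipulated elsewhere in the paper. As an alternative I could argue geometrically, by projecting $z$ to a nearest point $p$ on a geodesic $[x,y]$, using the reverse triangle inequality (Proposition \ref{prop:reverse triangle}) to obtain $\gp{y}{x}{z} = d_X(y, p) + O(\delta)$, and then invoking the complement case of Proposition \ref{prop:npp shadow} to get $d_X(y,p) \ge R + O(\delta)$; but the purely algebraic route above is cleaner and gives the sharper constant, so that is the one I would write up.
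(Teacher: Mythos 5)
Your proof is correct, and in fact it is a genuinely different and cleaner route than the one the paper implies. The paper states the corollary immediately after Proposition \ref{prop:npp shadow} and introduces it with ``as a consequence,'' so the intended argument is geometric: take $z \notin S_x(y,R)$, project it to a nearest point $p$ on a geodesic $[x,y]$, use \eqref{eq:npp shadow complement} to get $d_X(y,p) \ge R + O(\delta)$, and then use the reverse triangle inequality to convert this back into a bound $\gp{y}{x}{z} \ge R - O(\delta)$. That route genuinely needs $\delta$-hyperbolicity twice (once inside Proposition \ref{prop:npp shadow}, once to recover the Gromov product from $d_X(y,p)$) and so can only give $\widetilde{R} = d_X(x,y) - R + O(\delta)$. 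Your observation that $\gp{x}{y}{z} + \gp{y}{x}{z} = d_X(x,y)$ holds identically in any metric space collapses the whole argument to arithmetic: $z \notin S_x(y,R)$ means $\gp{x}{y}{z} < d_X(x,y) - R$, which is exactly $\gp{y}{x}{z} > R$, which is exactly $z \in S_y(x, d_X(x,y) - R)$. This is sharper (no $O(\delta)$ slack, and in fact no hyperbolicity at all), and you correctly note the degenerate cases $R \ge d_X(x,y)$ and $R < 0$ are harmless. The only thing the paper's route buys is consistency with the surrounding nearest-point-projection machinery, which you already point out.
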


\section{The horofunction boundary} \label{section:horofunction}

Let $(X, d_X)$ be a metric space.  A function $f : X \to \mathbb{R}$
is called \emph{$1$-Lipschitz} if for each $x, y \in X$ we have
$$|f(x) - f(y)| \leqslant d_X(x, y).$$
Clearly, $1$-Lipschitz functions are uniformly continuous. For each
$x_0 \in X$, let us define
$$\textup{Lip}^1_{x_0}(X) := \{ f \colon X \to \mathbb{R} \ : \
f \textup{ is 1-Lipschitz, and } f(x_0) = 0\}, $$
the space of $1$-Lipschitz functions which vanish at $x_0$.
We shall endow the space $\textup{Lip}^1_{x_0}(X)$ with the topology of pointwise convergence.
Note that, since all elements of $\textup{Lip}^1_{x_0}(X)$ are uniformly continuous with the same modulus of 
continuity, this topology is equivalent to the topology of 
uniform convergence on compact sets, which is also equivalent to the compact-open topology as $\mathbb{R}$ is a metric space.

\begin{proposition}
Let $X$ be a separable metric space. Then for each $x_0 \in X$, the space 
$\textup{Lip}^1_{x_0}(X)$
is compact, Hausdorff and second countable (hence metrizable).
\end{proposition}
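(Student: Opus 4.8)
The plan is to realize $\textup{Lip}^1_{x_0}(X)$ as a closed subspace of a product of compact intervals and then apply Tychonoff's theorem. First I would note that for any $f \in \textup{Lip}^1_{x_0}(X)$ and any $x \in X$, the Lipschitz condition together with $f(x_0) = 0$ gives $|f(x)| = |f(x) - f(x_0)| \le d_X(x, x_0)$. Hence $f$ belongs to the product space $\Pi := \prod_{x \in X} [-d_X(x, x_0), d_X(x, x_0)]$, which is compact by Tychonoff. The topology of pointwise convergence on $\textup{Lip}^1_{x_0}(X)$ is precisely the subspace topology inherited from the product topology on $\Pi$ (and, as the proposition's preamble observes, this agrees with the compact-open topology since all these functions share the modulus of continuity $\mathrm{id}$). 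So it suffices to check that $\textup{Lip}^1_{x_0}(X)$ is a closed subset of $\Pi$.

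Closedness is the routine verification: if a net (or, using second countability, a sequence) $f_i \to f$ pointwise with each $f_i$ being $1$-Lipschitz and vanishing at $x_0$, then $f(x_0) = \lim f_i(x_0) = 0$, and for any $x, y$ we have $|f(x) - f(y)| = \lim |f_i(x) - f_i(y)| \le d_X(x, y)$, so $f \in \textup{Lip}^1_{x_0}(X)$. This shows the set is closed, hence compact. Hausdorffness is automatic since $\Pi$ is Hausdorff (it is a product of Hausdorff spaces), and subspaces of Hausdorff spaces are Hausdorff.

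For second countability, this is where separability of $X$ is used in an essential way (and it is the only place the hypothesis enters). Let $D = \{x_0, x_1, x_2, \dots\}$ be a countable dense subset of $X$ containing $x_0$. Because every $f \in \textup{Lip}^1_{x_0}(X)$ is continuous, it is determined by its restriction to $D$; more precisely, the restriction map $\textup{Lip}^1_{x_0}(X) \to \prod_{x \in D}[-d_X(x,x_0), d_X(x,x_0)]$ is a continuous injection, and in fact a homeomorphism onto its image, since pointwise convergence on $D$ implies pointwise convergence on all of $X$ for a uniformly equicontinuous family (given $x \in X$ and $\varepsilon$, pick $x_k \in D$ with $d_X(x, x_k) < \varepsilon$; then $|f_i(x) - f(x)| \le |f_i(x) - f_i(x_k)| + |f_i(x_k) - f(x_k)| + |f(x_k) - f(x)| \le 2\varepsilon + |f_i(x_k) - f(x_k)|$). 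Thus $\textup{Lip}^1_{x_0}(X)$ embeds in a countable product of compact metric intervals, which is itself metrizable and second countable, and so $\textup{Lip}^1_{x_0}(X)$ is second countable. Finally, a compact Hausdorff second countable space is metrizable by the Urysohn metrization theorem, giving the last assertion.

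The only mild subtlety — the ``main obstacle,'' though it is not a serious one — is making sure that the topology of pointwise convergence really does coincide with the subspace topology from the \emph{countable} product over $D$, rather than just the full product over $X$; this is exactly the equicontinuity argument above, and it is the one spot where one must use that the functions have a common modulus of continuity together with density of $D$. Everything else is a direct application of Tychonoff, the closedness check, and Urysohn metrization.
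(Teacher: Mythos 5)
Your proof is correct. The compactness and Hausdorff parts coincide with the paper's argument: both realize $\textup{Lip}^1_{x_0}(X)$ as a closed subset of the product $\prod_{x\in X}[-d_X(x,x_0),d_X(x,x_0)]$ via the bound $|f(x)|\le d_X(x,x_0)$ and invoke Tychonoff, with Hausdorffness inherited from the ambient space. Where you genuinely diverge is second countability. The paper deduces it from the claim that $C(X)$ with the compact-open topology is second countable whenever $X$ and $\R$ are; that step is delicate, since for a general separable metric $X$ (for instance a non-hemicompact space such as the irrationals) $C(X)$ with the compact-open topology need not even be first countable, and the assertion only becomes safe after restricting to the equicontinuous subspace, where the compact-open and pointwise topologies agree. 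Your route sidesteps this entirely: you restrict to a countable dense set $D$, use the common Lipschitz modulus to show that the restriction map is a homeomorphism onto its image (alternatively, a continuous injection from a compact space into a Hausdorff space is automatically an embedding, so the equicontinuity estimate could be replaced by the compactness already established), and then read off second countability from the countable product $\prod_{x\in D}[-d_X(x,x_0),d_X(x,x_0)]$, finishing with Urysohn metrization. This is more self-contained and, for the second countability claim, more careful than the paper's one-line argument. One small remark: the parenthetical ``or, using second countability, a sequence'' in your closedness check is circular at that stage, since second countability is what is being proved; but as you also allow nets, the verification stands as written.
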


\begin{proof}
Note that for any function $f \in \textup{Lip}^1_{x_0}(X)$
and each $z \in X$ we have
$$|f(z)| = |f(z) - f(x_0)| \leqslant d_X(x_0, z)$$
hence the space $\textup{Lip}^1_{x_0}(X)$ is a closed subspace of an infinite 
product of compact spaces, hence it is compact by Tychonoff's theorem.
Let $C(X)$ be the space of real-valued continuous functions on $X$,
with the compact-open topology.  
As $\R$ is Hausdorff, $C(X)$ is also Hausdorff, hence so is $\textup{Lip}^1_{x_0}(X)$. Since $X$ is a
separable metric space, it is second countable; moreover, as
$\mathbb{R}$ is also second countable, $C(X)$ is second countable and so is $\textup{Lip}^1_{x_0}(X)$.
\end{proof}

Let $x_0 \in X$ be a basepoint. We define the \emph{horofunction} map
$\rho$ to be
\begin{align*} 
\rho \colon X & \to C(X) \\
y & \mapsto \rho_y(z) := d_X(z, y) - d_X(x_0, y),
\end{align*}
where we write $\rho_y$ for $\rho(y)$.  Note that the horofunction map
$\rho$ depends on the choice of basepoint, but as we shall usually
consider horofunctions defined from some fixed basepoint we omit this
from the notation. In the few cases we consider horofunctions with
different basepoints we will change notation on an ad hoc basis. For
each $y$, the horofunction $\rho_y$ is $1$-Lipschitz, as
\begin{equation} \label{eq:bound}  
\norm{ \rho_y(z_1) - \rho_y(z_2) } = \norm{ d_X(z_1, y) - d_X(z_2, y) }
\le d_X(z_1, z_2),
\end{equation}
and moreover $\rho_y(x_0) = 0$ for all $y$, hence $\rho$ maps $X$ into 
$\textup{Lip}^1_{x_0}(X, \mathbb{R})$.

\begin{lemma}
The map $\rho \colon X \to C(X)$ defined above is continuous and injective.
\end{lemma}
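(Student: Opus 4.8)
The plan is to prove the two assertions — continuity and injectivity — separately, both directly from the definition $\rho_y(z) = d_X(z,y) - d_X(x_0,y)$. For continuity, since the target $\mathrm{Lip}^1_{x_0}(X)$ carries the topology of pointwise convergence, it suffices to check that for each fixed $z \in X$ the evaluation $y \mapsto \rho_y(z)$ is continuous on $X$. But this map is just $y \mapsto d_X(z,y) - d_X(x_0,y)$, which is a difference of two $1$-Lipschitz functions of $y$, hence continuous; indeed $|\rho_{y}(z) - \rho_{y'}(z)| = |(d_X(z,y) - d_X(z,y')) - (d_X(x_0,y) - d_X(x_0,y'))| \le 2 d_X(y,y')$, so $y \mapsto \rho_y$ is in fact $2$-Lipschitz into the space of bounded functions with the sup norm, which is more than enough for continuity in the weaker pointwise topology.

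For injectivity, suppose $\rho_y = \rho_{y'}$ as functions on $X$. Evaluating at $z = y$ gives $\rho_y(y) = d_X(y,y) - d_X(x_0,y) = -d_X(x_0,y)$, while $\rho_{y'}(y) = d_X(y,y') - d_X(x_0,y')$. Symmetrically, evaluating at $z = y'$ gives $\rho_{y'}(y') = -d_X(x_0,y')$ and $\rho_y(y') = d_X(y',y) - d_X(x_0,y)$. Setting the two pairs equal and adding (or subtracting) the resulting identities yields $d_X(y,y') = 0$, hence $y = y'$. Concretely: from $\rho_y(y) = \rho_{y'}(y)$ we get $-d_X(x_0,y) = d_X(y,y') - d_X(x_0,y')$, i.e. $d_X(y,y') = d_X(x_0,y') - d_X(x_0,y)$; from $\rho_y(y') = \rho_{y'}(y')$ we get $d_X(y,y') = d_X(x_0,y) - d_X(x_0,y')$; adding these gives $2d_X(y,y') = 0$.

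Neither step presents a genuine obstacle — this is a routine verification — so there is no real "hard part." The only point requiring a moment's care is making sure the continuity claim is stated with respect to the correct topology on $C(X)$: one should note explicitly (as the paper already does) that the compact-open topology, uniform-on-compacts topology, and pointwise topology all coincide on $\mathrm{Lip}^1_{x_0}(X)$ because of the uniform modulus of continuity, so that checking continuity pointwise in $z$ is legitimate. With that observation in hand, both parts follow from the elementary Lipschitz estimates above.
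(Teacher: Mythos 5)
Your proof is correct and follows essentially the same route as the paper: the continuity estimate $|\rho_y(z) - \rho_{y'}(z)| \le 2 d_X(y,y')$ is exactly the paper's, and your injectivity computation (evaluating at $z=y$ and $z=y'$ and adding) is just a made-explicit version of the paper's one-line observation that $\rho_y$ attains its unique minimum at $z=y$.
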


\begin{proof}
For any $y \in X$, the function $\rho_y(z) = d_X(z, y) - d_X(x_0, y)$ achieves a
unique minimum at $z = y$, so the map $\rho$ is injective on $X$. 
The map $\rho$ is continuous, as for any $x, y, z \in X$ we have 
\begin{align*}
\norm{ \rho_x(z) - \rho_y(z) } & = \norm{ d_X(z, x) - d_X(x_0, x) - d_X(z, y) +
  d_X(x_0, y)  } \\
& \le \norm{  d_X(z, x) - d_X(z, y) } + \norm{  d_X(x_0, y) - d_X(x_0, x) }
\\ 
& \le 2 d_X(x, y) 
\end{align*}
and so if $(y_n)_{n \in \N} \to y$ then $(\rho_{y_n})_{n \in \N} \to
\rho_y$ uniformly on compact sets, in fact uniformly on all of $X$.
\end{proof}

Let us now define the fundamental object we are going to work with.

\begin{definition}
Let $X$ be a separable metric space with basepoint $x_0 \in X$. We
define the \emph{horofunction compactification} $\overline{X}^h$ to be
the closure of $\rho(X)$ in $\textup{Lip}^1_{x_0}(X)$.  We shall call
the set $\partial X^h := \Xh \setminus X$ the \emph{horofunction
  boundary} of $X$.  Elements of $\Xh$ will be called
\emph{horofunctions}.
\end{definition}

Note that in the proper case, the space $\Xh$ contains $X$ as an open,
dense set.  In the non-proper case, although the map $\rho$ is
injective on $X$, the image $\rho(X)$ need not be open in $\Xh$, so
although $\Xh$ is compact, it is not a compactification of $X$ in the
standard sense.

\begin{lemma}
Let $G$ be a group of isometries of $X$. Then the action
of $G$ on $X$ extends to a continuous action by homeomorphisms on
$\Xh$, defined as 
\begin{equation} \label{E:action}
g.h(z) := h(g^{-1} z) - h(g^{-1} x_0)
\end{equation}
for each $g \in G$ and $h \in \Xh$.
\end{lemma}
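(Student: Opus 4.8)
The plan is to verify, in order, that (1) the formula \eqref{E:action} defines a genuine action of $G$, (2) each $g$ acts continuously (hence by homeomorphisms since $g^{-1}$ also acts), and (3) this action extends the given action on $X$ under the embedding $\rho$. For the last point I would compute directly: for $y \in X$ and $g \in G$, using that $g$ is an isometry,
\[
g.\rho_y(z) = \rho_y(g^{-1}z) - \rho_y(g^{-1}x_0) = d_X(g^{-1}z, y) - d_X(g^{-1}x_0, y) = d_X(z, gy) - d_X(x_0, gy) = \rho_{gy}(z),
\]
so $g.\rho_y = \rho_{gy}$, which is exactly the orbit map behaviour, and in particular shows that $g$ maps $\rho(X)$ to $\rho(X)$ bijectively.

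Next I would check that \eqref{E:action} really lands in $\textup{Lip}^1_{x_0}(X)$ and defines an action. Given $h \in \Xh$, the function $z \mapsto h(g^{-1}z) - h(g^{-1}x_0)$ is $1$-Lipschitz because $h$ is $1$-Lipschitz and $g^{-1}$ is an isometry, and it visibly vanishes at $z = x_0$; so $g.h \in \textup{Lip}^1_{x_0}(X)$. For the action axioms, $e.h = h$ is immediate, and a short computation gives $(g_1 g_2).h = g_1.(g_2.h)$: writing out both sides, the cross terms $h((g_1g_2)^{-1}x_0)$ match up once one expands $g_1.(g_2.h)(z) = (g_2.h)(g_1^{-1}z) - (g_2.h)(g_1^{-1}x_0)$ and substitutes the definition of $g_2.h$. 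This is the routine associativity bookkeeping and I would state it without grinding through every term.

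For continuity: the topology on $\textup{Lip}^1_{x_0}(X)$ is pointwise convergence, so it suffices to show that if $h_n \to h$ pointwise then $g.h_n \to g.h$ pointwise. But for each fixed $z$, $g.h_n(z) = h_n(g^{-1}z) - h_n(g^{-1}x_0) \to h(g^{-1}z) - h(g^{-1}x_0) = g.h(z)$, since pointwise convergence at the two points $g^{-1}z$ and $g^{-1}x_0$ is all that is needed. Thus $g.(\,\cdot\,)$ is continuous on the compact Hausdorff space $\Xh$; since $g^{-1}.(\,\cdot\,)$ is its inverse and also continuous, $g$ acts by homeomorphisms. Finally, because $g$ preserves $\rho(X)$ (by step (3)) and acts continuously, it preserves the closure $\Xh = \overline{\rho(X)}$, so the action is well defined on all of $\Xh$.

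I do not expect any serious obstacle here; the only mild subtlety is making sure the formula is self-consistent — i.e.\ that $g.h$ depends only on $h$ and not on a choice of representative, which is automatic since $h$ is an honest function, and that the definition is forced by requiring $\rho_{gy} = g.\rho_y$ together with continuity. So the ``hard part'' is really just organizing the three verifications cleanly; each is a one- or two-line estimate using only that $g^{-1}$ is an isometry and that $h$ is $1$-Lipschitz.
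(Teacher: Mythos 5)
Your proposal is correct and follows essentially the same route as the paper: compute $g.\rho_y = \rho_{gy}$ using that $g$ is an isometry, observe that the formula \eqref{E:action} is continuous for pointwise convergence, and conclude that $G$ acts by homeomorphisms on $\Xh$. The extra verifications you spell out (that $g.h$ lies in $\textup{Lip}^1_{x_0}(X)$, the action axioms, and that continuity plus invariance of $\rho(X)$ forces invariance of the closure $\Xh$) are exactly the details the paper leaves implicit, so there is no substantive difference in approach.
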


\begin{proof}
The action of $g \in G$ on $X$ translates into an action on $\rho(X)$, 
by defining $g.\rho_y := \rho_{gy}$ for each $g \in G$ and $y \in X$. Let us observe that 
\begin{align*}
g. \rho_{y}(z) & = \rho_{gy}(z) \\ 
& = d_X(g y, z) - d_X(gy, x_0) \\ 
&= d_X(y, g^{-1} z) - d_X(y, g^{-1} x_0) \\ 
& = \rho_y(g^{-1}z) - \rho_y(g^{-1} x_0), 
\end{align*}
thus we can define the action of $g \in G$ on each $h \in \Xh$ as in \eqref{E:action}.
It is immediate from the
definition that if $h_n \to h$ pointwise then $g.h_n \to g.h$, hence
$g$ acts continuously on $\Xh$, and so acts by homeomorphisms, as $G$
is a group.
\end{proof}

We shall write $\overline{U}^h$ for the closure of $U$ in $\Xh$. We
remark that as $B(x_0, r)$ need not be compact, a sequence of points
contained in a bounded set may have images under $\rho$ which converge
to the horofunction boundary, i.e.  $\overline{\rho(B(x_0, r))}^h$ may
contain points in the horofunction boundary $\partial X^h$.

\subsection{Horofunctions in $\delta$-hyperbolic spaces} \label{S:horohyp}

We now record some basic observations about the behaviour of
horofunctions. We start by describing the restriction of a
horofunction $\rho_y$ to a geodesic $\gamma$ in $X$. As we shall see, 
for any geodesic
$\gamma$, the restriction of $\rho_y$ to $\gamma$ has a coarsely well
defined local minimum a bounded distance away from the nearest point
projection $p$ of $y$ to $\gamma$.  Moreover, for any point $z \in \gamma$, the
value of $\rho_y(z)$ is equal to $\rho_y(p) + d_X(p, z)$, up to bounded
error depending only on $\delta$. We now make this precise.

\begin{proposition}
Let $\gamma$ be a geodesic in $X$, $y \in X$, and let $p$ be a nearest point
projection of $y$ to $\gamma$. Then the restriction of $\rho_y$ to
$\gamma$ is given by
\begin{equation} \label{eq:rhogeodesic} 
\rho_y(z) = \rho_y(p) + d_X(z, p) + O(\delta), 
\end{equation}
for all $z \in \gamma$.
\end{proposition}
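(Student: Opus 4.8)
The plan is to show that the path $[y,p]\cup[p,z]$ is a quasigeodesic and then compute the horofunction along it. First I would invoke the reverse triangle inequality (Proposition~\ref{prop:reverse triangle}): since $p$ is a nearest point projection of $y$ to the geodesic $\gamma$ and $z\in\gamma$, we have
\[ d_X(y,z) = d_X(y,p) + d_X(p,z) + O(\delta). \]
Now unwind the definition of $\rho_y$: by definition $\rho_y(z) = d_X(y,z) - d_X(y,x_0)$ and $\rho_y(p) = d_X(y,p) - d_X(y,x_0)$, so subtracting gives $\rho_y(z) - \rho_y(p) = d_X(y,z) - d_X(y,p)$. Substituting the reverse triangle inequality estimate into the right-hand side yields $\rho_y(z) - \rho_y(p) = d_X(p,z) + O(\delta)$, which is exactly \eqref{eq:rhogeodesic}.

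There is essentially no obstacle here: the statement is an immediate algebraic consequence of Proposition~\ref{prop:reverse triangle}, since the ambiguous basepoint term $d_X(y,x_0)$ cancels when we take the difference $\rho_y(z)-\rho_y(p)$. The only point to be careful about is that $p$ is only \emph{coarsely} well defined (any two nearest point projections are within $K_2 = O(\delta)$ of each other by the nearest point projection estimate), but since both $\rho_y$ and $d_X(\cdot,p)$ are $1$-Lipschitz, replacing $p$ by another nearest point projection changes both sides of \eqref{eq:rhogeodesic} by only $O(\delta)$, so the statement is insensitive to this choice and the $O(\delta)$ error absorbs it.

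If one wanted to avoid quoting Proposition~\ref{prop:reverse triangle} directly, an alternative is to argue geometrically: the concatenation $[y,p]\cup[p,z]$ is a $(1,O(\delta))$-quasigeodesic (this is shown inside the proof of Proposition~\ref{prop:reverse triangle}), and in a $\delta$-hyperbolic space the value of any $1$-Lipschitz function $\rho_y$ at the endpoint of such a quasigeodesic, measured from $p$, is controlled because $\rho_y$ decreases at unit speed along $[y,p]$ toward $p$ and then — since $p$ is the coarse minimum of $\rho_y$ on $\gamma$ — increases at unit speed along $[p,z]$. But the cleaner route is simply the cancellation argument above, so that is what I would write. The proof is two or three lines.
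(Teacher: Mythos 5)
Your proof is correct and is essentially identical to the paper's: both deduce \eqref{eq:rhogeodesic} directly from the reverse triangle inequality of Proposition~\ref{prop:reverse triangle} by subtracting off the basepoint term $d_X(y,x_0)$. The extra remarks about the coarse well-definedness of $p$ and the alternative quasigeodesic argument are fine but not needed.
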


\begin{proof}
This follows from the reverse triangle inequality, Proposition
\ref{prop:reverse triangle}, by adding $- d_X(y, x_0)$ to both sides.
\end{proof}

We now describe the restriction of an arbitrary horofunction $h$ to a
geodesic $\gamma$ in $X$.
An \emph{orientation} for a geodesic $\gamma$ is a
strict total order on the points of $\gamma$, induced by a
choice of unit speed parameterization (thus, each geodesic has exactly two orientations). 
We may then define the \emph{signed distance function} along $\gamma$ to be
\[ d^+_\gamma(x, y) = \left\{ 
\begin{array}{rl} 
d_X(x, y) & \text{ if } x \le y \\ 
-d_X(x, y) & \text{ if } x \ge y. 
\end{array} \right. \]

\begin{proposition} \label{prop:horofunction geodesic} %
Let $h$ be a horofunction in $\Xh$, and let $\gamma$ be a geodesic in
$X$. Then there is a point $p$ on $\gamma$
such that the restriction of $h$ to $\gamma$
is equal to exactly one of the following two functions, up to bounded
additive error:
%
\begin{enumerate}
\item[]  either 
\begin{equation} \label{eq;sign1}
h(x) = h(p) + d_X(p, x) + O(\delta), \qquad x \in \gamma; 
\end{equation}
\item[]
or 
\begin{equation}
h(x) = h(p) + d^+_\gamma(p, x) + O(\delta), \qquad x \in \gamma \label{eq:sign}
\end{equation}
for some choice of orientation on $\gamma$.
\end{enumerate}
\end{proposition}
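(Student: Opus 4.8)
The plan is to realize $h$ as a pointwise limit $h=\lim_n\rho_{y_n}$ of horofunctions of points $y_n\in X$ (possible since $h$ lies in the closure of $\rho(X)$), to parametrize $\gamma$ by arclength as an isometric embedding of an interval $I\subseteq\R$, and to pull everything back to $I$. For each $n$ I would choose a point $p_n=\gamma(t_n)$ on $\gamma$ realizing $d_X(y_n,\gamma)$ up to an additive $\delta$ (an exact nearest-point projection need not exist as $X$ is not proper, but such a coarse projection suffices, and the estimate \eqref{eq:rhogeodesic} of the preceding proposition continues to hold for it after enlarging the error term). Setting $c_n:=\rho_{y_n}(p_n)$ and $f_n(s):=\rho_{y_n}(\gamma(s))$, equation \eqref{eq:rhogeodesic} then reads $f_n(s)=c_n+|s-t_n|+O(\delta)$ for every $s\in I$, while by construction $f_n(s)\to h(\gamma(s))$ pointwise.

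Next I would pass to a subsequence along which $t_n$ converges in the extended reals $[-\infty,+\infty]$ and split into two cases. If the limit $t_*$ lies in $I$: fixing one value of $s$ and using $|s-t_n|\to|s-t_*|$ shows that $c_n$ stays within $O(\delta)$ of a constant $c_*$; letting $n\to\infty$ in $f_n(s')=c_n+|s'-t_n|+O(\delta)$ for arbitrary $s'\in I$ then gives $h(\gamma(s'))=c_*+|s'-t_*|+O(\delta)$, and since $d_X(\gamma(s'),p)=|s'-t_*|$ and $h(p)=c_*+O(\delta)$ for $p:=\gamma(t_*)$, this is exactly the first alternative \eqref{eq;sign1}. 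If instead $t_*=+\infty$ (the case $-\infty$ being symmetric), then for each fixed $s$ eventually $|s-t_n|=t_n-s$, so $f_n(s)=(c_n+t_n)-s+O(\delta)$; hence $c_n+t_n$ stays within $O(\delta)$ of a constant $b_*$, and in the limit $h(\gamma(s))=b_*-s+O(\delta)$ for all $s\in I$. Choosing any $p=\gamma(s_0)$ gives $h(\gamma(s))=h(p)-(s-s_0)+O(\delta)$, which is the second alternative \eqref{eq:sign} for one of the two orientations of $\gamma$.

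Finally I would record that on a segment $t_n$ is automatically bounded, so only the first alternative occurs; on a ray only the first alternative or an escape to the single infinite end can occur; and on a bi-infinite geodesic the two alternatives are coarsely incompatible (one tends to $+\infty$ at both ends, the other is monotone and unbounded below), which accounts for the "exactly one" in the statement. I do not expect a serious obstacle: the only points needing care are that the $O(\delta)$ errors coming from \eqref{eq:rhogeodesic} need not converge, merely remain bounded — harmless, since the conclusion is itself only claimed up to $O(\delta)$ — and the use of coarse, rather than exact, nearest-point projections in the non-proper setting.
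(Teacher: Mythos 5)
Your argument is correct and follows essentially the same route as the paper's proof: approximate $h$ by point horofunctions $\rho_{y_n}$, project the $y_n$ to $\gamma$, and split into the cases where the projections stay in a bounded subinterval (giving \eqref{eq;sign1}) or escape to an end of $\gamma$ (giving \eqref{eq:sign}), using the coarse formula \eqref{eq:rhogeodesic} and passing to the pointwise limit. The only cosmetic difference is that you work in arclength coordinates and absorb the divergent terms into $c_n+t_n$, whereas the paper takes $p$ to be the projection of the basepoint $x_0$ and redoes the reverse-triangle-inequality estimate directly in the escaping case.
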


So for example, for geodesic rays starting at the basepoint $x_0$, the
graphs of these functions are equal to one of the two graphs shown
below, up to an error of $O(\delta)$.

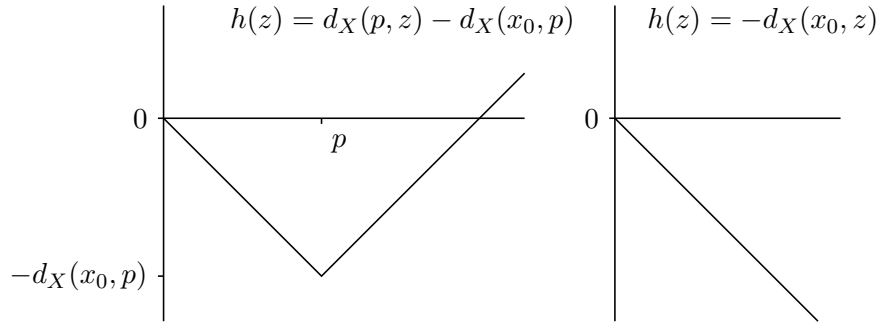
\begin{figure}[H]
 \begin{center}
\begin{tikzpicture}[scale=0.3]

\draw[semithick] (12,-13) -- (12,-13.25) node [below right] {$p$};
\draw[semithick] (4.75,-13) node [left] {$0$} -- (21,-13);
\draw[semithick] (5,-13) -- (12,-20) -- (21,-11);
\draw[semithick] (25,-8) -- (25,-22);
\draw[semithick] (24.75,-13) node [left] {$0$} -- (35,-13);
\draw[semithick] (5,-8) -- (5,-22);
\draw[semithick] (5,-20) -- (4.75,-20) node [left] {$-d_X(x_0, p)$};
\draw[semithick] (25,-13) -- (34,-22);
\path (7.5,-9) node[anchor=base west] {$h(z) = d_X(p, z) - d_X(x_0, p)$};
\path (12.5,-12.5);
\path (26,-9) node[anchor=base west] {$h(z) = - d_X(x_0, z)$};

\end{tikzpicture}
\end{center} 
\caption{The behaviour of $h(z)$ along a geodesic ray starting at
  $x_0$.}
\label{pic:graph}
\end{figure}

\begin{proof}
Let $h$ be a horofunction in $\Xh$, let $\seq{ \rho_{y_n} }$ be
sequence of horofunctions which converge to $h$, and let $p_n$ be the
nearest point projection of $y_n$ to $\gamma$.

First, suppose that there is a subsequence of the $\seq{\rho_{y_n} }$
for which the projections $p_n$ to $\gamma$ are bounded,
i.e. contained in a subinterval $I$ of $\gamma$ of finite length. We
may pass to a further subsequence, which by abuse of notation we shall
also call $\seq{ \rho_{y_n} }$, such that the projections $p_n$
converge to a point $p \in \gamma$, and in fact are all within
distance $1$ of $p$. Therefore, for this subsequence, $\rho_{y_n}(x) =
\rho_{y_n}(p) + d_X(p, x) + O(\delta)$, by \eqref{eq:rhogeodesic}. As
$\rho_{y_n} \to
h$ pointwise, this implies that $h(x) = h(p) + d_X(p, x) + O(\delta)$ for each $x \in \gamma$,
as required.

Now consider the case in which the nearest point projections $p_n$
eventually exit every compact subinterval of $\gamma$. In this case it
will be convenient to choose $p$ to be a nearest point on $\gamma$ to
the basepoint $x_0$, and we may pass to a subsequence such that $p_n >
p$ for all $n$, for some choice of orientation on $\gamma$.

For any $x \in \gamma$ all but finitely many $p_n$ satisfy $p_n >
x$. As $p_n$ is a nearest point projection of $y_n$ to $\gamma$, and
$p$ is a nearest point projection of $x_0$ to $\gamma$, we may use the
reverse triangle inequality to rewrite $\rho_{y_n}(x)$ in terms of
$d_X(x_0, p)$, $d_X(y_n, p_n)$, and distances between points on the
geodesic $\gamma$. There are two cases, depending on whether $x > p$,
or $x \le p$, illustrated below in Figure \ref{pic:horofunction
  geodesic}.

The sign on each line segment in Figure \ref{pic:horofunction
  geodesic} indicates the sign of the corresponding line segment in
the approximation for $h_{y_n}(x)$. This shows that 
\[ \rho_{y_n}(x) = 
\left\{ 
\begin{array}{ll}
-d_X(x_0, p) - d_X(p, x) + O(\delta), \text{ if } x > p, \\
-d_X(x_0, p) + d_X(p, x) + O(\delta), \text{ if } x\le p.
\end{array}
\right.   \]
As $\seq{ \rho_{y_n} }$ converges to $h$, this implies the same coarse
equalities for $h(x)$. As $h(p) = -d_X(x_0, p) + O(\delta)$, and using
the definition of signed distance along $\gamma$, this gives
\eqref{eq:sign} above.
\end{proof}

\begin{figure}[H] 
\begin{center}
\begin{tikzpicture}[scale=0.4]

\tikzstyle{point}=[circle, draw, fill=black, inner sep=0pt, minimum width=2.5pt]

\draw (0,0) -- (10,0) node [below] {$\gamma$};
\draw (2,0) -- node [midway, right] {$-$} (2,4);
\draw (8,0) -- node [midway, left] {$+$} 
                            node [midway, right] {$-$} (8,-4);

\draw (2,0) -- node [midway, above] {$-$} (5,0);
\draw (5,0) -- node [midway, above] {$-$} 
               node [midway, below] {$+$} (8,0);

\filldraw (2,4) node [point, label=right:$x_0$] {};
\filldraw (2,0) node [point, label=below:$p$] {};
\filldraw (8,0) node [point, label=above:$p_n$] {};
\filldraw (8,-4) node [point, label=right:$y_n$] {};
\filldraw (5,0) node [point, label=below:$x$] {};

\begin{scope}[xshift=+18cm]

\draw (-4,0) -- (10,0) node [below] {$\gamma$};
\draw (2,0) -- (2,4);
\draw (8,0) -- (8,-4);

\filldraw[black] (2,4) node [point, label=right:$x_0$] {};
\filldraw[black] (2,0) node [point, label=below:$p$] {};
\filldraw[black] (8,0) node [point, label=above:$p_n$] {};
\filldraw[black] (8,-4) node [point, label=right:$y_n$] {};
\filldraw[black] (-2,0) node [point, label=below:$x$] {};

\draw (2,0) -- node [midway, right] {$-$} (2, 4);
\draw (-2,0) -- node [midway, below] {$+$} (2,0)
             -- node [midway, below] {$+$} 
                node [midway, above] {$-$} (8,0)
             -- node [midway, left] {$+$} 
                node [midway, right] {$-$} (8,-4);

\end{scope}

\end{tikzpicture}
\end{center} 
\caption{The horofunction $\rho_{y_n}$ restricted to a geodesic $\gamma$.}
 \label{pic:horofunction geodesic}
\end{figure}
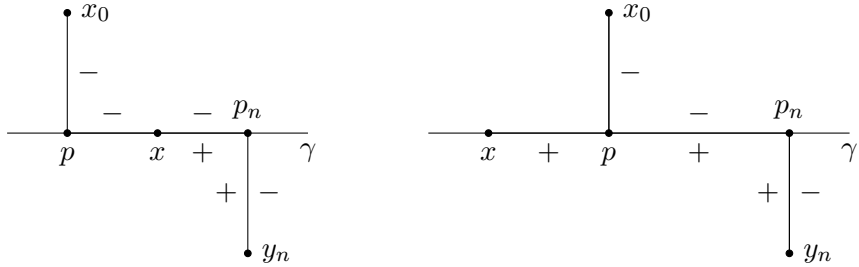

We say a function $f(x)$ has no coarse local maxima if $f(x) = g(x) +
O(\delta)$, where $g(x)$ has no local maxima. Similarly we say $f(x)$
has at most one coarse local minima if $f(x) = g(x) + O(\delta)$,
where $g(x)$ has at most one local minima. So we have shown that any
horofunction $h$ restricted to a geodesic $\gamma$ has no coarse local
maxima on $\gamma$, and at most one coarse local minimum on $\gamma$.

\subsection{A partition of the horofunction boundary}

For any horofunction $h \in \Xh$ we may consider
\[ \inf(h) = \inf_{y \in X} h(y), \] 
which takes values in $[ -\infty, 0]$. We may partition $\Xh$ into two
sets depending on whether or not $\inf(h) = - \infty$. 

\begin{definition}
We shall denote $\Xh_F$ the set of \emph{finite horofunctions} 
\[ \Xh_F := \{ h \in \Xh : \inf(h) >  - \infty \},  \]
and $\Xh_\infty$ the set of \emph{infinite horofunctions}
\[ \Xh_\infty := \{ h \in \Xh : \inf(h) = - \infty \}. \]
\end{definition}

Clearly, both $\Xh_F$ and $\Xh_\infty$ are invariant for the action of
$G$.  Note moreover that if a horofunction $h$ is contained in
$\rho(X)$, i.e $h = \rho_y$ for some $y \in X$, then
$$\inf(\rho_y) = - d_X(x_0, y) > - \infty, $$
and the infimum is achieved at the unique point $y \in X$, hence
$\rho(X)$ is contained in the set $\xq$ of finite horofunctions.  More
generally, by the same proof, if $B$ is a bounded subset of $X$, then
$$\overline{\rho(B)}^h \subset \xq.$$

Note however that the subset $\xr \subset \Xh$ need not be compact, so
there may be sequences of elements of $\xr$ which do not have
subsequences which converge in $\xr$.  If a sequence of horofunctions
$\seq{h_n}$ converges to $h$, this does not in general imply that
$\inf h_n$ converges to $\inf h$. For example, in Example
\ref{ex:wedge}, consider the sequence $\seq{ \rho_{x_n} }$, where
$x_n$ is the point distance $n$ from $0$ in the $n$-th ray. Then $\inf
\rho_{x_n} = -n$, but $\seq{ \rho_{x_n} }$ converges to $\rho_{x_0}$,
for which $\inf \rho_{x_0} = 0$. In fact, in this example there are
sequences in $\xr$ which converge to horofunctions in $\xq$. If we set
$h_n$ to be equal to the limit of $( \rho_{x_k} )_{k \in \N}$, where
the $x_k$ are points distance $k$ from $0$ along the $n$-th ray, then
$h_n \in \xr$, but $\seq{ h_n }$ converges to $\rho_{x_0}$.

\begin{lemma} \label{L:horoineq}
For each basepoint $x_0 \in X$, each horofunction $h \in \Xh$ and each pair of points $x, y \in X$
the following inequality holds:
\begin{equation} \label{E:hgp}
\min\{ - h(x), - h(y) \} \leqslant \gp{x_0}{x}{y} + O(\delta).
\end{equation}
\end{lemma}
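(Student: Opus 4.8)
The plan is to reduce the inequality to a statement about horofunctions restricted to a single geodesic, where we have the explicit description from Proposition~\ref{prop:horofunction geodesic}. First I would fix the basepoint $x_0$ and a geodesic $\gamma = [x,y]$ from $x$ to $y$, and let $m$ be a nearest point projection of $x_0$ to $\gamma$; by \eqref{eq:gp estimate} we have $d_X(x_0, m) = \gp{x_0}{x}{y} + O(\delta)$, so it suffices to show $\min\{-h(x), -h(y)\} \le d_X(x_0, m) + O(\delta)$. Now apply Proposition~\ref{prop:horofunction geodesic} to $h$ and $\gamma$: there is a point $p \in \gamma$ such that either $h$ restricted to $\gamma$ is coarsely $h(p) + d_X(p, \cdot)$, or it is coarsely $h(p) + d^+_\gamma(p, \cdot)$ for some orientation. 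In either case $h$ has no coarse local maximum on $\gamma$, so its coarse maximum on the segment $[x,y]$ is attained at one of the two endpoints; say (after relabelling) it is attained coarsely at $x$, i.e. $h(x) \ge h(y) + O(\delta)$, hence $\min\{-h(x),-h(y)\} = -h(x) + O(\delta)$.

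The remaining point is to bound $-h(x)$ by $d_X(x_0,m) + O(\delta)$. Evaluate the coarse formula for $h$ along $\gamma$ at $x_0$'s projection $m$ and at $x$. Since $h(x_0) = 0$ and $h$ is $1$-Lipschitz, $-h(m) \le d_X(x_0, m)$, so $h(m) \ge -d_X(x_0,m)$. On the other hand, since $m$ lies between $x$ and $p$ on $\gamma$ or coincides with one of the coarse-minimum-side behaviours, the formula \eqref{eq;sign1} or \eqref{eq:sign} gives $h(x) = h(m) + d_X(m,x) + O(\delta)$ whenever $x$ and $p$ lie on the same side of $m$, i.e. whenever moving from $m$ toward $x$ the horofunction is coarsely increasing. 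But $x$ is the coarse maximum endpoint, so this is exactly the side along which $h$ increases; thus $h(x) \ge h(m) + d_X(m,x) + O(\delta) \ge -d_X(x_0,m) + O(\delta)$, which gives $-h(x) \le d_X(x_0,m) + O(\delta) = \gp{x_0}{x}{y} + O(\delta)$, as desired. (If instead $x$ sits on the decreasing side, then $h(x) \ge h(p) + O(\delta)$ and $h(p) \ge h(m) - d_X(m,p) + O(\delta)$; but then $y$ would be on the increasing side, contradicting the choice of $x$ as the coarse maximum endpoint — so only one case survives.)

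The main obstacle I expect is bookkeeping the orientation/side cases cleanly: Proposition~\ref{prop:horofunction geodesic} gives two shapes for $h|_\gamma$ (a "V" with coarse minimum at $p$, or a monotone-up-to-sign profile with a bend at $p$), and one has to check in each shape that the endpoint realizing the coarse max of $h$ lies on a side where the distance-to-$x_0$-projection estimate can be chained through $m$. A slick way to avoid splitting into subcases is to argue directly: $-h$ has no coarse local minimum on $[x,y]$ (the negative of "no coarse local maximum"), so $-h$ is coarsely unimodal with a coarse maximum somewhere; its restriction to $[x,x_0$-projection side$]$... — but in fact the cleanest formulation is simply that for \emph{any} point $z$ on $\gamma$, $-h(z) \le \max\{-h(x), -h(y)\} + O(\delta)$ is false in general, so one genuinely needs the projection point $m$ and the observation that $d_X(x_0, z) \ge d_X(x_0, m)$ forces the Lipschitz bound $-h(m) \le d_X(x_0,m)$ to be the binding constraint. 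I would write the argument in the two-case form above, keeping the additive $O(\delta)$ errors explicit, since that is the most transparent route and all the needed estimates are already assembled in Section~\ref{section:horofunction}.
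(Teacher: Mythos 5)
Your argument is correct in substance, but it takes a genuinely different route from the paper. The paper's proof is much shorter: it first verifies the inequality for the point horofunctions $\rho_z$, since the triangle inequality gives $\gp{x_0}{x}{z} \ge d_X(x_0,z) - d_X(x,z) = -\rho_z(x)$, then applies the hyperbolicity inequality \eqref{E:gpineq} to obtain $\gp{x_0}{x}{y} \ge \min\{-\rho_z(x), -\rho_z(y)\} - \delta$, and finally passes to the pointwise limit, since every $h \in \Xh$ is a pointwise limit of functions $\rho_z$; no geodesic $[x,y]$, no projection point, and no structure theorem for $h|_\gamma$ is needed. Your route instead combines Proposition \ref{prop:horofunction geodesic} with the $1$-Lipschitz normalization $h(x_0)=0$: the coarse maximum of $h$ on $[x,y]$ is attained at an endpoint, while at the projection $m$ of $x_0$ one has $h(m) \ge -d_X(x_0,m) = -\gp{x_0}{x}{y} + O(\delta)$, whence $\min\{-h(x),-h(y)\} = -\max\{h(x),h(y)\} \le -h(m) + O(\delta) \le \gp{x_0}{x}{y} + O(\delta)$. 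Written this way, the side/orientation bookkeeping in your middle paragraph is unnecessary, and in fact the intermediate identity $h(x) = h(m) + d_X(m,x) + O(\delta)$ you assert there is false when the coarse minimum $p$ lies strictly between $m$ and $x$ (in case \eqref{eq;sign1} one gets $h(x) - h(m) = d_X(p,x) - d_X(p,m)$, which can be much smaller than $d_X(m,x)$); fortunately you only need $h(x) \ge h(m) + O(\delta)$, which already follows from your observation that the coarse maximum on the segment sits at an endpoint, so the proof survives with that step deleted. The trade-off: your argument is geometric and stays within the toolkit of Section \ref{section:horofunction}, but it leans on Proposition \ref{prop:horofunction geodesic} and on (almost-)nearest point projections in a possibly non-proper space, whereas the paper's limit argument is essentially a one-liner from the definitions of $\rho_z$ and of the Gromov product.
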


\begin{proof}
Let $z \in X$. Then one has, by the triangle inequality 
\begin{align*} \gp{x_0}{x}{z} & = \frac{d_X(x_0, x) + d_X(x_0, z) -
  d_X(x, z)}{2},  \\
\intertext{which implies}
\gp{x_0}{x}{z} & \geqslant d_X(x_0, z) - d_X(x, z),
\intertext{and by definition, the right hand side is equal to
  $-\rho_z(x)$, which gives}
\gp{x_0}{x}{z} & \ge -\rho_z(x).
\end{align*}
Now, by $\delta$-hyperbolicity, combined with the previous estimate, one has 
\begin{align*}
\gp{x_0}{x}{y} & \geqslant \min\{ \gp{x_0}{x}{z}, \gp{x_0}{y}{z} \} -
\delta, \\
\intertext{which we may rewrite as}
\gp{x_0}{x}{y} & \geqslant \min\{ -\rho_z(x), -\rho_z(y) \} - \delta.
\end{align*}
Since every horofunction is the pointwise limit of functions of type $\rho_z$, the claim follows.
\end{proof}

\begin{definition}
We define a sequence $\seq{x_n} \subseteq X$ to be \emph{minimizing} for a horofunction $h$ if $h(x_n) \to -\infty$ as $n \to \infty$.
\end{definition}

We shall now prove that every minimizing sequence is a Gromov sequence, hence it has a limit in the Gromov boundary.

\begin{lemma} \label{L:converges}
Let $h \in \xr$ an infinite horofunction, and $(y_n)$ a sequence of points in $X$ such that 
$h(y_n) \to -\infty$. Then the sequence $(y_n)$ converges to a point in the Gromov boundary 
of $X$. Moreover, two minimizing sequences for the same horofunction converge to the same point in 
the Gromov boundary. 
\end{lemma}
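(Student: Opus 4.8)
The plan is to use the key inequality from Lemma \ref{L:horoineq} to show that a minimizing sequence has Gromov products tending to infinity. Specifically, suppose $h(y_n) \to -\infty$. Applying \eqref{E:hgp} to the pair $y_m, y_n$ gives
\[ \min\{-h(y_m), -h(y_n)\} \le \gp{x_0}{y_m}{y_n} + O(\delta). \]
Since $-h(y_n) \to +\infty$, the left-hand side tends to $+\infty$ as $\min\{m,n\} \to \infty$, hence so does the Gromov product $\gp{x_0}{y_m}{y_n}$. This is exactly the statement that $(y_n)$ is a Gromov sequence, so by definition of the Gromov boundary it determines a point, and (by completeness of $\partial X$, or directly from the definition of convergence in $X \cup \partial X$) the sequence $(y_n)$ converges to a point $\xi \in \partial X$.

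For the second assertion, let $(y_n)$ and $(y_n')$ be two minimizing sequences for the same horofunction $h$, so $h(y_n) \to -\infty$ and $h(y_n') \to -\infty$. One could simply interleave them into a single sequence $y_1, y_1', y_2, y_2', \dots$; since $h$ tends to $-\infty$ along both subsequences, it tends to $-\infty$ along the interleaved sequence, which is therefore a Gromov sequence by the first part. A sequence all of whose subsequences converge (in $X \cup \partial X$) to the same limit has that limit; conversely, any Gromov sequence converges, and the interleaved sequence converging forces $(y_n)$ and $(y_n')$ to have the same limit. Alternatively, and more directly, apply \eqref{E:hgp} to the mixed pair $y_n, y_n'$: we get $\min\{-h(y_n), -h(y_n')\} \le \gp{x_0}{y_n}{y_n'} + O(\delta)$, and the left side tends to $+\infty$, so $\gp{x_0}{y_n}{y_n'} \to \infty$, which is precisely the condition for the two Gromov sequences to be equivalent, i.e. to define the same boundary point.

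I do not expect any serious obstacle here: the content is entirely packaged in Lemma \ref{L:horoineq}, and the rest is unwinding the definitions of Gromov sequence, equivalence of Gromov sequences, and convergence in $X \cup \partial X$. The one point that requires a small amount of care is the passage from "$(y_n)$ is a Gromov sequence" to "$(y_n)$ converges to a point of $\partial X$ in the topology of $X \cup \partial X$" — one must invoke that this topology is set up precisely so that Gromov sequences converge to their equivalence class (as recalled in Section \ref{section:background}), and that two equivalent Gromov sequences converge to the same point. This is standard, so the proof is short.
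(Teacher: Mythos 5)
Your proof is correct and follows the paper's own argument essentially verbatim: both the first claim and the uniqueness claim are obtained by applying the inequality of Lemma \ref{L:horoineq} to the pairs $y_m, y_n$ (respectively the mixed pairs $y_n, y_n'$) and letting the right-hand side be forced to infinity, so that one gets a Gromov sequence, respectively equivalent Gromov sequences. The interleaving variant you sketch is an unnecessary detour, but your ``more directly'' argument is exactly the paper's proof, and the remark about the topology on $X \cup \partial X$ being designed so that Gromov sequences converge to their equivalence class is the same standard fact the paper implicitly invokes.
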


\begin{proof}
By Lemma \ref{L:horoineq} one has 
$$ \gp{x_0}{y_n}{y_m} \geqslant \min\{-h(y_n), -h(y_m)\} - O(\delta) \to \infty$$
as $\min\{m,n\} \to \infty$, proving the first claim.

To prove uniqueness of the limit, suppose that there are two sequences $(x_n)$ and $(y_n)$ such that $h(x_n) \to -\infty$ and $h(y_n) \to -\infty$.
Then, by Lemma \ref{L:horoineq}, the Gromov product $\gp{x_0}{x_n}{y_n} \to \infty$, hence by definition the sequences $(x_n)$ and $(y_n)$ 
converge to the same point in the Gromov boundary $\Xb$.
\end{proof}

Note finally that if $h \in \xr$, then there is actually a
quasigeodesic sequence $(y_n)$ in $X$ such that $h(y_n) \to -
\infty$. Furthermore, we can recover the Gromov boundary from the
horofunction boundary as follows. Define an equivalence relation on
$\Xh$ by $h_1 \sim h_2$ if $\sup_{x \in X} \norm{h_1(x) - h_2(x)}$ is
finite. This collapses $\xq$ to a single point, and the equivalence
classes in $\xr$ are precisely the point pre-images of the local
minimum map $\phi \colon \xr \to \Xb$, so the Gromov boundary is
homeomorphic to $\xr /_\sim$. However, we will not use this result so
we omit the proof.

\subsection{The local minimum map}

We now define a map $\phi \colon \Xh \to \Xg$, which may be thought of
as the ``local minimum'' map, which sends a horofunction $h$ to the
location at which it attains its minimum. If the horofunction $h$ does
not attain a minimum in $X$, it turns out that it makes sense to
think of the minimum value as lying in the Gromov boundary. We now make
this precise.

\begin{definition} 
The \emph{local minimum} map $\phi \colon \Xh \to \Xg$ is defined as follows.

\begin{itemize}

\item If $h \in \xq$, i.e. $\inf(h) > - \infty$, then define 
$$\phi(h) := \{ x \in X \ : \ h(x) \leqslant \inf h + 1 \}$$
the set of points of $X$ where the value of $h$ is close to its infimum; 

\item if $h \in \xr$, i.e. $\inf(h) = - \infty$, then choose a sequence
  $\seq{y_n}$ with $h(y_n) \to - \infty$, and set 
  $$\phi(h) := \lim_{n \to \infty} y_n$$ 
 to be the limit point of $(y_n)$ in the Gromov boundary.

\end{itemize}

\end{definition}

\begin{lemma} The local minimum map $\phi : \Xh \to X \cup \Xb$ is well-defined and $G$-equivariant.
\end{lemma}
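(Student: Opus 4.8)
The plan is to verify two things: that $\phi$ does not depend on the choices made in its definition (so that it is genuinely a function), and that it intertwines the $G$-action on $\Xh$ with the $G$-action on $\Xg$. For well-definedness, the only case requiring argument is $h \in \xr$: one must show that if $(y_n)$ and $(y_n')$ are two sequences with $h(y_n) \to -\infty$ and $h(y_n') \to -\infty$, then they are Gromov sequences converging to the same boundary point. But this is exactly Lemma \ref{L:converges}, which already established both that a minimizing sequence is a Gromov sequence (hence has a well-defined limit in $\Xb$) and that any two minimizing sequences for the same $h$ have the same limit. So for $h \in \xr$ well-definedness is immediate. For $h \in \xq$ there is nothing to check, since $\phi(h)$ is defined as an explicit subset of $X$ with no auxiliary choices; one might remark that this set is nonempty (it contains any near-minimizer) and bounded (by the $1$-Lipschitz property, $h(x) \ge -d_X(x_0,x)$ forces $d_X(x_0,x) \le -\inf h + 1$ on $\phi(h)$), so it is a legitimate point of the relevant ``coarse'' target, but strictly speaking $\phi(h) \in \Xg$ only up to adopting the convention that a bounded subset of $X$ represents a point; I would state this convention explicitly.

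For $G$-equivariance, I would treat the two pieces $\xq$ and $\xr$ separately, using that both are $G$-invariant (noted just before Lemma \ref{L:horoineq}). Recall the action is $g.h(z) = h(g^{-1}z) - h(g^{-1}x_0)$, i.e. $g.h = \rho_{g x_0}$-shifted composition with $g^{-1}$; the key point is that $g.h$ and $h \circ g^{-1}$ differ only by the additive constant $-h(g^{-1}x_0)$, which is finite. Hence $\inf(g.h) = \inf(h) - h(g^{-1}x_0)$, so $g.h \in \xq \iff h \in \xq$, and on the finite part, $x$ nearly minimizes $g.h$ iff $g^{-1}x$ nearly minimizes $h$ (up to the same constant, so up to enlarging the ``$+1$'' to a bounded constant, which is harmless at the level of coarse points of $X$); thus $\phi(g.h) = g\,\phi(h)$ as coarse subsets of $X$. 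On the infinite part, if $(y_n)$ is minimizing for $h$ then $(g y_n)$ satisfies $g.h(g y_n) = h(y_n) - h(g^{-1}x_0) \to -\infty$, so $(g y_n)$ is minimizing for $g.h$; since $g$ acts on $X$ by isometries it extends to a homeomorphism of $\Xg$ (preserving the Gromov boundary), so $\lim g y_n = g \lim y_n$ in $\Xb$, giving $\phi(g.h) = g\,\phi(h)$.

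The main (and only genuine) obstacle is purely bookkeeping: the target $\Xg$ is really being used in a coarse sense on the $\xq$ side, where $\phi(h)$ is a bounded-diameter subset of $X$ rather than a single point, and one has to be careful that ``up to bounded error'' is consistent with equivariance — but since $G$ acts by isometries, it preserves diameters of these sets and the additive shift $-h(g^{-1}x_0)$ is a single real number independent of the evaluation point, so no real difficulty arises. I expect the whole argument to be short, with Lemma \ref{L:converges} doing the substantive work for the infinite horofunctions and a one-line constant-shift computation handling the finite ones.
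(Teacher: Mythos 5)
Your proof is correct and follows essentially the same route as the paper: well-definedness on $\xr$ via Lemma \ref{L:converges}, and equivariance via the additive shift $g.h(gz) = h(z) - h(g^{-1}x_0)$ on both pieces (note the shift is an exact constant, so on $\xq$ one gets $\phi(g.h) = g\,\phi(h)$ as an exact set equality, with no need to enlarge the constant $+1$). One caveat on your aside: the $1$-Lipschitz bound $h(x) \ge -d_X(x_0,x)$ only bounds $d_X(x_0,x)$ from \emph{below} on $\phi(h)$, so boundedness of $\phi(h)$ does not follow from it; in the paper that boundedness is Lemma \ref{lemma:bdd diameter}, proved using hyperbolicity, and it is not needed for the present statement.
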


\begin{proof}
By Lemma \ref{L:converges}, the map is well-defined on $\xr$: indeed, every minimizing sequence
for $h$ converges in the Gromov boundary, and any two minimizing sequences yield the same limit.

To prove equivariance, let us first pick $h \in \xq$, and
$x \in \phi(h)$. Then for each $y \in X$ we have $h(x) \leqslant h(y) + 1$,
thus 
$$g.h(g x) = h(x) - h(g^{-1}x_0) \leqslant h(y) - h(g^{-1}x_0) + 1 = g.h(gy) + 1$$
for each $y \in X$, hence the value of $g.h$ at $gx$ is close to its infimum hence $gx \in \phi(g.h)$.
If instead $h \in \xr$, then let $(y_n)$ a minimizing sequence for $h$. Then by definition of the action one gets
$$g.h(g y_n) = h(y_n) - h(g^{-1}x_0) \to -\infty$$
hence $(g y_n)$ is a minimizing sequence for $g.h$, so $\phi(g.h) = g.\phi(h)$ as required.
\end{proof}

\begin{lemma} \label{lemma:bdd diameter}
There exists $K$, which depends only on $\delta$, such that for each finite horofunction $h \in \xq$ we have 
$$\textup{diam }\phi(h) \le K.$$
\end{lemma}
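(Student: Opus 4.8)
The plan is to show that any two points $x, y \in \phi(h)$ for a finite horofunction $h$ must be within bounded distance of each other, with the bound depending only on $\delta$. First I would observe that, by definition of $\phi(h)$ for $h \in \xq$, both $h(x)$ and $h(y)$ lie within $1$ of $\inf h$, so in particular $h(x) = \inf h + O(1)$ and $h(y) = \inf h + O(1)$. Since $h$ is $1$-Lipschitz, and more importantly since $h$ is a horofunction, I can invoke Proposition \ref{prop:horofunction geodesic}: restricting $h$ to a geodesic $\gamma$ from $x$ to $y$, the function $h|_\gamma$ is, up to $O(\delta)$, either monotone-with-a-single-minimum of the form $h(p) + d_X(p, \cdot)$, or of the signed-distance form $h(p) + d^+_\gamma(p, \cdot)$ for some $p \in \gamma$.

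In the first case, $h|_\gamma$ has a coarse local minimum at $p$ and increases (coarsely) as one moves away from $p$ in either direction along $\gamma$; since $x$ and $y$ are the endpoints of $\gamma$, we get $h(x) \ge h(p) + d_X(p, x) - O(\delta)$ and $h(y) \ge h(p) + d_X(p, y) - O(\delta)$, while $h(p) \ge \inf h$. Combining with $h(x), h(y) \le \inf h + 1$ forces $d_X(p, x) = O(\delta)$ and $d_X(p, y) = O(\delta)$, hence $d_X(x, y) = O(\delta)$ by the triangle inequality. In the second (signed-distance) case, $h|_\gamma$ is coarsely monotone along $\gamma$, so WLOG $h(x) \le h(y) - d_X(x, y) + O(\delta)$; but $h$ restricted to $\gamma$ then takes values as low as $h(x)$, and moving past $x$ off the geodesic one could in principle push lower — more carefully, the signed-distance form means $h$ decreases without coarse lower bound in one direction, which I should reconcile with $h \in \xq$. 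The clean way: if $h|_\gamma$ has the signed-distance form, then $\inf_{z \in \gamma} h(z) \le h(x) \le \inf h + 1$, and also $h(y) \ge h(x) + d_X(x,y) - O(\delta)$; since $h(y) \le \inf h + 1 \le h(x) + 1 + O(\delta)$, we again conclude $d_X(x,y) = O(\delta)$.

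The main obstacle I anticipate is handling the signed-distance case cleanly: a priori Proposition \ref{prop:horofunction geodesic} allows $h|_\gamma$ to look like a function with no interior minimum on $\gamma$, and one has to be careful that the infimum of $h$ over all of $X$ (which is finite, since $h \in \xq$) still controls the values at the two endpoints $x$ and $y$ of $\gamma$. But since $x, y \in \phi(h)$ both have $h$-value within $1$ of the \emph{global} infimum, and the two standard forms each force the larger endpoint value to exceed the smaller by roughly $d_X(x,y) - O(\delta)$, the difference $|h(x) - h(y)| \le 2$ pins down $d_X(x,y) \le 2 + O(\delta)$. So one may take $K = O(\delta)$; making the additive constants explicit is routine. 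An alternative, essentially equivalent, route is to use Lemma \ref{L:horoineq}: for $x, y \in \phi(h)$ we have $\min\{-h(x), -h(y)\} \le (x \cdot y)_{x_0} + O(\delta)$, which bounds the Gromov product from below only in terms of $-\inf h$, not in terms of $d_X(x,y)$ directly, so this alone is not quite enough and one still needs the geodesic description; I would therefore present the argument via Proposition \ref{prop:horofunction geodesic} as the main line.
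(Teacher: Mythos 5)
Your proposal is correct and follows essentially the same route as the paper: both restrict $h$ to a geodesic from $x$ to $y$ and invoke Proposition \ref{prop:horofunction geodesic}, the paper compressing the conclusion into ``$h$ has at most one coarse local minimum on $[x,y]$ while $x$ and $y$ are both coarse minima,'' which is exactly the two-case analysis you carry out explicitly. (Only your summary sentence that the larger endpoint value exceeds the smaller by roughly $d_X(x,y)$ is inaccurate in the first case, e.g.\ when $p$ is the midpoint, but your detailed argument via $h(p)\ge\inf h$ already handles that case correctly.)
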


\begin{proof}
Let $x, y \in \phi(h)$, for some $h \in \xq$, and consider the restriction of $h$ 
along a geodesic segment from $x$ to $y$.
By Proposition \ref{prop:horofunction geodesic}, the restriction has at most one
coarse local minimum: hence, since $x$ and $y$ are coarse local minima of $h$,
the distance between $x$ and $y$ is universally bounded in terms of $\delta$.
\end{proof}

\begin{proposition} \label{prop:phi cts on R}
Let $\seq{h_n}$ a sequence of horofunctions which converges to some infinite horofunction 
$h \in \xr$. Then $\phi(h_n)$ converges to $\phi(h)$ in the Gromov boundary.
As a consequence, the local minimum map $\phi: \xr \to \Xb$ is continuous.
\end{proposition}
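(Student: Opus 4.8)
The plan is to reduce the statement to a quantitative fact: if $h$ is an infinite horofunction and $h(y)$ is very negative at some point $y$, then $y$ is very close (in the Gromov product sense) to the boundary point $\phi(h)$; and conversely, convergence $h_n \to h$ forces the minimizing behaviour of $h_n$ near any fixed point to track that of $h$. Concretely, since $h \in \xr$, pick a minimizing sequence $\seq{y_k}$ for $h$, so $h(y_k) \to -\infty$ and $y_k \to \phi(h)$ in $\Xb$; by Lemma \ref{L:converges} the Gromov products $\gp{x_0}{y_k}{\phi(h)} \to \infty$. Fix a large threshold $R$ and choose $k$ with $h(y_k) < -R$ (and also $\gp{x_0}{y_k}{\phi(h)}$ large). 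Since $h_n \to h$ pointwise, for all $n$ large enough we have $h_n(y_k) < -R$ as well. Now apply Lemma \ref{L:horoineq} to the horofunction $h_n$ with the pair of points $y_k$ and any point $z_n$ with $h_n(z_n)$ close to $\inf h_n$ (such a point exists, and if $h_n \in \xr$ we take a minimizing sequence; note $\phi(h_n)$ is by definition the limit of such points): this gives
\[ \min\{ -h_n(y_k), -h_n(z_n) \} \le \gp{x_0}{y_k}{z_n} + O(\delta). \]
If $-h_n(z_n) \ge R$ as well (which we can arrange by taking the point far enough along the minimizing sequence when $h_n \in \xr$, or by noting $-\inf h_n$ is large when $h_n \in \xq$ — see below), the left side is $\ge R$, so $\gp{x_0}{y_k}{z_n} \ge R - O(\delta)$. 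Passing to the limit defining $\phi(h_n)$, we get $\gp{x_0}{y_k}{\phi(h_n)} \ge R - O(\delta)$.

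From here the triangle inequality \eqref{E:gpineq} for the Gromov product on $\Xg$ yields
\[ \gp{x_0}{\phi(h)}{\phi(h_n)} \ge \min\{ \gp{x_0}{\phi(h)}{y_k}, \gp{x_0}{y_k}{\phi(h_n)} \} - O(\delta) \ge \min\{ \gp{x_0}{\phi(h)}{y_k},\ R \} - O(\delta), \]
which is large once $R$ and $k$ are chosen large. Since $R$ was arbitrary, this shows $\gp{x_0}{\phi(h)}{\phi(h_n)} \to \infty$, i.e. $\phi(h_n) \to \phi(h)$ in the Gromov boundary, which is exactly the first assertion. The ``as a consequence'' clause is then the special case where all $h_n$ (and $h$) lie in $\xr$, giving sequential continuity of $\phi : \xr \to \Xb$; since $\Xh$ is metrizable, sequential continuity is continuity.

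The one point that needs care — and which I expect to be the main technical obstacle — is guaranteeing that $-h_n(z_n)$ can be taken large uniformly, i.e. that the ``depth'' of $h_n$ does not stay bounded as $n \to \infty$ even though $h_n \to h$ with $\inf h = -\infty$. But this is automatic: we already showed $h_n(y_k) < -R$ for $n$ large, so $\inf h_n < -R$, so a point $z_n$ with $h_n(z_n)$ within $1$ of $\inf h_n$ (or a minimizing point if $h_n \in \xr$) satisfies $-h_n(z_n) > R - 1$. The only subtlety is choosing $z_n$ consistently with the definition of $\phi(h_n)$: when $h_n \in \xq$ we should take $z_n \in \phi(h_n)$ directly and use $-h_n(z_n) \ge -\inf h_n - 1 > R-1$; when $h_n \in \xr$ we take $z_n$ far out along a minimizing sequence so that both $-h_n(z_n) > R$ and $\gp{x_0}{z_n}{\phi(h_n)} > R$, then let $z_n \to \phi(h_n)$. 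Feeding this into the displayed chain of Gromov-product inequalities and taking limits carefully — being mindful that the Gromov product on $\Xg$ only satisfies \eqref{E:gpineq} up to a larger $O(\delta)$ constant — completes the argument.
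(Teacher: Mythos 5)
Your proposal is correct and follows essentially the same route as the paper: choose a point deep along a minimizing sequence for $h$, use pointwise convergence $h_n \to h$ at that point to force $\inf h_n$ to be very negative, apply Lemma \ref{L:horoineq} to pair that point with near-minimizers of $h_n$, and finish with the Gromov-product triangle inequality \eqref{E:gpineq}. The only differences are bookkeeping (the paper runs the argument through $\liminf$'s over the minimizing sequences $(y_{m,n})$ rather than splitting the cases $h_n \in \xq$ and $h_n \in \xr$), and your resolution of the "uniform depth" issue is exactly the observation the paper uses implicitly.
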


(If $h_n$ is a finite horofunction, in the above statement we mean that  $x_n \to \phi(h)$ for any choice 
 of $x_n \in \phi(h_n)$.)

\begin{proof}
Let $N > 0$, and let $(h_n)$ a sequence of horofunctions which converge to $h \in \xr$.
Let us pick a minimizing sequence $(x_m)_{m \in \mathbb{N}}$ for $h$, and for each $n$ 
a sequence $(y_{m, n})_{m \in \mathbb{N}}$, such that $y_{m, n} \to \inf h_n \in \R \cup \{-\infty\}$ as $m \to \infty$, 
so that $\phi(h) = [x_m]$ and $\phi(h_n) = [y_{m,n}]$. The goal is to prove that $\gp{x_0}{\phi(h_n)}{\phi(h)} \to \infty$ as $n \to \infty$.

Since $(x_m)$ is a Gromov sequence and it is minimizing for $h$, there exists $m_0$ such that
$$h(x_{m_0}) \leqslant -N -1$$
and for each $m, m' \geqslant m_0$ one has 
$$\gp{x_0}{x_m}{x_{m'}} \geqslant N+1.$$

Since $h_n \to h$ pointwise, there exists $n_0$ such that 
$$h_n(x_{m_0}) \leqslant -N$$
for each $n \geqslant n_0$. Now, since $(y_{m,n})$ is minimizing for $h_n$, there exists $m_1 = m_1(N, n)$ such that $m_1 \geqslant m_0$
and
$$h_n(y_{m, n}) \leqslant -N \qquad \textup{for each }m \geqslant m_1$$
Hence, by Lemma \ref{L:horoineq} we have 
$$\gp{x_0}{x_{m_0}}{y_{m, n}} \geqslant \min \{ - h_n(x_{m_0}), - h_n(y_{m, n}) \} \geqslant N$$
and by property \eqref{E:gpineq}, for all $m, m' \geqslant m_1$
$$\gp{x_0}{x_{m'}}{y_{m, n}} \geqslant \min \{ \gp{x_0}{x_{m_0}}{y_{m, n}}, \gp{x_0}{x_{m_0}}{x_{m'}} \} - \delta \geqslant N - \delta$$
thus 
$\gp{x_0}{\phi(h)}{\phi(h_n)} = \sup \liminf_{m, m'} \gp{x_0}{x_{m'}}{y_{m,n}} \geqslant N -\delta$ for $n \geqslant n_0$, as claimed.
\end{proof}

\begin{corollary} \label{cor:surjective}
The local minimum map $\phi \colon \xr \to \Xb$ is surjective.
\end{corollary}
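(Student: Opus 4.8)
The plan is to show that every boundary point $\xi \in \Xb$ arises as $\phi(h)$ for some infinite horofunction $h$. Fix $\xi \in \Xb$ and a basepoint $x_0$. By the material in Section \ref{section:qg}, there is a quasigeodesic ray $\gamma$ based at $x_0$ converging to $\xi$, with quasigeodesic constants controlled by $\delta$; choosing a sequence of points $y_n = \gamma(t_n)$ with $t_n \to \infty$, we have $y_n \to \xi$ and $d_X(x_0, y_n) \to \infty$. Consider the horofunctions $\rho_{y_n} \in \rho(X) \subseteq \Xh$. Since $\Xh$ is compact (sequentially compact, being metrizable), we may pass to a subsequence so that $\rho_{y_n} \to h$ for some $h \in \Xh$.

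First I would check that $h \in \xr$, i.e. $\inf h = -\infty$. Along the quasigeodesic ray $\gamma$, one has $\rho_{y_n}(y_k) = d_X(y_k, y_n) - d_X(x_0, y_n)$, and for $k \le n$ the reverse triangle inequality (Proposition \ref{prop:npp2}, or directly the quasigeodesic estimate along $\gamma$) gives $\rho_{y_n}(y_k) = -d_X(x_0, y_k) + O(\delta)$ up to the quasigeodesic constants. Passing to the pointwise limit in $n$, we get $h(y_k) = -d_X(x_0, y_k) + C$ for a constant $C = C(\delta, Q, c)$, and since $d_X(x_0, y_k) \to \infty$ this forces $h(y_k) \to -\infty$. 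Hence $h \in \xr$, and moreover $(y_k)$ is itself a minimizing sequence for $h$.

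Finally, since $(y_k)$ is a minimizing sequence for $h$, the definition of $\phi$ on $\xr$ gives $\phi(h) = \lim_{k} y_k = \xi$ (the limit is well-defined and independent of the minimizing sequence by Lemma \ref{L:converges}). Thus $\xi$ is in the image of $\phi \colon \xr \to \Xb$, proving surjectivity. I expect the only mildly delicate point to be the first estimate: one must be careful that the additive error in $\rho_{y_n}(y_k) = -d_X(x_0, y_k) + O(\delta)$ is genuinely uniform in both $k$ and $n$ (for $k \le n$), which is exactly what the stability of quasigeodesics, together with the reverse triangle inequality along $\gamma$, provides; everything else is a direct appeal to compactness of $\Xh$ and the definition of $\phi$.
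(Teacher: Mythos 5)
Your proposal is correct, and it follows the same overall strategy as the paper (approximate $\xi$ by points of $X$, use compactness of $\Xh$ to extract a limit horofunction $h$, show $h \in \xr$, and identify $\phi(h) = \xi$), but the execution differs in two ways worth noting. The paper takes an \emph{arbitrary} sequence $\lambda_n \to \lambda$, argues that $h \in \xr$ from the one-line observation $\inf \rho_{\lambda_n} \le -d_X(x_0,\lambda_n) \to -\infty$, and then concludes via Proposition \ref{prop:phi cts on R} (continuity of $\phi$ on $\xr$, in the extended sense covering finite approximants), which gives $\lambda_{n_k} \to \phi(h)$ and hence $\phi(h)=\lambda$. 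You instead place the approximating points $y_n$ on a quasigeodesic ray to $\xi$, which lets you prove the uniform estimate $\rho_{y_n}(y_k) \le -d_X(x_0,y_k) + C(\delta,Q,c)$ for $k \le n$ (via the Morse lemma and the Gromov product estimate \eqref{eq:gp estimate} -- this, rather than Proposition \ref{prop:npp2}, is the precise tool), pass to the pointwise limit to get $h(y_k) \to -\infty$, and then conclude directly from the definition of $\phi$ and Lemma \ref{L:converges}, since $(y_k)$ is itself a minimizing sequence converging to $\xi$. What your route buys is that the verification that the limit lies in $\xr$ is airtight and self-contained: the paper itself warns (after Example \ref{ex:wedge}) that $\inf \rho_{y_n} \to -\infty$ does not by itself force the limit to be an infinite horofunction, so the quasigeodesic choice (or some equivalent use of the fact that $\lambda_n$ converges in $\partial X$) is exactly what closes that step; in exchange, the paper's argument is shorter because it offloads the identification $\phi(h)=\lambda$ to the continuity proposition, which you do not need at all.
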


\begin{proof}
Pick $\lambda \in \partial X$. By construction, there exists a sequence $\seq{\lambda_n}$ of points of $X$ 
which converge to $\lambda$. By compactness, the sequence of horofunctions $\rho_{\lambda_n}$
has a subsequence $(\rho_{\lambda_{n_k}})_{k \in \N}$ which converges to some $h \in \Xh$; since 
$\inf \rho_{\lambda_n} \le \rho_{\lambda_n}(\lambda_n) = -d_X(x_0, \lambda_n) \to -\infty$, 
we have that $h$ belongs to $\xr$; thus, by the Proposition, $\lambda_{n_k} \to \phi(h)$, hence 
by uniqueness of the limit $\phi(h) = \lambda$, as required. 
\end{proof}

However, we emphasize that if $(h_n)$
converges to $h$ in $\xq$, then $(\phi(h_n))$ need not converge to
$\phi(h)$. 
For instance, in the countable wedge of rays of Example \ref{ex:wedge}, if $x_n$ is the point on the branch $X_n$ 
at distance $n$ from the base point $x_0$, then $\rho_{x_n} \to \rho_{x_0}$ in the horofunction 
compactification, but the sequence $\seq{x_n}$ does not converge in the Gromov boundary.

\subsection{Horofunctions and shadows}

We define the \emph{depth} of the shadow $S = S_{x_0}(x, R)$
to be the quantity
$$dep(S) := 2R - d_X(x_0, x).$$
We now show that we may characterize points in a shadow in terms of
the value of the corresponding horofunction at the basepoint and the
depth.

\begin{lemma} \label{L:shadowhoro}
If $S = S_{x_0}(x, R)$ is a shadow, then 
$y \in S$ 
if and only if 
$$\rho_y(x) \le dep(S).$$
\end{lemma}

\begin{proof}
By definition of shadow one has
\[ \gp{x_0}{x}{y} \ge d_X(x_0, x) - R  \]
hence by writing out the Gromov product 
\[ \tfrac{1}{2}( d_X(x_0, x) + d_X(x_0, y) - d_X(x, y)  ) \ge d_X(x_0, x) - R  \]
and by simplifying we get
\[ -\rho_y(x)  \ge d_X(x_0, x) - 2R  \]
which proves the claim.
\end{proof}

\begin{corollary}
For any shadow $S = S_{x_0}(x, R)$, the closure of $S$ in
$\Xh$ is
\[\overline{S}^h = \{ h \in \Xh : h(x) \le dep(S) \}, \]
where $dep(S) = 2R - d_X(x_0, x)$.
\end{corollary}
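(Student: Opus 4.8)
The plan is to deduce this corollary directly from Lemma \ref{L:shadowhoro} together with the fact that the evaluation-at-$x$ functional $h \mapsto h(x)$ is continuous on $\Xh$ (which is immediate from the definition of the topology on $\Xh$ as pointwise convergence). Indeed, Lemma \ref{L:shadowhoro} already identifies
\[ \rho(S) = \{ \rho_y : y \in S \} = \{ h \in \rho(X) : h(x) \le dep(S) \}, \]
so the content of the corollary is that this description passes to the closure in $\Xh$.

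For the inclusion $\overline{S}^h \subseteq \{ h \in \Xh : h(x) \le dep(S) \}$: if $h \in \overline{S}^h$, pick a sequence of points $y_n \in S$ with $\rho_{y_n} \to h$ in $\Xh$ (a sequence suffices since $\Xh$ is metrizable). By Lemma \ref{L:shadowhoro}, $\rho_{y_n}(x) \le dep(S)$ for all $n$, and by pointwise convergence $\rho_{y_n}(x) \to h(x)$, so $h(x) \le dep(S)$ as the limit of a sequence bounded above by $dep(S)$. Note the inequality is not strict, which is why the closed half-space $\{h(x) \le dep(S)\}$ appears and not an open one; this is the only place one needs to be slightly careful.

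For the reverse inclusion $\{ h \in \Xh : h(x) \le dep(S) \} \subseteq \overline{S}^h$: let $h \in \Xh$ with $h(x) \le dep(S)$. Since $h \in \Xh = \overline{\rho(X)}^h$, there is a sequence $y_n \in X$ with $\rho_{y_n} \to h$; then $\rho_{y_n}(x) \to h(x) \le dep(S)$. This gives $\rho_{y_n}(x) \le dep(S) + \e$ eventually for any $\e > 0$, but to land exactly in $\rho(S)$ we want $\rho_{y_n}(x) \le dep(S)$ on the nose. The cleanest fix is to observe that the shadows $S_{x_0}(x,R)$ are nested in $R$, so $S_{x_0}(x, R) = \bigcup_{R' < R} S_{x_0}(x, R')$ is exhausted by smaller shadows, or equivalently to work with $S' = S_{x_0}(x, R + 1)$ which has $dep(S') = dep(S) + 2$; then $\rho_{y_n}(x) \le dep(S) + 1 \le dep(S')$ eventually, so $y_n \in S'$ eventually and $h \in \overline{S'}^h$. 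In fact the argument is smoother if one simply does not insist on the $y_n$ lying in $S$ itself: perturb slightly, or note that since $\rho_{y_n}(x) \to h(x)$ and $h(x) \le dep(S)$, for any fixed $\e>0$ one has $y_n \in S_{x_0}(x, R + \e/2)$ eventually, and intersecting over $\e$ or taking $\e \to 0$ in a diagonal argument places $h$ in the closure of the relevant shadow.

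The main (mild) obstacle is purely this boundary-case bookkeeping: the closure of $\rho(S)$ need not equal $\rho(\overline{S}^\delta)$ and the defining inequality of a shadow is non-strict, so one must be careful that a limit $h$ with $h(x) = dep(S)$ exactly is still in $\overline{S}^h$ — handled by the nesting/exhaustion of shadows noted above. Everything else is a one-line application of Lemma \ref{L:shadowhoro} and continuity of point evaluation, so the proof should be only a few lines.
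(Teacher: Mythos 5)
The paper gives no proof of this Corollary, presenting it as an immediate consequence of Lemma \ref{L:shadowhoro}; your forward inclusion $\overline{S}^h \subseteq \{h \in \Xh : h(x) \le dep(S)\}$ is exactly this easy direction and is correct (continuity of $h \mapsto h(x)$ plus Lemma \ref{L:shadowhoro}).

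The reverse inclusion is where your write-up has a genuine gap, and you seem half-aware of it. You correctly observe that the only delicate case is $h(x) = dep(S)$ exactly (when $h(x) < dep(S)$ strictly, the approximating $\rho_{y_n}$ satisfy $\rho_{y_n}(x) < dep(S)$ eventually, hence $y_n \in S$, and you are done). But neither of your proposed fixes closes the gap. First, the claimed exhaustion $S_{x_0}(x,R) = \bigcup_{R'<R} S_{x_0}(x,R')$ is simply false: the union on the right is $\{y : \gp{x_0}{x}{y} > d_X(x_0,x)-R\}$, which omits exactly the boundary points of the shadow where $\gp{x_0}{x}{y} = d_X(x_0,x)-R$, i.e.\ where $\rho_y(x) = dep(S)$. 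Those are precisely the points at issue. Second, passing to the larger shadow $S' = S_{x_0}(x, R+1)$, or to $S_{x_0}(x, R+\e/2)$ and letting $\e \to 0$ diagonally, only yields $h \in \overline{S'}^h$ (respectively, $h$ in the closure of a union of strictly larger shadows), which is a weaker conclusion than $h \in \overline{S}^h$ and does not imply it; the closure operator does not commute with the decreasing intersection over $\e$ in the way the argument would need.

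What is actually missing is a geometric perturbation argument: given $\rho_{y_n} \to h$ with $\rho_{y_n}(x) > dep(S)$ and $\rho_{y_n}(x) \to dep(S)$, one must produce $z_n$ with $\rho_{z_n}(x) \le dep(S)$ (so $z_n \in S$) and $\rho_{z_n} \to h$. The natural move is to slide $y_n$ along a geodesic $[y_n, x]$ toward $x$, using that $\rho_{\cdot}(x)$ is non-increasing along such a geodesic and $\rho_x(x) = -d_X(x_0,x) < dep(S)$; but one must then justify that this can be done without destroying the convergence $\rho_{z_n} \to h$, which is not automatic (the amount one must slide is controlled by the geometry, not merely by $\rho_{y_n}(x) - dep(S)$). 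That is the content hidden behind the word ``Corollary,'' and your proposal does not supply it.
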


It will also be useful to know how shadows are related to the topology
of the Gromov boundary $\Xb$. We shall use the following property of
shadows: there is a constant $R_0$, which only depends on the action of
$G$ on $X$, such that for any $g \in G$ the closure of the shadow $S_{x_0}(g x_0,
R_0)$ in $\partial X$ contains a non-empty open set. This follows from the Proposition below.

\begin{proposition} \label{prop:shadow open}
Let $G$ be a group acting by isometries on a separable Gromov
hyperbolic space $X$, such that $G$ contains at least one hyperbolic
isometry. Then there is a number $R_0 > 0$
such that for any $g \in G$ the set $\overline{S_{x_0}(g x_0,
R_0)}^\delta$ contains a limit point of $G x_0$ in its interior.
\end{proposition}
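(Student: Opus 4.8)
The plan is to first reduce to the case $g=\mathrm{id}$, and then produce an open set of the Gromov boundary inside the closure of a single shadow at the basepoint, using the hyperbolic isometry. Let $h\in G$ be a hyperbolic isometry, with attracting and repelling fixed points $h^{\pm}\in\Xb$. Since $h$ is hyperbolic, the axis of $h$ (or rather a quasigeodesic invariant up to bounded error) passes within bounded distance of $x_0$ and translates $x_0$ towards $h^+$. I would fix $R_0$ large, to be specified, depending only on $\delta$ and on $d_X(x_0,hx_0)$. For the reduction: by $G$-equivariance of the action on $\Xh$ and the identity $g.\rho_y=\rho_{gy}$, one checks $\overline{S_{x_0}(gx_0,R_0)}^\delta = g\cdot\overline{S_{x_0}(x_0 ,\,R_0')}^\delta$ up to adjusting $R_0$ by a bounded amount, so it suffices to find \emph{one} value $R_0$ such that $\overline{S_{x_0}(x_0,R_0)}^\delta$ — wait, that shadow has distance parameter $-R_0<0$, i.e.\ it is all of $X$; more carefully I should take the shadow $S_{x_0}(g x_0,R_0)$, translate by $g^{-1}$, and observe $g^{-1}S_{x_0}(gx_0,R_0)=S_{g^{-1}x_0}(x_0,R_0)$, whose closure in $\Xb$ is $g^{-1}$ times what we want. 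So the statement is equivariant and it is enough to exhibit, for a \emph{fixed} basepoint-type shadow $S_{x_0}(p_0,R_0)$ with $p_0$ at bounded distance from $x_0$, an open subset of $\Xb$; but since we need it for \emph{every} $g$, the cleanest route is: show directly that for suitable $R_0$ and suitable bounded-distance point, $\overline{S_{x_0}(gx_0,R_0)}^\delta$ contains an interior limit point, by transporting the problem to the orbit point $gx_0$ and using that $gh^{n}x_0$ accumulates at $gh^{+}$.

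Here is the concrete approach. Apply $g$ to the hyperbolic element $h$: the conjugate $ghg^{-1}$ is hyperbolic with attracting fixed point $gh^{+}$, and the orbit $gh^{+}$ is a limit point of $Gx_0$. I claim $gh^{+}$ lies in the interior of $\overline{S_{x_0}(gx_0,R_0)}^\delta$ for $R_0$ large enough. To see membership: points $gh^{n}x_0$ lie in $S_{x_0}(gx_0,R_0)$ for all large $n$, because they march off along a quasigeodesic emanating from $gx_0$ (the $g$-translate of the axis of $h$), and for such points the Gromov product $\gp{x_0}{gx_0}{gh^nx_0}$ is at least $d_X(x_0,gx_0)$ minus a constant depending only on $\delta$ and $d_X(x_0,hx_0)$ — this uses Proposition~\ref{prop:npp2} / the reverse triangle inequality applied to the nearest-point projection of $x_0$ to that quasigeodesic. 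Hence $gh^{+}=\lim gh^nx_0$ lies in the closure of the shadow. To see it is interior: using the lower bound \eqref{eq:gp min}, any two points in $\overline{S_{x_0}(gx_0,R_0)}^\delta$ have Gromov product at least $d_X(x_0,gx_0)-R_0+O(\delta)$; so the closed shadow contains the metric ball in $\Xb$ about $gh^{+}$ of radius comparable to $e^{-(d_X(x_0,gx_0)-R_0)}$ — \emph{provided} $gh^+$ is genuinely ``deep'' in the shadow, which it is, since $gh^nx_0\to gh^+$ with the Gromov product $\gp{gx_0}{gh^+}{\cdot}$ unbounded. More precisely I would show there is $\epsilon=\epsilon(\delta)>0$ with the property that for any boundary point $\xi$ with $\gp{x_0}{gx_0}{\xi}\ge d_X(x_0,gx_0)-R_0+\epsilon$, every $\eta$ close enough to $\xi$ also satisfies $\gp{x_0}{gx_0}{\eta}\ge d_X(x_0,gx_0)-R_0$, by the triangle inequality \eqref{E:gpineq} for the Gromov product on $X\cup\Xb$; and $gh^+$ satisfies the stronger inequality once $R_0$ is chosen large.

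The uniformity in $g$ is the subtle point, so let me isolate it. The only place $g$ enters the geometric estimates is through $d_X(x_0,gx_0)$, which appears symmetrically on both sides of the defining inequality of the shadow ($\gp{x_0}{gx_0}{y}\ge d_X(x_0,gx_0)-R_0$) — it is the distance \emph{parameter} $d_X(x_0,gx_0)-R_0$ that varies, but the ``width'' of the shadow as measured by Gromov products of pairs inside it, namely $d_X(x_0,gx_0)-R_0+O(\delta)$, scales exactly so that the interior ball about $gh^+$ has radius bounded below by a constant depending only on $\delta$ and $d_X(x_0,hx_0)$, not on $g$. Concretely: translating everything by $g^{-1}$ turns $S_{x_0}(gx_0,R_0)$ into $S_{g^{-1}x_0}(x_0,R_0)$ and the problem into finding an interior limit point of $G x_0$ near $h^+$ in that shadow based at $g^{-1}x_0$; but a shadow $S_{z}(x_0,R_0)$ with $x_0$ as the ``tip'' simply consists of points whose nearest-point projection to $[z,x_0]$ is within $R_0+O(\delta)$ of $x_0$, and for $R_0$ larger than, say, $d_X(x_0,hx_0)+C(\delta)$, this contains the entire quasigeodesic ray from $x_0$ to $h^+$ (independently of $z$), hence contains $h^+$ interiorly.

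I expect the main obstacle to be exactly this uniformity, i.e.\ extracting a single $R_0$ that works for all $g$ simultaneously and controlling the size of the interior neighbourhood of $gh^+$ from below uniformly; the underlying geometry (a quasigeodesic ray from $gx_0$ towards $gh^+$ fellow-travels geodesics, and its far points lie in the shadow) is standard via the Morse lemma (Proposition~\ref{prop:morse}) and Proposition~\ref{prop:npp2}, but bookkeeping the additive $O(\delta)$ constants against the varying distance parameter, and making sure the ball in the boundary metric does not shrink to nothing as $d_X(x_0,gx_0)\to\infty$, requires care. The rest — membership of $gh^+$, surjectivity of limits, and the reduction by equivariance — is routine given the estimates already proved in Section~\ref{section:background}.
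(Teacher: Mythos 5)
Your overall architecture (find a limit point deep inside each shadow, then upgrade ``deep membership'' to ``interior'' via the Gromov-product triangle inequality or the Blach\`ere--Ha\"issinsky--Mathieu estimate) matches the paper's, but the specific membership claim on which everything rests is false, and this is a genuine gap rather than a bookkeeping issue. You fix one hyperbolic isometry $h$ and insist that the limit point you place in $\overline{S_{x_0}(g x_0, R_0)}^\delta$ is always $g h^{+}$, claiming $\gp{x_0}{g x_0}{g h^{n} x_0} \ge d_X(x_0, g x_0) - C$ with $C$ independent of $g$, and, after translating by $g^{-1}$, that $S_{z}(x_0,R_0)$ contains the whole ray from $x_0$ to $h^{+}$ ``independently of $z$''. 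This fails whenever $g^{-1}x_0$ lies in the direction of $h^{+}$: take $g = h^{-n}$ with $n$ large, so $g h^{+} = h^{+}$ while $g x_0 = h^{-n} x_0$ heads toward $h^{-}$. Then $\gp{x_0}{h^{-n}x_0}{h^{m}x_0}$ stays bounded (by roughly $\gp{x_0}{h^{-}}{h^{+}} + O(\delta)$) as $m,n \to \infty$, whereas the shadow $S_{x_0}(h^{-n}x_0,R_0)$ requires Gromov product at least $d_X(x_0,h^{-n}x_0) - R_0 \to \infty$; so $h^{+}$ is not even in the closed shadow, let alone its interior. Equivalently, in your reduced picture with $z = h^{n}x_0$, the geodesic $[z,x_0]$ and the ray $[x_0,h^{+})$ fellow-travel for length about $n\tau(h)$, so points of the ray project far from $x_0$ on $[z,x_0]$ and leave $S_{z}(x_0,R_0)$.

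The missing idea is that the limit point must be allowed to depend on $g$: one needs \emph{two} distinct limit points $\alpha^{\pm}$ (e.g.\ $h^{\pm}$) and the observation that, since $\gp{x_0}{\alpha^{+}}{\alpha^{-}} < \infty$, for every $g$ at least one of the products $\gp{g x_0}{x_0}{g\alpha^{\pm}}$ is bounded independently of $g$ (in the paper this is phrased by projecting $x_0$ to the translated quasigeodesic $g\alpha$ and relabeling the endpoint). With that uniform bound $A$ in hand, Proposition \ref{prop:bhm} applied with $y = g\alpha^{+}$ and $x = g x_0$ produces a single $R_0$, depending only on $\delta$ and $A$, such that $\overline{S_{x_0}(gx_0,R)}^\delta$ contains an open neighbourhood of $g\alpha^{+}$ for all $R \ge R_0$; note the statement only asks for \emph{some} limit point of $Gx_0$ in the interior, which is exactly the freedom your argument gives up by committing to $g h^{+}$. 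A secondary remark: your first interiority argument, quoting \eqref{eq:gp min}, runs in the wrong direction (large pairwise Gromov products inside the shadow bound the shadow \emph{inside} a ball, not the reverse); your later sketch via the triangle inequality \eqref{E:gpineq}, or Proposition \ref{prop:bhm} itself, is the correct mechanism, and the resulting boundary ball is allowed to shrink as $d_X(x_0,gx_0)$ grows, so no uniform lower bound on its radius is needed.
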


This follows from the following result from Blach\`ere, Ha\"issinsky
and Mathieu \cite{bhm}.

\begin{proposition} \cite{bhm}*{Proposition 2.1} \label{prop:bhm} %
For any  $\e > 0$ sufficiently small, and any $A > 0$,
there are positive numbers $C$ and $R_0$, such that for any $R > R_0$,
and any $x \in X, y \in \Xb$ with $\gp{x}{x_0}{y} \le A$,
\[ B_\e(y, \tfrac{1}{C}e^{\e (R - d_X(x_0, x))}) \subset \overline{
  S_{x_0}(x, R) }^\delta \cap \Xb \subset B_\e(y, Ce^{\e (R -d_X(x_0,
  x))}), \]
where $B_\e(y, r)$ is the ball of radius $r$ about $y$ in the
metric $d_\e$ on $\Xb$.
\end{proposition}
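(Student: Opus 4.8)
The plan is to deduce the proposition from two applications of the triangle inequality for the extended Gromov product, once one records the standard comparison between the visual metric $d_\e$ and the Gromov product. Throughout, $O(\delta)$ denotes a quantity bounded in terms of $\delta$ (and, once $\e$ is fixed, also of $\e$), and I set $t := d_X(x_0,x) - R$ for the distance parameter of the shadow $S := S_{x_0}(x,R)$, so that $e^{\e(R - d_X(x_0,x))} = e^{-\e t}$.

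First I would recall the fact that makes the hypothesis ``$\e$ sufficiently small'' meaningful: for $\e>0$ small enough depending only on $\delta$, there is $C_0 = C_0(\e,\delta)\ge 1$ with
\[ C_0^{-1}\, e^{-\e \gp{x_0}{\xi}{\eta}} \le d_\e(\xi,\eta) \le C_0\, e^{-\e\gp{x_0}{\xi}{\eta}} \qquad \text{for all } \xi,\eta\in\Xb, \]
where $\gp{x_0}{\cdot}{\cdot}$ is the extended Gromov product on $\Xb$; in the non-proper case this is Väisälä's construction \cite{vaisala} (see \cite{bh}*{III.H.3} for the proper case). I would also use freely the standard facts that the extended Gromov product satisfies \eqref{E:gpineq} on $\Xg$ with an additive constant $O(\delta)$, and that for any basepoint and any sequences $p_i\to p$, $q_i\to q$ in $X$ with $p,q\in\Xg$ one has $\liminf_i \gp{x_0}{p_i}{q_i}\le\gp{x_0}{p}{q}$ and $\limsup_i \gp{x_0}{p_i}{q_i}\le\gp{x_0}{p}{q}+O(\delta)$.

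The next step is to record that the hypothesis $\gp{x}{x_0}{y}\le A$ pins down $\gp{x_0}{x}{y}$: the algebraic identity $\gp{x_0}{x}{z} = d_X(x_0,x) - \gp{x}{x_0}{z}$ for $z\in X$, applied along a sequence $y_i\to y$ together with $\limsup_i \gp{x}{x_0}{y_i}\le \gp{x}{x_0}{y}+O(\delta)\le A+O(\delta)$, gives
\[ \gp{x_0}{x}{y}\ \ge\ \liminf_i \gp{x_0}{x}{y_i}\ =\ d_X(x_0,x)-\limsup_i\gp{x}{x_0}{y_i}\ \ge\ d_X(x_0,x) - A - O(\delta). \]
With this in hand both inclusions are immediate. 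For the right-hand one, take $z\in\overline{S}^\delta\cap\Xb$; as $z$ is a limit of points of $S$, the definition of the shadow gives $\gp{x_0}{x}{z}\ge t$, and \eqref{E:gpineq} yields $\gp{x_0}{y}{z}\ge \min\{d_X(x_0,x)-A,\,t\} - O(\delta)$, which equals $t-O(\delta)$ as soon as $R_0\ge A+O(\delta)$ (so that $t\le d_X(x_0,x)-A-O(\delta)$ whenever $R>R_0$); hence $d_\e(y,z)\le C_0\, e^{\e O(\delta)}\,e^{-\e t}$. For the left-hand one, take $z\in\Xb$ with $d_\e(y,z)<\tfrac1C e^{-\e t}$; the lower bound for $d_\e$ gives $\gp{x_0}{y}{z} > t + \tfrac1\e\log(C/C_0)$, and \eqref{E:gpineq} gives $\gp{x_0}{x}{z}\ge\min\{d_X(x_0,x)-A-O(\delta),\ t+\tfrac1\e\log(C/C_0)\} - O(\delta)$, which exceeds $t$ once $R_0>A+O(\delta)$ and $C$ is large enough that $\tfrac1\e\log(C/C_0)>O(\delta)$; then by the sup-liminf definition of $\gp{x_0}{x}{z}$ there is a sequence $w_i\to z$ in $X$ with $\gp{x_0}{x}{w_i}> d_X(x_0,x)-R$ for all large $i$, i.e. $w_i\in S$ eventually, so $z\in\overline{S}^\delta$. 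Taking $C$ and $R_0$ to be the largest of the finitely many thresholds above (all depending only on $\e,\delta,A$) finishes the proof.

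The hard part will be bookkeeping rather than geometry: tracking that the extended Gromov product obeys the triangle inequality only up to an additive error, that its various sequential definitions agree only up to $O(\delta)$, and that ``$z$ lies in the $\delta$-closure of a shadow'' and ``$\gp{x_0}{x}{z}$ is large'' are equivalent only up to $O(\delta)$ — none of which causes trouble since it is absorbed into the choice of $R_0$ and $C$. The one genuinely external input is the comparison $d_\e\asymp e^{-\e\gp{x_0}{\cdot}{\cdot}}$, which holds only for small $\e$ and, in the non-proper setting, relies on Väisälä's construction of the boundary metric.
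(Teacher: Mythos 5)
The paper does not actually prove this proposition: it is quoted verbatim from Blach\`ere--Ha\"issinsky--Mathieu \cite{bhm}*{Proposition 2.1}, so there is no internal proof to compare against. Your reconstruction is nonetheless correct, and it hits the two essential ingredients the citation rests on: the comparison $d_\e \asymp e^{-\e \gp{x_0}{\cdot}{\cdot}}$ for $\e$ small (V\"ais\"al\"a's construction in the non-proper setting), and the approximate triangle inequality for the extended Gromov product. Your preliminary reduction, using the exact algebraic identity $\gp{x_0}{x}{z} + \gp{x}{x_0}{z} = d_X(x_0,x)$ for $z \in X$ and then passing to the boundary via $\limsup$/$\liminf$, is a clean way to turn the hypothesis $\gp{x}{x_0}{y} \le A$ into the lower bound $\gp{x_0}{x}{y} \ge d_X(x_0,x) - A - O(\delta)$ that both inclusions actually use; and your bookkeeping with $R_0$ and $C$ absorbing all the $O(\delta)$ slack (and the $\log(C/C_0)$ term) is sound. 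One small thing worth stating explicitly in a final write-up: in the leftward inclusion you pass from $\gp{x_0}{x}{z} > t$ to the existence of a sequence $w_i \to z$ with $w_i \in S$ eventually; this uses that $\liminf$ of a sequence strictly exceeding $t$ forces the terms to eventually exceed $t$, and that convergence of $w_i$ to $z$ as a Gromov sequence agrees with convergence in the topology on $X \cup \partial X$, so $z$ genuinely lies in $\overline{S}^\delta$. Neither point is in doubt, but both are the kind of thing that deserves a sentence when the space is not proper.
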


\begin{proof}[Proof (of Proposition \ref{prop:shadow open})]
Since $G$ contains at least one hyperbolic isometry, then 
the limit set $\overline{Gx_0}^\delta$ contains at least two points
in $\partial X$, which we call $\alpha^+$ and $\alpha^-$. 
Let $\alpha$ be a quasigeodesic from $\alpha^+$ to $\alpha^-$, 
and let $p$ be a
closest point on $\alpha$ to the basepoint $x_0$. Given a group
element $g \in G$, consider the translate $g \alpha$, as illustrated
in Figure \ref{pic:quasi-axis} below.

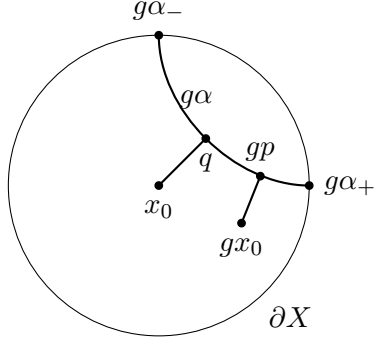
\begin{figure}[H]
\begin{center}
\begin{tikzpicture}[scale=0.5]

\tikzstyle{point}=[circle, draw, fill=black, inner sep=0pt,
minimum width=2.5pt]

\draw (0,0) circle (4);

\draw [thick] (4, 0) node [point, label=right:$g \alpha_+$] {}
..controls (2, 0) and (0, 2) .. (0, 4) node [point, label=above:$g
\alpha_-$] {};

\draw (1, 2.3) node {$g \alpha$};

\draw (3.5, -3.5) node {$\Xb$};

\draw [thick] (0, 0) node [point, label=below:$x_0$] {} -- (1.25, 1.25) node
[point, label=below:$q$] {};

\draw [thick] (2.2, -1) node [point, label=below:$g x_0$] {} -- (2.7, 0.25) node
[point, label=above:$g p$] {};

\end{tikzpicture}
\end{center} 
\caption{The translate of the quasigeodesic $\alpha$ under $g$.}
\label{pic:quasi-axis}
\end{figure}

For any element $g \in G$, let $q$ be a nearest point on $g \alpha$ to
$x_0$.  Any quasigeodesic from $x_0$ to either $g \alpha_+$ or $g
\alpha_-$ passes within distance $O(\delta)$ of $q$, so 
$g x_0$ lies within distance $d_X(x_0, \alpha) + O(\delta)$ of at
least one of these quasigeodesics, which we may assume has endpoint
$\alpha_+$, up to relabeling.

Therefore, the product $\gp{g x_0}{x_0}{g \alpha^+}$ is bounded above independently of $g \in G$, 
hence by Proposition \ref{prop:bhm}, there is a number $R_0$,
(depending only on $\delta$ and the choice of $\alpha$) 
such that the closure $\overline{S_{x_0}(g
x_0, R)}^\delta \cap \partial X$ contains an open set containing $g \alpha_+ \in \overline{Gx_0}^\delta$, for any $R \ge
R_0$, as required.
\end{proof}

\subsection{Horofunctions and weak convexity of shadows}

\emph{A priori}, 
shadows need not be convex, or even
quasi-convex. However, we now show various results about horofunctions
and nested shadows which we can think of as weak versions of
convexity. For example, for any two points contained in a shadow
$S_{x_0}(x, R)$, the geodesic connecting them is contained in
$S_{x_0}(x, R + O(\delta))$. We start by showing that the value
of a horofunction along a geodesic is bounded by its values on the
endpoints, up to an additive error of $O(\delta)$.

\begin{lemma} \label{L:qc}
Let $X$ a $\delta$-hyperbolic, geodesic metric space. Then there exists a constant $C$, which depends only on $\delta$, such that given any geodesic segment 
$[z_1, z_2]$ in $X$, with $z_1, z_2 \in X$, the following holds: 
\begin{equation} \label{E:qc}
\min \{ \rho_{z_1}(x), \rho_{z_2}(x) \} - C \leqslant \rho_y(x) \leqslant \max \{ \rho_{z_1}(x), \rho_{z_2}(x) \} + C
\end{equation}
for any $y \in [z_1, z_2]$ and any $x \in X$.
\end{lemma}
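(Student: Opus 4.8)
The plan is to reduce the inequality \eqref{E:qc} to statements about horofunctions restricted to geodesics, which are controlled by Proposition \ref{prop:horofunction geodesic} and its consequences. First I would fix a geodesic segment $[z_1, z_2]$, a point $y$ on it, and a point $x \in X$. Choose a geodesic $\gamma$ from $x$ to $y$, and let $p$ be a nearest point projection of $z_1$ (respectively $z_2$) to $\gamma$; alternatively, and more symmetrically, project $x$ onto the geodesic $[z_1,z_2]$ and compare. I expect the cleanest route is this: apply Proposition \ref{prop:reverse triangle} to the point $x$ and the geodesic $[z_1, z_2]$, letting $q$ be the nearest point projection of $x$ to $[z_1, z_2]$. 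Then for $y \in [z_1,z_2]$ one has $d_X(x, y) = d_X(x, q) + d_X(q, y) + O(\delta)$, and likewise $d_X(x, z_i) = d_X(x, q) + d_X(q, z_i) + O(\delta)$ for $i = 1, 2$. Subtracting $d_X(x_0, \cdot)$ is not immediately legal since $y$ and $z_i$ differ, so I would instead work directly with the definition $\rho_w(x) = d_X(x, w) - d_X(x_0, w)$.

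The key computation is then: since $q$ lies between $z_1$ and $z_2$ on the geodesic, either $y$ lies in $[z_1, q]$ or $y$ lies in $[q, z_2]$; say $y \in [q, z_2]$. Then $d_X(q, y) \le d_X(q, z_2)$ and $d_X(z_1, y) = d_X(z_1, q) + d_X(q, y) \ge d_X(z_1, q)$, while $d_X(x_0, y)$ compares with $d_X(x_0, z_1)$ and $d_X(x_0, z_2)$ via the triangle inequality. The plan is to write
\[
\rho_y(x) = d_X(x, q) + d_X(q, y) - d_X(x_0, y) + O(\delta),
\]
and similarly for $\rho_{z_i}(x)$, so that the inequality \eqref{E:qc} becomes a purely one-dimensional statement about the three collinear points $z_1, q, z_2$ and the point $y$ between two of them: the function $t \mapsto d^+(q, t) - d_X(x_0, t)$ along $[z_1,z_2]$ is (coarsely) of the form described in Proposition \ref{prop:horofunction geodesic}, hence has no coarse local maximum, so its value at the interior point $y$ is bounded above by the max of its values at the endpoints $z_1, z_2$, up to $O(\delta)$; for the lower bound one uses that $d_X(x,q) \ge 0$ together with the reverse triangle inequality to see $\rho_y(x) \ge \min\{\rho_{z_1}(x), \rho_{z_2}(x)\} - O(\delta)$ directly, since $d_X(x,y) \ge d_X(x, z_i) - d_X(z_i, y)$ and $d_X(x_0, y) \le d_X(x_0, z_i) + d_X(z_i, y)$ give $\rho_y(x) \ge \rho_{z_i}(x) - 2 d_X(z_i, y)$, which is not good enough unless $y$ is close to $z_i$ — so the lower bound genuinely needs the projection $q$ and the observation that $y$ lies on the $q$-side, making $d_X(z_i, y) = d_X(z_i, q) + d_X(q, y)$ cancel correctly against the corresponding terms.

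The main obstacle I anticipate is precisely keeping the bookkeeping of signed distances straight: one must handle the two cases ($y$ between $z_1$ and $q$, or between $q$ and $z_2$) and, within each, the fact that $\rho_w(x)$ involves $d_X(x_0, w)$ which is itself only controlled up to the triangle inequality in terms of $d_X(x_0, q)$ and $d_X(q, w)$. Concretely, the identity I want is
\[
\rho_y(x) - \rho_{z_2}(x) = \big(d_X(x,y) - d_X(x,z_2)\big) - \big(d_X(x_0,y) - d_X(x_0,z_2)\big),
\]
and on the right the first bracket equals $-d_X(y, z_2) + O(\delta)$ (by the reverse triangle inequality through $q$, since $y, z_2$ are on the same side), while the second bracket is at least $-d_X(y,z_2)$ and at most $d_X(y,z_2)$; combining these with the analogous identity for $z_1$ and taking the appropriate case gives both bounds with a uniform constant $C = O(\delta)$. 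I would present this as two short displayed computations, one per case, invoking Proposition \ref{prop:reverse triangle} at the start and nothing heavier.
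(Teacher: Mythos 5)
Your reduction via the nearest-point projection $q$ of $x$ to $[z_1,z_2]$ is the right first move, and your \emph{upper} bound is fine: if $y$ lies on the $z_2$-side of $q$, then $d_X(x,y)-d_X(x,z_2)=-d_X(y,z_2)+O(\delta)$ by Proposition \ref{prop:reverse triangle}, and the crude bound $d_X(x_0,y)-d_X(x_0,z_2)\ge -d_X(y,z_2)$ gives $\rho_y(x)\le\rho_{z_2}(x)+O(\delta)$. But your \emph{lower} bound argument has a genuine gap. With the same ingredients you only get
\[
\rho_y(x)\ge\rho_{z_2}(x)-2d_X(y,z_2)+O(\delta)
\quad\text{and}\quad
\rho_y(x)\ge\rho_{z_1}(x)-2d_X(q,z_1)+O(\delta),
\]
and in general $d_X(y,z_2)$ and $d_X(q,z_1)$ can both be large simultaneously (already on a tree: put branch points at $q$ and at $a\in[q,z_2]$, hang $x$ off $q$ and $x_0$ off $a$, and take $y$ between $q$ and $a$). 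So there is no ``appropriate case'' to choose, and the cancellation you allude to does not happen: the plain triangle inequality for $d_X(x_0,\cdot)$ throws away too much.

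The missing idea is to project $x_0$ to $[z_1,z_2]$ as well. Let $p_{x_0}$ be its nearest-point projection. Applying Proposition \ref{prop:reverse triangle} to both $x$ and $x_0$ gives, for every $z\in[z_1,z_2]$,
\[
\rho_z(x)=\bigl(d_X(x,q)-d_X(x_0,p_{x_0})\bigr)+\bigl(d_X(q,z)-d_X(p_{x_0},z)\bigr)+O(\delta),
\]
so up to an additive constant independent of $z$ and an $O(\delta)$ error, the function $z\mapsto\rho_z(x)$ on $[z_1,z_2]$ equals $z\mapsto d_X(q,z)-d_X(p_{x_0},z)$, i.e.\ a horofunction with both $q$ and the basepoint $p_{x_0}$ on the geodesic. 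In that one-dimensional situation \eqref{E:qc} holds with $C=0$: if the order is $z_1,q,p_{x_0},z_2$, then $d_X(q,z)-d_X(p_{x_0},z)\in[-d_X(q,p_{x_0}),\,d_X(q,p_{x_0})]$ for all $z$, with the two extremes attained at $z_1$ and $z_2$. This is exactly the cancellation you need for the lower bound, and it is how the paper's proof proceeds. Your write-up should therefore keep the upper-bound computation but replace the lower-bound step by this two-projection reduction to the collinear case.
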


\begin{proof}
Let us first assume that $x, x_0$ belong to $[z_1, z_2]$. Up to swapping $z_1$ and $z_2$, we can assume $x \in [z_1, x_0]$; then we have the bound
$$\rho_{z_1}(x) = - d_X(x, x_0) \leqslant \rho_y(x) = d_X(x, y) - d_X(x_0, y) \leqslant d_X(x, x_0) = \rho_{z_2}(x)$$
which yields the claim.
Now, in the general case let $p_x$ be the closest point projection of
$x$ to $[z_1, z_2]$, and $p_{x_0}$ the projection of $x_0$. By the
definition of $\rho_z$,
\begin{align*}
\rho_z(x) & = d_X(x, z) - d_X(x_0, z). \\
\intertext{Then by the reverse triangle inequality (Proposition \ref{prop:reverse triangle}) we have for each $z \in X$,}
\rho_z(x) & = d_X(x, p_x) + d_X(p_x, z) - d_X(x_0, p_{x_0}) -
d_X(p_{x_0}, z) + O(\delta),
\end{align*}
hence for each $i = 1, 2$
$$\rho_{y}(x) - \rho_{z_i}(x) = \widetilde{\rho}_{y}(p_x) - \widetilde{\rho}_{z_i}(p_x) + O(\delta),$$
where $\widetilde{\rho}_{z}(x) := d_X(x, z) - d_X(p_{x_0}, z)$ denotes the horofunction based at $p_{x_0}$. Since now $p_{x_0}$ and $p_x$ lie on 
$[z_1, z_2]$, the claim follows by the previous case.
\end{proof}

Lemma \ref{L:qc} implies the following weak convexity property of both
shadows and their complements.

\begin{corollary} \label{cor:weak convexity} %
There exists a constant $C$, which depends only on $\delta$, such that 
for each shadow $S = S_{x_0}(x, R)$ the following hold:
\begin{enumerate}
\item if $z_1$ and $z_2$ belong to $S$, 
then the geodesic segment $[z_1, z_2]$ lies in $S_{x_0}(x, R + C)$;
\item
if $z_1$ and $z_2$ do not belong to $S$, then the geodesic segment $[z_1, z_2]$ does not intersect $S_{x_0}(x, R-C)$.
\end{enumerate}
Moreover, for each $(Q, c)$ which satisfy Proposition \ref{prop:morse}, the above statements still hold with ``geodesic" replaced by ``$(Q,c)$-quasi-geodesic", where this time $C$ depends on $\delta, Q$, and $c$.
\end{corollary}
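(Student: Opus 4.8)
The plan is to reduce the statement to Lemma \ref{L:qc} using the dictionary provided by Lemma \ref{L:shadowhoro}: a point $y \in X$ lies in $S = S_{x_0}(x,R)$ if and only if $\rho_y(x) \le dep(S)$, where $dep(S_{x_0}(x,R)) = 2R - d_X(x_0,x)$ increases monotonically with $R$. In other words, membership of $y$ in a shadow around $x$ is an upper bound on the number $\rho_y(x)$, while exclusion of $y$ from such a shadow is a lower bound on $\rho_y(x)$; and Lemma \ref{L:qc} controls exactly these values along a geodesic, bounding $\rho_y(x)$ for $y \in [z_1,z_2]$ between $\min\{\rho_{z_1}(x),\rho_{z_2}(x)\}-C$ and $\max\{\rho_{z_1}(x),\rho_{z_2}(x)\}+C$. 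So the corollary should follow after bookkeeping with the constants.

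For item (1): if $z_1, z_2 \in S = S_{x_0}(x,R)$ then $\rho_{z_1}(x), \rho_{z_2}(x) \le dep(S)$, so the upper bound in \eqref{E:qc} would give, for every $y \in [z_1,z_2]$,
\[ \rho_y(x) \le \max\{\rho_{z_1}(x),\rho_{z_2}(x)\} + C \le dep(S) + C = dep\big(S_{x_0}(x, R + \tfrac{C}{2})\big), \]
hence $y \in S_{x_0}(x, R + \tfrac{C}{2}) \subseteq S_{x_0}(x, R + C)$ by Lemma \ref{L:shadowhoro}. For item (2): if $z_1, z_2 \notin S$ then $\rho_{z_1}(x), \rho_{z_2}(x) > dep(S)$, and the lower bound in \eqref{E:qc} would give, for $y \in [z_1,z_2]$,
\[ \rho_y(x) \ge \min\{\rho_{z_1}(x),\rho_{z_2}(x)\} - C > dep(S) - C = dep\big(S_{x_0}(x, R - \tfrac{C}{2})\big), \]
so $y \notin S_{x_0}(x, R - \tfrac{C}{2})$; since $S_{x_0}(x, R - C) \subseteq S_{x_0}(x, R - \tfrac{C}{2})$ by monotonicity of shadows, $y \notin S_{x_0}(x, R-C)$ as well. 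Renaming $\tfrac{C}{2}$ as $C$ throughout yields the stated form.

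For the quasigeodesic version, I would fix $(Q,c)$ satisfying Proposition \ref{prop:morse}, let $L = L(\delta, Q, c)$ be the corresponding Morse constant, and let $\gamma$ be a $(Q,c)$-quasigeodesic from $z_1$ to $z_2$. By Proposition \ref{prop:morse}, $\gamma$ lies in the $L$-neighbourhood of a geodesic $[z_1,z_2]$, so each $y \in \gamma$ has a companion $y' \in [z_1,z_2]$ with $d_X(y,y') \le L$. Since $y \mapsto \rho_y$ is $2$-Lipschitz --- the estimate $\norm{\rho_y(x) - \rho_{y'}(x)} \le 2 d_X(y,y')$ established in the continuity lemma for $\rho$ --- we get $\norm{\rho_y(x) - \rho_{y'}(x)} \le 2L$. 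Feeding this into the bounds above, and using that $z_1$ and $z_2$ are the common endpoints of $\gamma$ and $[z_1,z_2]$, shows that (1) and (2) hold along $\gamma$ with $C$ replaced by $C + 2L$, a constant depending only on $\delta, Q, c$.

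The argument is essentially just bookkeeping once Lemma \ref{L:qc} is in hand; the only step requiring any care is keeping the monotonicity of $R \mapsto dep(S_{x_0}(x,\cdot))$ straight and matching the inequality directions correctly (membership $\leftrightarrow$ upper bound on $\rho_y(x)$, exclusion $\leftrightarrow$ lower bound), together with absorbing the Morse constant correctly in the quasigeodesic step.
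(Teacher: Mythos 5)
Your proof is correct and follows essentially the same route as the paper's: translate membership/exclusion in a shadow into an upper/lower bound on $\rho_\cdot(x)$ via Lemma \ref{L:shadowhoro}, apply the two-sided bound of Lemma \ref{L:qc}, and translate back, absorbing a factor of $2$ in the relation $dep(S_{x_0}(x,R)) = 2R - d_X(x_0,x)$. The paper dispatches the quasigeodesic extension as ``immediate'' from Proposition \ref{prop:morse}; your version spells this out by combining the Morse constant $L$ with the $2$-Lipschitz dependence of $\rho_y(x)$ on $y$, which is the natural way to make that step precise and is exactly the intended argument.
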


\begin{proof}
If $z_1, z_2$ belong to $S$, then $\rho_{z_i}(x) \leqslant dep(S)$ for each $i = 1, 2$ by Lemma \ref{L:shadowhoro}, hence Lemma \ref{L:qc} implies
$$\rho_y(x) \leqslant \max\{ \rho_{z_1}(x), \rho_{z_2}(x) \} + C \leqslant dep(S) + C = dep(S')$$
 where $S' = S_{x_0}(x, R+C/2)$, 
thus $y \in S'$ once again by Lemma \ref{L:shadowhoro}. The proof of (2) is similar, using the left-hand side of equation \eqref{E:qc}.
The extension to quasi-geodesic is immediate by the fellow-traveling property of Proposition \ref{prop:morse}.
\end{proof}

\section{Convergence to the boundary} \label{section:convergence}

In this section we prove the following theorem.

\begin{theorem} \label{T:conv} %
Let $G$ be a countable group of isometries of a
geodesic, separable, $\delta$-hyperbolic space $X$ (not necessarily
proper), and let $\mu$ be a non-elementary probability measure on $G$. 
Then for each $x_0 \in X$, almost every sample path $(w_n x_0)_{n \in \N}$
converges in $X \cup \partial X$ to a point of the Gromov boundary
$\partial X$.
\end{theorem}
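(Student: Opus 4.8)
\textbf{Proof proposal for Theorem \ref{T:conv}.}

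The plan is to exploit the horofunction machinery developed in Section \ref{section:horofunction} to transfer the classical Furstenberg-type argument to the non-proper setting, where the Gromov boundary itself may fail to be compact. First I would note that since $\mathrm{Lip}^1_{x_0}(X)$ is compact and metrizable, the space $\mathcal{P}(\Xh)$ of probability measures on $\Xh$ is weak-$*$ compact, so there exists a $\mu$-stationary measure $\nu$ on $\Xh$; concretely, take any weak-$*$ limit of the Ces\`aro averages $\frac1n\sum_{k=1}^n \mu^{*k}*\delta_{\rho_{x_0}}$. By the standard martingale argument (applying the martingale convergence theorem to the $\mathcal{P}(\Xh)$-valued martingale $(w_n\nu)$), for almost every sample path $\omega=(w_n)$ the measures $w_n\nu$ converge weak-$*$ to some limit measure $\nu_\omega\in\mathcal{P}(\Xh)$.

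The crucial step — and the one I expect to be the main obstacle — is to show that $\nu_\omega$ is almost surely a point mass $\delta_h$ for some infinite horofunction $h\in\xr$, and that simultaneously $w_n x_0$ converges in $\Xg$ to the Gromov boundary point $\phi(h)$. The natural approach is the shadow criterion: using Proposition \ref{prop:shadow open}, there is a radius $R_0$ so that for each $g$ the set $\overline{S_{x_0}(gx_0,R_0)}^\delta$ contains an open subset of $\Xb$ carrying positive $\tnu$-mass, where $\tnu=\phi_*\nu$ (one must first check $\nu$ gives full mass to $\xr$, or else argue directly on $\Xh$ using Lemma \ref{L:shadowhoro} and Corollary \ref{cor:weak convexity} to control $\nu$ of shadow-closures in $\Xh$). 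The non-elementarity of $\mu$ and a standard argument (a 0-1 law plus the fact that two independent hyperbolic isometries have disjoint fixed-point pairs) then forces $d_X(x_0,w_nx_0)\to\infty$ and forces the successive shadows $S_{x_0}(w_nx_0,R_n)$ seen from $x_0$ to shrink — using the weak-convexity/nesting properties of shadows under the horofunction embedding — so that $w_n x_0$ is eventually trapped in a nested decreasing sequence of shadows with distance parameter tending to infinity. By \eqref{eq:gp min}, any two points in such a shadow have large Gromov product, so $(w_n x_0)$ is a Gromov sequence and converges to a point $\omega_+\in\Xb$.

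Once convergence of $(w_nx_0)$ to a boundary point is established on a set of positive probability, I would upgrade it to almost sure convergence: the event is $\mu$-invariant in the appropriate sense (shift-invariant on the path space up to a tail argument), so by ergodicity of the shift on $(G^{\mathbb N},\mu^{\otimes\mathbb N})$ — or by a direct Borel–Cantelli estimate on the shadow events — it has probability one. Finally I would identify the limit: the stationarity of $\nu$ together with $w_n\nu\to\nu_\omega=\delta_{h_\omega}$ gives, after pushing forward by $\phi$, that $\phi(h_\omega)=\omega_+=\lim_n w_nx_0$, closing the loop. The delicate points throughout are (i) that $\phi$ is only continuous on $\xr$, not on all of $\Xh$, so one must rule out $\nu_\omega$ charging $\xq$, and (ii) that shadows are not genuinely convex, so all nesting arguments must be run through the coarse weak-convexity statements of Corollary \ref{cor:weak convexity} rather than naive convexity.
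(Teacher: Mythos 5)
Your setup (stationary measure on $\Xh$ by compactness, martingale convergence of $w_n\nu$, the need to rule out mass on $\xq$) matches the paper, but the step you yourself flag as crucial is where the proposal has a genuine gap, in two places. First, the assertion that $\nu_\omega$ is almost surely a point mass $\delta_h$ on $\Xh$ is not provable and is not what is needed: the paper's example of the nearest neighbour walk on $F_2\times\Z/2\Z$ shows sample paths need not converge in $\Xh$, and the limit measure $\nu_\omega$ can be spread over a fiber of the local minimum map; only the pushforward $\phi_*\nu_\omega$ on $\Xb$ is a delta measure. Second, and more seriously, your mechanism for boundary convergence — ``a 0-1 law plus disjoint fixed points forces $d_X(x_0,w_nx_0)\to\infty$, and then the shadows shrink so the path is trapped in nested shadows'' — is an assertion of the conclusion rather than a proof. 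In the non-proper setting there is no standard argument giving $d_X(x_0,w_nx_0)\to\infty$ first (the paper explicitly derives transience on bounded sets \emph{from} convergence to the boundary, not the other way around), and even granting it, escape to infinity does not make the Gromov products $\gp{x_0}{w_nx_0}{w_mx_0}$ tend to infinity or trap the path in a nested family of shadows; producing that trapping is exactly the hard part that the compactness of $\Xb$ would normally provide.

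What the paper does instead, and what your sketch is missing, is the following chain. Subsequential convergence is obtained at the level of horofunctions (Proposition \ref{P:convsub}): cover $\xr$ by countably many open shadows of depth $\le -M$, extract finite subcollections of $\nu$-measure $\ge 1-\epsilon$ for every $M$ (Lemma \ref{L:largeshadow}, using a stationary $\nu$ arising as a Ces\`aro limit so that $\tmun{n}$-masses can be compared to $\nu$-masses of the closed complements), and run a diagonal argument (Lemma \ref{L:conv}) to get a subsequence of $\rho_{w_nx_0}$ converging in $\xr$, hence a subsequence of $w_nx_0$ converging in $\Xb$. Then one shows $w_n\tnu\to\delta_\lambda$ along that subsequence via the non-proper version of Kaimanovich's lemma (Lemma \ref{lemma:action}, which itself needs the weak convexity of Corollary \ref{cor:weak convexity} applied to quasigeodesics) together with non-atomicity of $\tnu$, and concludes that the full sequence $w_n\tnu$ converges to $\delta_\lambda$ by the martingale limit (Proposition \ref{prop:delta}). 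Finally, the implication ``$w_n\tnu\to\delta_\lambda$ implies $w_nx_0\to\lambda$'' is itself a separate argument (Proposition \ref{P:topconv}), using two shadows with disjoint closures and positive $\tnu$-measure (Lemma \ref{L:disjshadows}) and weak convexity to pin $w_nx_0$ inside a slightly enlarged shadow around $\lambda$. None of these steps is supplied or replaced by an alternative in your proposal, so as written it does not yield the theorem; your 0-1 law remark is fine only for upgrading a positive-probability convergence statement that you have not yet established.
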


The proof of the theorem takes several steps, and it exploits the
action of $G$ on the space of probability measures both on the
horofunction compactification $\Xh$ and on the Gromov boundary
$\partial X$.  
The strategy of the proof is the following:

\begin{enumerate}
\item
By compactness, there is a stationary measure $\nu$ on $\Xh$ (Lemma \ref{L:existstat}).
\item
The measure $\nu$ does not charge the finite part of the boundary: $\nu(\xr) = 1$ (Proposition \ref{prop:qzero}).  
\item 
By the martingale convergence theorem (Proposition \ref{P:martingale}), for almost every sample path the sequence of measures $\seq{w_n \nu}$
converges to some measure $\nu_\omega$ in $\mathcal{P}(\Xh)$.
\item
By pushing the sequence forward to $\partial X$, using the local
minimum map $\phi$, almost every sequence $\seq{w_n \tnu}$ converges to some measure $\phi_* \nu_\omega$ in $\mathcal{P}(\partial X)$ (Lemma \ref{lemma:mart2}).
\item
Almost every sample path $\seq{w_n x_0}$ has a subsequence which converges to a point $\lambda$ in the Gromov boundary $\partial X$ (Proposition \ref{P:convsub}).
\item 
Thus, by Lemma \ref{lemma:delta}, almost every sample path $\seq{w_n}$ has a subsequence $(w_{n_k})_{k
  \in \N}$ such that $(w_{n_k} \tnu)_{k \in \N}$ converges to a delta-measure $\delta_\lambda$, 
for some $\lambda \in \Xb$.
\item 
Since the limit exists, almost every sequence $\seq{w_n \tnu}$ converges to a delta-measure $\delta_\lambda$ (Proposition \ref{prop:delta}).
\item
We prove that the fact that $\seq{w_n \tnu}$ converges to $\delta_\lambda$ implies that the sample path $\seq{w_n x_0 }$
converges to $\lambda \in \partial X$ (Proposition \ref{P:topconv}).
\end{enumerate}
In the rest of the section we shall work out the details of the proof.

We remark that sample paths do not in general converge to points in
the horofunction compactification.

\begin{example}
Consider the nearest neighbour random walk on the Cayley graph of $F_2
\cross \Z / 2\Z$, with respect to the standard generating set $\langle
a, b, c \mid [a, c], [b, c], c^2 \rangle$, and with basepoint $x_0$
corresponding to the identity element. If $g \in F_2$, then
$\rho_{g}(c) = 1$, and $\rho_{gc}(c) = -1$. As almost every sample
path $w_n$ hits each coset of $F_2$ infinitely often, sample paths do
not converge in $\Xh$, almost surely.
\end{example}

\subsection{Random walks}

We briefly review some background material on random walks and fix
notation.  Let $\mu$ be a probability distribution on $G$; 
the \emph{step space} of the random walk generated by
$\mu$ is the measure space $(G^\Z, \mu^\Z)$, which is the countable
infinite product of the measure spaces $(G, \mu)$.
Each element of $G^\mathbb{Z}$ is a sequence $\seqz{g_n}$, 
whose entries are the increments of our (bi-infinite) random walk. 
The shift map $T \colon G^\Z \to G^\Z$ sends $\seqz{g_n}$ to
$\seqz{g_{n-1}}$, and is measure preserving and ergodic.

We define the \emph{location} of the random walk at time $n$, which we
shall denote $w_n$ to be
\[ w_n = \left\{
\begin{array}{ll}
g_{0}^{-1} g_{-1}^{-1} \ldots g_{n+1}^{-1} & \text{ if } n \le -1 \\
1 & \text{ if } n = 0 \\
g_1 g_2 \ldots g_n & \text{ if } n \ge 1.
\end{array}
\right. \]
This gives a map $G^\Z \to G^\Z$, defined by $(g_n)_{n \in \Z} \mapsto
(w_n)_{n \in Z}$. 
We shall denote the range of the location map as $\Omega$, to distinguish it from the step space, 
and call $\P$ the pushforward to $\Omega$ of the product measure $\mu^\Z$ on the step space $G^\Z$.
We shall refer to $(\Omega, \P)$ as the \emph{path space}, and elements
$\w \in \Omega$ as \emph{sample paths} of the random walk. The shift
map $T$ acts on $\Omega$ by $T^k \colon (w_n)_{n \in \Z} \mapsto
\seqz{w_k^{-1} w_{n+k}}$, and is measure preserving and
ergodic.

\subsection{Stationary measures}

Let $M$ be a metrizable topological space, and denote
$\mathcal{P}(M)$ the space of Borel probability measures on $M$.
The space $\mathcal{P}(M)$ is endowed with the weak-* topology,
which is defined by saying $\nu_n \to \nu$ if for each continuous
bounded function $f$ on $M$ one has $\nu_n(f) \to \nu(f)$.

If now $G$ is a countable group which acts on $M$ by homeomorphisms, we
denote $g\nu$ the pushforward of $\nu \in \mathcal{P}(M)$ under the
action of $g \in G$, i.e. $g \nu(U) = \nu(g^{-1} U)$, and define the
\emph{convolution operator} $\star \colon \mathcal{P}(G) \times
\mathcal{P}(M) \to \mathcal{P}(M)$ as the average of the
pushforwards:
$$\mu \star \nu := \sum_{g \in G} \mu(g) \ g\nu.$$ 
We say that 
a probability distribution $\nu$ on $M$ is
\emph{$\mu$-stationary} if $\mu \star \nu = \nu$, i.e. for each Borel set $U$ we have 
\begin{equation} \label{eq:mustationary} %
\nu(U) = \sum_{g \in G} \mu(g) \nu(g^{-1}U).   
\end{equation}
A space $M$ equipped with a $\mu$-stationary measure $\nu$ is called a \emph{$(G, \mu)$-space}.
Now, if we fix a base point $x_0 \in M$, we shall write $\tmu \in \mathcal{P}(M)$ for 
the pushforward of $\mu$ under the orbit
map, i.e. if $U \subset M$ then $\tmu(U) = \mu(\{ g \in G : g
  x_0 \in U\})$. We shall write $\mun{n}$ for the $n$-fold
convolution of $\mu$ with itself on $G$, we shall write $\tmun{n}$ for
the pushforward of $\mun{n}$ to $M$, and finally, we shall write
$\overline{ \mu }_n$ for the Ces\`aro averages of the pushforward
measures, $\overline{ \mu }_n := \tfrac{1}{n}(\tmu + \tmu_2 + \cdots +
\tmu_n)$.  Classical compactness arguments yield the following:

\begin{lemma} \label{L:existstat} %
Let $G$ be a countable group which acts by homeomorphisms on a compact metric space $M$, 
and let $\mu$ be a probability
distribution on $G$. Then there exists a $\mu$-stationary Borel
probability measure $\nu$ on $M$.
\end{lemma}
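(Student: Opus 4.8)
The plan is to establish the existence of a $\mu$-stationary measure on the compact metric space $M$ via the standard Krylov--Bogolyubov / Cesàro-averaging argument, which works verbatim because $\mathcal{P}(M)$ is compact in the weak-$*$ topology whenever $M$ is compact metric. First I would fix any Borel probability measure $\eta$ on $M$ (for instance the point mass $\delta_{x_0}$ at some basepoint, though any choice works) and form the sequence of Cesàro averages
\[
\nu_n := \frac{1}{n} \sum_{k=1}^{n} \mun{k} \star \eta
= \frac{1}{n}\left( \mu \star \eta + \mu \star \mu \star \eta + \cdots + \mun{n} \star \eta \right).
\]
Each $\nu_n$ lies in $\mathcal{P}(M)$.

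Next I would invoke compactness: since $M$ is a compact metric space, $\mathcal{P}(M)$ with the weak-$*$ topology is compact (and metrizable, e.g.\ via the Lévy--Prokhorov metric), so the sequence $\seq{\nu_n}$ has a weak-$*$ convergent subsequence $\nu_{n_j} \to \nu$ for some $\nu \in \mathcal{P}(M)$. The key computation is then to show $\mu \star \nu = \nu$. One estimates
\[
\mu \star \nu_n - \nu_n = \frac{1}{n}\left( \mun{n+1}\star\eta - \mu \star \eta \right),
\]
whose total variation norm is at most $2/n \to 0$; passing to the subsequential weak-$*$ limit and using that convolution with $\mu$ is continuous on $\mathcal{P}(M)$ (because $G$ acts by homeomorphisms on $M$ and $\mu$ is a countable convex combination, so weak-$*$ continuity follows from dominated convergence applied to the defining integrals against bounded continuous test functions) gives $\mu \star \nu = \nu$, i.e.\ $\nu$ is $\mu$-stationary.

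The one point that requires a little care — and the only plausible obstacle — is the continuity of $\eta \mapsto \mu \star \eta$ on $\mathcal{P}(M)$ when $G$ is merely countable rather than finite, since $\mu \star \eta = \sum_{g} \mu(g)\, g\eta$ is an infinite sum. This is handled by observing that for a bounded continuous $f$ on $M$, $(\mu\star\eta)(f) = \sum_g \mu(g) \int_M f(gx)\, d\eta(x)$, each term is bounded by $\mu(g)\|f\|_\infty$, so the series converges uniformly in $\eta$; hence if $\eta_j \to \eta$ weak-$*$ then each integral converges and dominated convergence for series gives $(\mu\star\eta_j)(f) \to (\mu\star\eta)(f)$. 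Alternatively, one can avoid this entirely by noting that $\|\mu\star\eta_1 - \mu\star\eta_2\|_{TV} \le \|\eta_1 - \eta_2\|_{TV}$ is not quite what is needed, so the test-function argument above is the clean route. With continuity in hand the argument closes, and since the whole construction is classical I would simply state the lemma and give this short proof, perhaps citing a standard reference such as \cite{furman} or the analogous statement in \cite{kai94} for groups acting on compact spaces.
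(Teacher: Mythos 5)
Your proof is correct and follows essentially the same route as the paper: the paper's proof is exactly the Cesàro-average argument on $\mathcal{P}(M)$, using weak-$*$ compactness (the paper also mentions Schauder--Tychonoff as an alternative). You simply spell out the telescoping estimate and the weak-$*$ continuity of $\eta \mapsto \mu \star \eta$, which the paper leaves implicit.
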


\begin{proof}
Since $M$ is a compact metrizable space, then $\mathcal{P}(M)$ is compact in the
weak-$*$ topology. Then any weak-$*$ limit point of the sequence $\seq{\overline{\mu}_n}$ of the 
Ces\`aro averages is $\mu$-stationary.
An alternate proof follows from the Schauder-Tychonoff fixed point theorem.
\end{proof}

Applying the above arguments to $\Xh$ implies that there exists a $\mu$-stationary measure
$\nu$ on $\Xh$, i.e. $(\Xh, \nu)$ is a
$(G, \mu)$-space.  We now show that the measure $\nu$ is supported on
$\xr$.

\begin{proposition} \label{prop:qzero} %
Let $G$ be a non-elementary countable group of isometries of a
separable Gromov hyperbolic space $X$. Let $\mu$ be a non-elementary 
probability distribution on $G$, and let $\nu$ be a $\mu$-stationary measure
on $\Xh$. Then
\[ \nu(\xq) = 0. \]
\end{proposition}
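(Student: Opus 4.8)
The plan is to argue that the finite horofunctions $\xq$ are ``small'' from the point of view of any stationary measure, by combining the structure of $\xq$ (bounded-diameter local minimum sets, Lemma \ref{lemma:bdd diameter}) with the recurrence-type behaviour of the random walk. The key point is that a finite horofunction $h$ is essentially determined, up to the action of $G$, by the point $\phi(h) \in X$ where it attains its (coarse) minimum, and the stationary measure on $X$ should spread out to infinity, so no single ``finite'' region can retain positive mass.

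First I would exploit stationarity directly. If $\nu(\xq) > 0$, normalize $\nu$ restricted to $\xq$ — but note $\xq$ need not be $\mu$-invariant as a set in a way that makes this immediate, so instead I would look at the pushforward $\phi_*(\nu|_{\xq})$, a (sub-probability) measure on $X$, and at the quantity $\int_{\xq} h(x_0)\, d\nu(h)$ or more usefully $\int_{\xq} \inf(h)\, d\nu(h)$. Using the cocycle formula \eqref{E:action}, $g.h(x_0) = -h(g^{-1}x_0) = \rho_{\text{something}}$-type expressions, one can track how $\inf(h)$ transforms under the $G$-action: $\inf(g.h) = \inf(h) - h(g^{-1}x_0)$, and $-h(g^{-1}x_0) \ge -\|h\|$-type bounds are not enough, but by Lemma \ref{L:horoineq} and the fact that $h$ is $1$-Lipschitz one gets $|h(g^{-1}x_0)| \le d_X(x_0, g^{-1}x_0)$. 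The cleaner route: consider the function $F(h) := \inf(h) \in [-\infty, 0]$ on $\Xh$, which is measurable, and observe that stationarity gives $\int F\, d\nu = \sum_g \mu(g) \int F(g.h)\, d\nu(h)$, i.e. $\int F\, d\nu$ is preserved; meanwhile $F(g.h) = F(h) - h(g^{-1}x_0)$. If $\nu(\xq)>0$, then $\int_{\xq} F\, d\nu$ is a well-defined real number (possibly $-\infty$, in which case we first need to rule that out — but on $\xq$, $F > -\infty$ pointwise, though the integral could still diverge; I would handle this by truncating, replacing $F$ by $F \vee (-M)$ and letting $M \to \infty$, or by restricting to $\{h : \inf h \ge -M\}$ which has positive measure for some $M$).

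The heart of the argument should then be: using the two independent hyperbolic isometries in $\supp\mu$ (non-elementarity), one can find, with positive probability at each step, a group element $g$ with $h(g^{-1}x_0)$ \emph{very negative} for every $h$ in a given shadow — this is where Proposition \ref{prop:shadow open} and the geometry of shadows enter. Concretely, if $h \in \xq$ with $\phi(h) = p$ a coarse minimum at bounded distance from $x_0$, and $g$ is chosen so that $g^{-1}x_0$ lies deep inside a shadow ``opposite'' to $p$, then by Proposition \ref{prop:horofunction geodesic} the restriction of $h$ to a geodesic from $p$ through $g^{-1}x_0$ is increasing, so $h(g^{-1}x_0) \ge d_X(p, g^{-1}x_0) - O(\delta) \to \infty$, forcing $\inf(g.h) = \inf(h) - h(g^{-1}x_0) \to -\infty$, i.e. $g.h$ is pushed toward $\xr$. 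Iterating and using Borel--Cantelli (or the stationarity identity with the truncated $F$), the mass on $\{\inf h \ge -M\}$ must leak to $\{\inf h = -\infty\} = \xr$; since this holds for every $M$, we conclude $\nu(\xq) = \lim_M \nu(\{\inf h \ge -M\}) = 0$.

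The main obstacle I anticipate is making the ``find a good $g$'' step uniform and quantitative enough: I need that for $\nu$-almost every $h \in \xq$ and every $M$, the $\mu$-random increment $g$ satisfies $h(g^{-1}x_0) \ge M$ with probability bounded below by a constant not depending on $h$ (only on $M$). Since $\supp\mu$ may be unbounded and the basepoint-distances $d_X(x_0, g^{-1}x_0)$ are not controlled, and since $\phi(h)$ for $h\in\xq$ could a priori be far from $x_0$ (Lemma \ref{lemma:bdd diameter} only bounds the diameter of $\phi(h)$, not its distance from $x_0$), I will likely need to first stratify $\xq$ by $d_X(x_0, \phi(h))$ and run the argument on each piece, or — more robustly — avoid $\phi$ altogether and phrase everything via the reflected random walk / the identity $\inf(g.h) = \inf(h) - h(g^{-1}x_0)$ together with a lower bound on $-h(g^{-1}x_0)$ coming purely from $h \in \Xh$ and $g$ ranging over a suitable subset of $\supp\mu$ guaranteed nonempty by non-elementarity. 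Getting a clean, basepoint-independent lower bound here is the technical crux; once it is in place, the stationarity + truncation + limit argument is routine.
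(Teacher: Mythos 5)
Your geometric input is sound: for a finite horofunction $h$, the value grows linearly away from the coarse minimum set, so $\inf(g.h)=\inf(h)-h(g^{-1}x_0)\approx \inf(h)-d_X(\phi(h),g^{-1}x_0)$, and your stratification of $\xq$ by $d_X(x_0,\phi(h))$ (equivalently by $\inf h \ge -M$) matches the paper's sets $Y_r=\{h\in\xq:\phi(h)\cap B(x_0,r)\neq\emptyset\}$. But the probabilistic engine of your argument has a genuine gap. First, $\xq$ and $\xr$ are $G$-invariant, so no group element ever moves a finite horofunction into $\xr$; the picture of mass ``leaking to $\{\inf h=-\infty\}$'' cannot be literally correct, and since $\nu(\xq)=\lim_M\nu(\{\inf h\ge -M\})$ is an increasing limit, what you actually must prove is $\nu(\{\inf h\ge -M\})=0$ for every fixed $M$. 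Second, and more seriously, a stationary measure is a fixed point of the convolution equation, not a measure evolving in time, so ``iterating and using Borel--Cantelli'' is not an available mechanism: you need a device that converts the statement ``there are (semi)group elements pushing any given bounded region far from itself'' into ``$\nu$ assigns it zero mass.'' The paper supplies exactly this device (its Lemma \ref{lemma:measure zero}): if a set $Y$ admits, for every translate $fY$, infinitely many pairwise disjoint translates $g_n^{-1}fY$ with $\mu_m(g_n)$ bounded below (for some $m$ depending on $g_n$), then the near-maximizer of $\nu(fY)$ forces total mass $>1$, a contradiction. Your sketch contains no substitute for this maximality argument, and your own ``technical crux'' is stated in a form that cannot hold: you ask for a \emph{single-step} probability bounded below uniformly in $h$, but with no boundedness or moment assumptions on $\mu$ the support may contain no element moving $x_0$ far at all; one must pass to convolution powers $\mu_m$ with element-dependent, arbitrarily small weights, which is precisely what the paper's lemma is engineered to tolerate. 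Relatedly, truncating $\inf(h)$ destroys the cocycle identity, and without first moments the integrals $\int h(g^{-1}x_0)\,d\mu(g)$ need not converge, so the ``stationarity + truncation + limit'' step is not routine even granted a lower bound.

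For comparison, the paper's proof is short: choose (by non-elementarity) a hyperbolic isometry $g$ in the semigroup generated by $\supp(\mu)$ with translation length $\tau(g)>2r+K$, where $K$ bounds $\mathrm{diam}\,\phi(h)$ (Lemma \ref{lemma:bdd diameter}); then the translates $g^{-n}fB(x_0,r)$ are pairwise far apart, so the sets $g^{-n}fY_r$ are disjoint, and Lemma \ref{lemma:measure zero} gives $\nu(Y_r)=0$ for every $r$, hence $\nu(\xq)=0$. If you want to salvage your approach, the missing idea to import is exactly that maximality argument (or an equivalent strict supermartingale/Lyapunov construction); the drift heuristics on $\inf(h)$ alone do not suffice.
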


In order to show that some set $Y$ has measure zero, the basic idea is
to consider the translates $gY$ of the set $Y$, and to consider the
supremum of the measures of these sets. If we choose a translate $gY$
with measure very close to the supremum, then by $\mu$-stationarity,
if $h g Y$ is another translate with $\mu(h) > 0$, then $\nu(h g Y)$
will also be close to the supremum. If there are enough disjoint
translates with $\nu$-measures close to the supremum, then the total
measure of $\nu$ is strictly greater than one, which contradicts the
fact that $\nu$ is a probability measure. We now make this precise.

\begin{lemma} \label{lemma:measure zero} %
Let $G$ a countable group acting by homeomorphisms on a metric space $M$,
$\mu$ a probability distribution on $G$ whose support generates $G$ as a semigroup, 
and $\nu$ a $\mu$-stationary probability measure on $M$. 
Moreover, let us suppose that $Y \subset M$ has the property that there is a
sequence of positive numbers $\seq{ \e_n }$ such that for any
translate $f Y$ of $Y$ there is a sequence $\seq{ g_n }$ of group
elements (which may depend on $f$), such that the translates $fY,
g_1^{-1} fY, g_2^{-1} fY, \ldots$ are all disjoint, and for each
$g_n$, there is an $m \in \N$, such that $\mu_m(g_n) \ge \e_n$. Then
$\nu(Y) = 0$.
\end{lemma}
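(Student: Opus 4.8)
The plan is to run the standard ``amenability/invariance'' argument by contradiction: suppose $\nu(Y) = c > 0$. The first step is to pass to a translate of $Y$ whose measure is nearly maximal. Set $s := \sup_{f \in G} \nu(fY)$; note $s \ge c > 0$. Fix a small $\eta > 0$, to be chosen later depending on $c$, and pick $f \in G$ with $\nu(fY) \ge s - \eta$. Apply the hypothesis to this $f$: we obtain group elements $g_1, g_2, \dots$ such that the translates $fY, g_1^{-1}fY, g_2^{-1}fY, \dots$ are pairwise disjoint, and for each $n$ there is some $m = m(n) \in \N$ with $\mu_m(g_n) \ge \e_n$.

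The key estimate comes from stationarity. Since $\nu$ is $\mu$-stationary, it is also $\mu_m$-stationary for every $m$, so for each $n$,
\[
\nu(fY) = \sum_{g \in G} \mu_{m(n)}(g)\, \nu(g^{-1} fY) \ge \mu_{m(n)}(g_n)\, \nu(g_n^{-1} fY) + \Big(1 - \mu_{m(n)}(g_n)\Big)\cdot 0,
\]
but this goes the wrong way; instead one bounds $\nu(g_n^{-1}fY)$ from below. Rearranging the stationarity identity and using that all other terms are nonnegative and sum to at most $1 - \mu_{m(n)}(g_n)$, together with $\nu(g^{-1}fY) \le s$ for every $g$, gives
\[
\nu(fY) \le \mu_{m(n)}(g_n)\, \nu(g_n^{-1}fY) + \big(1 - \mu_{m(n)}(g_n)\big) s,
\]
hence
\[
\nu(g_n^{-1} fY) \ge s - \frac{s - \nu(fY)}{\mu_{m(n)}(g_n)} \ge s - \frac{\eta}{\e_n}.
\]
This is the crucial point: each translate $g_n^{-1}fY$ has measure close to $s$, with an error controlled by $\eta/\e_n$.

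Now choose $N$ large enough that $N s > 1$ (possible since $s > 0$), and then choose $\eta > 0$ small enough that $\eta \sum_{n=1}^{N} \e_n^{-1} < Ns - 1$. Since the $N+1$ sets $fY, g_1^{-1}fY, \dots, g_N^{-1}fY$ are pairwise disjoint, additivity of $\nu$ gives
\[
1 \ge \nu(fY) + \sum_{n=1}^{N} \nu(g_n^{-1} fY) \ge \sum_{n=1}^{N}\Big(s - \frac{\eta}{\e_n}\Big) > Ns - (Ns - 1) = 1,
\]
a contradiction. (One does not even need the $\nu(fY) \ge s - \eta$ term in the final sum; the $N$ translates already suffice.) Therefore $\nu(Y) = 0$.

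The main obstacle is purely bookkeeping: making sure the quantifiers in the hypothesis are used in the right order — the sequence $\seq{\e_n}$ is fixed in advance (independent of $f$), while the $g_n$ and the exponents $m(n)$ may depend on $f$ — so that the choice of $N$ and $\eta$, which depends only on $s$ (equivalently on $c$) and on the fixed sequence $\seq{\e_n}$, can be made before selecting $f$. One should also note explicitly that $\mu$-stationarity implies $\mu_m$-stationarity for all $m$, which is an easy induction using the convolution identity $\mu_{m} \star \nu = \mu_{m-1} \star (\mu \star \nu) = \mu_{m-1}\star \nu$. No hyperbolicity or group-action geometry enters here; this lemma is purely a statement about stationary measures and disjoint translates.
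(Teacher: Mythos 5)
Your proof is correct and follows essentially the same route as the paper: pass to a translate $fY$ whose measure is within a small error of the supremum $s$, use $\mu_m$-stationarity to deduce that each disjoint translate $g_n^{-1}fY$ with $\mu_m(g_n)\ge \e_n$ has measure at least $s - (\text{error})/\e_n$, and sum over enough disjoint translates to exceed total mass $1$. The only difference is cosmetic bookkeeping of the constants (the paper takes $\e=\min_{i\le N}\e_i$ and error $\e/N$, while you keep the $\e_n$ individually and require $\eta\sum_{n\le N}\e_n^{-1}<Ns-1$), and your remark about the order of choices ($N$ and $\eta$ before $f$) matches the paper's implicit logic.
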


The proof of this is a variation on \cite{Maher_heegaard}*{Lemma 3.5},
but we provide a proof for the convenience of the reader.

\begin{proof}
Suppose that $s := \sup \{ \nu(f Y) : f \in G\} > 0$. Choose $N >
2/s$, let $\e = \min \{ \e_i : 1 \le i \le N \}$, and let $\e_s =
\e/N$. Finally, choose $f$ such that the harmonic measure of $f Y$ is
within $\e_s$ of the supremum, i.e. $\nu(f Y) \ge s - \e_s$. By
hypothesis, there is a sequence of group elements $g_1, \ldots, g_N$
such that the $N$ translates $g_1^{-1} f Y, \ldots, g_N^{-1} f Y$, are
all disjoint, and for each $g_n$ there is an $m$ such that with
$\mu_m(g_n) \ge \e$.

The harmonic measure $\nu$ is $\mu$-stationary, and hence
$\mun{m}$-stationary for any $m$, which implies
\begin{align*} 
\nu(f Y) &= \sum_{h \in G} \mun{m}(h) \nu(h^{-1} f Y).
\intertext{%
For any element $g \in G$ we may rewrite this as
}
\mun{m}(g) \nu(g^{-1} f Y) &= \nu(f Y) - \sum_{h \in G \setminus g}
\mun{m}(h) \nu(h^{-1} f Y). \\
\intertext{%
As we have chosen $f Y$ to have measure within $\e_s$ of the supremum,
this implies 
}
\mun{m}(g) \nu(g^{-1} f Y) & \ge s-\e_s -\sum_{h \in G \setminus g}
\mun{m}(h) \nu(h^{-1} f Y).
\intertext{%
The harmonic measure of each translate of $Y$ is at most the supremum
$s$, 
}
\mun{m}(g) \nu(g^{-1} f Y) & \geqslant s - \e_s - s \sum_{h \in G
  \setminus g} \mun{m}(h),
\intertext{%
and the sum of $\mun{m}(h)$ over all $h \in G \setminus g$ is equal to
$1-\mun{m}(g)$, which implies that 
}
\mun{m}(g) \nu(g^{-1} f Y) & \geqslant s - \e_s - s(1 - \mun{m}(g) ).
\intertext{%
As the semi-group support $\langle \supp(\mu) \rangle_+$ is equal to
$G$, for any element $g \in G$ there is an $m$ such that $\mun{m}(g) >
0$. For such an $m$ we may divide by $\mun{m}(g)$ to give the
following estimate for the harmonic measure of the translate,
}
\nu(g^{-1} f Y) & \geqslant s - \e_s /\mun{m}(g).
\end{align*}
In particular, such an estimate holds for each of the $N$ disjoint
translates $g_i^{-1} f Y$, and furthermore, $\mun{m}(g_i) \ge \e$ for each
$i$. This implies that
\[ 
\nu(\bigcup_{i = 1}^N g_i^{-1} f Y) = \nu(g_1^{-1} f Y) + \dots + \nu(g_N^{-1} f Y)  \geqslant N(s - \e_s/\e). \]
As we chose $N > 2/s$, and $\e_s/\e = 1/N$, this implies that the total
measure $\nu(\bigcup g_i^{-1} f Y)$ is greater than one, a contradiction.
\end{proof}

We now complete the proof of Proposition \ref{prop:qzero}.  Recall
that the translation length $\tau(g)$ of an isometry $g$ of $X$ is defined to be
\[ \tau(g) := \lim_{n \to \infty} \tfrac{1}{n} d_X(x_0, g^n x_0). \]
This definition is independent of the base point $x_0$, and $\tau(g) >
0$ if and only if $g$ is a hyperbolic isometry, and furthermore
$\tau(g^k) = k\tau(g)$.

\begin{proof}[Proof of Proposition \ref{prop:qzero}]
We shall apply Lemma \ref{lemma:measure zero} taking as $Y$ the set of
horofunctions whose local minimum lies in a given ball around the
base-point: precisely, $Y = \{ h \in \xq \ : \ \phi(h) \cap B(x_0, r)
\neq \emptyset \}$, where $\phi$ is the local minimum map, and $B(x_0,
r)$ is a ball of radius $r$ in $X$.  As $G$ is non-elementary it
contains hyperbolic isometries of arbitrarily large translation
length. Choose a hyperbolic isometry $g$ with translation length
$\tau(g)$ greater than $2r + K$, where $K$ is the bound on the
diameter of $\phi(h)$ from Lemma \ref{lemma:bdd diameter}.  Now, the
translates $g^{-n} f B(x_0, r)$ are all at least distance $\tau(g) -
2r > K$ apart, hence no $\phi(h)$ can intersect two of them, so the
sets $g^{-n} f Y$ are all disjoint.  As the semi-group generated by
the support of $\mu$ is equal to $G$, for each $n$
there is an $m$ such that $\mun{m}(g^{n}) > 0$. Set $\e_n =
\mun{m}(g^n)$, for some such $m$, then Lemma \ref{lemma:measure zero}
implies that $\nu(Y) = 0$. As this holds for every
$r$, this implies that $\nu(\xq) = 0$, as required.
\end{proof}

The measure $\nu$ is therefore supported on $\xr$, and as $\phi$ is
continuous on $\xr$, the measure $\nu$ pushes forward to a Borel
probability measure 
$$\tnu := \phi_* \nu$$ 
on the Gromov boundary $\Xb$.  The measure $\tnu$ is a
$\mu$-stationary probability measure on $\Xb$, so $(\Xb, \tnu)$ is a
$(G, \mu)$-space.  We now show that $\tnu$ is non-atomic, which
implies that $\nu$ is non-atomic as well. Recall that if the action of
$G$ on $X$ is non-elementary, then $G$ does not preserve any finite
subset of the boundary $\Xb$.

\begin{lemma} \label{lemma:non-atomic} %
Let $G$ be a countable group which acts by isometries on a separable
Gromov hyperbolic space $X$. Let $\mu$ be a non-elementary probability
distribution on $G$, and let $\nu$ be a $\mu$-stationary measure on
$\Xh$, with pushforward $\tnu$ on $\Xb$ under the local minimum map
$\phi$.  Then the measure $\tnu$ is non-atomic (hence so is
$\nu$). Furthermore, any $\mu$-stationary measure on $\Xb$ is the
pushforward of a $\mu$-stationary measure on $\xr$.
\end{lemma}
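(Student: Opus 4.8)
The plan is to follow the classical argument (as in Kaimanovich's and Furstenberg's work) that a $\mu$-stationary measure on the Gromov boundary of a group with a non-elementary action cannot have atoms. First I would argue by contradiction: suppose $\tnu$ has an atom, and let $s := \sup\{\tnu(\{\lambda\}) : \lambda \in \Xb\} > 0$ be the maximal weight of an atom. The key observation is that the set $A := \{\lambda \in \Xb : \tnu(\{\lambda\}) = s\}$ of atoms of maximal weight is non-empty, finite (it has at most $\lfloor 1/s \rfloor$ elements since $\tnu$ is a probability measure), and, crucially, $G$-\emph{quasi-invariant} at the level of the measure class in the following averaged sense: stationarity $\tnu = \sum_g \mu(g)\, g\tnu$ applied to a maximal atom $\lambda \in A$ gives $s = \tnu(\{\lambda\}) = \sum_g \mu(g)\, \tnu(\{g^{-1}\lambda\})$, and since each term $\tnu(\{g^{-1}\lambda\}) \le s$ with total weight $\sum_g \mu(g) = 1$, we must have $\tnu(\{g^{-1}\lambda\}) = s$ for every $g \in \supp\mu$. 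Iterating, $\tnu(\{g^{-1}\lambda\}) = s$ for every $g$ in the semigroup generated by $\supp\mu$, which is all of $G$ since $\mu$ is non-elementary. Hence $A$ is a finite, non-empty, $G$-invariant subset of $\Xb$.

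Next I would invoke the hypothesis that the action is non-elementary to derive a contradiction: a non-elementary action on a Gromov hyperbolic space admits two independent hyperbolic isometries with disjoint fixed-point pairs in $\Xb$, and such a group cannot preserve any non-empty finite subset of $\Xb$ — indeed, a hyperbolic isometry $g$ has exactly two fixed points $g^{\pm}$ on $\Xb$, and for any $\lambda \in \Xb \setminus \{g^-\}$ the sequence $g^n\lambda$ converges to $g^+$, so the $G$-orbit of any boundary point is infinite unless it lies in the (at most two point) fixed set of every hyperbolic element, which is impossible for two independent hyperbolics. This contradicts the finiteness of $A$, so $\tnu$ has no atoms. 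Since $\tnu = \phi_*\nu$ and $\phi$ restricted to $\xr$ has the property (Lemma \ref{lemma:bdd diameter} and the construction of $\phi$) that point-preimages are determined by bounded-difference equivalence classes, an atom of $\nu$ at a horofunction $h$ would push forward to an atom of $\tnu$ at $\phi(h)$; hence $\nu$ is non-atomic as well. (More carefully: $\nu(\{h\}) \le \nu(\phi^{-1}(\phi(h))) $ need not hold as stated, so instead I would observe directly that $\nu(\{h\}) > 0$ forces, by the same stationarity-and-supremum argument applied on $\Xh$, a finite $G$-invariant set of horofunctions, whose images under the $G$-equivariant map $\phi$ form a finite $G$-invariant subset of $X \cup \Xb$; a finite $G$-invariant subset of $X$ is also excluded since hyperbolic isometries have unbounded orbits, giving the same contradiction.)

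For the final sentence, let $\nu'$ be an arbitrary $\mu$-stationary measure on $\Xb$. By Lemma \ref{L:existstat} applied to the compact space $\Xh$ we could try to lift, but more directly: consider the set $\mathcal{L} \subset \mathcal{P}(\Xh)$ of $\mu$-stationary measures $\eta$ on $\Xh$ with $\phi_*\eta = \nu'$; one shows $\mathcal{L} \neq \emptyset$ by a compactness argument — pick any Borel section or, better, push $\nu'$ back along a measurable right inverse of $\phi|_{\xr}$ (which exists since $\phi|_{\xr}$ is a continuous surjection of Polish spaces by Corollary \ref{cor:surjective} and Proposition \ref{prop:phi cts on R}), obtaining some $\eta_0 \in \mathcal{P}(\Xh)$ with $\phi_*\eta_0 = \nu'$, then Cesàro-average the convolution iterates $\overline{\eta_0}_n := \frac1n\sum_{k=1}^n \mu^{(k)} \star \eta_0$ and extract a weak-$*$ limit $\eta$; this $\eta$ is $\mu$-stationary, and since $\phi$ is $G$-equivariant and continuous on the full-measure set $\xr$ (using Proposition \ref{prop:qzero} to know all these measures are supported on $\xr$), $\phi_*$ commutes with convolution and with the averaging, so $\phi_*\eta = \nu'$. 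By Proposition \ref{prop:qzero}, $\eta$ is supported on $\xr$, which is exactly the assertion. The main obstacle I anticipate is the measurable-section step: producing a Borel right inverse to $\phi|_{\xr} : \xr \to \Xb$ requires the Jankov–von Neumann or Kuratowski–Ryll-Nardzewski selection theorem, so one must check that $\xr$ is a Borel (indeed, Polish or at least standard Borel) subset of the compact metrizable space $\Xh$ — this follows since $\xr = \{h : \inf_{x \in D} h(x) = -\infty\}$ for a countable dense set $D$, which is a countable intersection of open sets, hence $G_\delta$ — and that $\phi|_{\xr}$ is Borel measurable, which is immediate from its continuity (Proposition \ref{prop:phi cts on R}).
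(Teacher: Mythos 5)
Your argument for the non-atomicity of $\tnu$ coincides with the paper's: find the maximal atom weight, observe that the set of maximal-weight atoms is finite and $G$-invariant by stationarity, and contradict non-elementarity. Your hesitation about deducing non-atomicity of $\nu$ can be resolved more quickly than your alternative: by Proposition \ref{prop:qzero}, $\nu(\xq) = 0$, so any atom of $\nu$ would lie in $\xr$, and for $h \in \xr$ one does have $\nu(\{h\}) \le \nu\bigl(\phi^{-1}(\{\phi(h)\})\bigr) = \tnu(\{\phi(h)\}) = 0$; your worry about the inequality arises only for $h \in \xq$, which carries no $\nu$-measure.

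For the lifting statement your route is genuinely different from the paper's. The paper cites an abstract surjectivity result (Aliprantis--Border, Theorem 15.14) to conclude that $\phi_* \colon \mathcal{P}(\xr) \to \mathcal{P}(\Xb)$ is onto, then takes $H := \overline{\phi_*^{-1}(\lambda)}$ in the compact $\mathcal{P}(\Xh)$, checks that $H$ is compact, convex, and convolution-invariant, and invokes the Schauder--Tychonoff fixed point theorem to produce a stationary lift; Proposition \ref{prop:qzero} places the fixed point in $\mathcal{P}(\xr)$, and the relativization of the weak-$*$ topology shows it still pushes forward to $\lambda$. You instead construct an explicit preimage $\eta_0$ via a measurable selection theorem applied to the Polish surjection $\phi|_{\xr}$, and then Ces\`aro-average the convolution iterates \`a la Krylov--Bogolyubov. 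Both approaches work, and both must still use Proposition \ref{prop:qzero} to place the stationary weak-$*$ limit in $\mathcal{P}(\xr)$, together with the relativization fact you correctly trace back to $\xr$ being $G_\delta$. The paper's route avoids descriptive set theory entirely at the cost of the abstract surjectivity reference and Schauder--Tychonoff; your route is more constructive, but Jankov--von Neumann only gives a universally measurable (not Borel) section in general, which is still enough to pull back a Borel probability measure but deserves a sentence. Note also that the Ces\`aro averages $\overline{\eta_0}_n$ are supported on $\xr$ because $\xr$ is $G$-invariant and $\eta_0$ is supported there, not because of Proposition \ref{prop:qzero}, which applies only to stationary measures and enters only to control the limit.
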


\begin{proof}
We first observe that if there are atoms, then there must be an atom
of maximal weight, as an infinite sequence of atoms $\seq{b_n}$ of
increasing weights has total measure greater than one. Let $m$ be the
maximal weight of any atom, and let $A_m$ be the collection of atoms
of weight $m$, which is a finite set. As $\tnu$ is
$\mu$-stationary, if $b \in A_m$, then
\[ \tnu(b) = \sum_{g \in G} \mu(g) \tnu(g^{-1}b).  \]
As no atom has weight greater than $m$, all elements of the orbit of $b$ under
$G$ must have the same weight $m$, so $A_m$ is a finite $G$-invariant
set, which contradicts the fact that $G$ is non-elementary.

Finally, as the local minimum map $\phi \colon \xr \to \Xb$ is
surjective, the pushforward map $\phi_* \colon \mathcal{P}(\xr) \to
\mathcal{P}(\Xb)$ is also surjective, see e.g. 
\cite{ab}*{Theorem 15.14}. 
If $\lambda$ is a $\mu$-stationary
measure on $\Xb$, then $\phi_*^{-1}(\lambda)$ is a non-empty, convex subspace of
the space of measures on $\xr$, which can also be seen as a subspace of the space
$\mathcal{P}(\Xh)$ of probability measures on $\Xh$.
Thus, the closure $H$ of $\phi_*^{-1}(\lambda)$ in $\mathcal{P}(\Xh)$ is 
compact, convex, and invariant under convolution
with $\mu$, so by the Schauder-Tychonoff fixed point theorem there is
a $\mu$-stationary measure $\nu$ in $H \subseteq \mathcal{P}(\Xh)$.
However, by Proposition \ref{prop:qzero}, $\nu$ vanishes on the set of 
finite horofunctions, hence it belongs to $\mathcal{P}(\xr)$, and 
since $\nu$ is a limit of elements in $\phi_*^{-1}(\lambda)$
and $\phi_*$ is continuous, we also 
have $\phi_* \nu = \lambda$, as required.
\end{proof}

\subsection{Convergent subsequences}

The goal of this section is to prove the following step in the proof of Theorem \ref{T:conv}:

\begin{proposition} \label{P:convsub} %
Let $G$ be a countable group of isometries of a
separable Gromov hyperbolic space $X$, and let $\mu$ be a non-elementary 
probability distribution on $G$.

Then, for $\P$-almost every sample path $\seq{w_n}$ there
is a subsequence of $\seq{\rho_{w_n x_0}}$ which converges to a
horofunction in $\xr$.  

As a corollary, $\P$-almost every sample path
$\seq{w_n}$ has a subsequence $(w_{n_k})_{k \in \N}$ such that
$(w_{n_k} x_0)_{k \in \N}$ converges to a point in the Gromov boundary
$\Xb$.
\end{proposition}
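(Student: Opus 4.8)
The plan is to use the stationary measure $\nu$ on $\Xh$ together with the fact that $\nu(\xq)=0$ (Proposition \ref{prop:qzero}), and to argue that a positive fraction of the mass of $w_n\nu$ cannot escape to infinity in $X$, so that at least one point of $w_n x_0$ must have a horofunction converging into $\xr$ along a subsequence. Concretely, I would first record that, since $\nu(\xr)=1$, for every $\e>0$ there is a \emph{compact} subset $C_\e\subseteq\xr$ with $\nu(C_\e)>1-\e$; because $C_\e\subseteq\xr$ and every element of $\xr$ is an infinite horofunction, one can cover $C_\e$ by finitely many shadows-type neighbourhoods, so there is a bounded set $B=B(x_0,r)\subseteq X$ and a depth bound such that all horofunctions in $C_\e$ take a value $\le -M$ somewhere in $B$, for $M$ as large as we like (using compactness and pointwise convergence). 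The key inequality from Lemma \ref{L:horoineq} relates the value of a horofunction at a point to the Gromov product, and hence to shadows via Lemma \ref{L:shadowhoro}.

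Next I would bring in the random walk. Since $\nu$ is $\mu$-stationary, $\E[w_n\nu]=\nu$ for every $n$; more precisely, the measures $w_n\nu$ on $\Xh$ form a martingale and hence converge almost surely to some $\nu_\omega\in\mathcal P(\Xh)$ (this is Proposition \ref{P:martingale}/Lemma \ref{lemma:mart2}, which I may assume). The point is to show that for a.e.\ $\omega$ the limit $\nu_\omega$ puts positive mass on a \emph{small} shadow $\overline{S_{x_0}(w_n x_0,R_0)}^h$ for infinitely many $n$ — or more robustly, that $w_n\nu$ assigns mass bounded below to a shadow seen from $w_n x_0$. Here I would use Proposition \ref{prop:shadow open}: there is $R_0$ (depending only on the action) such that $\overline{S_{x_0}(gx_0,R_0)}^\delta$ contains an open set in $\Xb$ containing a limit point of $Gx_0$, for every $g$. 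Translating, $w_n\nu\big(\overline{S_{x_0}(w_n x_0, R_0+r)}^h\big)$ is bounded below by $\nu$ of a fixed shadow through a limit point, which by the previous paragraph is positive once we discard $\e$-mass. The Borel–Cantelli / martingale argument then forces, for a.e.\ sample path, that infinitely many of the points $w_n x_0$ lie in a nested family of shadows with distance parameter $\to\infty$; equivalently (Lemma \ref{L:shadowhoro} again), $\rho_{w_n x_0}(x)\to-\infty$ for a suitable $x$, along a subsequence, so $\seq{\rho_{w_n x_0}}$ has a subsequential limit $h$ with $\inf h=-\infty$, i.e.\ $h\in\xr$.

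Finally, the corollary: given a subsequence $(w_{n_k})$ with $\rho_{w_{n_k}x_0}\to h\in\xr$, apply the local minimum map. We have $\phi(h)\in\Xb$, and since $h$ is the limit of the finite horofunctions $\rho_{w_{n_k}x_0}$, Proposition \ref{prop:phi cts on R} (continuity of $\phi$ on $\xr$, in the sense stated there for sequences of finite horofunctions converging into $\xr$) gives $\phi(\rho_{w_{n_k}x_0})\to\phi(h)$. But $\phi(\rho_y)=y$ for $y\in X$, so $w_{n_k}x_0\to\phi(h)\in\Xb$ in $X\cup\Xb$, as claimed.

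The main obstacle I expect is the middle step: turning ``$\nu$ charges a fixed shadow positively'' into ``a.e.\ sample path visits a sequence of shadows of growing depth, seen from $w_n x_0$''. The clean way to do this is to use the $G$-stationarity to write $\nu = \E[g_1\cdots g_n \,\nu]$ and exploit that $w_n\nu$ converges a.s.; but one must control the \emph{location} of the shadow relative to the drifting basepoint $w_n x_0$, not relative to $x_0$. I would handle this by working with the reversed/adjoint walk or by a direct first-moment estimate: $\P(\rho_{w_n x_0}(x)\le -M \text{ for some } x\in B) $ is bounded below independently of $n$ using stationarity, then upgrade ``infinitely often with positive probability'' to ``infinitely often almost surely'' via a $0$–$1$ law for the tail (the shift $T$ is ergodic on $\Omega$, and the event ``$\rho_{w_n x_0}$ has a subsequential limit in $\xr$'' is shift-invariant). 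Keeping careful track of which shadows are ``small'' (uniformly bounded distance parameter offset, via Proposition \ref{prop:shadow open}) versus ``deep'' (distance parameter $\to\infty$) is the delicate bookkeeping.
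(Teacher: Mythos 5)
Your overall strategy (use $\nu(\xq)=0$, capture most of the $\nu$-mass with finitely many deep shadows, show the walk enters them, extract a subsequential limit in $\xr$, then get the corollary from continuity of $\phi$) is the same as the paper's, and your treatment of the corollary matches the paper exactly (Proposition \ref{prop:phi cts on R}). But the middle step has genuine gaps. First, the reduction to a single bounded set $B=B(x_0,r)$ cannot work: every horofunction satisfies $|h(x)|\le d_X(x_0,x)$, so no horofunction takes a value $\le -M$ anywhere in $B(x_0,r)$ once $M>r$. The finite families of test points (equivalently, shadows of depth $\le -M$) must depend on $M$, and one then needs the diagonal/index-stabilization argument of Lemma \ref{L:conv} applied to the nested open sets $\Sigma_M$ to produce a single subsequence of $(\rho_{w_nx_0})$ converging to an infinite horofunction; your phrase ``$\rho_{w_nx_0}(x)\to-\infty$ for a suitable $x$ along a subsequence'' skips this and is false for any fixed $x$.

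Second, and more seriously, the probabilistic estimate is not justified. Stationarity says $\mu_n\star\nu=\nu$, i.e.\ $\E[w_n\nu]=\nu$: it controls the random translates $w_n\nu$, but says nothing about the law $\tmun{n}$ of the orbit point $w_nx_0$, which is what you need in order to bound $\P(\rho_{w_nx_0}\in\Sigma_M)$ from below. Your geometric substitute is circular: $w_n\nu\bigl(\overline{S_{x_0}(w_nx_0,R_0+r)}^h\bigr)=\nu\bigl(\overline{S_{w_n^{-1}x_0}(x_0,R_0+r)}^h\bigr)$, a shadow whose basepoint $w_n^{-1}x_0$ moves with $n$, so it is not ``a fixed shadow through a limit point'' and has no a priori uniform lower bound --- knowing that $w_n\nu$ charges shadows near $w_nx_0$ is essentially the convergence one is trying to prove. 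The paper closes this gap by a specific device you do not invoke: it takes $\nu$ to be a weak-$*$ limit point of the Ces\`aro averages $\overline{\mu}_n=\tfrac1n(\tmun{1}+\cdots+\tmun{n})$, and uses that the complements $Y_M=\Xh\setminus\Sigma_M$ are closed, so that $\inf_n\tmun{n}(Y_M)\le\inf_n\overline{\mu}_n(Y_M)\le\nu(Y_M)\le\e$; this bounds $\P(\text{no limit point in }\xr)$ by $\e$ directly, with no Borel--Cantelli and no $0$--$1$ law, and note that even this yields control only along infinitely many $n$, not the uniform-in-$n$ lower bound you assert. Your ergodicity patch is correct as far as it goes (the event that $(\rho_{w_nx_0})$ has a limit point in $\xr$ is shift-invariant, since $\rho_{w_1^{-1}w_{n+1}x_0}=w_1^{-1}.\rho_{w_{n+1}x_0}$ and $\xr$ is $G$-invariant), but it only upgrades positive probability to probability one; the positive-probability step is exactly what remains unproved in your proposal.
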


Given a shadow $S = S_{x_0}(x, R)$, we define the \emph{open shadow}
$S^\circ = S^\circ_{x_0}(x, R)$ to be the subset of $\Xh$ given by
\[S^\circ_{x_0}(x, R) := \{ h \in \Xh : h(x) < dep(S) \}. \]
As $\{x\}$ is compact and $(-\infty, dep(S))$ is open in $\R$, the set $S^\circ$
is an open subset of $\Xh$ contained in the interior of $\overline{S}^h$.

\begin{lemma} \label{L:cover}
For each $T <  0$, the set 
$\xr$ is contained in a countable collection of open shadows 
of depth $\le T$.
\end{lemma}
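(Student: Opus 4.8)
The plan is to cover $\xr$ by a countable family of open shadows, each of depth at most $T$, using separability of $X$ together with the observation that every infinite horofunction is realized as a limit of $\rho_{y}$ along a minimizing sequence. First I would fix $T < 0$ and take $h \in \xr$. Since $\inf(h) = -\infty$, there is a point $y \in X$ with $h(y) < T - O(\delta)$; moreover, $h$ is the pointwise limit of $\rho_{z_n}$ for some sequence $z_n \to \infty$ in $X$, and one may in fact take $y = z_n$ for large $n$ so that $h$ itself lies in an open shadow based at $y$. More precisely, I want to produce a shadow $S = S_{x_0}(x, R)$ with $h \in S^\circ$, i.e. $h(x) < dep(S) = 2R - d_X(x_0,x)$, and with $dep(S) \le T$; the point $x$ will be (coarsely) the location of a nearest-point projection witnessing that $h$ drops below $T$, or simply $x = y$ itself with $R$ chosen appropriately.

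The key step is discretization. Since $X$ is separable, fix a countable dense set $D \subseteq X$. For $h$ as above, pick $x \in X$ with $h(x) \le T - 1$ and $d_X(x_0, x)$ large, then choose $x' \in D$ with $d_X(x, x') < 1$; by the $1$-Lipschitz property, $h(x') \le T$. Choosing $R$ to be a half-integer (or drawing it from a fixed countable set of values) with $2R - d_X(x_0, x')$ slightly less than $h(x')$ — but still $\le T$ — we get $h \in S^\circ_{x_0}(x', R)$ and $dep(S_{x_0}(x',R)) \le T$. The collection of all shadows $S_{x_0}(x', R)$ with $x' \in D$ and $R$ ranging over a fixed countable set (say $\tfrac12\Z_{>0}$), intersected with the requirement $dep \le T$, is countable, and by construction every $h \in \xr$ lies in the open version of one of them. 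Hence $\xr$ is covered by this countable family of open shadows of depth $\le T$.

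The main obstacle, I expect, is arranging the depth bound $dep(S) \le T$ simultaneously with $h \in S^\circ$, while keeping the family countable: one must be careful that $R$ is chosen from a countable set independent of $h$, and that the inequality $2R - d_X(x_0, x') \le T$ can always be met. This works because we are free to take $d_X(x_0, x')$ as large as we like (it is only required that $h(x')$ be very negative, which forces $x'$ far from $x_0$ via $|h(x')| \le d_X(x_0, x')$), so for any prescribed $R$ in our countable set we can find $x' \in D$ with $d_X(x_0, x')$ large enough that $2R - d_X(x_0, x') \le T$ and simultaneously $h(x') < 2R - d_X(x_0,x')$. A mild bookkeeping point is that $S^\circ$ was defined via the strict inequality $h(x) < dep(S)$, so one should pick $R$ with $2R - d_X(x_0,x')$ strictly between $h(x')$ and $\min\{T, h(x') + \tfrac12\}$, which is possible since the $R$-values are spaced $\tfrac12$ apart and $h(x')$ can be made as negative as needed. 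This completes the construction.
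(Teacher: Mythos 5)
Your construction is essentially the paper's proof: the paper also covers $\xr$ by the sets $\{h : h(x_i) < T_j\}$, with $x_i$ ranging over a countable dense subset of $X$ and $T_j$ over the rationals below $T$, each such set being exactly an open shadow of depth $T_j \le T$ (with $R = \tfrac12(T_j + d_X(x_0,x_i))$, positive whenever the set is nonempty because $|h(x_i)| \le d_X(x_0,x_i)$); the $1$-Lipschitz transfer from a witness point $x$ with $h(x)$ very negative to a nearby dense point is the same step you use. One detail of your final bookkeeping fails as literally written: for a fixed $x' \in D$ and $R \in \tfrac12\Z_{>0}$, the available depths $2R - d_X(x_0,x')$ form an arithmetic progression of spacing $1$ (not $\tfrac12$), so they can miss the window strictly between $h(x')$ and $\min\{T, h(x')+\tfrac12\}$, which has length at most $\tfrac12$; and making $h(x')$ more negative does not help, since it gives no control on $d_X(x_0,x')$ modulo $1$. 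The repair is immediate and stays within your argument: only require the depth to lie in $(h(x'), T]$, and start from a point $x$ with $h(x) \le T-2$, so that $h(x') \le T-1$ and the window has length at least $1$, hence meets the progression; alternatively, let the depth itself range over a fixed countable set (the rationals below $T$), letting $R$ depend on $x'$ as in the paper, which still yields a countable family of open shadows of depth $\le T$.
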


\begin{proof}
We have immediately from the definition
$$\xr \subseteq \{ h \in \Xh \ | \ \inf h < T \} = \bigcup_{x \in X} \{ h \in \Xh \ | \ h(x) < T \}.$$
Now, by picking a countable dense subset $\{x_i\}_{i \in \mathbb{N}}$ of $X$
and an enumeration $\{T_j\}_{j \in \mathbb{N}}$ of $(-\infty,T) \cap  \mathbb{Q}$, we have 
$$\xr \subseteq \bigcup_{i, j \in \mathbb{N}} \{ h \ | \ h(x_i) < T_j
\},$$
as required.
\end{proof}

We shall now define a \emph{descending shadow sequence} to be a sequence  
$\mathcal{S} = (\mathcal{S}_M)_{M \in \N}$, where 
each  $\mathcal{S}_M$ is a finite collection of shadows, 
and each shadow $S \in \mathcal{S}_M$ has depth $dep(S) \le -M$. 

Given  a descending shadow sequence $\mathcal{S}$, we shall introduce
(for convenience of notation) an indexing 
of all its shadows, i.e. $\bigcup_M \mathcal{S}_M = \{S_1, S_2, \dots\}$.
We say a $M$-tuple $I = (i_1, \dots, i_M)$ of positive integers is 
\emph{an index set of depth $M$} for $\mathcal{S}$ if each $j = 1, \dots, M$,
the shadow $S_{i_j}$ is an element of
$\mathcal{S}_j$ (hence, it has depth $\le -j$).  Given a descending shadow sequence $\mathcal{S}$,
and an index set $I = (i_1, \dots, i_M)$, we define the \emph{cylinder} $C_I$ to be the
intersection of the open shadows $S^\circ_{i_j}$ corresponding to the
indexed shadows $S_{i_j}$, i.e.
\[ C_I := S_{i_1}^\circ \cap \dots \cap S_{i_M}^\circ, \]
so $C_I$ is an open set in $\Xh$.  Given a number $M \in \N$, let
$\Sigma_M$ be 
\[ \Sigma_M := \bigcup_{\stackrel{I \text{ index set}}{ \text{of depth }M}} C_I = \bigcup S_{i_1}^\circ \cap \dots \cap
S_{i_M}^\circ, \]
where the union is taken over all index sets of depth $M$. The sets
$\Sigma_M$ form a nested sequence of open sets in $\Xh$, i.e.
\[  \Sigma_1 \supseteq \Sigma_2 \supseteq \Sigma_3 \supseteq \cdots.  \]
Finally, we observe that by Lemma \ref{L:shadowhoro}, if $h \in
\Sigma_M$ then $\inf h \le -M$, so
$$\bigcap_{M \in \N} \Sigma_M \subseteq \xr.$$

\begin{lemma} \label{L:conv} %
Let $\seq{y_n}$ be a sequence of points of $X$, and let
$(\mathcal{S}_M)_{M \in \mathbb{N}}$ be a descending shadow sequence.
If $\seq{\rho_{y_n}}$ intersects $\Sigma_M$ for each $M$, then there
exists a subsequence $(y_{n_k})_{k \in \N}$ such that
$(\rho_{y_{n_k}})_{k \in \N}$ converges to a horofunction in $\xr$.
\end{lemma}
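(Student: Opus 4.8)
The plan is to extract a convergent subsequence by a diagonal argument, using compactness of $\Xh$ together with the nested open sets $\Sigma_M$ to guarantee that the limit lies in $\xr$. First I would use the hypothesis: for each $M$ there is an index $n$ with $\rho_{y_n} \in \Sigma_M$. Since $\Sigma_M \subseteq \Sigma_{M'}$ for $M \ge M'$, I would like to say that for each $M$ there are \emph{infinitely many} $n$ with $\rho_{y_n} \in \Sigma_M$; this is not quite what is stated, so the first careful step is to see why it holds. The point is that the statement should be read as: the sequence $\seq{\rho_{y_n}}$ has infinitely many terms in $\Sigma_M$ for each $M$ (equivalently, one applies the lemma to a tail of the sequence, and the hypothesis is really that the sequence is \emph{frequently} in each $\Sigma_M$). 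Granting this reading, for each $M$ the set $A_M := \{ n \in \N : \rho_{y_n} \in \Sigma_M \}$ is infinite, and $A_1 \supseteq A_2 \supseteq \cdots$ since $\Sigma_{M} \subseteq \Sigma_{M-1}$.

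Next I would run the diagonal extraction. Pick $n_1 \in A_1$; having chosen $n_1 < \cdots < n_{k-1}$, pick $n_k \in A_k$ with $n_k > n_{k-1}$, which is possible since $A_k$ is infinite. By compactness of $\Xh$ (the Proposition that $\textup{Lip}^1_{x_0}(X)$ is compact), the subsequence $(\rho_{y_{n_k}})_{k \in \N}$ has a convergent sub-subsequence; relabel so that $\rho_{y_{n_k}} \to h$ for some $h \in \Xh$. It remains to show $h \in \xr$, i.e. $\inf h = -\infty$. Fix $M$. For all $k \ge M$ we have $n_k \in A_k \subseteq A_M$, so $\rho_{y_{n_k}} \in \Sigma_M$, and $\Sigma_M$ is an intersection of open shadows $S^\circ_{i_j}$, $j=1,\dots,M$, with $dep(S_{i_j}) \le -j$. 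In particular $\rho_{y_{n_k}}(x_{i_M}) < dep(S_{i_M}) \le -M$, where $x_{i_M}$ is the center of the $M$-th shadow in the chosen index set (which may vary with $k$, so I would instead just use: there is \emph{some} point $x$, depending on $k$, with $\rho_{y_{n_k}}(x) < -M$). Hence $\inf \rho_{y_{n_k}} \le -M$ for all $k \ge M$.

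The subtlety — and the main obstacle — is that $\inf$ is not continuous under pointwise (or compact-open) convergence of horofunctions, precisely the phenomenon flagged in the text (sequences in $\xr$ can limit into $\xq$). So I cannot conclude $\inf h \le -M$ merely from $\inf \rho_{y_{n_k}} \le -M$. The way around this is to use the \emph{specific} structure of $\Sigma_M$ more carefully: since $\Sigma_M$ is a finite union of cylinders $C_I$, by passing to a further subsequence (using finiteness of the index sets of depth $M$, for each fixed $M$, and another diagonal argument over $M$) I may assume that all $\rho_{y_{n_k}}$ with $k \ge M$ lie in a \emph{single} fixed cylinder $C_{I(M)} = S^\circ_{i_1(M)} \cap \cdots \cap S^\circ_{i_M(M)}$, with the centers $x_{i_j(M)}$ now independent of $k$. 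Then $\rho_{y_{n_k}}(x_{i_M(M)}) < -M$ for all large $k$, and since evaluation at the fixed point $x_{i_M(M)}$ is continuous on $\Xh$, passing to the limit gives $h(x_{i_M(M)}) \le -M$, hence $\inf h \le -M$. As $M$ was arbitrary, $\inf h = -\infty$, so $h \in \xr$, as required. The only real work is organizing the two-level diagonal argument (over $k$ for fixed $M$, then over $M$) so that the finiteness of each $\mathcal{S}_M$ is used correctly; everything else is compactness and continuity of point-evaluation.
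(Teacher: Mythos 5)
Your argument is essentially the paper's proof: extract a subsequence meeting deeper and deeper $\Sigma_M$'s, use compactness of $\Xh$ to get a limit $h$, and — this is the key point, which you identify correctly — use the finiteness of each $\mathcal{S}_M$ to pass to further subsequences so that the shadow centers are fixed independently of $k$, so that continuity of point-evaluation (rather than the discontinuous $\inf$) forces $h(x) \le -M$ at a fixed point $x$, whence $h \in \xr$.

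The only loose end is the step you ``grant'': the reading that each $\Sigma_M$ contains infinitely many terms of $\seq{\rho_{y_n}}$ is not an extra hypothesis but follows from the stated one, and you should say why rather than reinterpret the lemma. Each $\rho_{y_n}$ is a finite horofunction, $\inf \rho_{y_n} = -d_X(x_0, y_n) > -\infty$, and membership in $\Sigma_M$ forces $\inf \le -M$; hence a fixed $\rho_{y_n}$ lies in only finitely many $\Sigma_M$. Consequently, if some $\Sigma_{M_0}$ contained only finitely many terms, then by nestedness the witnesses for all $M \ge M_0$ would come from that finite set, and by pigeonhole a single term would lie in $\Sigma_M$ for all $M$, a contradiction; equivalently, as in the paper, the chosen indices $n_M$ must tend to infinity, which already yields a genuine subsequence. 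With that one-line observation inserted, your proof is complete and coincides with the paper's.
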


\begin{proof}
Suppose the sequence $\seq{\rho_{y_n}}$ intersects each $\Sigma_M$. So
for each $M \in \N$ there is an $n_M$ such that $\rho_{y_{n_M}} \in
\Sigma_M$. As $\bigcap \Sigma_M \subset \xr$, each $\rho_{y_n}$ may
lie in only finitely many $\Sigma_M$, and so $n_M \to \infty$ as $M
\to \infty$. 
The horofunction $\rho_{y_{n_k}}$ lies in $\Sigma_{k}$, which is a union
of cylinders, so there is an index set $I_k = (i_1, \dots, i_{k})$
of depth $k$ such that
\[ \rho_{y_{n_k}} \in C_{I_k} = S^\circ_{i_1} \cap \cdots \cap
S^\circ_{i_{k}}. \]

There are only finitely many choices for the first entry $i_1$ in each
index set $I_k$, so we may pass to a further subsequence in which
${i_1}$ is constant. Choose $n_{1}$ to be the first element of this
subsequence, and relabel the remaining elements as $(n_k)_{k \in \N}$
for $k \ge 2$. Again, as there are only finitely many choices for the
second entry $i_2$ in the index set $I_k$, we may pass to a further
subsequence in which the indices $i_2$ are constant for all $k \ge
2$. Proceeding by induction, we may construct a subsequence
$(y_{n_k})_{k \in \N}$, and a sequence of indices $(i_k)_{k \in \N}$,
such that
\[ \rho_{y_{n_k}} \in S^\circ_{i_1} \cap \cdots \cap S^\circ_{i_{k}},  \]
for each $k$.

By compactness, the sequence $(\rho_{y_{n_k}})_{k \in \N}$ has a limit
point $h \in \Xh$.  Then by Lemma \ref{L:shadowhoro}, for each $j$ and
each $k \ge j$, if $S^\circ_{i_j} = S^\circ_{x_0}(x_j, R_j)$, we have $y_{n_k} \in S^\circ_{i_j}$, hence
$$\rho_{y_{n_k}}(x_j) \le -j, $$
thus by passing to the limit as $k \to \infty$ we have
$$h(x_j) \le -j \qquad \textup{for all }j \ge 1,$$
which gives $h \in \xr$. 
\end{proof}

\begin{lemma} \label{L:largeshadow} %
Let $\epsilon > 0$, and let $\nu$ be a $\mu$-stationary measure on
$\Xh$. Then there exists a finite descending shadow sequence
$\mathcal{S} = (\mathcal{S}_M)_{M \in \mathbb{N}}$ such that for each
$M$,
$$\nu(\Sigma_M) \ge 1-\epsilon.$$
\end{lemma}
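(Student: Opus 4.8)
The plan is to decouple the $\mathcal{S}_j$ at different levels and construct each one separately by a straightforward exhaustion. The key observation I would record first is the set-theoretic identity
\[ \Sigma_M = \bigcup_{S^{(1)} \in \mathcal{S}_1} \cdots \bigcup_{S^{(M)} \in \mathcal{S}_M} \Big( (S^{(1)})^\circ \cap \cdots \cap (S^{(M)})^\circ \Big) = \bigcap_{j=1}^{M} U_j, \]
where $U_j := \bigcup_{S \in \mathcal{S}_j} S^\circ$. This holds because in an index set of depth $M$ the element $S_{i_j}$ of $\mathcal{S}_j$ is chosen independently in each coordinate $j$, so the union of cylinders distributes over the intersection. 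Granting this, the lemma reduces to choosing, for each $j \in \N$, a nonempty finite family $\mathcal{S}_j$ of shadows of depth $\le -j$ such that $\nu(U_j) \ge 1 - \epsilon \, 2^{-j}$; indeed then, for every $M$,
\[ \nu(\Sigma_M) = \nu\Big( \bigcap_{j=1}^{M} U_j \Big) \ge 1 - \sum_{j=1}^{M} \nu(\Xh \setminus U_j) \ge 1 - \sum_{j=1}^{M} \epsilon \, 2^{-j} > 1 - \epsilon. \]

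To build $\mathcal{S}_j$, I would invoke Lemma \ref{L:cover} with $T = -j$: it supplies a countable collection $\{S^\circ_{k,j}\}_{k \in \N}$ of open shadows of depth $\le -j$ whose union contains $\xr$. Since $G$ and $\mu$ are non-elementary, Proposition \ref{prop:qzero} gives $\nu(\xq) = 0$, so $\nu(\xr) = 1$ and hence $\nu\big( \bigcup_{k} S^\circ_{k,j} \big) = 1$. As each $S^\circ_{k,j}$ is open (in particular Borel) in $\Xh$, continuity of $\nu$ from below along the increasing sets $\bigcup_{k \le N} S^\circ_{k,j}$ produces $N_j \ge 1$ with $\nu\big( \bigcup_{k \le N_j} S^\circ_{k,j} \big) \ge 1 - \epsilon \, 2^{-j}$; I then set $\mathcal{S}_j := \{ S_{k,j} : k \le N_j \}$, which is a nonempty finite collection of shadows each of depth $\le -j$. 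Then $\mathcal{S} = (\mathcal{S}_M)_{M \in \N}$ is a (finite) descending shadow sequence, and the displayed estimate gives $\nu(\Sigma_M) \ge 1 - \epsilon$ for all $M$.

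The proof is short once this reduction is in place; the only point that really needs care — and the step I would treat as the crux — is the distributive identity for $\Sigma_M$, i.e. verifying that a horofunction lies in some cylinder $C_I$ of depth $M$ exactly when it lies in $U_j$ for every $j \le M$. This is precisely where the freedom to choose the index in each $\mathcal{S}_j$ independently enters, and it is what lets the levels $\mathcal{S}_1, \mathcal{S}_2, \dots$ be chosen one at a time rather than all at once. The remaining ingredients are purely routine: continuity of measure from below, the full-measure fact $\nu(\xr) = 1$ from Proposition \ref{prop:qzero}, and the covering statement of Lemma \ref{L:cover}.
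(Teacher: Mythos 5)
Your proof is correct and follows essentially the same route as the paper: apply Lemma~\ref{L:cover} and $\nu(\xr)=1$ (Proposition~\ref{prop:qzero}) to cover $\xr$ by countably many open shadows of depth $\le -j$, then use continuity of measure from below to extract a finite subfamily $\mathcal{S}_j$ with $\nu$-measure at least $1 - 2^{-j}\epsilon$, and conclude via the union bound. The one thing you make explicit that the paper leaves implicit is the distributive identity $\Sigma_M = \bigcap_{j=1}^{M} U_j$, which is exactly what justifies the paper's final line $\nu(\Sigma_M)\ge 1-\epsilon/2-\dots-\epsilon/2^M$; recording it is a worthwhile clarification, not a change of approach.
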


\begin{proof}
Recall by Lemma \ref{L:cover} that for any $M \in \N$ there exists a
countable collection of shadows of depth $\le -M$ which covers $\xr$,
and $\xr$ has full measure by Proposition \ref{prop:qzero}.  Thus,
since probabilities are countably additive one can find a finite set
$\mathcal{S}_M := \{S_{M, 1}, \dots, S_{M, r_M}\}$ of shadows of depth
$\le -M$ such that the union $\overline{\mathcal{S}}_M := \bigcup_{i =
  1}^{r_M} S_{M, i}$ satisfies
$$\nu( \overline{\mathcal{S}}_M ) \ge 1-   2^{-M} \epsilon.$$
We may now set $\mathcal{S}$ to be the sequence $(\mathcal{S}_M)_{M
  \in \N}$, which is a descending shadow sequence.  We now
observe that $\nu(\Sigma_M) \geq 1 - \epsilon/2 - \dots - \epsilon/
2^{M} \ge 1-\epsilon$ as required.
\end{proof}

We may now complete the proof of Proposition \ref{P:convsub}.

\begin{proof}[Proof (of Proposition \ref{P:convsub})]
We shall show that the set $Z$ of sequences $\seq{w_n}$ in the path
space $(\Omega, \P)$ for which $\seq{\rho_{w_n}}$ does not have limit
points in $\xr$ has measure at most $\epsilon$, for each $\epsilon >
0$. Fix $\epsilon > 0$, and let $(\Sigma_M)_{M \in \mathbb{N}}$ be a
descending shadow sequence constructed according to
Lemma \ref{L:largeshadow}, using a measure $\nu$ which is a
$\mu$-stationary weak limit of the Ces\`aro averages $\overline{\mu}_n
= \tfrac{1}{n}(\tmun{1} + \cdots + \tmun{n})$.  Now, suppose that the
sequence $\seq{\rho_{w_n}}$ does not have limit points in $\xr$: then
by Lemma \ref{L:conv} there exists an index $M$ such that $\rho_{w_n}$
does not belong to $\Sigma_M$ for any $n$.  Thus we have the inclusion
$$Z \subseteq \bigcup_{M} \bigcap_{n} \{ (w_k)_{k \in \N} \ : \ \rho_{w_n}
\notin \Sigma_M \}, $$
so if we set $Y_M := \Xh \setminus \Sigma_M$, then
$$\mathbb{P}(Z) \le \sup_{M} \inf_{n} \tmun{n}(Y_M).$$
Then, by definition of the Ces\`aro averages, $\inf
\overline{\mu}_n(Y_M) \ge \inf \tmun{n}(Y_M)$, so this implies that
$$\mathbb{P}(Z) \le \sup_{M} \inf_{n} \overline{\mu}_n(Y_M).$$
Furthermore as $\nu$ is the weak limit points of the 
$\overline{\mu}_n$ of the Ces\`aro averages, and $Y_M$ is closed, we have for each $M$,
$$\inf_n \overline{\mu}_n(Y_M) \le \nu(Y_M), $$
hence by Lemma \ref{L:largeshadow}
$$\mathbb{P}(Z) \le \sup_M \nu(Y_M) \le \epsilon, $$
and the claim is proven. The corollary follows from Proposition \ref{prop:phi cts on R}.
\end{proof}

\subsection{The boundary action} \label{S:bdryaction}

We now prove Theorem \ref{theorem:convergence}, convergence to the
boundary. 
We start by showing that the action of $G$ on $\Xg$ satisfies the
following property (which need not hold for the action of $G$ on
$\Xh$): if the sequence $(g_n x_0)_{n \in \N}$ converges to a point in $\Xb$, then
the sequence $(g_n y)_{n \in \N}$ converges to the same point, for any $y \in X$.

\begin{lemma} \label{lemma:insideconv}
Let $X$ be a Gromov hyperbolic space.  If $x_0 \in X$ and the sequence
$\seq{g_n x_0}$ converges to a point $\lambda$ in $\Xb$, then the
sequence $\seq{g_n y}$ converges to the same point $\lambda$, for any
$y \in X$.
\end{lemma}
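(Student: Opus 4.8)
The plan is to use the Gromov product estimate directly. Since $\seq{g_n x_0}$ converges to $\lambda \in \Xb$, by definition of convergence in $\Xg$ we have $\gp{x_0}{g_n x_0}{g_m x_0} \to \infty$ as $\min\{m,n\} \to \infty$; equivalently, $\gp{x_0}{g_n x_0}{\lambda} \to \infty$ as $n \to \infty$. The goal is to show the same for $\seq{g_n y}$, i.e.\ $\gp{x_0}{g_n y}{\lambda} \to \infty$, which by definition of the topology on $\Xg$ gives $g_n y \to \lambda$.

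First I would observe that $g_n$ is an isometry, so $d_X(g_n x_0, g_n y) = d_X(x_0, y)$ is a fixed constant, call it $D$, independent of $n$. From the definition of the Gromov product one has the elementary bound $\norm{\gp{x_0}{a}{z} - \gp{x_0}{b}{z}} \le d_X(a,b)$ for any points $a, b, z$ (this follows from the triangle inequality applied termwise), so in particular $\gp{x_0}{g_n y}{g_m y} \ge \gp{x_0}{g_n x_0}{g_m x_0} - 2D$. Since the right-hand side tends to $\infty$ as $\min\{m,n\}\to\infty$, so does the left-hand side, which shows $\seq{g_n y}$ is a Gromov sequence. To see it converges to the \emph{same} boundary point $\lambda$, I would similarly bound $\gp{x_0}{g_n y}{g_n x_0} \ge \gp{x_0}{g_n x_0}{g_n x_0} - d_X(g_n y, g_n x_0)$; but more cleanly, using the extended Gromov product to the boundary and the triangle inequality \eqref{E:gpineq} (with the larger constant valid on $\Xg$), we get
\[ \gp{x_0}{g_n y}{\lambda} \ge \min\{ \gp{x_0}{g_n y}{g_n x_0}, \gp{x_0}{g_n x_0}{\lambda} \} - O(\delta). \]
Now $\gp{x_0}{g_n y}{g_n x_0} = \tfrac12(d_X(x_0, g_n y) + d_X(x_0, g_n x_0) - D)$, which tends to $\infty$ since $d_X(x_0, g_n x_0) \to \infty$ (as $g_n x_0 \to \lambda \in \Xb$, it leaves every bounded set) and $d_X(x_0, g_n y) \ge d_X(x_0, g_n x_0) - D \to \infty$ as well. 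The second term $\gp{x_0}{g_n x_0}{\lambda} \to \infty$ by hypothesis. Hence $\gp{x_0}{g_n y}{\lambda} \to \infty$, so $g_n y \to \lambda$ in $\Xg$, as required.

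There is essentially no serious obstacle here; the only mild care needed is to make sure the triangle inequality is used with the version that holds on $X \cup \partial X$ (with the enlarged additive constant $O(\delta)$, as noted in the background section), since one of the three points is the boundary point $\lambda$. Everything else is a direct consequence of $g_n$ being an isometry, so that $g_n x_0$ and $g_n y$ remain a bounded distance apart uniformly in $n$, while both escape to infinity.
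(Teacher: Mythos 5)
Your proof is correct and rests on the same key estimate as the paper's: since $g_n$ is an isometry, $g_n x_0$ and $g_n y$ stay at fixed distance $d_X(x_0,y)$ while escaping to infinity, so $\gp{x_0}{g_n x_0}{g_n y}\ge d_X(x_0,g_n x_0)-d_X(x_0,y)\to\infty$. The paper concludes directly from the definition of equivalent Gromov sequences, whereas you route the conclusion through the extended Gromov product with $\lambda$ and the boundary triangle inequality; this is only a cosmetic difference.
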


\begin{proof}
Consider the Gromov product
\[ \gp{x_0}{g_n x_0}{g_n y} = \tfrac{1}{2}( d_X(x_0, g_n x_0) +
d_X(x_0, g_n y) - d_X(g_n x_0, g_n y) ). \]
By the triangle inequality $d_X(x_0, g_n y) \ge d_X(x_0, g_n x_0) - d_X( g_n
x_0, g_n y)$, and as $g_n$ is an isometry, $d_X(g_n x_0, g_n y) = d_X(x_0,
y)$. This implies
\[ \gp{x_0}{g_n x_0}{g_n y} \ge d_X(x_0, g_n x_0) - d_X(x_0, y) , \]
which tends to infinity as $n$ tends to infinity, so $\seq{g_n y}$ converges
to the same limit point as $\seq{g_n x_0}$.
\end{proof}

The following is a version of Kaimanovich \cite{kaimanovich}*{Lemma
  2.2} in the non-proper case.

\begin{lemma} \label{lemma:action} %
Let $G$ be a group of isometries of a Gromov hyperbolic space $X$.
Let $\seq{g_n}$ be a sequence in $G$ such that $\seq{g_n x_0} \to
\lambda \in \Xb$. Then there is a subsequence $(g_{n_k})_{k \in \N}$
such that $(g_{n_k} x)_{k \in \N} \to \lambda$ for all but at most one
point of $X \cup \Xb$.
\end{lemma}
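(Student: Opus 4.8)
The plan is to use the Gromov product estimate from hyperbolicity together with the fact that convergence $g_n x_0 \to \lambda$ forces $(x_0 \cdot g_n x_0)_{g_n^{-1} x_0} \to \infty$ unless $g_n^{-1} x_0$ itself accumulates somewhere in the boundary. Concretely, first I would apply Lemma \ref{lemma:insideconv}: since $g_n x_0 \to \lambda$, in fact $g_n y \to \lambda$ for \emph{every} $y \in X$, so the only way $(g_n x)$ can fail to converge to $\lambda$ for some $x \in X \cup \Xb$ is if $x$ is a boundary point. Thus it suffices to handle the action on $\Xb$ and find a single ``exceptional'' boundary point.

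Next I would pass to a subsequence (still written $g_n$) so that $g_n^{-1} x_0$ converges in $X \cup \Xb$; this uses that $\Xb$ need not be compact, so a priori $g_n^{-1} x_0$ could leave every bounded set without converging — but by Proposition \ref{P:convsub}-style reasoning, or more simply by extracting a subsequence along which $\rho_{g_n^{-1} x_0}$ converges in the compact space $\Xh$ and then invoking the local minimum map $\phi$, I can arrange that $g_n^{-1} x_0$ converges to some point $\beta \in X \cup \Xb$ (passing to a further subsequence and relabeling as $(g_{n_k})$). Set the exceptional point to be $\beta$ if $\beta \in \Xb$, and declare there is no exceptional point otherwise.

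Then for any $\xi \in \Xb$ with $\xi \neq \beta$, I would estimate the Gromov product $\gp{x_0}{g_{n_k} x_0}{g_{n_k} \xi}$. Writing this as $\gp{g_{n_k}^{-1} x_0}{x_0}{\xi}$ after translating by the isometry $g_{n_k}^{-1}$ (up to the standard $O(\delta)$ adjustment in the basepoint-change formula for the Gromov product), hyperbolicity gives
\[
\gp{g_{n_k}^{-1} x_0}{x_0}{\xi} \ge \min\{ \gp{g_{n_k}^{-1} x_0}{x_0}{\beta}, \gp{g_{n_k}^{-1} x_0}{\beta}{\xi} \} - \delta.
\]
The first term $\gp{g_{n_k}^{-1} x_0}{x_0}{\beta} \to \infty$ by definition of $g_{n_k}^{-1} x_0 \to \beta$ (or is simply $d_X(g_{n_k}^{-1}x_0, [x_0,\beta]) + O(\delta) \to \infty$ when $\beta \in X$ and $g_{n_k}^{-1} x_0 \to \infty$, with the case $\beta \in X$ bounded handled directly since then the $g_{n_k}$ are eventually drawn from finitely many cosets); and the second term stays bounded \emph{below} away from $-\infty$ — indeed it is bounded below in terms of $\gp{x_0}{\beta}{\xi}$, which is finite but fixed since $\xi \neq \beta$, plus a term depending on $d_X(x_0, g_{n_k}^{-1}x_0)$ that also grows. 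A cleaner route: $\gp{x_0}{g_{n_k} x_0}{g_{n_k}\xi} \ge \min\{\gp{x_0}{g_{n_k}x_0}{g_{n_k} x_0}, \dots\}$ is not quite it; instead I use that $g_{n_k} x_0 \to \lambda$ directly gives $\gp{x_0}{g_{n_k} x_0}{g_{n_k} y_m} \to \infty$ for suitable $y_m$ along a quasigeodesic ray to $\xi$, combined with the reverse-triangle Proposition \ref{prop:npp2} applied to the nearest-point projection of $x_0$ onto $[g_{n_k}^{-1} x_0, \xi]$. The conclusion is $\gp{x_0}{g_{n_k} x_0}{g_{n_k}\xi} \to \infty$, i.e. $g_{n_k}\xi \to \lambda$.

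The main obstacle is the second step: controlling the behavior of $g_n^{-1} x_0$ in the \emph{non-proper} setting, where one cannot simply invoke compactness of $X \cup \Xb$ to extract a convergent subsequence. The fix is to work in the horofunction compactification $\Xh$, which \emph{is} compact by the Proposition in Section \ref{section:horofunction}, extract a limit horofunction $h$ for $\rho_{g_{n_k}^{-1} x_0}$, and then use $\phi(h)$ (together with Lemma \ref{L:converges} and Proposition \ref{prop:phi cts on R}) to pin down $\beta$; one must also separately dispose of the case where $h$ is a finite horofunction, in which case $g_{n_k}^{-1} x_0$ stays in a bounded region and there is no exceptional boundary point at all. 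After that, the Gromov-product manipulations are routine $\delta$-hyperbolic bookkeeping.
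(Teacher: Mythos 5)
There is a genuine gap, and it sits exactly at the step you identify as the ``main obstacle.'' From compactness of $\Xh$ you extract a subsequence with $\rho_{g_{n_k}^{-1}x_0}\to h$, and you claim this lets you arrange $g_{n_k}^{-1}x_0\to\beta\in X\cup\Xb$, disposing of the case $h\in\xq$ by asserting that then $g_{n_k}^{-1}x_0$ stays in a bounded region. That assertion is false: convergence of horofunctions to a finite horofunction does not confine the points to a bounded set --- Example \ref{ex:wedge} is precisely a counterexample, where $\rho_{x_n}\to\rho_{x_0}$ while $d_X(x_0,x_n)=n\to\infty$ --- and in your situation boundedness is in fact impossible, since $d_X(x_0,g_{n_k}^{-1}x_0)=d_X(x_0,g_{n_k}x_0)\to\infty$ because $g_nx_0\to\lambda\in\Xb$. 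So when $h\in\xq$ your argument produces no candidate point $\beta$ and no conclusion; the case is left genuinely unhandled. (It can be rescued, but only with an additional argument you have not supplied: e.g.\ one can show that if some subsequence of $g_{n_k}^{-1}x_0$ converged to a point $\xi\in\Xb$ then the limiting horofunction would have to be infinite, so a finite limit excludes any exceptional boundary point; and one must also show that an exceptional $\xi$ forces such a subsequential limit equal to $\xi$. Neither implication is in your proposal.) A secondary problem is the Gromov-product bookkeeping in your third paragraph: $\gp{g_{n_k}^{-1}x_0}{x_0}{\beta}$ does \emph{not} tend to infinity ``by definition of $g_{n_k}^{-1}x_0\to\beta$''; if the points $g_{n_k}^{-1}x_0$ run along a quasigeodesic ray toward $\beta$ this product stays bounded (it is coarsely the distance from $g_{n_k}^{-1}x_0$ to $[x_0,\beta)$), so your displayed inequality cannot give the divergence you need. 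The usable estimate for $\xi\neq\beta$ is rather $\gp{g_{n_k}^{-1}x_0}{x_0}{\xi}=d_X(x_0,g_{n_k}^{-1}x_0)-\gp{x_0}{g_{n_k}^{-1}x_0}{\xi}+O(\delta)$, where the subtracted term is bounded precisely because $g_{n_k}^{-1}x_0\to\beta\neq\xi$; you gesture toward something like this but never carry it out, and you flag the step yourself as ``not quite it.''

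For comparison, the paper's proof avoids any analysis of $g_n^{-1}x_0$ and of the horofunction boundary: it assumes two exceptional boundary points $b_1,b_2$ survive along a common subsequence, takes a point $y$ on a $(Q,c)$-quasigeodesic from $b_1$ to $b_2$, and notes that since $g_{n_k}b_1$ and $g_{n_k}b_2$ avoid a shadow neighbourhood of $\lambda$, weak convexity of shadow complements (Corollary \ref{cor:weak convexity}) forces $g_{n_k}y$ to avoid a slightly smaller shadow, contradicting $g_{n_k}y\to\lambda$ from Lemma \ref{lemma:insideconv}. If you want to keep your route, the two points above are what you would need to repair.
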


\begin{proof}
We show that there is a subsequence $(g_{n_k})_{k \in \N}$ for which
there is at most one point $b \in \Xb$ such that $(g_{n_k} b)_{k \in
  \N} \not \to \lambda$. Suppose there is a point $b_1$ in $\Xb$ such
that $(g_n b_1)_{n \in \N} \not \to \lambda$. This implies there is an
open set $U_1$ containing $\lambda$ such that infinitely many $g_n
b_1$ do not lie in $U_1$. Therefore, we may pass to a subsequence
$(g_{n_k})_{k \in \N}$ such that $g_{n_k} b_1 \not \in U_1$ for all
$k$. Now suppose there is another point $b_2 \in \Xb$ such that
$(g_{n_k} b_2)_{k \in \N} \not \to \lambda$. This implies there is an
open set $U_2$ containing $\lambda$ such that infinitely many $g_{n_k}
b_2$ do not lie in $U_2$. As before, we may pass to a subsequence,
which by abuse of notation we shall continue to call $(g_{n_k} )_{k
  \in N}$, such that $(g_{n_k} b_2)_{k \in \N} \not \in U_2$ for all
$n$.
As $U_1 \cap U_2$ is also an open neighbourhood of $\lambda$, 
it contains a shadow set of
the form $S_1 = \overline{ S_{x_0}(x, R) }^\delta$, with $\lambda$ contained
in the interior of the slightly smaller shadow 
$S_2 = \overline{ S_{x_0}(x, R-C) }^\delta$,  where $C$ will be chosen as the 
 weak convexity constant of Corollary \ref{cor:weak convexity} for $(Q,c)$-quasi-geodesics.
Let now $\gamma$ be a $(Q, c)$-quasigeodesic from $b_1$ to
$b_2$, and $y \in X$ a point on $\gamma$.
By Corollary \ref{cor:weak convexity}, since the endpoints $g_{n_k}b_1$ 
and $g_{n_k}b_2$ do not belong to $S_1$, then the point $g_{n_k}y$ 
does not belong to $S_2$.
However, this is a contradiction, as by Lemma \ref{lemma:insideconv} we know that 
$g_{n_k}y \to \lambda$, hence it must be eventually lie inside $S_2$.
\end{proof}

\subsection{Convergence of measures} \label{S:measconv}

We will use the following result, which goes back to Furstenberg
\cite{Furstenberg}*{Corollary 3.1}, see also Margulis
\cite{margulis}*{Chapter VI}.

\begin{proposition} \label{P:martingale}
Let $M$ be a compact metric space on which the countable group $G$
acts continuously, and $\nu$ a $\mu$-stationary Borel probability
measure on $M$.  Then for $\P$-almost all sequences $\seq{w_n}$ the limit
\[ \nu_\w := \lim_{n \to \infty} g_1 g_2 \ldots g_n \nu \] exists in
the space $\mathcal{P}(M)$ of probability measures on $M$.
\end{proposition}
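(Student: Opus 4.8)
The plan is to exploit that for a fixed continuous bounded function $f$ on $M$, the sequence $(w_n \nu)(f) = \int_M f(w_n^{-1} x) \, d\nu(x)$ is a martingale with respect to the natural increasing filtration on the path space. Concretely, let $\mathcal{F}_n$ be the $\sigma$-algebra generated by the increments $g_1, \dots, g_n$ (equivalently by $w_1, \dots, w_n$). By $\mu$-stationarity of $\nu$ one has, for any $g \in G$, that $g\nu = \sum_{h} \mu(h) \, (gh)\nu$, so conditioning on $\mathcal{F}_n$ and averaging over the next increment $g_{n+1}$ gives $\E[\, w_{n+1}\nu \mid \mathcal{F}_n \,] = w_n \nu$ as a measure-valued identity; pairing with $f$ yields that $\seq{(w_n \nu)(f)}$ is a bounded martingale. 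By Doob's martingale convergence theorem, this sequence converges $\P$-almost surely.

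Next I would upgrade this functionwise convergence to weak-$*$ convergence of the measures $w_n \nu$ themselves. Since $M$ is a compact metric space, $C(M)$ is separable in the sup norm; fix a countable dense subset $\{f_k\}_{k \in \N}$. Applying the previous step to each $f_k$ and taking the countable intersection of the full-measure sets, we get a single $\P$-conull set of sample paths $\w$ on which $\lim_n (w_n\nu)(f_k)$ exists for every $k$. A standard $3\epsilon$-argument (using that $\|f - f_k\|_\infty$ can be made small and that each $w_n\nu$ is a probability measure, so $|(w_n\nu)(f) - (w_n\nu)(f_k)| \le \|f-f_k\|_\infty$) then shows $\lim_n (w_n\nu)(f)$ exists for every $f \in C(M)$. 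Call this limit $\ell_\w(f)$; it is clearly linear in $f$, positive, and satisfies $\ell_\w(1) = 1$, so by the Riesz representation theorem it is integration against a probability measure $\nu_\w \in \mathcal{P}(M)$, and by definition $w_n \nu \to \nu_\w$ in the weak-$*$ topology.

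The main obstacle — really the only subtle point — is the passage from pointwise-in-$f$ almost sure convergence to almost sure weak-$*$ convergence: one must handle the order of quantifiers carefully, since a priori the conull set on which the martingale converges depends on $f$. Separability of $C(M)$, which follows from compactness and metrizability of $M$, is exactly what makes this work, and it is why we have been careful to arrange that $\Xh$ (the space to which we will apply this proposition) is compact and metrizable. Everything else is routine: the martingale property is a direct consequence of the defining stationarity relation $\mu \star \nu = \nu$, and Doob's theorem and Riesz representation are invoked off the shelf.
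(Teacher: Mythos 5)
Your proof is correct and follows essentially the same route the paper takes: the paper's own justification is a brief sketch citing Furstenberg and Margulis, in which one shows $X_n = \int_M f(g_1\cdots g_n x)\,d\nu(x)$ is a bounded martingale by stationarity and then represents the resulting limit functional by a Borel measure; you have filled in the details, correctly identifying the separability of $C(M)$ as the key to getting a single conull set of sample paths. One small slip: with the paper's pushforward convention $g\nu(U) = \nu(g^{-1}U)$, one has $(w_n\nu)(f) = \int_M f(w_n x)\,d\nu(x)$ rather than $\int_M f(w_n^{-1}x)\,d\nu(x)$, though this does not affect the martingale identity you derive afterwards.
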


To give a brief overview of the argument, one proves that, since the
measure is stationary, for each continuous function $f \in C(M)$ the
process
$$X_n :=  \int_X f(g_1 \dots g_n x) \ d\nu(x)$$
is a bounded martingale, hence converges almost surely; this defines a
positive linear functional on the space $C(M)$, which is thus
represented by a Borel measure.

Furthermore, if $w_n \nu \to \nu_\omega$ a.s., then we get the integral formula
\begin{equation} \label{eq:integral} 
\nu = \int_\Omega \nu_\w \ d \P(\w).   
\end{equation}

\begin{lemma} \label{lemma:mart2} %
Let $G$ be a countable group of isometries of a
separable Gromov hyperbolic space $X$, let $\mu$ be a 
non-elementary probability
distribution on $G$, and let $\tnu$ be a $\mu$-stationary Borel
measure on $\Xb$.

Then for almost every sample path $\omega = \seq{w_n}$, the sequence
of measures $\seq{w_n \tnu}$ converges to a measure $\tnu_\omega \in
\mathcal{P}(\Xb)$.
\end{lemma}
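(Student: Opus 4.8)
The plan is to deduce Lemma~\ref{lemma:mart2} from the abstract martingale convergence result, Proposition~\ref{P:martingale}, by transporting it from $\Xh$ down to $\Xb$ via the local minimum map $\phi$. The obstacle to applying Proposition~\ref{P:martingale} directly to $\Xb$ is that $\Xb$ need not be compact, so the space $\mathcal{P}(\Xb)$ of probability measures need not be compact, and there is no reason \emph{a priori} that $\seq{w_n\tnu}$ has convergent subsequences at all. The whole point of the horofunction compactification is to repair exactly this: $\Xh$ \emph{is} compact, so $\mathcal{P}(\Xh)$ is compact and Proposition~\ref{P:martingale} applies there.

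First I would invoke Lemma~\ref{L:existstat} together with Proposition~\ref{prop:qzero} to pick a $\mu$-stationary measure $\nu$ on $\Xh$ supported on $\xr$, with $\phi_*\nu = \tnu$; this is legitimate because of the last sentence of Lemma~\ref{lemma:non-atomic}, which says every $\mu$-stationary measure on $\Xb$ is the pushforward under $\phi$ of a $\mu$-stationary measure on $\xr$. Next I would apply Proposition~\ref{P:martingale} to the compact metric space $M = \Xh$: for $\P$-almost every sample path $\omega = \seq{w_n}$ the limit $\nu_\omega := \lim_{n\to\infty} w_n\nu$ exists in $\mathcal{P}(\Xh)$. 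Then I would push this forward by $\phi$: since $\phi$ is continuous on $\xr$ by Proposition~\ref{prop:phi cts on R}, and $\nu$ (hence each $w_n\nu$, by $G$-invariance of $\xr$) is supported on $\xr$, the pushforward map $\phi_*\colon \mathcal{P}(\xr)\to\mathcal{P}(\Xb)$ is continuous with respect to the weak-$*$ topologies, so
\[ w_n\tnu = w_n \phi_*\nu = \phi_*(w_n\nu) \longrightarrow \phi_*\nu_\omega =: \tnu_\omega \]
in $\mathcal{P}(\Xb)$ for $\P$-almost every $\omega$. The only mild technical point to be careful about is that $\phi$ is a continuous map from $\xr$ (not all of $\Xh$) to $\Xb$, and $\xr$ itself is not compact; but since all the measures in question are concentrated on $\xr$, continuity of $\phi$ restricted to $\xr$ is enough to conclude that $\phi_*$ is sequentially continuous on the relevant family, using that if $\lambda_n\to\lambda$ weakly with all $\lambda_n,\lambda$ supported on a common space on which $\phi$ is continuous, then $\phi_*\lambda_n\to\phi_*\lambda$ weakly (test against $f\circ\phi$ for bounded continuous $f$ on $\Xb$).

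The main obstacle, such as it is, is purely bookkeeping: one must make sure that the exceptional $\P$-null set coming from Proposition~\ref{P:martingale} is the only loss, and that $G$-invariance of $\xr$ (noted right after the definition of $\xq$ and $\xr$, and used implicitly) guarantees $w_n\nu$ stays supported on $\xr$, so that $\phi_*$ can be applied term by term. No new geometry is needed here; the substantive work was already done in establishing compactness of $\Xh$, $\nu(\xq)=0$, and continuity of $\phi$ on $\xr$. I would also record the resulting integral representation $\tnu = \int_\Omega \tnu_\omega\, d\P(\omega)$, which follows by pushing forward \eqref{eq:integral} under $\phi_*$ and commuting $\phi_*$ with the integral (again justified by testing against bounded continuous functions on $\Xb$), since this identity will be needed in the subsequent steps of the proof of Theorem~\ref{T:conv}.
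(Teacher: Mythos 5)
Your high-level strategy is exactly the paper's: lift $\tnu$ to a stationary $\nu$ on $\xr$, apply Proposition~\ref{P:martingale} on the compact space $\Xh$, and push the convergence down through the local minimum map $\phi$. You've correctly identified that the technical crux is the passage through $\phi$. But there are two genuine gaps in the way you handle it.

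First, you silently assume that the limit measure $\nu_\omega$ is concentrated on $\xr$, so that $\phi_*\nu_\omega$ makes sense. This does not follow from the fact that each $w_n\nu$ is concentrated on $\xr$: the set $\xr$ is not closed in $\Xh$, so a weak-$*$ limit of measures supported on $\xr$ can perfectly well charge $\xq$. (Indeed the paper has to work for this.) The fix is to use the integral representation $\nu = \int_\Omega \nu_\omega\,d\P(\omega)$ from equation~\eqref{eq:integral}: since $\nu(\xq)=0$ by Proposition~\ref{prop:qzero}, integrating $\nu_\omega(\xq)\ge 0$ against $\P$ gives zero, hence $\nu_\omega(\xq)=0$ for $\P$-a.e.\ $\omega$. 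You mention the integral formula only as an afterthought at the end; in fact it is needed \emph{inside} the proof, before the pushforward.

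Second, the parenthetical justification ``test against $f\circ\phi$ for bounded continuous $f$ on $\Xb$'' does not establish what you want. For $f\in C_b(\Xb)$, the composite $f\circ\phi$ is a bounded continuous function on $\xr$, but it need not extend to a continuous function on $\Xh$, since $\phi$ is continuous only on $\xr$ and can behave badly near $\xq$. So the hypothesis $w_n\nu\to\nu_\omega$ in $\mathcal{P}(\Xh)$ (i.e.\ convergence of integrals against $C_b(\Xh)$) does not directly give $\int (f\circ\phi)\,d(w_n\nu)\to\int (f\circ\phi)\,d\nu_\omega$. The fact you are actually invoking is true, but it is a theorem, not a one-line observation: if $Y$ is a Borel (e.g.\ $G_\delta$) subspace of a separable metrizable space $M$, and $\lambda_n,\lambda\in\mathcal{P}(Y)$ with $\lambda_n\to\lambda$ in $\mathcal{P}(M)$, then $\lambda_n\to\lambda$ in $\mathcal{P}(Y)$ (and only then does continuity of $\phi_*\colon\mathcal{P}(\xr)\to\mathcal{P}(\Xb)$ finish the job). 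The paper establishes that $\xr$ is a countable intersection of open sets, hence Borel, and then cites \cite{ab}*{Theorem 15.4} for exactly this relativization of the weak-$*$ topology. Your proposal would be complete once you insert these two steps; as written, both are missing.
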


\begin{proof}
By Lemma \ref{lemma:non-atomic}, there is a $\mu$-stationary
probability measure $\nu$ on $\xr$ such that $\tnu$ is the pushforward of
$\nu$.  Applying Proposition \ref{P:martingale} to the action of $G$
on $\Xh$, the sequence $\seq{w_n \nu}$ converges to a measure
$\nu_\omega \in \mathcal{P}(\Xh)$, almost surely.  Moreover, since $\nu$ vanishes 
on $\xq$ and $\xq$ is $G$-invariant, the measures $w_n \nu$ also vanish on $\xq$ for each $w_n$; 
furthermore, by equation \eqref{eq:integral} the limit $\nu_\omega$ also vanishes on $\xq$ for almost every 
$\omega$. Now, note that $\xr$ is a countable intersection of open subsets
of $\Xh$, hence it is a Borel subset of $\Xh$, so the
weak-$*$ topology on $\mathcal{P}(\xr)$ (arising from $C_b(\xr)$)
is the relativization of the weak-$*$ topology on $\mathcal{P}(\Xh)$,  see e.g. 
\cite{ab}*{Theorem
  15.4}. Since $w_n \nu \to \nu_\omega$ a.s. in $\mathcal{P}(\Xh)$ and 
 both $w_n \nu$ and $\nu_\omega$ belong to $\mathcal{P}(\xr)$, this implies 
 that a.s. $w_n \nu \to \nu_\omega$ in the weak-$*$ topology of 
 $\mathcal{P}(\xr)$.
Finally, since $\phi$ is continuous, the
pushforward map $\mathcal{P}(\xr) \to \mathcal{P}(\Xb)$ is continuous,
hence $w_n \tnu  = \phi_* w_n \nu \to \phi_* \nu_\omega = \tnu_\omega$ as claimed.
\end{proof}

We wish to show that for $\P$-almost all sequences $\seq{w_n}$, the
measure $\tnu_\w$ is a $\delta$-measure, and as the limit exists, it
suffices to show this for any subsequence $(w_{n_k})_{k \in \N}$.  We
have already shown, by Proposition \ref{P:convsub}, that almost every
sequence $\seq{w_n}$ in $(\Omega, \P)$ has a subsequence $(w_{n_k})_{k
  \in \N}$ such that $(w_{n_k} x_0)_{k \in \N}$ converges to a point
in $\Xb$, so it suffices to show that if a sequence $\seq{g_n}$ has
the property that $\seq{g_n x_0}$ converges to $\lambda \in \Xb$, then
the measures $\seq{g_n \tnu}$ converge to $\delta_\lambda$.

\begin{lemma} \label{lemma:delta} %
Let $G$ be a non-elementary countable group of isometries of a
separable Gromov hyperbolic space $X$, 
and let $\seq{g_n}$ be a sequence in $G$ such that $g_n x_0 \to \lambda
\in \Xb$. Then for any non-atomic probability measure $\tnu$ on $\Xb$
there is a subsequence $(g_{n_k})_{k \in \N}$ such that the
translations $(g_{n_k} \tnu )_{k \in \N}$ converge in the weak-$*$
topology to a delta-measure $\delta_\lambda$, supported on $\lambda$.
\end{lemma}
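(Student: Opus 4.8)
The plan is to deduce this directly from Lemma \ref{lemma:action} together with the non-atomicity of $\tnu$. First I would apply Lemma \ref{lemma:action} to the sequence $\seq{g_n}$: since $g_n x_0 \to \lambda$, there is a subsequence $(g_{n_k})_{k \in \N}$ and at most one point $b \in \Xg$ such that $g_{n_k} x \to \lambda$ for every $x \in \Xg \setminus \{b\}$. Restricting attention to the boundary, this says that $g_{n_k} x \to \lambda$ in $\Xb$ for every $x \in \Xb$ with at most the single exception $x = b$ (and if $b$ happens to lie in $X$ rather than $\Xb$, there is no exceptional boundary point at all). Here I use that each $g_{n_k}$ acts by a homeomorphism of $\Xg$ preserving $\Xb$, so these translates make sense and convergence in $\Xg$ to $\lambda \in \Xb$ restricts to convergence in $\Xb$.

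Next I would invoke that $\tnu$ is non-atomic: the exceptional set $\{b\} \cap \Xb$ then has $\tnu$-measure zero, so $g_{n_k} x \to \lambda$ for $\tnu$-almost every $x \in \Xb$. Given any bounded continuous function $f$ on $\Xb$, I would write
\[ \int_{\Xb} f \ d(g_{n_k} \tnu) = \int_{\Xb} f(g_{n_k} x) \ d\tnu(x), \]
and observe that, since $f$ is continuous, the integrand converges $\tnu$-almost everywhere to the constant $f(\lambda)$, while $\norm{f(g_{n_k} x)} \le \sup \norm{f}$ uniformly in $k$ and $x$. By the dominated convergence theorem the right-hand side converges to $f(\lambda) = \int_{\Xb} f \ d\delta_\lambda$. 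As this holds for every bounded continuous $f$, this is exactly the assertion that $(g_{n_k} \tnu)_{k \in \N}$ converges to $\delta_\lambda$ in the weak-$*$ topology.

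I do not expect a substantive obstacle here: all the geometric content has already been placed into Lemma \ref{lemma:action} (which itself rests on the weak convexity of shadows, Corollary \ref{cor:weak convexity}) and into the non-atomicity of $\tnu$ established in Lemma \ref{lemma:non-atomic}. The only mild subtlety is the bookkeeping around the exceptional point $b$ of Lemma \ref{lemma:action}, namely that it may lie in $X$; but since $\tnu$ is carried by $\Xb$ this case is harmless, and when $b \in \Xb$ it is precisely non-atomicity of $\tnu$ that discards its contribution.
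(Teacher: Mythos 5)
Your proposal is correct and matches the paper's argument: the paper's proof of Lemma \ref{lemma:delta} likewise applies Lemma \ref{lemma:action} to obtain a subsequence with at most one exceptional boundary point, discards that point using non-atomicity of $\tnu$, and concludes by the dominated convergence theorem. Your extra remark about the exceptional point possibly lying in $X$ is a harmless bookkeeping detail the paper glosses over, but it does not change the argument.
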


\begin{proof}
By Lemma \ref{lemma:action}, there exists a subsequence $(n_k)_{k \in
  \N}$ and a point $x \in \Xb$ such that for all $ b \in \Xb, b \neq
x$, one has $g_{n_k} b \to \lambda$.  Since the measure $\tnu$ is
non-atomic, $\tnu(\{x\}) = 0$ and the claim follows by the dominated
convergence theorem.
\end{proof}

\begin{proposition} \label{prop:delta} %
Let $G$ be a countable group of isometries of a
separable Gromov hyperbolic space $X$, let $\mu$ be a 
non-elementary probability
distribution on $G$, and let $\tnu$ be a $\mu$-stationary
probability measure on $\Xb$.

Then for almost every sample path $\omega = \seq{w_n}$, there is a boundary
point $\lambda(\w) \in \Xb$ such that the sequence of measures
$\seq{w_n \tnu}$ converges in $\mathcal{P}(\Xb)$ to a delta-measure
$\delta_{\lambda(\w)}$.
\end{proposition}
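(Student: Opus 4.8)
The plan is to combine the almost-sure existence of the limit $\tnu_\omega := \lim_n w_n \tnu$ (Lemma \ref{lemma:mart2}) with the fact that a subsequence realizes a delta-measure (Lemmas \ref{P:convsub} and \ref{lemma:delta}). Since a convergent sequence has the same limit as any of its subsequences, the existence of one subsequence converging to a delta-measure forces the whole sequence to converge to that same delta-measure. First I would invoke Lemma \ref{lemma:mart2}: for $\P$-a.e. $\omega = \seq{w_n}$ the sequence $\seq{w_n \tnu}$ converges in $\mathcal{P}(\Xb)$ to some measure $\tnu_\omega$. Second, I would invoke the corollary to Proposition \ref{P:convsub}: for $\P$-a.e. $\omega$ there is a subsequence $(w_{n_k})_{k \in \N}$ and a point $\lambda(\omega) \in \Xb$ with $w_{n_k} x_0 \to \lambda(\omega)$.

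Third, I would apply Lemma \ref{lemma:delta} to the sequence $(w_{n_k})_{k \in \N}$: since $\tnu$ is non-atomic (Lemma \ref{lemma:non-atomic}), there is a further subsequence $(w_{n_{k_j}})_{j \in \N}$ such that $w_{n_{k_j}} \tnu \to \delta_{\lambda(\omega)}$ in the weak-$*$ topology. Fourth, I would conclude: since $\seq{w_n \tnu}$ converges to $\tnu_\omega$ and $(w_{n_{k_j}}\tnu)_j$ is a subsequence of it converging to $\delta_{\lambda(\omega)}$, uniqueness of limits in the (metrizable, hence Hausdorff) space $\mathcal{P}(\Xb)$ gives $\tnu_\omega = \delta_{\lambda(\omega)}$. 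This holds on the intersection of the two full-measure sets from the first two steps, which is itself of full measure.

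There is essentially no obstacle here, since all the substantive work has been done in the preceding lemmas; the only point requiring a little care is that $\mathcal{P}(\Xb)$ is Hausdorff so that limits are unique, and that the measurability of $\omega \mapsto \lambda(\omega)$ is not actually needed for this statement (only the pointwise existence of such a $\lambda(\omega)$ for a.e. $\omega$). I would write the proof as follows.

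\begin{proof}
By Lemma \ref{lemma:mart2}, for $\P$-almost every sample path $\omega = \seq{w_n}$ the sequence of measures $\seq{w_n \tnu}$ converges in $\mathcal{P}(\Xb)$ to some measure $\tnu_\omega$. By the corollary to Proposition \ref{P:convsub}, for $\P$-almost every $\omega$ there is a subsequence $(w_{n_k})_{k \in \N}$ such that $(w_{n_k} x_0)_{k \in \N}$ converges to a point $\lambda(\omega) \in \Xb$. Both of these events have full measure, so their intersection does as well; we work on this intersection.

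Fix such an $\omega$. Since $\tnu$ is non-atomic by Lemma \ref{lemma:non-atomic}, we may apply Lemma \ref{lemma:delta} to the sequence $(w_{n_k})_{k \in \N}$: there is a further subsequence $(w_{n_{k_j}})_{j \in \N}$ such that $w_{n_{k_j}} \tnu \to \delta_{\lambda(\omega)}$ in the weak-$*$ topology. On the other hand, $(w_{n_{k_j}} \tnu)_{j \in \N}$ is a subsequence of $\seq{w_n \tnu}$, which converges to $\tnu_\omega$; hence $w_{n_{k_j}} \tnu \to \tnu_\omega$ as well. As $\mathcal{P}(\Xb)$ is metrizable, and in particular Hausdorff, limits are unique, so $\tnu_\omega = \delta_{\lambda(\omega)}$. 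Therefore $\seq{w_n \tnu}$ converges in $\mathcal{P}(\Xb)$ to $\delta_{\lambda(\omega)}$, as required.
\end{proof}
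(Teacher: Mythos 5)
Your proposal is correct and follows essentially the same route as the paper: first use the martingale convergence (Lemma \ref{lemma:mart2}) to get an almost-sure limit $\tnu_\omega$, then use Proposition \ref{P:convsub} together with Lemma \ref{lemma:delta} (via non-atomicity from Lemma \ref{lemma:non-atomic}) to exhibit a subsequence converging to $\delta_{\lambda(\omega)}$, and conclude by uniqueness of limits. Your write-up merely spells out the subsequence extraction and the Hausdorff point a bit more explicitly than the paper does; there is no substantive difference.
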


\begin{proof}
By Proposition \ref{P:convsub}, almost every sample path $\w$ has a
subsequence which converges to some point $\lambda(\w) \in \Xb$.
Thus, by Lemma \ref{lemma:delta}, there exists a subsequence
$(w_{n_k})_{k \in \N}$ such that $(w_{n_k} \tnu)_{k \in \N} \to
\delta_{\lambda(\w)}$.  By Lemma \ref{lemma:mart2}, the
sequence $\seq{ w_{n} \tnu }$ has a limit, hence the limit must
coincide with $\delta_{\lambda(\w)}$.
\end{proof}

\subsection{Convergence to the boundary: end of proof} \label{S:convened}

We have shown that $\P$-almost every sequence $\seq{ w_n \tnu}$
converges to a $\delta$-measure on $\Xb$. Finally, we now show that
this implies that $\P$-almost every sequence $\seq{w_n x_0}$ converges
to some point in the Gromov boundary $\Xb$.

We start by showing that there are two shadows with disjoint closures and positive measure.

\begin{lemma} \label{L:disjshadows}
Let $X$ be a separable, Gromov hyperbolic space, and $\tnu$ a non-atomic probability measure on $\partial X$. Then 
there exist two shadows $S_1$, $S_2$ in $X$ such that their closures $U_i := \overline{S_i}^\delta$ in $\partial X$ are disjoint, 
and both have positive $\tnu$-measure. 
\end{lemma}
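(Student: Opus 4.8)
The plan is to take $S_1$ and $S_2$ to be shadows around two distinct points of $\supp(\tnu)$, pushed far enough out along quasigeodesic rays that their traces in $\Xb$ become tiny disjoint balls in the boundary metric $d_\e$. The only substantial input is Proposition \ref{prop:bhm}, which says precisely that the boundary trace of such a far-out shadow is sandwiched between two metric balls centred at the corresponding boundary point.

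First I would record that a non-atomic probability measure on $\Xb$ has support with no isolated points, so $\supp(\tnu)$ contains two distinct points $\lambda_1 \neq \lambda_2$. Fix $\e > 0$ small enough that $d_\e$ is a metric on $\Xb$ and that Proposition \ref{prop:bhm} applies, and set $2\eta := d_\e(\lambda_1, \lambda_2) > 0$. For each $i$ I choose a quasigeodesic ray $\gamma_i$ from $x_0$ to $\lambda_i$ with quasigeodesic constants $(Q,c)$ depending only on $\delta$, as provided in Section \ref{section:qg}. Any point $x$ on $\gamma_i$ then satisfies $\gp{x}{x_0}{\lambda_i} \le A$ for some constant $A = A(\delta)$: this is a standard consequence of the Morse Lemma (Proposition \ref{prop:morse}), since $x$ lies on a quasigeodesic joining $x_0$ and $\lambda_i$ whose constants are bounded in terms of $\delta$. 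Feeding this $A$ into Proposition \ref{prop:bhm} produces constants $C$ and $R_0$; fix $R := R_0 + 1$.

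Now, for a point $x_i$ on $\gamma_i$ with $d_X(x_0, x_i) = D$, put $S_i := S_{x_0}(x_i, R)$ and $U_i := \overline{S_i}^\delta \cap \Xb$. Proposition \ref{prop:bhm} gives
\[ B_\e\left(\lambda_i, \tfrac{1}{C} e^{\e(R-D)}\right) \subseteq U_i \subseteq B_\e\left(\lambda_i, C e^{\e(R-D)}\right). \]
Choosing $D$ (the same for $i = 1, 2$) large enough that $C e^{\e(R-D)} < \eta$, the right-hand inclusion yields $U_i \subseteq B_\e(\lambda_i, \eta)$; since $d_\e(\lambda_1, \lambda_2) = 2\eta$, the two balls $B_\e(\lambda_1,\eta)$ and $B_\e(\lambda_2,\eta)$ are disjoint, so $U_1 \cap U_2 = \emptyset$. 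The left-hand inclusion puts a $d_\e$-ball of positive radius around $\lambda_i \in \supp(\tnu)$ inside $U_i$, whence $\tnu(U_i) > 0$. This produces the required pair of shadows.

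The step I expect to need the most care is the uniform bound $\gp{x}{x_0}{\lambda_i} \le A$: one must use that, even though $X$ need not be proper, there are quasigeodesic rays from $x_0$ to every boundary point with constants controlled by $\delta$ alone (Section \ref{section:qg}), and then the Morse Lemma to turn this into a uniform Gromov-product estimate — this is exactly what lets Proposition \ref{prop:bhm} be applied with a single pair $(C, R_0)$ serving both $\lambda_1$ and $\lambda_2$. Everything after that is bookkeeping with the two inclusions.
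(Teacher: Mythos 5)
Your proof is correct and follows essentially the same approach as the paper's: pick two distinct points $\lambda_1,\lambda_2$ in $\supp(\tnu)$ (using non-atomicity), then take shadows far out along rays to $\lambda_1$ and $\lambda_2$ so that each closure contains a neighbourhood of $\lambda_i$ (giving positive measure) while the two closures are disjoint. You invoke Proposition \ref{prop:bhm} explicitly and use the metric-ball sandwich for disjointness, whereas the paper phrases disjointness via the Gromov product $\gp{x_0}{\lambda_1}{\lambda_2}$ and leaves the ``$\lambda_i$ in the interior'' step implicit — but these are the same tool, since $d_\e$ is built from the Gromov product and that implicit step is precisely Proposition \ref{prop:bhm}.
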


\begin{proof}
As $\partial X$ is a separable metric space, the support of $\tnu$ is
a non-empty closed set, and, since $\tnu$ is non-atomic, it contains
at least two points $\lambda_1$ and $\lambda_2$.  Now, for each
$\lambda_i$ we can choose a shadow $S_i$ in $X$ such that $\lambda_i$
is contained in the interior of the closure $U_i =
\overline{S_i}^\delta$, hence $\tnu(U_i) > 0$ for each $i = 1, 2$, and
such that the distance parameter of each $S_i$ is much larger than
$\gp{x_0}{\lambda_1}{\lambda_2}$, so that $U_1$ and $U_2$ are
disjoint.
\end{proof}

The next proposition completes the proof of Theorem \ref{T:conv}.

\begin{proposition} \label{P:topconv} %
Let $G$ be a countable group of isometries of a
separable Gromov hyperbolic space $X$, let $\mu$ be a non-elementary 
probability distribution on $G$, and let $\tnu$ be a $\mu$-stationary measure
on $\Xb$.

Suppose that the sequence $\seq{ w_n \tnu }$ converges to a
delta-measure $\delta_\lambda$ on $\Xb$, for some $\lambda \in
\Xb$. Then $\seq{ w_n x_0 }$ converges to $\lambda$ in $X \cup \Xb$.
\end{proposition}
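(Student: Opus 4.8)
The plan is to argue by contradiction, using the two disjoint shadows supplied by Lemma \ref{L:disjshadows} together with the characterization of shadows via horofunction values (Lemma \ref{L:shadowhoro}) and the weak convexity of shadows (Corollary \ref{cor:weak convexity}). Suppose $\seq{w_n x_0}$ does not converge to $\lambda$. Then, passing to a subsequence, we may assume that $w_n x_0$ stays outside a fixed shadow-neighbourhood $U$ of $\lambda$ in $X \cup \partial X$. Now choose, via Lemma \ref{L:disjshadows}, two shadows $S_1, S_2$ whose closures $U_1, U_2$ in $\partial X$ are disjoint and have positive $\tnu$-measure; by taking their distance parameters large we may assume $U_1 \cup U_2$ is disjoint from $\lambda$, and (shrinking $U$) that $U$ is disjoint from $U_1$ and from $U_2$.

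First I would translate the hypothesis $w_n \tnu \to \delta_\lambda$ into a statement about the sets $w_n^{-1}(U_1)$ and $w_n^{-1}(U_2)$. Since $\tnu(U_i) > 0$ and $w_n \tnu$ concentrates near $\lambda$, for $i = 1, 2$ the set $w_n U_i$ must eventually be contained in (a small neighbourhood of) $\lambda$ — more precisely, since $\delta_\lambda(U_i) = 0$ while the limit concentrates at $\lambda$, we get $w_n \tnu(V) \to 0$ for any neighbourhood $V$ of $U_i$ not containing $\lambda$, so $\tnu(w_n^{-1} V) \to 0$. Combining this with $\tnu(U_i) > 0$ forces, for $n$ large, that a definite portion of $U_i$ is mapped by $w_n^{-1}$ into ... wait — the cleaner route is the reverse: since $w_n \tnu \to \delta_\lambda$ and $\tnu(U_i)>0$, the measures $w_n\tnu$ put mass near $\lambda$; but I actually want to locate where $w_n x_0$ goes. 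So instead I would use the shadow attached to $w_n x_0$ itself.

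The key step is to invoke Proposition \ref{prop:shadow open}: there is $R_0$ such that for each $n$, the closure $\overline{S_{x_0}(w_n x_0, R_0)}^\delta$ contains a limit point of $Gx_0$ in its interior — but more usefully, one shows that the \emph{complement} of the shadow $S_{x_0}(w_n x_0, R)$, which by Corollary \ref{C:shadowcompl} is contained in a shadow $S_{w_n x_0}(x_0, \widetilde R)$ based at $w_n x_0$, has $w_n\tnu$-measure controlled as follows. If $w_n x_0 \not\to \lambda$, then along a subsequence the shadows $S_{x_0}(w_n x_0, R)$ (for a suitable fixed $R$) have closures in $\partial X$ that stay a definite distance from $\lambda$, hence eventually miss a neighbourhood of $\lambda$; therefore $w_n \tnu$ evaluated on such a shadow tends to $0$. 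On the other hand, the preimage $w_n^{-1}\overline{S_{x_0}(w_n x_0, R)}^\delta$ is a shadow $\overline{S_{w_n^{-1}x_0}(x_0, R)}^\delta$, which — since $d_X(w_n^{-1}x_0, x_0) = d_X(x_0, w_n x_0) \to \infty$ along the subsequence where the walk leaves bounded sets — has $\tnu$-measure bounded below by a positive constant (because $\partial X$ is covered by finitely many such shadows of bounded distance parameter, or by applying Proposition \ref{prop:bhm} to see its complement shrinks). This gives $w_n\tnu\big(\overline{S_{x_0}(w_n x_0, R)}^\delta\big) = \tnu\big(w_n^{-1}\overline{S_{x_0}(w_n x_0, R)}^\delta\big) \ge c > 0$, contradicting that this quantity tends to $0$.

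The main obstacle is the bookkeeping in the previous paragraph: ensuring that $d_X(x_0, w_n x_0) \to \infty$ along the relevant subsequence (this needs a separate small argument — if $w_n x_0$ stays bounded it has no limit point in $\partial X$ at all, yet $w_n\tnu \to \delta_\lambda$ with $\lambda\in\partial X$ should already be impossible since isometries preserving a bounded set can't push a non-atomic measure to a boundary delta), and then pinning down a uniform lower bound $c>0$ for $\tnu$ of a shadow based at $x_0$ with large distance parameter — this uses that $\partial X = \bigcup_g \overline{S_{x_0}(gx_0,R_0)}^\delta$ with each piece open, so finitely many cover, but here the base point $w_n^{-1}x_0$ varies, so one instead argues that the complement of $S_{w_n^{-1}x_0}(x_0,R)$ lies in a shadow based at $x_0$ with distance parameter $\to\infty$, whose $\tnu$-measure $\to 0$ by Proposition \ref{prop:bhm} and non-atomicity. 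Once both bounds are in place, the contradiction $0 = \lim w_n\tnu(\cdots) \ge c > 0$ closes the proof.
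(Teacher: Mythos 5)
Your strategy -- playing off the $w_n\tnu$-mass of the shadow $S_{x_0}(w_nx_0,R)$ (small when $w_nx_0$ stays Gromov-far from $\lambda$, since the shadow's boundary closure then avoids a neighbourhood of $\lambda$) against the $\tnu$-mass of its pullback $S_{w_n^{-1}x_0}(x_0,R)$ -- is genuinely different from the paper's, but as written it has two problems. The smaller one is the lower-bound step: since $\gp{x_0}{y}{z}+\gp{y}{x_0}{z}=d_X(x_0,y)$, the complement of $S_{w_n^{-1}x_0}(x_0,R)$ sits inside the shadow $S_{x_0}\bigl(w_n^{-1}x_0,\,d_X(x_0,w_n^{-1}x_0)-R\bigr)$, whose distance parameter is the \emph{fixed} constant $R$, not a quantity tending to infinity; so its $\tnu$-measure does not tend to $0$ as you claim. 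What you actually need is to fix $R$ large at the outset so that every shadow centred on the orbit $Gx_0$ with distance parameter at least $R-O(\delta)$ has $\tnu$-measure at most $1/2$ (Proposition \ref{prop:bhm} plus non-atomicity, i.e.\ the content of Proposition \ref{prop:shadow bound}, whose proof applies to any non-atomic stationary measure, not just the hitting measure); this yields the uniform constant $c=1/2$. That slip is repairable with the tools you cite.

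The genuine gap is the case where $d_X(x_0,w_nx_0)$ stays bounded along the bad subsequence. Your parenthetical dismissal (``isometries preserving a bounded set can't push a non-atomic measure to a boundary delta'') is not an argument: the $w_n$ preserve no bounded set, they merely move the basepoint a bounded amount, and ruling out that such isometries push $\tnu$ to $\delta_\lambda$ needs a proof. Worse, this is exactly the regime where your key step collapses: if $d_X(x_0,w_nx_0)\le R$ then $S_{x_0}(w_nx_0,R)$ is all of $X$, its closure certainly does not avoid $\lambda$, and no contradiction appears. The paper's proof avoids both issues by arguing directly, with no case split: it takes two shadows with disjoint closures $U_1,U_2$ of positive $\tnu$-measure (Lemma \ref{L:disjshadows}); since $w_n\tnu\to\delta_\lambda$, for any shadow neighbourhood $V$ of $\lambda$ both $w_nU_1$ and $w_nU_2$ eventually meet $V$, the Gromov product at $w_nx_0$ of the two chosen points is bounded by a constant depending only on $U_1,U_2$, so $w_nx_0$ lies boundedly close to a geodesic between two points of $V$, which by weak convexity (Corollary \ref{cor:weak convexity}) stays in a slightly larger shadow; hence $w_nx_0$ itself lies in a slightly enlarged shadow around $\lambda$, for every such $V$, which is convergence. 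You should either adopt that argument or supply a separate proof for the bounded regime (which, in practice, amounts to the same two-disjoint-shadows computation).
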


\begin{proof}
Let $U_1 = \overline{S_1}^\delta$ and $U_2 = \overline{S_2}^\delta$ as in Lemma \ref{L:disjshadows}, applied to the stationary measure $\widetilde \nu$.
Set $\e = \min \{ \widetilde \nu(U_1), \widetilde \nu(U_2) \}$; as $U_1$ and $U_2$ both have
positive $\widetilde \nu$-measure, $\e$ is strictly greater than zero.  As the
sequence of measures $\seq{w_n \widetilde \nu}$ converges to the
delta-measure $\delta_\lambda$, for any shadow set $V = \overline{
  S_{x_0}(x, R) }^\delta$ containing $\lambda$ in its interior, there
is an $N$ such that 
$$w_n \tnu(V) \ge 1 - \e/2 \qquad \textup{for all }n
\ge N.$$
 As $w_n \widetilde{ \nu}(w_n U_1) = \tnu(U_1) \ge
\e$, the set $w_n U_1$ intersects $V$, 
hence $w_n S_1$ intersects the (slightly larger) shadow $S_0 = S_{x_0}(x, R + O(\delta))$, and similarly 
$w_n S_2$ intersects $S_0$. For each $i = 1, 2$, let us pick 
 $y_i \in S_i \cap w_n^{-1}S_0$ and denote $x_i = w_n y_i \in S_0$.

By disjointness of $U_1$ and $U_2$, there exists a constant $C$ such that 
$$\gp{x_0}{y_1}{y_2} \leqslant C$$
for each $y_1 \in U_1$ and $y_2 \in U_2$. Moreover, since $G$ acts by isometries, 
we get
$$\gp{w_n x_0}{x_1}{x_2} = \gp{x_0}{y_1}{y_2} \leqslant C$$
hence we can bound the distance from $w_n x_0$ to the geodesic $[x_1, x_2]$ as 
$$d_X(w_n x_0, [x_1, x_2]) =  \gp{w_n x_0}{x_1}{x_2} + O(\delta) \leqslant C + O(\delta);$$
note that the constant on the right-hand side depends only on $U_1$, $U_2$ and $\delta$, and not on $n$.

As $x_1$ and $x_2$ both lie in $S_0$,
by weak convexity (Corollary \ref{cor:weak convexity}), a geodesic
$[x_1, x_2]$ connecting them lies in a shadow $S_{x_0}(x, R +
O(\delta))$, and as $w_n x_0$ is a bounded distance from $[x_1, x_2]$, this implies that 
$w_n x_0$ lies in the slightly larger shadow $S_0^+ = S_{x_0}(x, R+ C + O(\delta))$, for all $n \ge N$.  

As this holds for all shadow sets
$V$ containing $\lambda$ in their interiors, this implies that $\seq{ w_n
x_0}$ converges to $\lambda$, as required.
\end{proof}

Theorem \ref{T:conv} implies the convergence statement in Theorem
\ref{theorem:convergence}, and so it remains to show
that the hitting measure $\tnu$ is the unique $\mu$-stationary measure
on $\Xb$, and the convolution measures $(\tmun{n})$ converge weakly to
$\tnu$.

By Proposition \ref{prop:delta}, for any $\mu$-stationary measure
$\tnu$ on $\Xb$, for $\P$-almost every sample path $\w$, the sequence
of measures $(w_n \tnu)_{n \in \N}$ converges to
$\delta_{\lambda(\w)}$, where $\lambda(\omega)$ is the limit point of
the sample path $(w_n x_0)_{n \in \N}$, hence it only depends on $\w$,
not on $\tnu$.  Thus, uniqueness follows from the integral formula
\eqref{eq:integral}.  Furthermore, 
\[ \tmun{n} = \int_\Omega w_n \delta_{x_0} \ d \P(\omega).  \]
We may take the limit as $n$ tends to infinity, and by the integral
formula \eqref{eq:integral}, the distribution of the limit points is
given by $\nu$, so $\tmun{n}$ weakly converges to $\tnu$.

\section{Applications} \label{section:applications}

In this section we use convergence to the boundary to show the
results on positive drift, sublinear tracking and translation length.

We will no longer use measures on the horofunction boundary, and so we
shall from now on simply denote by $\nu$ the hitting measure on the
Gromov boundary $\Xb$.  Moreover, given $S \subseteq X$, the symbol
$\overline{S}$ from now on will always mean the closure of $S$ in the
space $X \cup \partial X$.

\subsection{Hitting measures of shadows} \label{section:shadow measure}

We start by showing that the measure of a shadow tends to zero as
the distance parameter of the shadow tends to infinity.
In order to simplify notation, we shall denote 
$$Sh(x_0, r) := \{ S_{x_0}(gx_0, R) \ : \ g \in G, \ d_X(x_0, gx_0) - R \ge r \}$$
the set of shadows based at $x_0$, with centers on the orbit $Gx_0$
and with distance parameter $\ge r$.

\begin{proposition} \label{prop:shadow bound} %
Let $G$ be a countable group of isometries of a separable Gromov
hyperbolic space $X$.  Let $\mu$ be a non-elementary probability distribution on $G$,
and let $\nu$ be the hitting measure on $\Xb$. 
Then we have 
\[ \lim_{r\to \infty} \sup_{S \in Sh(x_0, r)} \nu( \overline{S }) =
0.  \]
\end{proposition}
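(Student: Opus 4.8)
The plan is to prove this by contradiction, extracting a "bad" sequence of shadows whose measures stay bounded below, and then using the stationarity of $\nu$ together with the transience of the random walk to derive a contradiction. The key point is that a shadow $S_{x_0}(gx_0, R)$ with large distance parameter $d_X(x_0, gx_0) - R$ is a "small" target: the sample path, which converges to a point $\omega_+ \in \partial X$ and makes linear progress (or at least transiently escapes to infinity), can only land near such a distant shadow with small probability.

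Suppose the conclusion fails. Then there is $\e > 0$ and a sequence of group elements $g_n \in G$ with distance parameters $r_n := d_X(x_0, g_n x_0) - R_n \to \infty$, but $\nu(\overline{S_{x_0}(g_n x_0, R_n)}) \ge \e$ for all $n$. First I would use Proposition \ref{prop:shadow open}: there is a constant $R_0$ (depending only on the action) such that $\overline{S_{x_0}(g x_0, R_0)}^\delta$ contains an open set meeting the limit set, for every $g$. Combined with the fact that larger shadows (same center, larger $R$, i.e. smaller distance parameter) contain smaller ones, one can try to compare the shadows $S_{x_0}(g_n x_0, R_n)$ to translates $g_n S_{x_0}(x_0, \widetilde R)$ of a fixed shadow, via Corollary \ref{C:shadowcompl} and the weak-convexity estimates of Corollary \ref{cor:weak convexity}. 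The idea is: the hitting measure of a shadow based at $x_0$ with center $g_n x_0$ is essentially controlled by the probability that the random walk, after passing near $g_n x_0$, continues into the "forward" direction; by stationarity, $\nu(\overline{S_{x_0}(g_n x_0, R_n)})$ comparable to $\e$ forces the random walk to pass within bounded distance of $g_n x_0$ with probability bounded below — but since $d_X(x_0, g_n x_0) \ge r_n \to \infty$, and by convergence to the boundary almost every sample path visits any bounded neighborhood of $x_0$ only finitely often, stacking up infinitely many essentially disjoint such events contradicts $\sum_n \P(\text{something of mass} \ge \text{const}) < \infty$.

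More concretely, the cleanest route is probably: since $\nu$ is $\mu$-stationary, for any $m$ we have $\nu = \sum_{h} \mun{m}(h)\, h\nu$, so $\nu(\overline{S_{x_0}(g_n x_0,R_n)}) = \sum_h \mun{m}(h)\, \nu(h^{-1}\overline{S_{x_0}(g_n x_0,R_n)})$. One then shows that for a shadow with large distance parameter, $h^{-1}\overline{S}$ is contained in a shadow whose distance parameter is still large unless $h^{-1}$ moves $x_0$ a long way (comparable to $r_n$) — quantitatively, $d_X(x_0, h^{-1} g_n x_0) \le$ something forces $h$ to be "far". Using that $\mun{m}$ puts mass escaping to infinity (a consequence of convergence to the boundary, or directly of transience), the total contribution is small, giving the contradiction. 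Alternatively — and I suspect this is the author's route — one directly uses the convergence-to-the-boundary result: the shadows $\overline{S_{x_0}(g_n x_0, R_n)}$ all have $\nu$-measure $\ge \e$, and since the sample path $w_k x_0 \to \omega_+$, the $\omega$-limit $\nu_\omega = \delta_{\omega_+}$; applying the integral formula $\nu = \int \delta_{\omega_+} \, d\P$ to these sets and a Borel–Cantelli / Fatou argument forces $\P(\omega_+ \in \overline{S_{x_0}(g_n x_0,R_n)} \text{ i.o.}) > 0$, but a sample path converging to $\omega_+$ can belong to $\overline{S_{x_0}(gx_0,R)}$ — a set at distance $\approx r$ from $x_0$ — only finitely often when $r$ can be taken $\to\infty$ along a sparse enough subsequence.

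\textbf{The main obstacle} I anticipate is making precise the claim that "the limit point of the sample path lands in a shadow of distance parameter $r$ only with probability $o(1)$ as $r \to \infty$" \emph{uniformly over all shadows based at $x_0$ with centers on the orbit}; the uniformity over the (infinite) orbit $Gx_0$ is what prevents a trivial argument, and this is exactly why one needs Proposition \ref{prop:shadow open} (to get a uniform lower bound $R_0$ making these shadows genuinely open and genuinely "capturing" a definite chunk of boundary) together with the weak-convexity comparison to reduce an arbitrary shadow to a translate of one of finitely many reference shadows. Once one has the reduction $h^{-1}\overline{S_{x_0}(g_n x_0, R_n)} \subseteq \overline{S_{x_0}(h^{-1}g_n x_0, R_n + O(\delta))}$ and the bookkeeping that $d_X(x_0, h^{-1}g_n x_0)$ is large for all but a set of $h$ of small $\mun{m}$-measure (since the random walk escapes to infinity), the rest is a routine estimate with $\e_s = \e/N$ and $N$ large.
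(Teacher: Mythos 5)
The paper's proof is much shorter and rests on a different pair of ingredients: Proposition~\ref{prop:bhm} shows that a shadow of distance parameter $r$ based at $x_0$ with center on the orbit is contained, inside $\Xb$, in a ball of radius $C e^{-\e r}$ in the metric $d_\e$ (Proposition~\ref{prop:shadow open} supplies the uniform bound on $\gp{gx_0}{x_0}{y}$ needed to apply it uniformly over $g$), and then non-atomicity of $\nu$ forces $\sup_y \nu(B_\e(y,s)) \to 0$ as $s \to 0$. You never invoke Proposition~\ref{prop:bhm}, which is the crucial geometric input that turns the statement into a purely measure-theoretic one about a non-atomic measure. Instead you try to run a stationarity / convolution argument, and that route does not close.

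Concretely, let $s(r) := \sup_{S \in Sh(x_0,r)} \nu(\overline{S})$; this is nonincreasing in $r$. The reduction you propose, $h^{-1}\overline{S_{x_0}(g_n x_0, R_n)} \subseteq \overline{S_{x_0}(h^{-1}g_n x_0, R_n + O(\delta))}$, is incorrect: $h^{-1}S_{x_0}(gx_0,R) = S_{h^{-1}x_0}(h^{-1}gx_0,R)$, and re-basing at $x_0$ costs $O(d_X(x_0,hx_0))$, not $O(\delta)$. Taking that into account, stationarity gives
\[
s(r) \le \sum_{h} \mun{m}(h)\, s\bigl(\max\{0,\, r - 3\, d_X(x_0,hx_0)\}\bigr),
\]
and letting $r\to\infty$ with $s_\infty := \lim_{r\to\infty} s(r)$ yields only $s_\infty \le s_\infty$ — no contradiction, since you are comparing the family of "bad'' shadows to itself. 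The disjoint-translates mechanism of Lemma~\ref{lemma:measure zero} does not help either: it shows a single $G$-invariant-type set has $\nu$-measure zero, whereas here you need a uniform decay rate over an infinite family of sets whose centers wander over the whole orbit. Likewise, your Borel--Cantelli sketch using $\nu = \int \delta_{\omega_+}\, d\P$ needs summability or disjointness of the events $\{\omega_+ \in \overline{S_{x_0}(g_n x_0, R_n)}\}$, which you have not produced and which, without the metric-ball comparison, has no reason to hold. The missing idea is precisely the content of Proposition~\ref{prop:bhm}: once a shadow of large distance parameter is trapped in a metric ball of small $d_\e$-radius, the statement reduces to the elementary fact that for a non-atomic Borel probability measure on a complete metric space, $\sup_y \nu(B(y,s)) \to 0$ as $s\to 0$.
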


This result also holds for the reflected measures $\check \mu_n$ and
$\check \nu$ determined by $\check \mu(g) = \mu(g^{-1})$, as if $\mu$
satisfies the hypotheses of Proposition \ref{prop:shadow bound}, then
so does $\check \mu$. 

\begin{proof}
The result follows from Propositions \ref{prop:shadow open} and
\ref{prop:bhm}, which says that a shadow centered at $g x_0$ of
distance parameter $r$ is contained in a ball of radius $Ce^{-\e r}$
in the metric $d_\e$ on $\Xb$, where $C$ is independent of $r$, and as
$\nu$ is non-atomic, the measure of a such a ball tends to zero as the
radius of the ball tends to zero.
\end{proof}

If $\mu$ has bounded range in $X$ then the argument from
\cite{Maher_exp}*{Lemma 2.10} shows that the $\nu$- and
$\mun{n}$-measures of a shadow $\overline{ S_{x_0}(g x_0, R) }$
decay exponentially in the distance parameter, i.e.  there are
positive constants $K$ and $c < 1$ such that
\begin{equation} \label{eq:shadow decay} 
\nu( \overline{S_{x_0}(g x_0, R)} ) \le K c^{d_X(x_0, g x_0) - R}.
\end{equation}

For $U$ a subset of $X$, let $H^+_x(U)$ denote  the probability that
a random walk starting at $x$ ever hits $U$ in forward time, i.e.
$$H^+_x(U) := \mathbb{P}( \exists \ n \ge 0 \ : \ w_n x \in U ).$$
Similarly,
let $H^-_x(U)$ be the probability that a random walk starting at $x$
ever hits $U$ in reverse time, i.e. the probability that $w_n x$ lies
in $U$ for some $n \le 0$.

\begin{proposition} \label{prop:hitting} %
Let $G$ be a countable group which acts by isometries on a separable
Gromov hyperbolic space $X$, and 
$\mu$ a non-elementary 
probability distribution
on $G$. 
Then
\[ \sup_{S \in Sh(x_0, r)} H^+_{x_0}(S) \to 0 \qquad \text{ as } r \to
\infty. \]
\end{proposition}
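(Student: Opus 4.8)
The plan is to bound $H^+_{x_0}(S)$ for a shadow $S = S_{x_0}(gx_0, R)$ with large distance parameter $r = d_X(x_0, gx_0) - R$ by relating the event ``the walk ever enters $S$'' to the event ``the walk converges to a boundary point lying in the closure of a slightly enlarged shadow'', and then invoking Proposition \ref{prop:shadow bound}. First I would use the strong Markov property: if the forward random walk $(w_n x_0)$ ever enters $S$, then from the first hitting time $\sigma$ onward the walk is a fresh random walk started at $w_\sigma x_0 \in S$. By Theorem \ref{T:conv}, almost every sample path converges to a point $\omega_+ \in \partial X$; I want to argue that with probability bounded below (independently of $S$), a walk started inside $S$ converges to a point in $\overline{S^+}$, where $S^+ = S_{x_0}(gx_0, R + O(\delta))$ is a controlled enlargement of $S$.

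The key geometric input is weak convexity of shadows, Corollary \ref{cor:weak convexity}, together with Lemma \ref{lemma:insideconv}: if $w_\sigma x_0 \in S$ and the continuation of the walk converges to a boundary point $\lambda$ which also ``looks like'' it is deep in $S$ — concretely, if the walk after time $\sigma$ makes definite linear progress away from $x_0$ along a direction compatible with $S$ — then $\lambda \in \overline{S^+}$. To make this quantitative without moment assumptions, I would instead run the argument through the reflected walk and a reversal/time-symmetry trick, as is standard (cf. \cite{Maher_exp}): the probability that the forward walk hits $S$ is comparable, up to the $\check\nu$-measure of a shadow, to a sum over $g$ of hitting probabilities, and each such term is controlled by $\check\nu(\overline{S})$ for a shadow whose distance parameter is at least $r - O(\delta)$. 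Then Proposition \ref{prop:shadow bound}, applied to $\check\mu$ (which it explicitly covers), gives that this supremum tends to $0$ as $r \to \infty$.

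Concretely, the steps in order: (i) decompose $H^+_{x_0}(S)$ by the first hitting time and the hitting location, writing $H^+_{x_0}(S) \le \sum_{n \ge 0} \P(w_n x_0 \in S,\ w_k x_0 \notin S \text{ for } k < n)$; (ii) for a walk whose position at some time lies in $S$, use the convergence theorem plus weak convexity (Corollary \ref{cor:weak convexity}) to show that, conditionally, the limit point lies in $\overline{S^+}$ with probability at least some universal $c_0 > 0$ — here one picks a bounded set of ``good'' group elements, provided by non-elementariness and Proposition \ref{prop:shadow open}, that push the walk deeper into $S$ and pin down the limiting boundary point; (iii) conclude $c_0 \cdot H^+_{x_0}(S) \le \nu(\overline{S^+})$, since these events correspond to disjoint pieces of path space mapping into the shadow; (iv) since the distance parameter of $S^+$ is $r - O(\delta)$, apply Proposition \ref{prop:shadow bound} to get $\sup_{S \in Sh(x_0, r)} \nu(\overline{S^+}) \to 0$, hence $\sup_{S \in Sh(x_0, r)} H^+_{x_0}(S) \to 0$.

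The main obstacle I expect is step (ii): obtaining a \emph{uniform} positive lower bound $c_0$ on the conditional probability that the continuation of a walk starting in $S$ converges into $\overline{S^+}$, uniformly over all shadows $S$ of large distance parameter. Without moment hypotheses one cannot appeal to linear progress here directly; instead one must exploit that the shadow $S$, once the walk is inside it, "looks the same" up to isometry as a shadow based at the basepoint — more precisely, one translates by $w_\sigma^{-1}$, and the question becomes whether a fresh walk converges into a fixed enlarged shadow, which by Theorem \ref{theorem:convergence} has positive $\nu$-measure bounded below using Proposition \ref{prop:shadow open}. Carefully setting up this translation-invariance of the estimate, and checking that the various $O(\delta)$ enlargements do not accumulate, is the delicate part.
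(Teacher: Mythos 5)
Your high-level reduction --- bound $H^+_{x_0}(S)$ by $\nu(\overline{S^+})$ for an enlarged shadow and invoke Proposition \ref{prop:shadow bound} --- matches the shape of the paper's proof, and you have correctly identified step (ii) as the crux. But the way you propose to execute step (ii) does not work, and the fixes you sketch don't close the gap.

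You want a \emph{universal} $c_0 > 0$ such that, conditional on hitting $S$ at $g x_0$, the continuation converges into $\overline{S^+}$ with probability at least $c_0$, where $S^+$ is enlarged by only $O(\delta)$. No such uniform bound is available: Proposition \ref{prop:shadow bound} controls shadows of large distance parameter, but gives no bound away from $1$ for shadows of fixed distance parameter. Your ``translate by $w_\sigma^{-1}$ and appeal to a fixed enlarged shadow'' idea is also off: after translating by $g^{-1}$, the shadow has vertex $g^{-1}x_0$, not $x_0$, so it is not fixed and still depends on $g$ and on $S$; Proposition \ref{prop:shadow open} gives no uniform lower bound here. The ``good elements'' fallback would require an argument of its own and is not what is needed.

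The paper turns the problem inside out. Enlarge by an arbitrary constant $A$ (not $O(\delta)$): set $S_2 = S_{x_0}(x, R + A)$. For $g x_0 \in S_1$, a short coarse-geometry argument via Propositions \ref{prop:npp shadow} and \ref{prop:npp2} shows that the \emph{complement} of $S_2$ is contained in a shadow $S_3 = S_{g x_0}(x_0, R')$ of distance parameter $A + O(\delta)$, pointing back toward $x_0$. Translating by $g^{-1}$ sends $S_3$ to $S_{x_0}(g^{-1}x_0, R')$, a shadow based at $x_0$ with distance parameter $A + O(\delta)$ --- \emph{this} is what is uniform in $g$ and $S_1$. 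Choosing $A$ large, Proposition \ref{prop:shadow bound} gives $\nu(g^{-1}\overline{S_3}) \le \e$, hence the conditional probability of converging into $\overline{S_2}$ after hitting $S_1$ is $\nu(g^{-1}\overline{S_2}) \ge 1 - \e$, uniformly. This yields $H^+_{x_0}(S_1) \le (1-\e)^{-1}\nu(\overline{S_2})$, and since $S_2$ has distance parameter $r - A \to \infty$ (because $A$ is a fixed constant, not depending on $r$), Proposition \ref{prop:shadow bound} finishes. The lesson: don't seek a universal $c_0$ with $O(\delta)$ loss; instead sacrifice a fixed, $\e$-dependent amount $A$ of distance parameter, so that the measure of the complementary shadow (as seen from the hitting point) can be made small.
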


This immediately implies the same result with $H^+_{x_0}$ replaced by
$H^-_{x_0}$, by replacing $\mu$ with the reflected measure $\check
\mu(g) = \mu(g^{-1})$.

\begin{proof}
Suppose a sample path starting at $x_0$ hits a shadow $S_1 =
S_{x_0}(x, R)$ with $x = hx_0$ in forward time, at $g x_0$ say. Let $\gamma$ be a
geodesic from $x_0$ to $x$, and let $p$ be a nearest point on $\gamma$
to $g x_0$, as illustrated below in Figure \ref{pic:hitting shadows2}.

\begin{figure}[H]
\begin{center}
\begin{tikzpicture}

\tikzstyle{point}=[circle, draw, fill=black, inner sep=0pt,
minimum width=2.5pt]

\draw [thick] (-1, 0) node [point, label=below:$x_0$] {} -- (7, 0) node
[point, label=below:$x$] {};

\draw [thick] (6, 1.5) node [point, label=right:$g x_0$] {} -- (6,0) node
[point, label=below:$p$] {};

\draw [thick] (0, 1.5) node [point, label=right:$y$] {} -- (0,0) node
[point, label=below:$q$] {};

\draw [thick] (5, 2) -- (5, -2) node [right] {$S_1 = S_{x_0}(x, R)$};

\draw [thick] (1, 2) -- (1, -2) node [right] {$S_2 = S_{x_0}(x, R + A)$};

\end{tikzpicture}
\end{center} 
\caption{Nested shadows}
\label{pic:hitting shadows2}
\end{figure}
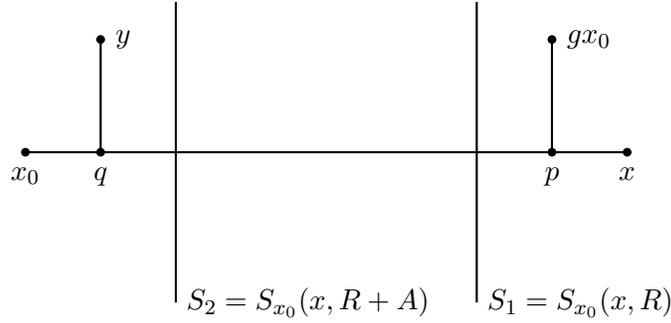

Consider the shadow $S_2 = S_{x_0}(x, R + A)$, for some fixed  $A > 0$ which we will choose later. 
The main idea is that if the random walk ever hits $S_1$, it will likely 
converge inside the closure of $S_2$, and the probability of that happening 
is small if the distance parameter of $S_2$ is large.

To make the idea precise, let $y$ be a point in the complement of $S_2$, and let $q$ be the nearest point projection
of $y$ to $\gamma$.  By Proposition \ref{prop:npp shadow}, $p$ is
within distance $O(\delta)$ of $S_1$, and $q$ is within distance
$O(\delta)$ of the complement of $S_2$, so the distance between $p$
and $q$ is at least $A + O(\delta)$. Now using Proposition
\ref{prop:npp2}, if $A \ge O(\delta)$, then the Gromov product satisfies
\[ \gp{g x_0}{x_0}{y} = d_X(g x_0, p) + d_X(p, q) + O(\delta) \ge A +
O(\delta). \]
Therefore, the complement of $S_2$ is contained in a shadow $S_3 =
S_{g x_0}(x_0, R')$ (where $R' = d_X(g x_0, x_0) - A + O(\delta)$) of distance parameter
$A + O(\delta)$.

Fix some positive $\e < 1$; then, since the measure of shadows tends to
zero as the distance parameter tends to infinity (Proposition
\ref{prop:shadow bound}), there is a number $A_0$ sufficiently large
such that $\nu(\overline{S}) \le \e$ for all shadows $S \in Sh(x_0, A_0)$ 
with distance parameter larger than $A_0$. 
As a consequence, if we choose $A$ such that $A + O(\delta) \ge A_0$ in the above construction, 
we have 
$$\nu(g^{-1}\overline{S_3}) = \nu(\overline{S_{x_0}(g^{-1}x_0, R')}) \leqslant \e$$
hence, since we proved $\partial X \setminus \overline{S_2} \subseteq \overline{X \setminus S_2} \subseteq \overline{S_3}$,  
\begin{equation}
\label{eq:largeS2}
\nu(g^{-1}\overline{S_2}) \geqslant 1 -\e.
\end{equation}
Now, by the Markov property of the random walk, 
the conditional probability of ending up in $\overline{S_2}$ 
after hitting an element $g x_0 \in S_1$ at time $k$ 
is given by 
$$\mathbb{P}(\lim_{n \to \infty} w_n x_0\in \overline{S_2} \ : \ w_k = g) = 
\mathbb{P}(\lim_{n \to \infty} w_n x_0\in g^{-1}\overline{S_2}) 
= \nu(g^{-1} \overline{S_2})$$
for each $k$ and $g$, and such probability is large by equation \eqref{eq:largeS2}. This implies the following 
lower bound on the probability of ending up in $\overline{S_2}$:
\begin{align*}
\mathbb{P}( \lim_{n \to \infty} w_n x_0 \in \overline{S_2}) 
& \geqslant \mathbb{P}(
\lim_{n \to \infty} w_n x_0 \in \overline{S_2} 
\text{ and } \exists \ n \ : \ 
w_n x_0 \in S_1) \\
\mathbb{P}( \lim_{n \to \infty} w_n x_0 \in \overline{S_2}) 
& \ge  \mathbb{P}(\exists \ n \ : \ w_n x_0 \in
S_1)(1-\e)
\end{align*}
which, by recalling the definitions of $\nu$ and $H^+_{x_0}$, becomes
\begin{equation} \label{eq:n-indep-estimate} 
H^+_{x_0}(S_1) \le \frac{1}{1 - \e} \nu( \overline{S_2} ). 
\end{equation}
Now, as the distance parameter of $S_1$ tends to $\infty$, 
so does the distance parameter of $S_2$, hence $\nu(\overline{S_2}) \to 0$
by Proposition \ref{prop:shadow bound},
and by the above equation $H^+_{x_0}(S_1)$ tends to $0$, as required.
\end{proof}

As $H^+_{x_0}(S_1)$ is an upper bound for $\mu_n(S_1)$ for any $n$, equation
\eqref{eq:n-indep-estimate} implies the following corollary.

\begin{corollary} \label{cor:mun estimate} %
Let $G$ be a countable group which acts by isometries on a separable
Gromov hyperbolic space $X$, and let $\mu$ be a non-elementary probability distribution
on $G$. 
Then there is a function $f(r)$, with $f(r) \to 0 $ as $r \to \infty$ such that
for all $n$ one has
\begin{equation} \label{e:mu_n estimate} 
\sup_{S \in Sh(x_0, r)} \mun{n}(S) \le f(r). 
\end{equation}
\end{corollary}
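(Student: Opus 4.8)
The plan is to obtain this immediately from Proposition \ref{prop:hitting}, using the elementary observation that visiting a set at a fixed time is a sub-event of ever visiting it. First I would unwind the definition: $\mun{n}(S)$ is the pushforward to $X$ of the $n$-fold convolution under the orbit map, so it equals $\P(w_n x_0 \in S)$, the probability that the sample path lies in $S$ precisely at time $n$. Since the event $\{w_n x_0 \in S\}$ is contained in the event $\{\exists\, m\ge 0 : w_m x_0 \in S\}$, we get, for every $n$ and every shadow $S$, the bound
\[ \mun{n}(S) \le H^+_{x_0}(S). \]

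Next I would define
\[ f(r) := \sup_{S \in Sh(x_0, r)} H^+_{x_0}(S), \]
which is a number in $[0,1]$ not depending on $n$, and which by Proposition \ref{prop:hitting} tends to $0$ as $r \to \infty$. Combining this with the previous inequality, for every $n$ and every $S \in Sh(x_0, r)$ we obtain $\mun{n}(S) \le H^+_{x_0}(S) \le f(r)$, and taking the supremum over $S \in Sh(x_0, r)$ gives exactly \eqref{e:mu_n estimate}.

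There is no real obstacle here: all the work has already been done in Proposition \ref{prop:hitting} (whose proof in turn rests on the decay of shadow measures from Proposition \ref{prop:shadow bound} and the Markov property), and the corollary is merely the remark that the uniform-in-$n$ hitting bound it provides is, a fortiori, a bound on each individual convolution measure. The only point worth stating carefully is the event inclusion $\{w_n x_0 \in S\} \subseteq \{\exists\, m : w_m x_0 \in S\}$, which is what lets a single function $f$ work for all $n$ simultaneously. If instead one wants exponential decay of $\mun{n}(S)$ in the distance parameter under the bounded-range hypothesis, one runs the same argument while feeding \eqref{eq:shadow decay} into the proof of Proposition \ref{prop:hitting} in place of Proposition \ref{prop:shadow bound}.
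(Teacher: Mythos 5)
Your proof is correct and is essentially identical to the paper's: the paper derives the corollary in one line by observing that $H^+_{x_0}(S)$ is an upper bound for $\mun{n}(S)$ for every $n$, and then invokes Proposition \ref{prop:hitting}. Your more explicit write-up, including the choice $f(r) := \sup_{S \in Sh(x_0, r)} H^+_{x_0}(S)$ and the event inclusion $\{w_n x_0 \in S\} \subseteq \{\exists\, m : w_m x_0 \in S\}$, is exactly the intended argument.
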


As the reflected random walk also satisfies the hypotheses of
Corollary \ref{cor:mun estimate}, we obtain a similar result for
$\check \mu_n$, though possibly for a different function $f$.

\begin{proposition} \label{prop:positive} %
Let $G$ be a non-elementary, countable group acting by isometries on a
separable Gromov hyperbolic space $X$, and let $\mu$ be a
probability distribution on $G$, whose support generates $G$ as a semigroup. 
Then there is a number $R_0$
such that for any $g, h \in G$ the closure of the shadow $S_{h x_0}(g x_0 ,
R_0)$ has positive hitting measure for the random walk determined by
$\mu$.
\end{proposition}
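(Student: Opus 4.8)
The plan is to identify the support of the hitting measure $\nu$ as a $G$-invariant subset of $\partial X$, deduce that it contains the full limit set, and then combine this with Proposition \ref{prop:shadow open} and the $G$-equivariance of shadows. Throughout, $\nu$ denotes the hitting measure on $\partial X$, which is $\mu$-stationary; note that although the hypotheses only require $\supp\mu$ to generate $G$ as a semigroup, this forces $\mu$ to be non-elementary (the semigroup it generates is $G$, which is non-elementary), so all earlier results apply and in particular $\nu$ is non-atomic by Lemma \ref{lemma:non-atomic}.

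First I would show that $\supp\nu$ is invariant under the action of $G$ on $\partial X$. Since $\nu$ is $\mu$-stationary it is also $\mun{n}$-stationary for every $n$, and since $\supp\mu$ generates $G$ as a semigroup, for each $g \in G$ there is an $n$ with $\mun{n}(g) > 0$. Dropping all but the $g$-term in $\nu = \sum_{k \in G}\mun{n}(k)\, k\nu$ gives $\nu \ge \mun{n}(g)\, g\nu$, so $g\nu \ll \nu$ and hence $g\cdot\supp\nu = \supp(g\nu) \subseteq \supp\nu$; applying this with $g^{-1}$ in place of $g$ gives the reverse inclusion, so $g\cdot\supp\nu = \supp\nu$. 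Thus $\supp\nu$ is a nonempty, closed, $G$-invariant subset of $\partial X$, and since $\nu$ is non-atomic it has at least two points.

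Since the action of $G$ on $X$ is non-elementary, any such set contains the limit set $\Lambda := \overline{Gx_0}^\delta \cap \partial X$: choosing a hyperbolic $k \in G$ and a point $\xi \in \supp\nu$ with $\xi \ne k^-$ gives $k^n\xi \to k^+ \in \supp\nu$, and the $G$-orbits of such fixed points are dense in $\Lambda$, so $\supp\nu \supseteq \Lambda$. Now fix $g, h \in G$, and let $R_0$ be the constant of Proposition \ref{prop:shadow open}. Applied to the element $h^{-1}g$, that proposition provides a limit point $\lambda \in \Lambda$ lying in the interior of $\overline{S_{x_0}(h^{-1}g x_0, R_0)}^\delta$. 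As $h$ is an isometry, shadows transform equivariantly, $h\cdot S_{x_0}(h^{-1}g x_0, R_0) = S_{hx_0}(g x_0, R_0)$, and since $h$ acts on $X \cup \partial X$ by a homeomorphism the set $\overline{S_{hx_0}(g x_0, R_0)}^\delta$ contains $h\lambda$ in its interior. Since $h\lambda \in h\Lambda = \Lambda \subseteq \supp\nu$, the open neighbourhood of $h\lambda$ witnessing that it is an interior point has positive $\nu$-measure, so $\nu\big(\overline{S_{hx_0}(g x_0, R_0)}\big) > 0$, as required. The main obstacle is the first step: one must use stationarity carefully --- in both the $g$ and $g^{-1}$ directions --- to upgrade the automatic quasi-invariance of $\supp\nu$ to genuine $G$-invariance; once that is in place, Proposition \ref{prop:shadow open} together with equivariance does the rest.
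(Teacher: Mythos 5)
Your route is genuinely different from the paper's, and in outline it works. The paper argues probabilistically: it reduces $h \neq 1$ to $h = 1$ by conditioning on $w_n = h$ (possible since $\mun{n}(h) > 0$ for some $n$), and then runs the two-shadow argument of Proposition \ref{prop:hitting} in reverse: by Proposition \ref{prop:shadow open} the closure of $S_1 = S_{x_0}(g x_0, R_0)$ contains a limit point $\lambda$ of $Gx_0$ in its interior; one chooses a second shadow $S_2$ containing $\lambda$ and lying deep inside $S_1$, notes that the walk visits $S_2$ with positive probability (semigroup generation), and that, conditioned on being at a point $g_n x_0 \in S_2$, the walk converges into $\overline{S_1}$ with probability at least $1 - \e$, because the complement of $S_1$ is contained in a shadow based at $g_n x_0$ with large distance parameter, whose measure is small by the decay of shadows (Proposition \ref{prop:shadow bound}); this yields the explicit bound $\nu(\overline{S_1}) \ge (1-\e)\mu_k(g_n)$. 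Your argument instead extracts everything from stationarity: the inequality $\nu \ge \mun{n}(g)\, g\nu$, applied to both $g$ and $g^{-1}$, gives $G$-invariance of $\supp \nu$, and then one only needs that the limit point supplied by Proposition \ref{prop:shadow open} lies in $\supp\nu$; equivariance of shadows, $h\, S_{x_0}(h^{-1}g x_0, R_0) = S_{h x_0}(g x_0, R_0)$, finishes the proof. This is softer and shorter, and it cleanly separates the measure-theoretic input from the geometry; what it does not give is the paper's quantitative lower bound on $\nu(\overline{S_1})$.

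The one step that is not self-contained is the sentence invoking north--south dynamics of a hyperbolic isometry on $\Xb$ and the density of the $G$-orbit of its fixed point in the limit set $\Lambda = \overline{Gx_0}^\delta \cap \Xb$. Neither fact is proved in the paper, and in the non-proper setting both require an argument of exactly the kind the paper is careful to supply elsewhere (compare Lemma \ref{lemma:action}); as written this is the gap in your proof. It can, however, be closed entirely with the paper's tools, and without any hyperbolic element: given $\lambda \in \Lambda$, choose $g_n$ with $g_n x_0 \to \lambda$; by Lemma \ref{lemma:action} there is a subsequence $(g_{n_k})$ and at most one exceptional point $b \in X \cup \Xb$ such that $g_{n_k}\xi \to \lambda$ for every $\xi \neq b$. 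Since $\nu$ is non-atomic (Lemma \ref{lemma:non-atomic}), $\supp\nu$ contains two distinct points, so it contains some $\xi \neq b$; by the invariance you established, $g_{n_k}\xi \in \supp\nu$, and $\supp\nu$ is closed, hence $\lambda \in \supp\nu$. Thus $\Lambda \subseteq \supp\nu$, which is all your final paragraph uses. With this substitution your proof is complete and correct.
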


\begin{proof}
Let us first assume $h = 1$.
By Proposition \ref{prop:shadow open}, there is a constant $R_0$ such
that every shadow $S_1 = S_{x_0}(g x_0 , R_0)$ contains a limit point
$\lambda$ of $G x_0$ in the interior of its closure.
We may now follow the same argument as in Proposition \ref{prop:hitting}.
  Choose a shadow $S_2 = S_{x_0}(\widetilde{g} x_0, R_0)$
containing $\lambda$ such that $d_X(S_2, X \setminus S_1 )$ is at
least $A + O(\delta)$  (note that here
the roles of $S_1$ and $S_2$ are reversed with respect to Proposition \ref{prop:hitting}, as $S_2 \subseteq S_1$). In particular, this implies that for any point
$y \in S_2$, the complement of $S_1$ is contained in a shadow
$S_y(x_0, R)$ with distance parameter at least $A$.  By Proposition
\ref{prop:shadow bound} the measure of shadows tends to zero as the
distance parameter tends to infinity, so given a positive number $\e < 1$
we may choose $A$ sufficiently large so that $\nu(\overline{S}) \le
\e$ for all shadows $S \in Sh(x_0, A)$.

Let $\seq{g_n}$ be a sequence in $G$ such that $\seq{g_n x_0}$
converges to $\lambda$, and let $g_n$ be an element of the sequence
with $g_n x_0 \in S_2$, and let $k$ be such that $\mu_k(g_n) >
0$. Now using the Markov property of the random walk, the conditional
probability of converging to the closure of $S_1$, having hit $S_2$,
is at least $1 - \e$, and so
\[ \nu(\overline{S_1}) \ge (1 - \e) \mu_k(g_n), \]
which is positive, as required. Now, the case $h \neq 1$ can be reduced to the previous one; 
indeed, given $h \in G$, there exists by hypothesis an $n$ such that $\mu_n(h) > 0$, which implies
$$\nu(\overline{S_{hx_0}(gx_0, R_0)}) \ge \mathbb{P}(w_n = h) \  \mathbb{P}(\lim_m w_m x_0 \in \overline{S_{hx_0}(gx_0, R_0)} \ \mid \ w_n = h )$$
which by the Markov property of the walk equals 
$$\mu_n(h) \  \mathbb{P}(\lim_m w_m x_0 \in \overline{S_{x_0}(h^{-1}gx_0, R_0)})$$
and this is positive by the previous case.
\end{proof}

\subsection{Positive drift}

In this section we prove Theorem \ref{theorem:linear progress},
i.e. that the sample paths $\seq{w_n x_0}$ of the random walk have
positive drift in $X$.

It will be convenient to consider the $k$-step random walk $\seq{
  w_{kn} }$, and introduce the notation $x_i := w_{ki}x_0$ 
 for each $i$. Let $\chi^k_i \colon \Omega \to \R$ be the random
variable given by the distance in $X$ traveled by the sample path
from time $k(i-1)$ to time $ki$, i.e. 
\[ \chi^k_i(\omega) := d_X(w_{k(i-1)} x_0, w_{ki}
x_0) = d_X(x_{i-1}, x_i)
. \]
For fixed $k$, the $\chi^k_i$ are independent identically distributed
random variables with common distribution $\chi^k_1$.

Given a number $R$, we say a subsegment $[x_i, x_{i+1}]$
of the sample path is \emph{persistent} 
if the following three conditions are satisfied:
\begin{align}
& d_X(x_i, x_{i+1}) \ge 2R + 2C + C_0 \label{eq:c1} \\
& x_n \in S_{x_{i+1}}(x_{i}, R) \text{ for all } n
\le i \label{eq:c2} \\
& x_n  \in S_{x_{i}}(x_{i+1}, R) \text{ for all } n \ge
i+1 \label{eq:c3}
\end{align}

The constant $C$ in \eqref{eq:c1} is the weak convexity constant from
Corollary \ref{cor:weak convexity}, while $C_0$ will be depend on $\delta$ and will be chosen later. 
A persistent subsegment is illustrated in Figure \ref{pic:persistent} below.

\begin{figure}[H]
\begin{center}
\begin{tikzpicture}

\tikzstyle{point}=[circle, draw, fill=black, inner sep=0pt,
minimum width=2.5pt]

\draw [thick] (0, 0) node [point, label=above:$x_{i}$] {} -- (8,
0) node [point, label=below:$x_{i+1}$] {};

\draw [thick] (6, 2) -- (6, -2) node [right] {$S_{x_{i}}(x_{i+1}, R)$};

\draw [thick] (2, 2) -- (2, -2) node [left] {$S_{x_{i+1}}(x_{i}, R)$};

\draw (-1, -1) -- (0,0) -- (1.5,-1) -- (0.5, -1) -- (4.5, 2) -- (3,
1.5) -- (8, 1) -- (7, 1.5) -- (8,0) -- (9, 1);

\end{tikzpicture}
\end{center} 
\caption{A persistent segment of the sample path.}
\label{pic:persistent}
\end{figure}
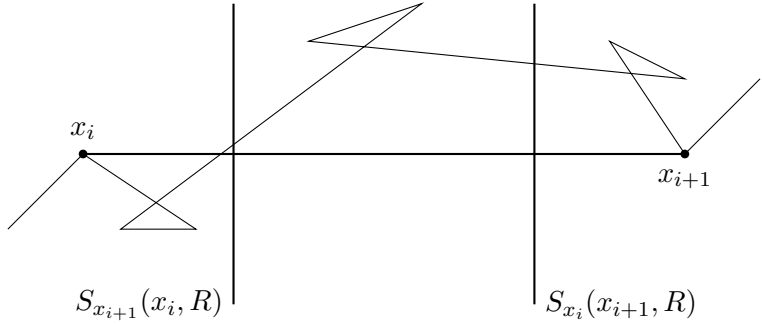

Choose an $\e$, with $0 < \e < \tfrac{1}{3}$. We now show that
given such a choice of $\e$, we may choose both $R$ and $k$
sufficiently large such that for any $i$ each of the three
conditions holds with probability at least $1 - \e$.

The probability that \eqref{eq:c2} holds is the same as the
probability that $w_{kn} x_0$ never hits the complement of the shadow 
$S_{x_{i+1}}(x_i, R)$ for any $n \le i$. 
As the complement of this shadow
is contained in a shadow
\[ S_i = S_{x_i}(x_{i+1}, R_i) \]
where 
$R_i = d_X(x_i, x_{i+1}) - R + O(\delta)$, 
the probability that \eqref{eq:c2} holds is at least
$$1 - \mathbb{P}( \exists \ n \le ki \ : \ w_n x_0 \in S_i )$$
which equals by the Markov property
\begin{equation} \label{eq:c2est}
1 - H^-_{x_0}(w_{ki}^{-1}S_i).
\end{equation}
The distance parameter of $w_{ki}^{-1}S_i$, which equals the distance parameter of $S_i$, is $R + O(\delta)$; 
hence, by Proposition \ref{prop:hitting}, 
we may
choose $R$ sufficiently large such that \eqref{eq:c2est} is at least
$1 - \e$.

A similar argument show that the probability that \eqref{eq:c3} holds
is at least
\begin{equation} \label{eq:c3est}
1- H^+_{x_0}(w_{k(i+1)}^{-1}S_{x_{i+1}}(x_i, R_i))
\end{equation}
and again we may choose $R$ sufficiently large such that
\eqref{eq:c3est} is at least $1 - \e$.

Finally, the probability that \eqref{eq:c1} holds is $\P(\chi^k_i \ge
2R + 2C + C_0 ) = \P(\chi^k_1 \ge 2 R + 2C + C_0 )$, 
since  $\chi_i^k$ and $\chi_1^k$ have the same law.
We have shown that almost every sample path
converges to a point in the Gromov boundary, so in particular, sample
paths are transient on bounded sets. This implies that for any $R$ and $\e$, there is a sufficiently large
 $k$, depending on
$R$ and $\e$, such that 
$$\P(\chi^k_1 \le 2 R + 2C + C_0 ) < \e$$
as required.
Therefore, for the choice of $\e, R$ and $k$ described above, the
probability that each condition holds individually is at least $1 -
\e$. The three conditions need not be independent, but the probability
that all three hold simultaneously is at least $\eta := 1 - 3 \e$, which is
positive as $\e < \tfrac{1}{3}$.

Thus, if we define for each $i$ the random variable $Y^k_i : \Omega \to \mathbb{R}$ 
\[ Y^k_i(\omega) := \left\{ 
\begin{array}{l} 
1 \text{ if } [x_i, x_{i+1}] \text{ is persistent} \\
0 \text{ otherwise.} \\
\end{array}
\right.  \]
we get that the $Y_i^k$ are identically distributed (but not
independent), with finite expectation since they are bounded; moreover, for each $i$
\begin{equation} \label{eq:positive}
\E(Y_i^k) \ge \eta > 0.
\end{equation}

We now show that the number of persistent segments lying between $x_0$
and $w_{kn} x_0$ 
gives a lower bound on the distance $d_X(x_0, w_{kn}
x_0)$.

Let $\gamma$ be a geodesic from $x_0$ to $x_n = w_{kn} x_0$, and suppose
that $[x_i, x_{i+1}]$ 
is a persistent subsegment of the
sample path. By \eqref{eq:c2}, $x_0$ lies in $S_{x_{i+1}}(x_{i}, R)$ for $i \ge 0$,  
and by \eqref{eq:c3}, $x_n$ lies in
$S_{x_{i}}(x_{i+1}, R)$ for $n \ge i+1$. As the two shadows are at least
distance $2C + C_0 - O(\delta)$ apart, the geodesic $\gamma$ has a
subsegment $\gamma_i$ of length at least $C_0 - O(\delta) \ge C_0/2$ which fellow
travels with $[x_{i}, x_{i+1}]$, and which is disjoint
from both $S_{x_{i+1}}(x_{i}, R+ C)$ and $S_{x_{i}}(x_{i+1}, R + C)$. 
Now let $[x_{j}, x_{j+1}]$ be a
different persistent subsegment. The same argument as above shows that
there is a subsegment $\gamma_j$ of $\gamma$ of length at least
$C_0/2$ which fellow travels with $[ x_{j}, x_{j+1} ]$. We now
show that $\gamma_i$ and $\gamma_j$ are disjoint subsegments of
$\gamma$. Up to relabeling, we may assume that $i < j$. Then both
$x_{j}$ and $x_{j+1}$ lie in $S_{x_{i}}(x_{i+1}, R)$,
and so by weak convexity, Corollary \ref{cor:weak convexity}, any
geodesic connecting them lies in $S_{x_{i}}(x_{i+1}, R +
C)$, and so in particular $\gamma_i$ and $\gamma_j$ are disjoint
subsegments of $\gamma$.  Therefore the distance $d_X(x_0, w_{kn}
x_0)$ is at least $C_0/2$ times the number of persistent
subsegments between $x_0$ and $w_{kn} x_0$.

We will now apply Kingman's subadditive ergodic theorem,
\cite{kingman}, using the following version from \cite{woess}*{Theorem 8.10}:

\begin{theorem}
\label{theorem:kingman}
Let $(\Omega,\P)$ be a probability space and $U:\Omega \to \Omega$ a
measure preserving transformation. If $W_n$ is a subadditive sequence
of non-negative real-valued random variables on $\Omega$, that is,
$W_{n+m} \le W_n + W_m \circ U^n$ for all $m,n \in \N$, and $W_1$ has
finite first moment, then there is a $U$-invariant random variable
$W_\infty$ such that \[ \lim_{n \to \infty} \tfrac{1}{n}W_n =
W_\infty \] $\P$-almost surely, and in $L^1(\Omega, \P)$.
\end{theorem}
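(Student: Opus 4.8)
Kingman's subadditive ergodic theorem is classical, so I would follow the standard proof, sketched here in the nonnegative, finite--first--moment case we need. Since $U$ is measure preserving, $\E(W_{n+m}) \le \E(W_n) + \E(W_m\circ U^n) = \E(W_n) + \E(W_m)$, so $\E(W_n)$ is a subadditive sequence; by Fekete's lemma $\gamma := \lim_n \tfrac1n\E(W_n) = \inf_n \tfrac1n\E(W_n)$ exists, with $0 \le \gamma \le \E(W_1) < \infty$ since $0 \le W_n \le \sum_{j=0}^{n-1}W_1\circ U^j$ (iterating subadditivity in blocks of length one). Put $\overline W := \limsup_n \tfrac1n W_n$ and $\underline W := \liminf_n \tfrac1n W_n$. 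Dividing $W_{n+1}\le W_1 + W_n\circ U$ by $n$ and letting $n\to\infty$ gives $\overline W\le\overline W\circ U$ and $\underline W\le\underline W\circ U$; since $U$ preserves $\P$ and, by Birkhoff's ergodic theorem, $0 \le \overline W \le \limsup_n\tfrac1n\sum_{j=0}^{n-1}W_1\circ U^j = \E(W_1\mid\mathcal I) < \infty$ almost surely (here $\mathcal I$ denotes the $U$-invariant $\sigma$-algebra), both $\overline W$ and $\underline W$ are integrable and hence $U$-invariant a.s.

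The heart of the argument is to show $\overline W = \underline W$ a.s. by a truncated stopping-time decomposition. Fix $\e > 0$ and $M \in \N$, and let $B = B_M := \{\w : W_k(\w) > k(\underline W(\w)+\e)\text{ for all }1\le k\le M\}$; define $\psi(\w) := \min\{k \le M : W_k(\w) \le k(\underline W(\w)+\e)\}$ off $B$, and $\psi(\w):=1$ on $B$. Since $\bigcap_M B_M=\emptyset$ (as $\inf_k\tfrac1kW_k \le \underline W < \underline W+\e$), we have $\P(B_M)\to 0$, hence $\E(W_1\ind_{B_M})\to 0$, as $M\to\infty$. Now set $t_0 = 0$, $t_{i+1}=t_i+\psi(U^{t_i}\w)$, and let $N$ be the largest index with $t_N\le n$, so $0\le n-t_N < M$. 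Iterating subadditivity, $W_n \le \sum_{i=0}^{N-1}W_{\psi(U^{t_i}\w)}(U^{t_i}\w) + W_{n-t_N}\circ U^{t_N}$. For each $i$, if $U^{t_i}\w\notin B$ then $W_{\psi(U^{t_i}\w)}(U^{t_i}\w)\le\psi(U^{t_i}\w)(\underline W(\w)+\e)$ by definition of $\psi$ and $U$-invariance of $\underline W$, and if $U^{t_i}\w\in B$ it equals $(W_1\ind_B)(U^{t_i}\w)$; using $\sum_{i=0}^{N-1}\psi(U^{t_i}\w)=t_N\le n$, $\underline W+\e\ge 0$, the fact that the $t_i$ are distinct integers in $\{0,\dots,n-1\}$, and that $n-t_N<M$ forces $W_{n-t_N}\circ U^{t_N}\le\sum_{j=n-M}^{n-1}W_1\circ U^j$, we obtain
\[ \tfrac1n W_n \le \underline W(\w)+\e + \tfrac1n\sum_{j=0}^{n-1}(W_1\ind_B)(U^j\w) + \tfrac1n\sum_{j=n-M}^{n-1}W_1\circ U^j. \]
By Birkhoff's theorem the second term on the right tends a.s. to $\E(W_1\ind_B\mid\mathcal I)$, and the third equals $\tfrac1n\sum_{j=0}^{n-1}W_1\circ U^j-\tfrac{n-M}{n}\cdot\tfrac1{n-M}\sum_{j=0}^{n-M-1}W_1\circ U^j\to 0$ a.s.; hence $\overline W\le\underline W+\e+\E(W_1\ind_{B_M}\mid\mathcal I)$ a.s. Letting $M\to\infty$ along a subsequence realizing the a.s.\ convergence $\E(W_1\ind_{B_M})\to 0$, and then $\e\to 0$, gives $\overline W\le\underline W$, hence $\overline W=\underline W=:W_\infty$ a.s. Thus $\tfrac1n W_n\to W_\infty$ a.s., and $W_\infty$ is $U$-invariant.

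For the $L^1$ statement, $0\le\tfrac1n W_n\le\tfrac1n\sum_{j=0}^{n-1}W_1\circ U^j$, whose right-hand side converges in $L^1$ by Birkhoff's theorem, so the family $\{\tfrac1n W_n\}_n$ is uniformly integrable; combined with the a.s.\ convergence this yields $\tfrac1n W_n\to W_\infty$ in $L^1(\Omega,\P)$, and in particular $\E(W_\infty)=\lim_n\tfrac1n\E(W_n)=\gamma$. \textbf{Main obstacle.} The only delicate point is this stopping-time decomposition: one must truncate the stopping time at level $M$ so that the ``bad set'' contribution $\E(W_1\ind_{B_M})$ can be driven to $0$, and control the leftover block of bounded but random length via the telescoping Birkhoff identity above; everything else reduces cleanly to Birkhoff's ergodic theorem and Fekete's lemma. (In the paper this theorem is only quoted, with reference to \cite{kingman, woess}, so in practice no proof is reproduced.)
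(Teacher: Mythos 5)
Your proposal is correct, but note that the paper does not prove this statement at all: Theorem \ref{theorem:kingman} is Kingman's subadditive ergodic theorem, quoted verbatim from Woess (Theorem 8.10) with a citation to Kingman's original paper, and then applied to the counting variables $Z_n^k$. So there is no in-paper argument to compare with; what you have written is the standard modern proof (the truncated stopping-time argument in the style of Katznelson--Weiss and Steele), and it is sound: the reduction of invariance of $\overline W,\underline W$ to integrability plus $f\le f\circ U$, the sets $B_M$ with $\P(B_M)\to 0$ and hence $\E(W_1\ind_{B_M})\to 0$, the block decomposition with distinct times $t_i\in\{0,\dots,n-1\}$ and leftover block of length $<M$, and the Vitali/uniform-integrability step for $L^1$ convergence all check out, using nonnegativity exactly where you invoke it. Two cosmetic remarks: when $t_N=n$ the leftover term should be read as absent (or $W_0:=0$), and since $B_M$ is decreasing in $M$ the conditional expectations $\E(W_1\ind_{B_M}\mid\mathcal I)$ decrease a.s., so they converge to $0$ a.s. without passing to a subsequence. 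Had you simply cited Kingman's theorem, as the paper does, that would also have been acceptable; reproducing the proof is a bonus rather than a requirement here.
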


In order to apply the theorem, let us define for each $n$ the variable
\[ 
Z^k_n := \sum_{i=0}^{n-1} Y_i^k = \#\{ 0 \le i \le n-1 \ : \  [x_i, x_{i+1}] \text{ is persistent} \}
\]
which gives the number of persistent subsegments along a given sample path from $x_0$ to $x_n = w_{kn} x_0$.
The random variables $(Z^k_n)_{n \in \N}$ are non-negative and have finite expectation, 
since $Z^k_n \le n$ for each $n$, and the sequence is subadditive by the Markov property.
Moreover, as expectation is additive, we get from equation \eqref{eq:positive}
\[ \E(Z_n^k) = \sum_{i=0}^{n-1} \E(Y^k_i) \ge n\eta \]
with $ \eta > 0$. We now apply Theorem \ref{theorem:kingman} taking as $\Omega$ the step space of the $k^{th}$-step 
random walk, $U$ the shift map, and the $Z_n^k$ as random variables (for fixed $k$); we get 
that the sequence $(\frac{1}{n} Z_n^k)_{n \in \N}$ converges almost surely and in $L^1$ to some 
random variable $Z_\infty^k$; moreover, since $U$ is ergodic, $Z_\infty^k$ must be constant almost everywhere, 
thus there exists a constant $A \ge 0$ such that 
$$\frac{1}{n} Z_n^k \to A$$
in $L^1$; finally, since $\E(Z_\infty^k) = \lim_n \E( \frac{1}{n} Z_n^k ) \ge \eta > 0$, we have that $A > 0$. 
Thus, since $Z_n^k$ is a lower bound for the distance $d_X(x_0, w_{kn} x_0)$, we get almost surely 
for the $k^{th}$-step random walk
$$\liminf_{n \to \infty} \frac{d_X(x_0, w_{kn} x_0)}{kn} \ge \frac{C_0}{2k} \liminf_{n \to \infty} \frac{1}{n} Z_n^k = \frac{A C_0}{2k} > 0$$
which proves the first part of
Theorem \ref{theorem:linear progress}, where we make no assumptions on
the moments of $\mu$.

For the second part of Theorem \ref{theorem:linear progress}, we
assume that $\mu$ has finite first moment with respect to the distance
function $d_X$. In this case, we can apply Kingman's Theorem directly
to $d_X(x_0, w_{kn} x_0)$, and we know that the limiting value $L$ is
positive, by the previous case.

Finally, if the support of $\mu$ is bounded in $X$, then the arguments
from \cite{Maher_exp} apply directly.

\subsection{Geodesic tracking}

We will now prove Theorem \ref{theorem:sublinear}, using the following
sublinearity result from Tiozzo \cite{tiozzo}.

\begin{lemma} \label{lemma:tiozzo}
Let $f \colon \Omega \to \R$ be a non-negative measurable function, $T : \Omega \to \Omega$ an ergodic, measure preserving transformation,
and suppose that 
\begin{equation} \label{eq:tiozzo}
g(\omega) = f(T \omega) - f(\omega) \text{ lies in } L^1(\Omega, \P).
\end{equation}
Then 
\[ \lim_{n \to \infty} \frac{f(T^n \omega)}{n} = 0  \]
for almost all $\omega \in \Omega$.
\end{lemma}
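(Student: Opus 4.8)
The plan is to telescope the cocycle, apply Birkhoff's pointwise ergodic theorem, and then argue separately that the resulting ergodic average vanishes. First I would record the telescoping identity
\[ f(T^n\omega) - f(\omega) = \sum_{k=0}^{n-1} g(T^k\omega), \]
where $g = f\circ T - f \in L^1(\Omega,\P)$ by hypothesis \eqref{eq:tiozzo}. Since $T$ is measure preserving and ergodic and $g$ is integrable, Birkhoff's theorem gives $\frac1n \sum_{k=0}^{n-1} g(T^k\omega) \to \E(g)$ for $\P$-almost every $\omega$; as $f(\omega)/n \to 0$ for each fixed $\omega$, dividing the telescoping identity by $n$ yields $f(T^n\omega)/n \to \E(g)$ almost surely. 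Write $c := \E(g) \in \R$.

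It then remains to show $c = 0$. Since $f$ is non-negative we have $f(T^n\omega)/n \ge 0$, so $c \ge 0$. Suppose, for contradiction, that $c > 0$. Then for almost every $\omega$ one has $f(T^n\omega) \ge cn/2 \to \infty$, so for every $M > 0$ the set $A_M := \{\, \omega : f(\omega) \ge M \,\}$ satisfies $\liminf_{n\to\infty} \ind_{A_M}(T^n\omega) = 1$ almost surely. Applying Fatou's lemma and using that $T$ preserves $\P$,
\[ 1 = \int_\Omega \liminf_{n\to\infty} \ind_{A_M}(T^n\omega)\, d\P(\omega) \le \liminf_{n\to\infty} \int_\Omega \ind_{A_M}(T^n\omega)\, d\P(\omega) = \P(A_M), \]
so $\P(f \ge M) = 1$ for every $M$, contradicting the fact that $f$ takes finite values almost everywhere. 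Hence $c = 0$, which proves the lemma.

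The one delicate point — and essentially the only place the hypotheses beyond integrability of $g$ are used — is the vanishing of $\E(g)$: one cannot simply write $\E(f\circ T) = \E(f)$, since $f$ is not assumed to lie in $L^1$. The recurrence argument above circumvents this, using only that $f \ge 0$ and that $f < \infty$ almost everywhere; everything else is a routine application of Birkhoff's theorem to the telescoped sum.
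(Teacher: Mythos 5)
Your proof is correct. The paper does not give an in-text proof of this lemma -- it cites Tiozzo \cite{tiozzo} -- but your argument (telescope to a Birkhoff sum to get $f(T^n\omega)/n \to \E(g)$, observe $\E(g)\ge 0$ from non-negativity of $f$, then rule out $\E(g)>0$ by a recurrence argument) is exactly the standard route to this statement. The one place that needs care is the vanishing of $\E(g)$, and your treatment of it is sound: if $\E(g)>0$ then $f\circ T^n\to\infty$ a.s., and the Fatou/measure-preservation step then forces $\P(f\ge M)=1$ for every $M$, contradicting finiteness of $f$. This is interchangeable with the usual Poincar\'e recurrence phrasing (a set $\{f\le M\}$ of positive measure must be visited infinitely often, contradicting $f\circ T^n\to\infty$); both close the same gap. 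No issues.
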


In order to apply it in this case, let us note that there are constants $Q$ and $c$, 
depending only on $\delta$, such that
any two distinct points in $\Xb$ are connected by a $(Q,
  c)$-quasigeodesic. We shall write $\Gamma(x, y)$ for the set of $(Q,
c)$-quasigeodesics connecting $x$ and $y$.
We then define $f : \Omega \to \mathbb{R}$ as 
\[ f(\omega) := \sup \{ d_X(x_0, \gamma) : \gamma \in
\Gamma(\omega_-, \omega_+) \}.  \]
As $\nu$ and $\check \nu$ are non-atomic, $\nu \cross \check \nu$ gives
measure zero to the diagonal in $\Xb \cross \Xb$, so
$\Gamma(\omega_-, \omega_+)$ is non-empty $(\nu \cross \check
\nu)$-almost surely, and the function $f(\omega)$ is well-defined $\P$-almost
surely.

Then by the triangle inequality, $\norm{ f(T \omega) - f(\omega) } \le
d_X(x_0, w_1 x_0)$, and so $g(\omega)$ lies in $L^1(\Omega, \P)$. 
Thus it follows from Lemma \ref{lemma:tiozzo} (where $T$ is the shift map 
on the step space) that
sample paths track
quasigeodesics sublinearly, i.e.
\[ \frac{d_X(w_n x_0, \gamma(\omega))}{n} \to 0, \text{ as } n \to
\infty, \text{ almost surely,}  \]
proving the first part of Theorem \ref{theorem:sublinear}.

To prove the second part, we need to show that, if $\mu$ has bounded support in $X$, then the tracking is in
fact logarithmic. We apply the argument from Blach\`ere, Ha\"issinsky and
Mathieu \cite{bhm}*{Section 3}, combined with our exponential decay of shadows.
We now give the details for the convenience of the reader.

We first show that the distribution of distances from the locations of
the sample path to the quasigeodesic satisfies an exponential decay
property.

\begin{proposition}
Let $G$ be a countable group of isometries of a separable Gromov
hyperbolic space $X$, and let $\mu$ be a non-elementary probability distribution on $G$.
There are positive constants $K$ and $c < 1$, which depend on $\mu$,
such that
\[ \P( d_X(w_n x_0, \gamma(\omega) ) \ge D ) \le K c^D,  \]
where $\gamma$ is a quasigeodesic ray from $x_0$ to the limit point in
$\Xb$ of $\seq{w_n x_0}$.
\end{proposition}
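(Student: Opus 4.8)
The plan is to bound the probability that $w_n x_0$ is far from the limiting quasigeodesic ray $\gamma$ by showing that such an event forces the random walk to pass through a deep shadow (whose center is near $w_n x_0$) and then converge back near $\gamma$, which is exponentially unlikely because shadows have exponentially decaying measure when $\mu$ has bounded support. Concretely, suppose $d_X(w_n x_0, \gamma) \ge D$. Let $p$ be a nearest point projection of $w_n x_0$ to $\gamma$; since $\gamma$ is a quasigeodesic from $x_0$ to $\omega_+ = \lim_m w_m x_0$, and $w_n x_0$ is far from $\gamma$, the Gromov product $(x_0 \cdot \omega_+)_{w_n x_0}$ is small — of order $O(\delta)$ — so $\omega_+$ lies in a shadow $S_{w_n x_0}(x_0, R)$ with distance parameter roughly $d_X(x_0, w_n x_0) - O(\delta)$, which is generally not what we want. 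The right move is the reverse: the point $w_n x_0$ itself lies in a shadow based at $x_0$ whose center is near $p$, but more usefully, consider the shadow $S' = S_{w_n x_0}(x_0', R')$ seen from $w_n x_0$: since $d_X(w_n x_0, \gamma) \ge D$ and $p \in \gamma$, a reverse-triangle-inequality argument (Proposition~\ref{prop:reverse triangle}, Proposition~\ref{prop:npp2}) gives that $\omega_+$ and $x_0$ are \emph{both} seen from $w_n x_0$ at Gromov product $\ge D - O(\delta)$ with the segment through $p$, so $\omega_+$ lies in a shadow around $w_n x_0$ of depth increasing with $D$.

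Here is the cleaner formulation I would actually carry out. Since $\seq{w_m x_0} \to \omega_+$, and $d_X(w_n x_0, \gamma) \ge D$ where $\gamma$ ends at $\omega_+$, I claim $\omega_+ \in \overline{S_{x_0}(w_n x_0, R_n)}$ for some $R_n$ with distance parameter $d_X(x_0, w_n x_0) - R_n \ge D - O(\delta)$: indeed, a quasigeodesic from $x_0$ to $\omega_+$ must fellow-travel $\gamma$, hence stays $O(\delta)$-close to $p$, hence passes within $O(\delta)$ of the projection of $w_n x_0$; combining with $d_X(w_n x_0, p) \ge D - O(\delta)$ and Proposition~\ref{prop:npp2} shows $(x_0 \cdot \omega_+)_{w_n x_0} \le O(\delta)$, equivalently $\omega_+$ lies deep in the shadow $S_{x_0}(w_n x_0, R_n)$ with distance parameter $\ge D - O(\delta)$. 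Then, conditioning on $w_n = g$ via the Markov property,
\[
\P\big( d_X(w_n x_0, \gamma(\omega)) \ge D \big) \le \sum_{g} \mun{n}(g)\, \P\big( \omega_+ \in \overline{S_{x_0}(g x_0, R_g)} \mid w_n = g \big) = \sum_g \mun{n}(g)\, \nu\big( \overline{S_{x_0}(g x_0, R_g)} \big),
\]
where each such shadow has distance parameter $\ge D - O(\delta)$. Wait — the conditional harmonic measure of a shadow around $g x_0$, given $w_n = g$, is $\nu$ of that shadow only up to the ``shift'': $\P(\omega_+ \in \overline{S} \mid w_n = g) = \nu(g^{-1}\overline{S}) = \nu(\overline{S_{x_0}(x_0, \cdot)})$-type expression, which is not directly a shadow around the orbit. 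The correct comparison is that $g^{-1}\overline{S_{x_0}(gx_0,R_g)} = \overline{S_{g^{-1}x_0}(x_0,R_g)}$, and by Corollary~\ref{C:shadowcompl} its complement is essentially a shadow around $g^{-1}x_0 \in Gx_0$ with distance parameter $\ge D - O(\delta)$; so $\nu$ of it is $\ge 1 - Kc^{D}$ by the exponential decay estimate \eqref{eq:shadow decay}, which is the wrong direction. I would instead run the argument symmetrically using the reflected walk: $d_X(w_n x_0, \gamma) \ge D$ also forces, looking backward from $w_n x_0$ toward $x_0 = w_0 x_0$, that the \emph{past} $\omega_-$-type trajectory is constrained, but since $\gamma$ only involves $\omega_+$, the honest route is: the complement of $\overline{S_{x_0}(g x_0, R_g)}$ contains $x_0$'s whole ``backward cone,'' and we directly estimate $H^+$.

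The genuinely clean argument, following Blach\`ere–Ha\"issinsky–Mathieu: the event $\{d_X(w_n x_0, \gamma) \ge D\}$ is contained in the event that the sample path, after time $n$, converges into the closure of a shadow $\overline{S_{w_n x_0}(z, R')}$ with $z$ on $\gamma$ near $p$ and distance parameter (seen from $w_nx_0$) at least $D - O(\delta)$; by the Markov property this has conditional probability $\nu(\overline{S_{x_0}(w_n^{-1}z, R')})$, a shadow with center on a \emph{bounded} perturbation of the orbit — here we use that $\mu$ has bounded support, so $z$ is within $O(1)\cdot$(walk increments) of some orbit point, only shifting the distance parameter by a bounded amount — and distance parameter $\ge D - O(\delta)$. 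By the exponential decay of shadow measures \eqref{eq:shadow decay} (valid since $\supp\mu$ is bounded), this is $\le K c^{D}$ uniformly in $g$. Summing against $\mun{n}(g)$ over $g$ gives the bound $\P(d_X(w_n x_0,\gamma)\ge D) \le Kc^D$, with $K,c$ independent of $n$. \textbf{The main obstacle} I expect is the geometric bookkeeping that turns ``$w_n x_0$ is $D$-far from $\gamma$, and $\gamma$ limits to $\omega_+$'' into ``$\omega_+$ lies in a specific shadow, centered near the orbit, of distance parameter $\ge D - O(\delta)$'' — this requires carefully combining the Morse lemma (the actual sample quasigeodesic $\gamma$ versus any quasigeodesic from $x_0$ to $\omega_+$), nearest-point projection estimates (Propositions~\ref{prop:reverse triangle}, \ref{prop:npp2}, \ref{prop:npp shadow}), and the fact that bounded support lets one replace the shadow center by a nearby orbit point at the cost of a bounded change in the distance parameter, before one can invoke \eqref{eq:shadow decay}. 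Everything after that is a one-line Markov-property sum.
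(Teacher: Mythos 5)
Your overall strategy matches the paper's: use the Markov property to reduce the event $\{ d_X(w_n x_0, \gamma(\omega)) \ge D \}$ to the $\nu$-measure of a shadow of distance parameter $\ge D - O(\delta)$, then invoke the exponential decay of shadow measures \eqref{eq:shadow decay}. But the specific shadow you land on creates a gap that your proposed patch does not close. Your shadow $S_{w_n x_0}(z, R')$ has center $z$ near the nearest-point projection $p$ of $w_n x_0$ onto $\gamma$. After applying $w_n^{-1}$ (Markov property) the center becomes $w_n^{-1}z$, which need not lie anywhere near the orbit $G x_0$: the point $p$ sits on the quasigeodesic $[x_0, \lambda(\omega)]$ at distance $\ge D$ from $w_n x_0$, and there is no reason for the sample path to visit a neighbourhood of $p$. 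Your claim that bounded support of $\mu$ makes $z$ ``within $O(1)\cdot$(walk increments) of some orbit point'' is not a consequence of bounded support and is, in effect, the tracking statement you are trying to prove. Since the exponential decay estimate \eqref{eq:shadow decay} is only available for shadows $\overline{S_{x_0}(g x_0, R)}$ centered at orbit points, the argument as written does not close. (Also, in your first two paragraphs the inequality on $(x_0 \cdot \omega_+)_{w_n x_0}$ is stated backwards: $d_X(w_n x_0, \gamma) \ge D$ makes this Gromov product \emph{large}, at least $D - O(\delta)$, not $O(\delta)$.)

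The fix is to drop the projection point $p$ entirely and take the shadow center to be $x_0$ itself, which lies on $\gamma$. Since $d_X(w_n x_0, \gamma(\omega)) = d_X(w_n x_0, [x_0, \lambda(\omega)]) = \gp{w_n x_0}{x_0}{\lambda(\omega)} + O(\delta)$ by \eqref{eq:gp estimate} (applied to the quasigeodesic from $x_0$ to $\lambda$), and $G$ acts by isometries, this equals $\gp{x_0}{w_n^{-1}x_0}{w_n^{-1}\lambda(\omega)} + O(\delta)$. The condition $d_X(w_n x_0, \gamma) \ge D$ is thus equivalent, up to $O(\delta)$, to $w_n^{-1}\lambda(\omega) \in \overline{S_{x_0}(w_n^{-1}x_0, R)}$ with distance parameter $D + O(\delta)$ and center $w_n^{-1}x_0 \in G x_0$. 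Then $w_n^{-1}\lambda(\omega)$ depends only on the increments after time $n$, so is independent of $w_n$ with distribution $\nu$, and conditioning on $w_n = g$ gives a sum of $\nu$-measures of shadows centered at orbit points with distance parameter $\ge D - O(\delta)$, which \eqref{eq:shadow decay} bounds by $K c^D$. Bounded support is needed exactly here (for \eqref{eq:shadow decay}); it plays no role in putting the shadow center on the orbit.
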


\begin{proof}
By the definition of $\gamma(\omega)$,
\[ d_X(w_n x_0, \gamma(\omega) )  = d_X(w_n x_0, [x_0, \lambda(\omega)] ),  \]
where $\lambda(\omega)$ is the limit point of $\seq{w_m x_0}$ in
$\Xb$, which exists for $\P$-almost every $\omega$. Applying the
isometry $w_n^{-1}$ gives
\[ d_X(w_n x_0, \gamma(\omega) ) = d_X( x_0, [w_n^{-1} x_0, w_n^{-1}
\lambda(\omega)] ). \]
Recall from \eqref{eq:gp estimate}, that the Gromov product
$\gp{x_0}{x}{y}$ may be estimated up to an error of $O(\delta)$ in
terms of the distance $d_X(x_0, [x, y])$, and a similar estimate holds
if one of $x$ or $y$ is a point in $\Xb$, and $[x, y]$ is a
quasigeodesic connecting them. This implies that
\[ d_X(w_n x_0, \gamma(\omega) ) = \gp{x_0}{w_n^{-1} x_0}{ w_n^{-1}
  \lambda(\omega)} + O(\delta). \]
So by the definition of a shadow, the condition
\[ d_X(w_n x_0, \gamma(\omega) ) \ge D \]
is equivalent to
\[ w_n^{-1}\lambda(\omega) \in \overline{ S_{x_0}(w_n^{-1}x_0, R) }, \]
where the parameter $R$ is given by 
\[ R = d_X(x_0, w_n^{-1} x_0) - D + O(\delta). \]

The boundary point $w_n^{-1} \lambda(\omega)$ only depends on the
increments of the random walk of index greater than $n$, so $w_n^{-1}
\lambda(\omega)$ and $w_n$ are independent. Furthermore, the
distribution of $w_n^{-1} \lambda(\omega)$ is equal to
$\nu$. Therefore
\[ \P( d_X(w_n x_0, \gamma(\omega) ) \ge D ) = \nu \left( \overline{
  S_{x_0}(w_n^{-1} x_0, R ) } \right), \]
and as $\mu$ has bounded range in $X$, we may use the exponential
decay estimate for shadows \eqref{eq:shadow decay}, which gives
\[ \P( d_X(w_n x_0, \gamma(\omega) ) \ge D ) \le K c^D, \]
as required.
\end{proof}

It follows immediately from the proposition above that there is a
constant $\kappa > 0$ such that
\[ \P( d_X( w_n x_0, \gamma(\omega) ) \ge \kappa \log n ) \le
\frac{1}{n^2}. \]
The logarithmic tracking result,
\[ \limsup \frac{ d_X(w_n x_0, \gamma(\omega) ) }{\log n} < \infty,
\text{ almost surely,} \]
then follows from the Borel-Cantelli lemma. This completes the proof of Theorem \ref{theorem:sublinear}.

\subsection{Translation length}

We briefly review some results about the translation length of
isometries, see e.g. Bridson and Haefliger \cite{bh} or
Fujiwara \cite{fujiwara}.

We start by observing that the translation length of an isometry $g$ may
be estimated in terms of the distance it moves the basepoint $x_0$, together
with the Gromov product of $g x_0$ and $g^{-1} x_0$.

\begin{proposition} \label{prop:hyperbolic} %
There exists a constant $C_0 >0$, which depends only on $\delta$, such that the following holds.
For any isometry $g$  of a $\delta$-hyperbolic space $X$, if $g$ satisfies the inequality
\begin{equation} \label{eq:hyperbolic}
d_X(x_0, g x_0) \ge 2 \gp{x_0}{g x_0}{g^{-1} x_0} + C_0, 
\end{equation}
then the translation length of $g$ is
\begin{equation} \label{eq:translation} 
\tau(g) = d_X(x_0, g x_0) - 2 \gp{x_0}{ g^{-1} x_0}{g x_0} +
O(\delta).  
\end{equation}
\end{proposition}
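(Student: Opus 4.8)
The plan is to show that under hypothesis \eqref{eq:hyperbolic}, the orbit $\{g^n x_0 : n \in \Z\}$ is a quasigeodesic with uniform constants, and then read off the translation length from the length of this quasigeodesic. First I would set $p := \gp{x_0}{g x_0}{g^{-1}x_0}$, and recall from \eqref{eq:gp estimate} that $p$ is, up to $O(\delta)$, the distance from $x_0$ to a geodesic $[g^{-1}x_0, g x_0]$; let $m$ be a nearest point projection of $x_0$ to this geodesic. The key local picture is that the concatenation $[g^{-1}x_0, x_0] \cup [x_0, g x_0]$ is an $O(\delta)$-fellow traveller of the geodesic $[g^{-1}x_0, g x_0]$ near its endpoints, while $x_0$ hangs off this geodesic at distance $p + O(\delta)$ over a ``detour'' region of size $O(p)$. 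The hypothesis $d_X(x_0, g x_0) \ge 2p + C_0$ with $C_0$ large (depending only on $\delta$) guarantees that the nearest-point projections of $x_0$ and of $g x_0$ onto $[g^{-1}x_0, g x_0]$ are separated by a definite amount, so that Proposition \ref{prop:npp2} applies.

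Next I would apply $g$ equivariantly: the same picture holds at every vertex $g^n x_0$ of the orbit, i.e.\ $\gp{g^n x_0}{g^{n+1}x_0}{g^{n-1}x_0} = p$ for all $n$. The standard local-to-global criterion for quasigeodesics in a $\delta$-hyperbolic space — a bi-infinite path made of geodesic segments, each of length at least some threshold depending on $\delta$, meeting at bounded Gromov products — then shows that the broken path $\bigcup_{n \in \Z} [g^n x_0, g^{n+1} x_0]$ is a $(Q,c)$-quasigeodesic with $Q,c$ depending only on $\delta$, provided $C_0$ is chosen large enough; this is exactly where the constant $C_0$ in the statement gets pinned down. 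Concretely, iterating Proposition \ref{prop:npp2} along consecutive segments gives, for every $n \ge 1$,
\[
d_X(x_0, g^n x_0) = n\big(d_X(x_0, g x_0) - 2p\big) + O(\delta),
\]
since each step contributes the ``outgoing'' piece $d_X(x_0,g x_0) - p$ minus the overlap $p$ with the previous detour, plus bounded error, and the errors accumulate only additively in $n$ at each junction but telescope — more carefully, one shows $d_X(x_0,g^n x_0) + d_X(x_0,gx_0) - 2p \le d_X(x_0,g^{n+1}x_0) + O(\delta)$ and the reverse, so the sequence $d_X(x_0,g^n x_0) - n(d_X(x_0,gx_0)-2p)$ stays bounded. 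Dividing by $n$ and letting $n \to \infty$ yields
\[
\tau(g) = \lim_{n \to \infty} \tfrac1n d_X(x_0, g^n x_0) = d_X(x_0, g x_0) - 2p + O(\delta),
\]
which is \eqref{eq:translation}.

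I expect the main obstacle to be the bookkeeping in the local-to-global step: one must verify that the additive $O(\delta)$ errors at the junctions between consecutive segments $[g^{n}x_0, g^{n+1}x_0]$ do not accumulate linearly. The clean way around this is not to sum $n$ error terms directly, but to establish the two-sided estimate $|d_X(x_0,g^{n+1}x_0) - d_X(x_0,g^n x_0) - (d_X(x_0,gx_0)-2p)| \le O(\delta)$ \emph{uniformly in $n$} using only Proposition \ref{prop:npp2} applied to the geodesic $[g^{-1}x_0 \cdot g^n, \, g x_0 \cdot g^n]$ — equivalently, showing the nearest-point projection of $x_0$ onto a geodesic representative of the quasi-axis moves by a bounded amount under $g$ — and then noting that a sequence whose consecutive differences are all within $O(\delta)$ of a fixed constant $\ell$ satisfies $a_n/n \to \ell$. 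This sidesteps any genuine accumulation of error. Choosing $C_0$ to be, say, $2K_2 + $ (the implied constant in Proposition \ref{prop:npp2}) $ + 100\delta$ makes all the quasigeodesic constants and hence all the $O(\delta)$'s depend on $\delta$ alone, as required.
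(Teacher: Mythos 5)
Your plan is sound and, modulo standard details, it proves the proposition; but it is organized quite differently from the paper's argument, so a comparison is worth recording. The paper fixes one geodesic $\gamma = [x_0, g^n x_0]$, proves by a furthest-point/quadrilateral argument that every orbit point $g^k x_0$ lies within $\gp{x_0}{gx_0}{g^{-1}x_0} + O(\delta)$ of $\gamma$, then shows the nearest-point projections $p_k$ are monotonically ordered and spaced at least $d_X(x_0,gx_0) - 2\gp{x_0}{gx_0}{g^{-1}x_0} - O(\delta)$ apart; this yields only the lower bound on $\tau(g)$, and the matching upper bound is obtained by a separate trick, namely $\tau(g) \le d_X(y, gy)$ for $y$ the midpoint of $[x_0, gx_0]$. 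You instead control each increment $d_X(x_0, g^{n+1}x_0) - d_X(x_0, g^n x_0) = d_X(x_0,gx_0) - 2\gp{g^n x_0}{x_0}{g^{n+1}x_0}$ and argue that $\gp{g^n x_0}{x_0}{g^{n+1}x_0} = \gp{x_0}{gx_0}{g^{-1}x_0} + O(\delta)$ uniformly in $n \ge 1$; indeed this follows by induction from the four-point inequality \eqref{E:gpineq} applied at $g^n x_0$ to the triple $x_0, g^{n-1}x_0, g^{n+1}x_0$, once the inductive hypothesis guarantees $\gp{g^n x_0}{x_0}{g^{n-1}x_0}$ is large, and \eqref{eq:hyperbolic} with $C_0$ large rules out the degenerate case. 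The payoff of your route is that the two-sided increment estimate gives the upper and lower bounds on $\tau(g)$ in one pass, with no midpoint argument and no need to invoke a local-to-global quasigeodesic criterion at all (your second paragraph is actually dispensable); the paper's route, in exchange, produces the stronger geometric statement that the whole orbit stays within $\gp{x_0}{gx_0}{g^{-1}x_0} + O(\delta)$ of any geodesic $[x_0, g^n x_0]$, i.e. an explicit quasi-axis. Two small slips to fix in your write-up: the displayed identity $d_X(x_0, g^n x_0) = n\bigl(d_X(x_0,gx_0) - 2p\bigr) + O(\delta)$ should carry an extra additive constant of order $2p$ (not $O(\delta)$), and the two-sided increment estimate fails at $n = 0$ (where the increment is $d_X(x_0,gx_0)$, not $d_X(x_0,gx_0) - 2p$); neither affects the limit $\tau(g) = \lim_n \tfrac1n d_X(x_0, g^n x_0)$, and your ``more carefully'' sentence already implicitly accommodates this.
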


This is well known, but we provide a proof in the appendix for the
convenience of the reader.

In order to complete the proof of Theorem \ref{theorem:translation},
we shall now estimate the probability that the translation length is
small for a sample path of length $n$.  To apply the estimate
for translation length \eqref{eq:translation} we need a lower bound
for $d_X(x_0, w_n x_0)$, which is given by positive drift, and an
upper bound for the Gromov product $\gp{x_0}{w_n^{-1} x_0}{w_n x_0}$,
which we now obtain.

Let $m = \lceil n/2 \rceil$; we shall introduce the notation 
$u_m := w_m^{-1}w_n = g_{m+1}g_{m+2}\cdots g_n$, and we may think of $w_m x_0$ as an
approximate midpoint of the sample path from $x_0$ to $w_n x_0$, and of
$u_m^{-1} x_0 = w_n^{-1} w_m x_0$ as an approximate midpoint of the inverse sample
path from $x_0$ to $w_n^{-1} x_0$. 
 Note that for each $m$, the $G$-valued processes $w_m = g_1g_2\dots
g_m$ and $u_m := g_{m+1}g_{m+2}\cdots g_n$ are
independent.

Because of this independence, and the fact that the hitting measures are non-atomic, 
it is easy to prove the following upper bound on the Gromov product $\gp{x_0}{u_m^{-1} x_0}{ w_m x_0 }$.

\begin{lemma} \label{lemma:gp upper}
Let $G$ be a countable group of isometries of a separable Gromov
hyperbolic space $X$, and let $\mu$ be a non-elementary probability
distribution on $G$. If $l : \mathbb{N} \to \mathbb{N}$ is any function such that $l(n) \to \infty$ as $n \to \infty$, then we have 
%
\[ \P \left( \gp{x_0}{u_m^{-1} x_0}{w_m x_0} \le l(n)
\right) \to 1, \text{ as } n \to \infty, \]
for all $n$, where $m = \lceil n / 2 \rceil$.
\end{lemma}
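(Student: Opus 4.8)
The plan is to exploit the independence of $w_m$ and $u_m$, together with the decay of shadow measures along the reflected random walk established in Corollary \ref{cor:mun estimate}. Since $w_m = g_1\cdots g_m$ and $u_m = g_{m+1}\cdots g_n$ are products of disjoint blocks of i.i.d.\ increments, they are independent; moreover $u_m^{-1} = g_n^{-1}g_{n-1}^{-1}\cdots g_{m+1}^{-1}$ has the same law as the location at time $n-m = \lfloor n/2 \rfloor$ of the random walk driven by the reflected measure $\check\mu(g) := \mu(g^{-1})$, which is again non-elementary, as noted after Corollary \ref{cor:mun estimate}. Since $n-m \to \infty$ as $n \to \infty$, the heuristic is that $u_m^{-1}x_0$ and $w_m x_0$ point in essentially independent directions from $x_0$, so that the Gromov product $\gp{x_0}{u_m^{-1}x_0}{w_m x_0}$ stays bounded with probability tending to one.

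To make this precise I would condition on the value of $w_m$. Since $w_m$ takes only countably many values, for any $g$ with $\mun{m}(g) > 0$ the independence of $u_m$ and $w_m$ gives
\[
\P\big( \gp{x_0}{u_m^{-1}x_0}{w_m x_0} > l(n) \ \mid\ w_m = g \big) = \check\mu_{n-m}\big( \{ z \in X : \gp{x_0}{gx_0}{z} > l(n) \} \big),
\]
where on the right $\check\mu_{n-m}$ denotes the pushforward to $X$ of the reflected convolution, i.e.\ the law of $u_m^{-1}x_0$. The key observation is that the set $\{ z : \gp{x_0}{gx_0}{z} > l(n) \}$ is empty whenever $d_X(x_0, gx_0) \le l(n)$, since $\gp{x_0}{gx_0}{z} \le d_X(x_0, gx_0)$ always, and otherwise is contained in the shadow $S_{x_0}(gx_0, R)$ with $R = d_X(x_0, gx_0) - l(n)$, a shadow whose distance parameter is exactly $l(n)$, hence an element of $Sh(x_0, l(n))$. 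Applying Corollary \ref{cor:mun estimate} to the reflected walk, there is a function $f$ with $f(r) \to 0$ as $r \to \infty$, independent of the number of steps $n-m$, such that $\check\mu_{n-m}(S) \le f(l(n))$ for every $S \in Sh(x_0, l(n))$. As this bound does not depend on $g$, integrating against the distribution of $w_m$ yields
\[
\P\big( \gp{x_0}{u_m^{-1}x_0}{w_m x_0} > l(n) \big) \le f(l(n)),
\]
and since $l(n) \to \infty$ the right-hand side tends to $0$, which is equivalent to the assertion of the lemma.

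The computations involved are routine, and the only points requiring attention are the identification of the law of $u_m^{-1}$ with that of the reflected walk of length $n-m$, the fact that Corollary \ref{cor:mun estimate} provides a bound on shadow measures that is uniform in the number of steps, and the harmless degenerate case $d_X(x_0, gx_0) \le l(n)$ in which the relevant set is empty. I do not expect any genuine obstacle here: all of the analytic content has already been packaged into Corollary \ref{cor:mun estimate} (equivalently Proposition \ref{prop:hitting}), whose proof in turn rests on the non-atomicity of the hitting measure.
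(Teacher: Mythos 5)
Your proof is correct and follows essentially the same route as the paper: condition on $w_m = g$ using independence of $w_m$ and $u_m^{-1}$, identify the law of $u_m^{-1}$ with $\check\mu_{n-m}$, rewrite the Gromov-product event as containment in a shadow of distance parameter $l(n)$, and apply the $n$-uniform decay estimate of Corollary~\ref{cor:mun estimate} for the reflected walk. The only difference is that you explicitly note the harmless degenerate case $d_X(x_0, gx_0)\le l(n)$, which the paper leaves implicit.
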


\begin{proof}
By the definition of shadows,
\begin{align*} 
\P \left( \gp{x_0}{u_m^{-1} x_0}{w_m x_0} \le l(n)
\right) & = \P( u_m^{-1} x_0 \not \in S_{x_0}(w_m x_0, R) ),
\intertext{%
where $R = d_X(x_0, w_m x_0) - l(n)$.  As $w_m$ and
$u_m^{-1}$ are independent and the distribution of $u_m^{-1}$ is $\check \mu_{n-m}$,
}
\P \left( \gp{x_0}{u_m^{-1} x_0}{w_m x_0} \le l(n)
\right) & = 1 - \sum_{g \in G} \check \mu_{n - m}( S_{x_0}(g x_0, R) )
\mu_m(g) 
%
%
%
\intertext{%
Now, since the distance parameter of the shadows on the RHS is $l(n)$, using the estimate \eqref{e:mu_n estimate} gives
}
\P \left( \gp{x_0}{u_m^{-1} x_0}{w_m x_0} \le l(n)
\right) & \ge 1 - f(l(n))
\end{align*}
which tends to $1$ as $n \to \infty$.
%
%
\end{proof}

We will now use the fact that if the
Gromov products $\gp{x_0}{w_m x_0}{w_n x_0}$ and $\gp{x_0}{u_m^{-1} x_0}{w_n^{-1} x_0}$ are large, and the Gromov product
$\gp{x_0}{u_m^{-1} x_0}{w_m x_0}$ is small, then the two Gromov
products $\gp{x_0}{w_n^{-1} x_0}{w_n x_0}$ and $\gp{x_0}{w_n^{-1}
  x_0}{w_m x_0}$ are equal, up to bounded additive error depending
only on $\delta$.  This follows from the following lemma, which is a
standard exercise in coarse geometry. We omit the proof, but the
appropriate approximate tree is illustrated in Figure \ref{pic:gromov
  product}, with the points labeled according to our application.

\begin{lemma} \label{lemma:gp approx} %
For any four points $a, b, c$ and $d$ in a Gromov hyperbolic space
$X$, if there is a number $A$ such that $\gp{x_0}{a}{b} \ge A$,
$\gp{x_0}{c}{d} \ge A$ and $\gp{x_0}{a}{c} \le A - O(\delta)$ then
$\gp{x_0}{a}{c} = \gp{x_0}{b}{d} + O(\delta)$.
\end{lemma}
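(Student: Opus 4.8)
The plan is to prove Lemma \ref{lemma:gp approx} by two applications of the triangle inequality for Gromov products, \eqref{E:gpineq}. Intuitively the hypotheses say that, among the five points $x_0, a, b, c, d$, the geodesics from $x_0$ to $a$ and to $b$ fellow-travel for a time at least $A$, those from $x_0$ to $c$ and to $d$ fellow-travel for a time at least $A$, but those from $x_0$ to $a$ and to $c$ have already parted well before time $A$; so $b$ lies on the same side as $a$ and $d$ on the same side as $c$, and replacing $a$ by $b$ and $c$ by $d$ should only move the relevant Gromov product by a bounded amount. This is exactly the approximate tree in Figure \ref{pic:gromov product}. To keep the bookkeeping light I abbreviate $\gp{x_0}{x}{y}$ by $(x\cdot y)$, and I read the hypothesis $\gp{x_0}{a}{c}\le A-O(\delta)$ as $(a\cdot c)\le A-N\delta$ for a universal constant $N$ (it will turn out that $N=3$ suffices) — this is precisely what the notation permits.

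First I would show that $(b\cdot c)=(a\cdot c)+O(\delta)$. For the lower bound, \eqref{E:gpineq} gives $(b\cdot c)\ge\min\{(a\cdot b),(a\cdot c)\}-\delta$; since $(a\cdot b)\ge A>(a\cdot c)$ the minimum is $(a\cdot c)$, so $(b\cdot c)\ge(a\cdot c)-\delta$. For the upper bound, \eqref{E:gpineq} applied with the roles of $b$ and $a$ interchanged gives $(a\cdot c)\ge\min\{(a\cdot b),(b\cdot c)\}-\delta$; were this minimum equal to $(a\cdot b)$ we would get $(a\cdot c)\ge A-\delta$, contradicting $(a\cdot c)\le A-N\delta$ provided $N\ge 2$, so the minimum is $(b\cdot c)$ and hence $(b\cdot c)\le(a\cdot c)+\delta$. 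In particular $|(b\cdot c)-(a\cdot c)|\le\delta$, and therefore $(b\cdot c)\le A-(N-1)\delta$.

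Next I would run the same argument verbatim with the pair $\{c,d\}$ playing the role of $\{a,b\}$ and the point $b$ playing the role of $c$: the two hypotheses needed are $(c\cdot d)\ge A$, which holds, and $(b\cdot c)\le A-(N-1)\delta\le A-2\delta$, which holds once $N\ge 3$. The same pair of estimates then yields $(b\cdot d)=(b\cdot c)+O(\delta)$. Chaining the two steps through the triangle inequality on $\R$ gives $(a\cdot c)=(b\cdot d)+O(\delta)$, which is the assertion, with final additive error at most $2\delta$.

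I do not anticipate any real difficulty here: the lemma is a routine exercise, and the only point requiring a little care is fixing $N$ large enough that both $\min$-disjunctions above are forced into the correct branch. An equivalent route, if one prefers the picture, is to quote the standard fact that any five points in a $\delta$-hyperbolic space lie within $O(\delta)$ of a configuration in an $\R$-tree, to verify the identity $(b\cdot d)=(a\cdot c)$ exactly on the tree — where Gromov products based at $x_0$ are lengths of overlaps of geodesics issuing from $x_0$, and the three hypotheses force $\{a,b\}$ and $\{c,d\}$ onto opposite branches at the branch point at distance $(a\cdot c)$ from $x_0$ — and then to transport the equality back to $X$ at the cost of the $O(\delta)$ error; this is the content of Figure \ref{pic:gromov product}.
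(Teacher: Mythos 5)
Your proof is correct. Note that the paper itself omits the proof of Lemma \ref{lemma:gp approx} (it is dismissed as a standard exercise, with only the approximate tree of Figure \ref{pic:gromov product} offered as a hint), so there is no argument of the authors' to compare against; your two-step application of the four-point inequality \eqref{E:gpineq} — first replacing $a$ by $b$ in the product with $c$, then replacing $c$ by $d$ in the product with $b$, each time forcing the minimum into the correct branch using the margin in the hypothesis — is a complete and valid substitute, and your alternative sketch via an approximating tree is exactly what the figure depicts. The only point worth flagging is your reading of the hypothesis $\gp{x_0}{a}{c}\le A-O(\delta)$: under the paper's convention this means ``$\le A-C$ for a sufficiently large constant $C$ depending only on $\delta$,'' and your choice $C=3\delta$ does the job for $\delta>0$ (which one may always assume, since a $0$-hyperbolic space is $\delta$-hyperbolic for every $\delta>0$); some positive margin is genuinely needed, as the conclusion fails in a tree when $\gp{x_0}{a}{c}$ equals $A$ exactly, so the care you took in fixing $N$ is not superfluous.
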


\begin{figure}[H] \begin{center}
\begin{tikzpicture}[scale=0.8]

\tikzstyle{point}=[circle, draw, fill=black, inner sep=0pt, minimum width=2.5pt]

\draw [thick] (0,0) node [point, label=below:$x_0$] {} -- (5, 0) node [point,
label=below:$w_m x_0$] {};

\draw [thick] (4,0) -- (4, 2) node [point, label=right:$w_n x_0$] {};

\draw [thick] (1,0) -- (1, -4) node [point, label=right:$u_m^{-1} x_0$]
{};

\draw [thick] (1,-3) -- (-1, -3) node [point, label=below:$w_n^{-1} x_0$] {};

\end{tikzpicture}
\end{center} 
\caption{Estimating the Gromov product.} \label{pic:gromov product}
\end{figure}
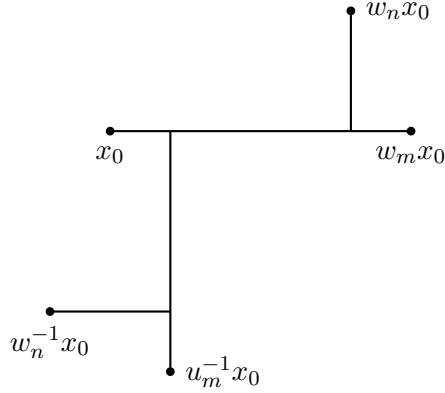

We now observe that with high probability, the Gromov
product $\gp{x_0}{w_m x_0}{w_n x_0}$ is large.
Recall that by linear progress there exists $L > 0$ such that 
\begin{equation}\label{eq:L}
\P\left( d_X(x_0, w_n x_0) \ge Ln \right) \to 1.
\end{equation}

\begin{lemma} \label{lemma:gp lower}
Let $G$ be a countable group of isometries of a separable Gromov
hyperbolic space $X$, and let $\mu$ be a non-elementary probability
distribution on $G$, and $L$ as in eq. \eqref{eq:L}. Then for any $l < L/2$ we have
\[ \P \left( \gp{x_0}{w_m x_0}{w_n x_0} \ge ln \right) \to
1, \text{ as } n \to \infty, \]
for all $n$, where $m = \lceil n / 2 \rceil$.
\end{lemma}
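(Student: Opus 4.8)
The plan is to isolate the Gromov product via the elementary identity, valid for any three points $p,q,r$ of a metric space and immediate from writing out the definitions, that $\gp{p}{q}{r}+\gp{q}{p}{r}=d_X(p,q)$. Applied with $p=x_0$, $q=w_m x_0$, $r=w_n x_0$, this reads
\[
\gp{x_0}{w_m x_0}{w_n x_0}=d_X(x_0,w_m x_0)-\gp{w_m x_0}{x_0}{w_n x_0},
\]
so it suffices to show that, with probability tending to $1$, the first term on the right is at least $Lm\ge Ln/2$ while the ``backtracking'' term $\gp{w_m x_0}{x_0}{w_n x_0}$ is smaller than $l(n)$ for some $l(n)\to\infty$ growing sublinearly.

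The first bound is immediate: $w_m$ is the location at time $m=\lceil n/2\rceil$ of a random walk driven by $\mu$, and $m\to\infty$ as $n\to\infty$, so the positive drift estimate \eqref{eq:L} gives $\P\big(d_X(x_0,w_m x_0)\ge Lm\big)\to 1$, and $Lm\ge Ln/2$. For the second bound I would apply the isometry $w_m^{-1}$ to obtain $\gp{w_m x_0}{x_0}{w_n x_0}=\gp{x_0}{w_m^{-1}x_0}{u_m x_0}$, where $u_m=w_m^{-1}w_n=g_{m+1}\cdots g_n$. Since $w_m$ and $u_m$ depend on disjoint blocks of the increments they are independent, with $w_m^{-1}x_0$ distributed according to $\check \mu_m$ and $u_m x_0$ according to $\mun{n-m}$. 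By the definition of a shadow, the event $\{\gp{x_0}{w_m^{-1}x_0}{u_m x_0}\ge l(n)\}$ is the event that $u_m x_0$ lands in the shadow $S_{x_0}(w_m^{-1}x_0,R)$ of distance parameter $l(n)$; conditioning on the value of $w_m^{-1}x_0$ and invoking the uniform estimate $\sup_{S\in Sh(x_0,l(n))}\mun{n-m}(S)\le f(l(n))$ from Corollary~\ref{cor:mun estimate}, this probability is at most $f(l(n))$, which tends to $0$ provided $l(n)\to\infty$. This is precisely the argument of Lemma~\ref{lemma:gp upper} with the roles of $w_m$ and $u_m$ interchanged.

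Combining the two estimates, on an event whose probability tends to $1$ we have $\gp{x_0}{w_m x_0}{w_n x_0}\ge Lm-l(n)\ge Ln/2-l(n)$; choosing $l(n)\to\infty$ with $l(n)=o(n)$ (say $l(n)=\lfloor\log n\rfloor$) and recalling $l<L/2$, the right-hand side exceeds $ln$ for all large $n$, which is the assertion. I do not expect a genuine obstacle here: the only point needing attention is the routine bookkeeping in the second step --- checking that the relevant shadows lie in $Sh(x_0,l(n))$ and that Corollary~\ref{cor:mun estimate} is applied to $\mun{n-m}$ uniformly in $n$ --- and both follow directly from statements already proved.
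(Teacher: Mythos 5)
Your argument is correct and is essentially the paper's proof: both rest on positive drift at the (approximate) midpoint $w_m$, the independence of $w_m$ and $u_m = w_m^{-1}w_n$, and the uniform decay of shadow measures from Corollary \ref{cor:mun estimate}. The only difference is cosmetic: you isolate the Gromov product via the exact identity $\gp{x_0}{w_m x_0}{w_n x_0} = d_X(x_0, w_m x_0) - \gp{w_m x_0}{x_0}{w_n x_0}$ and apply the decay estimate at a slowly growing distance parameter $l(n)$, whereas the paper passes to the complement of a shadow via Corollary \ref{C:shadowcompl} (accepting an $O(\delta)$ loss) and applies the decay at parameter roughly $(L/2-l)n$ after restricting to the event $\{d_X(x_0, w_m x_0) \ge Ln/2\}$; the two bookkeeping schemes are interchangeable.
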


\begin{proof}
Note that by definition of shadows, we have the equality
\begin{align*}
\P \left( \gp{x_0}{w_m x_0}{w_n x_0} \ge l n \right) & = \P
\left( w_n x_0 \in S_{x_0}(w_m x_0, R ) \right),
\intertext{where $R = d_X(x_0, w_m x_0) - l n $.  Applying the isometry
$w_m^{-1}$ yields (recall $u_m = w_m^{-1}w_n$)} 
\P \left( \gp{x_0}{w_m x_0}{w_n x_0} \ge l n \right) & = \P
\left( u_m x_0 \in S_{w_m^{-1} x_0}(x_0, R ) \right).
\intertext{%
As the complement of a shadow is approximately a shadow (Corollary \ref{C:shadowcompl}),
}
\P \left( \gp{x_0}{w_m x_0}{w_n x_0} \ge l n \right) & \ge
\P \left( u_m x_0 \notin S_{x_0}( w_m^{-1} x_0, \widetilde{R}
) \right)
\intertext{%
with $\widetilde{R} = ln + O(\delta)$, and by conditioning
with $w_m = g$ we get
}
\P \left( \gp{x_0}{w_m x_0}{w_n x_0} \ge l n \right) & \ge
\sum_{g \in G} \P \left(u_m x_0 \notin S_{x_0}( w_m^{-1} x_0,
\widetilde{R} ) \ \middle\vert \ w_m = g \right) \P \left( w_m = g
\right).
\intertext{%
As $u_m$ and $w_m$ are independent, and the
  distribution of $u_m$ is $\mu_{n-m}$
  we have
}
\P \left( \gp{x_0}{w_m x_0}{w_n x_0} \ge ln \right) & \ge
\sum_{g \in G} \left( 1 - \mu_{n-m} \left( S_{x_0}( g^{-1} x_0,
\widetilde{R} ) \right) \right) \mu_m(g). \\
\intertext{%
Now, if we restrict to the set of $g$ such that $d_X(x_0, gx_0) \ge Ln/2$, then 
the distance parameter of the shadow is $d_X(x_0, g x_0) -
ln+ O(\delta) \ge \epsilon n  + O(\delta)$ for $\epsilon = \frac{L}{2}-l  > 0$, hence by the estimate
for $\mu_{n - m}$ in terms of the distance parameter, we get 
}
\P \left( \gp{x_0}{w_m x_0}{w_n x_0} \ge  l n \right) & \ge
\left(1 - f \left( \epsilon n + O(\delta) \right) \right)
\P\left(d(x_0, w_m x_0) \ge L n /2 \right).
\end{align*}
The result now follows by positive drift.
\end{proof}

The same argument applied to $w_n^{-1} x_0$, which has approximate midpoint
$u_m^{-1} x_0$, shows that
\[ \P \left( \gp{x_0}{u_m^{-1} x_0}{w_n^{-1} x_0} \ge l n \right) \to 1 \text{ as } n \to \infty \]
for any $l < L/2$.
Now using Lemma \ref{lemma:gp approx}, together with the lower bounds
on the Gromov products of $\gp{x_0}{w_m x_0}{ w_n x_0}$ and
$\gp{x_0}{u_m^{-1} x_0}{ w_n^{-1} x_0}$ from Lemma \ref{lemma:gp
  lower}, and the upper bound on the Gromov product $\gp{x_0}{u_m^{-1} x_0}{ w_m x_0}$ from Lemma \ref{lemma:gp upper} implies
\[ \P \left( \gp{x_0}{w_n^{-1} x_0}{w_n x_0} \le l(n)
\right) \to 1 \text{ as } n \to \infty. \]
for any function $l : \mathbb{N} \to \mathbb{N}$ such that $l(n) \to \infty$ as $n \to \infty$.
Applying this to the estimate for translation length
\eqref{eq:translation}, shows that the probability that
\[ \tau(w_n) \ge \tfrac{1}{2} Ln + O(\delta) \]
tends to $1$ as $n \to \infty$, as required.

If $\mu$ has bounded support in $X$, then this happens exponentially
fast, by \cite{Maher_exp}.

\section{The Poisson boundary for acylindrically hyperbolic groups}
\label{section:poisson}

In this section we prove Theorem \ref{theorem:poisson}, i.e. we show
that if the action of $G$ on $X$ is acylindrical and $\mu$ has finite
entropy and finite logarithmic moment, then in fact the Gromov
boundary with the hitting measure is the Poisson boundary. 

We shall assume from now on that $G$ is a non-elementary, countable group of isometries 
of a separable Gromov hyperbolic space $X$, and $\mu$ a probability measure 
on $G$ whose support generates $G$ as a semigroup.
Recall that the entropy of $\mu$ is $ H(\mu) := -\sum_{g \in G} \mu(g) \log \mu(g),$ 
and $\mu$ is said to have \emph{finite entropy} if $H(\mu) < \infty$. The
measure $\mu$ is said to have \emph{finite logarithmic moment} if 
$$\sum_{g \in G} \mu(g) |\log d_X(x_0, g x_0)| < \infty.$$ 

Let us recall the definition of acylindrical action, which is due to
Sela \cite{sela} for trees, and Bowditch \cite{bowditch} for general
metric spaces.

\begin{definition}
We say a group $G$ acts \emph{acylindrically} on a Gromov hyperbolic
space $X$, if for every $K \ge 0$ there are numbers 
$R = R(K)$ and $N = N(K)$ 
such that for any pair of points $x$ and $y$ in $X$,
with $d_X(x, y) \ge R$,
there are at most $N$ group
elements $g$ in $G$ such that $d_X(x, gx) \le K$ and $d_X(y, gy) \le
K$. 
\end{definition}

For a discussion and several examples of acylindrical actions on
hyperbolic spaces, see \cite{osin}.

The proof will use Kaimanovich's strip criterion from
\cite{kaimanovich}.  Briefly, the criterion uses the existence of
``strips'', that is subsets of $G$ which are associated to each pair
of boundary points in a $G$-equivariant way.

In order to apply the criterion, however, one also needs to control
the number of elements in the strips; in fact, we will show that for
each strip the number of elements whose images in $X$ lie in a ball of
radius $r$ can grow at most linearly in $r$.  In a proper space, one
may often choose the strips to consist of all geodesics connecting the
endpoints of the sample path, but in our case, this usually gives
infinitely many points in a ball of finite radius. Instead, we observe
that by recurrence, the sample path returns close to a geodesic
connecting its endpoints for a positive density of times $n \in
\Z$. Using this it can be shown that there are infinitely many pairs
of locations $w_n x_0$ and $w_{n+m} x_0$, where the sample path has
gone a definite distance along the geodesic in bounded time. In fact,
we may choose a suitable group element $v$, and look at all group
elements $g$ whose orbit points $g x_0$ are close to a geodesic
$\gamma$, such that both $g x_0$ and $g v x_0$ are close to
$\gamma$. We shall call the collection of such group elements
\emph{bounded geometry} elements, and we will choose our strips to
consist of these elements. We will use acylindricality to show that
this set is locally finite, and in fact the intersection of its image
in $X$ with $B_X(x_0, r)$ grows at most linearly with $r$.  Let us now
make this precise.

\subsection{Bounded geometry points}

Let $v \in G$ be a group element, $K, R$ two constants, and $\alpha,
\beta \in \partial X$ two boundary points.

We say that a group element $g$ has \emph{$(K,R,v)$-bounded geometry}
with respect to the pair of boundary points $\alpha$, $\beta
\in \partial X$ if the three following conditions hold:

\begin{enumerate}
\item $d_X(gx_0, gvx_0) \ge R$; 
\item $\alpha$ belongs to the interior of the closure (in $X \cup \Xb$) of $S_{gvx_0}(gx_0, K)$; 
\item $\beta$ belongs to the interior of the closure of $S_{gx_0}(gvx_0, K)$.
\end{enumerate}

This is illustrated in Figure \ref{pic:bounded geometry} below.  We
shall write $\bg(\alpha, \beta)$ for the set of bounded geometry
elements determined by $\alpha$ and $\beta$ (or $\bg_{K, R, v}(\a,
\b)$ if we want to explicitly keep track of the constants).  This
definition is $G$-equivariant, i.e. $g \bg(\a, \b) = \bg(g \a, g \b)$
for any $g \in G$. We will refer to the image of a bounded geometry
element in $X$ under the orbit map as a \emph{bounded geometry point}.

\begin{figure}[H] \begin{center}
\begin{tikzpicture}[scale=0.5]

\tikzstyle{point}=[circle, draw, fill=black, inner sep=0pt, minimum width=2.5pt]

\draw (0,0) circle (5cm);

\draw (60:5) node [point, label=right:$\beta$] {}; 
\draw (-60:5) node [point, label=right:$\alpha$] {};

\draw (85:5) .. controls (1, 0.5) .. (25:5) node [right] {$S_{g x_0}(g v
  x_0, K)$};

\draw (-140:5) .. controls (1.5, 0.75) .. (-40:5) node [right] {$S_{g v x_0}(g
  x_0, K)$};

\draw (1, -1) node [point, label=right:$g x_0$] {};

\draw (2, 2) node [point, label=right:$g v x_0$] {};

\draw (-2, 2) node {$X$};

\draw (-5, 0) node [left] {$\Xb$};

\end{tikzpicture}
\end{center} 
\caption{A bounded geometry point $g x_0$ in $\bg_{K, R, v}(\alpha,
  \beta)$.} \label{pic:bounded geometry}
\end{figure}

We say a set of group elements $\bg$ is \emph{locally finite} if the
set $\bg x_0 \cap B_X(x, r)$ is finite for all $x \in X$ and all $r
\ge 0$, and that $\bg$ has \emph{linear growth} if there is a constant
$C$ such that for all $r \ge 0$
\[ \norm{\bg x_0 \cap B_X(x_0, r)} \le Cr.  \]

We now show that the set of bounded geometry elements has linear
growth.

\begin{proposition} \label{p:linear} %
There exists $K_0$ such that for any $K \ge K_0$, there exists $R_0$,
such that for any $R \ge R_0$, there exists a constant $C$ such that we
have the estimate
\[ \norm{B_X(x_0, r) \cap \bg_{K, R, v}(\a, \b) x_0 } \le  C r. \]
for any $\alpha, \beta \in \partial X$, any $r > 0$  
and any group element $v \in G$.
\end{proposition}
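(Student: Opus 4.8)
The plan is to reduce the counting to the acylindricality hypothesis applied at two well-chosen pairs of points on a common geodesic. Fix $\alpha,\beta\in\partial X$, a geodesic $\gamma$ from $\alpha$ to $\beta$ (which exists, and may be taken to be a $(Q,c)$-quasigeodesic with constants depending only on $\delta$), and a group element $v$. The first step is to show that for every bounded geometry element $g\in\mathcal{O}_{K,R,v}(\alpha,\beta)$, both orbit points $gx_0$ and $gvx_0$ lie within a bounded distance $D=D(K,\delta)$ of $\gamma$. Indeed, condition (2) says $\alpha$ is in the interior of the closure of $S_{gvx_0}(gx_0,K)$, and condition (3) says $\beta$ is in the interior of the closure of $S_{gx_0}(gvx_0,K)$; by Proposition \ref{prop:npp shadow} and the reverse triangle inequality, the geodesic $[gx_0,gvx_0]$ (or a quasigeodesic) fellow-travels a subsegment of any geodesic from $\alpha$ to $\beta$, so $gx_0,gvx_0$ are within $O(K+\delta)$ of $\gamma$. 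Also, by (1) the distance $d_X(gx_0,gvx_0)\ge R$, so the orbit points are spread out along $\gamma$.

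Next, fix $K_0$ large enough that the preceding fellow-traveling argument works, and set $R_0$ according to the acylindricality constant $R(K')$ for $K'=2D+2d_X(x_0,vx_0)+O(\delta)$, so that any two orbit points $gx_0$, $g'x_0$ that are close together in $X$ force $g^{-1}g'$ to move $x_0$ and $vx_0$ by at most $K'$. The key counting step: partition the ball $B_X(x_0,r)$ along $\gamma$ into $O(r/R)$ overlapping ``windows'' $W_j$ of length $\sim R$ centered at points $q_j$ spaced $R$ apart along $\gamma$. If $gx_0,g'x_0\in B_X(x_0,r)$ are two bounded geometry points both lying in the same window $W_j$ (i.e. close to $q_j$, hence close to $q_{j'}$ for the window containing $gvx_0$ as well, which is determined by $j$ since $v$ is fixed and $g(v x_0)$ is at controlled distance $d_X(x_0,vx_0)$ from $gx_0$ along $\gamma$), then $d_X(gx_0,g'x_0)\le O(R)$ wait --- this is too coarse; instead I would take windows of \emph{bounded} length $K'$, spaced $R_0$ apart, and argue that each bounded geometry orbit point $gx_0$ lies in a window, that the corresponding $gvx_0$ lies a distance $\approx d_X(x_0,vx_0)$ further along $\gamma$ (within $O(\delta)$), hence in a window at bounded combinatorial distance, and that the number of windows meeting $B_X(x_0,r)$ is $O(r/R_0)=O(r)$.

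Then the heart is: for two bounded geometry elements $g,g'$ with $gx_0,g'x_0$ in the same window, we have $d_X(gx_0,g'x_0)\le K'$ and also $d_X(gvx_0,g'vx_0)\le K'$, because both $gvx_0$ and $g'vx_0$ are the $\approx d_X(x_0,vx_0)$-translates along $\gamma$ and the coarse geodesic geometry forces them close. Writing $h=g^{-1}g'$, this says $d_X(x_0,hx_0)\le K'$ and $d_X(vx_0,hvx_0)\le K'$, i.e. $d_X(x,hx)\le K'$ and $d_X(y,hy)\le K'$ for $x=x_0$, $y=vx_0$. If $R\ge R(K')$ we cannot directly apply acylindricality to $x_0$ and $vx_0$ since $d_X(x_0,vx_0)$ is fixed (not $\ge R$); so instead I would apply acylindricality to the pair $x=gx_0$, $y=g(v^{m}x_0)$ for a suitable power, or more cleanly: the bounded geometry condition already guarantees $d_X(gx_0,gvx_0)=d_X(x_0,vx_0)$, which is too small. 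The correct fix is to observe that since $gx_0$ and $gvx_0$ both lie near $\gamma$ and are $R$ apart, we may instead apply acylindricality with the two points being $gx_0$ and $gvx_0$ \emph{themselves}: for $g,g'$ in the same window, $d_X(gx_0,g'x_0)\le K'$ and $d_X(gvx_0,g'vx_0)\le K'$, and $d_X(gx_0,gvx_0)\ge R\ge R(K')$; so by acylindricality applied to the pair $(gx_0,gvx_0)$, there are at most $N(K')$ choices of the element $(g')^{-1}g$ fixing both approximately, hence at most $N(K')$ elements $g'$ in each window. Combining, $\|B_X(x_0,r)\cap\mathcal{O}_{K,R,v}(\alpha,\beta)x_0\|\le N(K')\cdot O(r/R_0)=Cr$.

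The main obstacle I expect is the bookkeeping in the middle step: showing that membership of $gx_0$ in a window of bounded size along $\gamma$ forces $gvx_0$ to lie in a window at bounded combinatorial distance, uniformly in $g$, so that each pair of bounded geometry elements sharing a window actually satisfies \emph{both} displacement bounds $d_X(gx_0,g'x_0)\le K'$ and $d_X(gvx_0,g'vx_0)\le K'$ needed to invoke acylindricality. This requires carefully tracking which endpoint of $[gx_0,gvx_0]$ is near $\alpha$ versus $\beta$ (controlled by conditions (2) and (3)), using Proposition \ref{prop:npp2} to say that the nearest-point projections of $gx_0$ and $gvx_0$ to $\gamma$ are roughly $d_X(x_0,vx_0)$ apart and in a consistent order, and absorbing all the quasigeodesic Morse constants into the single constant $K'$. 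Once that geometric alignment is pinned down, the acylindricality hypothesis does exactly the work of bounding the count per window, and the linear bound in $r$ follows from the linear growth of the number of windows.
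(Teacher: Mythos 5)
Your argument is correct and is essentially the paper's own proof: the paper likewise uses the two shadow conditions plus the Morse lemma to place both $gx_0$ and $gvx_0$ within $2K$ of a $(Q,c)$-quasigeodesic $\gamma$ from $\alpha$ to $\beta$, covers the relevant portion of $\gamma$ by $O(r)$ balls $B_X(\gamma(n),4K)$, and bounds the count in each ball by $N(22K)$ by applying acylindricality to $f=g'g^{-1}$, which moves two points roughly $R$ apart (one near $gx_0$, one near $g'vx_0$) a bounded distance. The only adjustments to make when writing it up are that your windows must be spaced by their own bounded length (a constant depending on $K$ and $\delta$, not by $R_0$) so that they genuinely cover the $2K$-neighbourhood of $\gamma$ while still giving $O(r)$ windows, and that the displacement bound $K'$ should be the one from your final version (window size plus $O(K+\delta)$, the $d_X(x_0,vx_0)$ offsets cancelling), so that $K'$, $R_0$ and $C$ stay uniform in $v$, exactly as in the paper's choice of $22K$.
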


In order to prove the proposition, let us start by proving that the
number of bounded geometry elements in a ball $B_X(x, 4K)$ is bounded
in terms of $K$.

\begin{lemma} \label{prop:local bound} %
There exists $K_0$ such that, for any $K \ge K_0$, there exists $R_0$ such that for any $R \ge R_0$ 
and any group element $v \in G$, we have the estimate
\[ \norm{ B_X(x, 4 K) \cap \bg_{K, R, v}(\a, \b) x_0} \le N(22K) \]
for any pair of boundary points $\a$ and $\b$, and any point $x \in
X$.
\end{lemma}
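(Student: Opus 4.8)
The plan is to reduce the estimate to the acylindricity hypothesis, via the following dictionary: two bounded geometry elements whose orbit points lie in a common small ball differ by a group element that displaces two \emph{far apart} points only a bounded amount. Concretely, if $B_X(x, 4K) \cap \bg_{K, R, v}(\a, \b) x_0$ is empty there is nothing to prove, so I would fix one bounded geometry element $g_1$ with $g_1 x_0 \in B_X(x, 4K)$, and for every other $g \in \bg_{K, R, v}(\a, \b)$ with $g x_0 \in B_X(x, 4K)$ consider $h := g g_1^{-1}$. Then $h$ maps $g_1 x_0 \mapsto g x_0$ and $g_1 v x_0 \mapsto g v x_0$, so
\[ d_X(g_1 x_0,\, h\, g_1 x_0) = d_X(g_1 x_0, g x_0) \le 8K, \]
both orbit points being in $B_X(x, 4K)$, while condition (1) of the definition of bounded geometry gives $d_X(g_1 x_0, g_1 v x_0) = d_X(x_0, v x_0) \ge R$. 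The key geometric claim — discussed below — is that moreover
\[ d_X(g_1 v x_0,\, g v x_0) \le 22K. \]
Granting this, $h$ displaces each of the two points $g_1 x_0$ and $g_1 v x_0$ by at most $22K$, and since $g \mapsto g g_1^{-1}$ is injective, acylindricity applied to the pair $(g_1 x_0, g_1 v x_0)$ — valid because $d_X(g_1 x_0, g_1 v x_0) \ge R \ge R_0$, where I would take $R_0 := R(22K)$ — bounds the number of such $h$, hence of such $g$, hence of such orbit points, by $N(22K)$.

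For the geometric claim I would argue as follows. Write $D := d_X(x_0, v x_0)$, so $d_X(g_1 x_0, g_1 v x_0) = d_X(g x_0, g v x_0) = D$. Condition (3), for $g_1$ and for $g$, says $\b$ lies in the closure of $S_{g_1 x_0}(g_1 v x_0, K)$ and of $S_{g x_0}(g v x_0, K)$; since the centre of a shadow lies in its closure, \eqref{eq:gp min} (together with the extension of the Gromov product to $\partial X$) gives
\[ \gp{g_1 x_0}{g_1 v x_0}{\b} \ge D - K - O(\delta), \qquad \gp{g x_0}{g v x_0}{\b} \ge D - K - O(\delta). \]
Changing the basepoint in the second inequality from $g x_0$ to $g_1 x_0$ costs at most $d_X(g_1 x_0, g x_0) + O(\delta) \le 8K + O(\delta)$, so $\gp{g_1 x_0}{g v x_0}{\b} \ge D - 9K - O(\delta)$; the Gromov product triangle inequality \eqref{E:gpineq} at basepoint $g_1 x_0$ then gives $\gp{g_1 x_0}{g_1 v x_0}{g v x_0} \ge D - 9K - O(\delta)$. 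Expanding this Gromov product and using $d_X(g_1 x_0, g_1 v x_0) = D$ and $d_X(g_1 x_0, g v x_0) \le D + 8K$ yields $d_X(g_1 v x_0, g v x_0) = O(K) + O(\delta)$; choosing $K_0$ large enough to absorb the additive $O(\delta)$-errors into the linear term, and bookkeeping carefully (sharpening the crude bound above, e.g.\ via the Busemann-function picture of two geodesic rays with common endpoint $\b$ whose origins are $8K$-close), this is at most $22K$. Note that condition (2), involving $\a$, is not needed for this lemma.

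The one genuinely delicate point is this geometric claim — that $v$-translates of nearby bounded-geometry orbit points stay close. This is where $\delta$-hyperbolicity is used: the heuristic is that $g_1 v x_0$ and $g v x_0$ both lie coarsely a fixed distance $D$ along a geodesic towards the common boundary point $\b$, from the almost-coincident basepoints $g_1 x_0$ and $g x_0$, so they must be a bounded distance apart. Pinning down the universal constant $22K$ (and, accordingly, fixing $K_0$ and then $R_0 = R(22K)$) is the only real work; once that is in place the count is immediate from acylindricity, and everything else — the reduction through $h = g g_1^{-1}$ and the bookkeeping of shadow parameters — is routine.
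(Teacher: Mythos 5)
Your proposal is correct in substance but runs along a genuinely different route from the paper. The paper's proof is quasigeodesic-based: it fixes a continuous $(Q,c)$-quasigeodesic $\gamma$ from $\a$ to $\b$, uses \emph{both} shadow conditions (2) and (3) to force $\gamma$ and its translate $f\gamma$ (where $g'=fg$) to pass within $2K$ of $fgx_0$ and $fgvx_0$, and then compares nearest-point projections to these two fellow-travelling quasigeodesics (via the Morse constant $L$, which is why the paper's $K_0$ is taken larger than $Q,c,L$) to bound $d_X(y,fy)$ for $y=fgvx_0$ by $22K$; acylindricity then bounds the number of possible $f$. You instead bypass quasigeodesics entirely: you use only condition (3), the fact that the centre of a shadow lies in its closure together with \eqref{eq:gp min}, a basepoint-change estimate, and the Gromov-product triangle inequality \eqref{E:gpineq} extended to $\b\in\partial X$, to bound $d_X(g_1vx_0,gvx_0)$ directly, and then apply acylindricity to the pair $(g_1x_0,g_1vx_0)$, whose distance is $\ge R$ by condition (1). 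The reduction through the injective map $g\mapsto gg_1^{-1}$ is the same as the paper's reduction through $f$, so both proofs bottom out in acylindricity identically. Your route is arguably cleaner (no Morse constant, $K_0$ need only dominate $\delta$) and it exposes the fact that condition (2) plays no role in local finiteness — it is only needed elsewhere (e.g.\ in Proposition \ref{p:linear}, where the covering of $\bg(\a,\b)x_0$ by balls along $\gamma$ genuinely uses both conditions).

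The one caveat is the constant. Your displayed computation gives $d_X(g_1vx_0,gvx_0)\le 26K+O(\delta)$ (you lose $8K$ twice: once in $d_X(g_1x_0,gvx_0)\le D+8K$ and once in the basepoint change), and the asserted sharpening to $22K$ is left as a gesture towards a Busemann-function argument rather than carried out. This is not a genuine gap: the number $22$ is an artifact of the paper's particular proof, and every downstream use (Proposition \ref{p:linear} and the strip criterion in Section \ref{section:poisson}) needs only \emph{some} bound $N(CK)$ with $C$ universal, with $R_0=R(CK)$ adjusted accordingly. But as a proof of the lemma with the stated constant you should either carry out the sharpening honestly or, more sensibly, restate the conclusion with $N(26K)$ (absorbing the $O(\delta)$ term by taking $K_0$ large) and note that this changes nothing in the sequel.
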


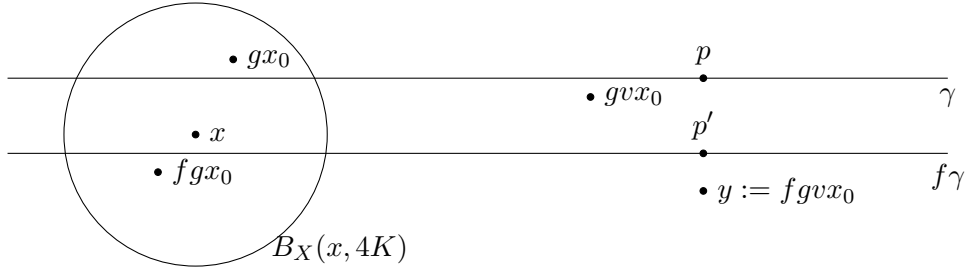
\begin{figure}[H] \begin{center}
\begin{tikzpicture}[scale=0.5]

\tikzstyle{point}=[circle, draw, fill=black, inner sep=0pt, minimum width=2.5pt]

\draw (-1, 3.5) -- (24, 3.5) node [below] {$\gamma$};
\draw (-1, 1.5) -- (24, 1.5) node [below] {$f \gamma$};

\filldraw[black] (4,2) node [point, label=right:$x$] {};

\draw (4, 2) circle (3.5cm);

\draw (4, 2) +(-60:3.5) node [right] {$B_X(x, 4 K)$};

\filldraw[black] (5,4) node [point, label=right:$g x_0$] {};
\filldraw[black] (3,1) node [point, label=right:$f g x_0$] {};

\filldraw[black] (14.5,3)   node [point, label=right:$g v x_0$] {};
\filldraw[black] (17.5,0.5) node [point, label={right:$y := f g v x_0$}] {};
\filldraw[black] (17.5,3.5) node [point, label=above:$p$] {};
\filldraw[black] (17.5,1.5) node [point, label=above:$p'$] {};

\end{tikzpicture}
\end{center} 
\caption{Bounded geometry points in $B_X(x, 4K)$.}\label{pic:geodesic}
\end{figure}

\begin{proof}
Recall from Section \ref{section:qg} that we have chosen $Q$ and $c$
to be two numbers such that every pair of points in the Gromov
boundary $\Xb$ are connected by a continuous $(Q,
c)$-quasigeodesic. We shall choose $L$ to be a Morse constant for the
$(Q, c)$-quasigeodesics, i.e. for any pair of points $x$ and $y$ in a
$(Q, c)$-quasigeodesic $\gamma$, the segment of $\gamma$ between $x$
and $y$ is contained in an $L$-neighbourhood of any geodesic
connecting $x$ and $y$.  

Let $g$ be a bounded geometry element, and write $S_1$ for $S_{g v
  x_0}( g x_0, K)$ and $S_2$ for $S_{g x_0}(g v x_0, K)$.  As $g$ has
bounded geometry, each $(Q,c)$-quasigeodesic from $\alpha$ to $\beta$
passes within distance $L + K + O(\delta)$ of both $gx_0$ and
$gvx_0$. Therefore we may choose $K_0$ to be sufficiently large such
that for any $K \ge K_0$ the distance from any bounded geometry point
$g x_0$ to a $(Q, c)$-quasigeodesic connecting $\alpha$ and $\beta$ is
at most $2 K$. Furthermore, we will choose $K_0$ to be larger than the
quasigeodesic constants $Q$ and $c$, and also larger than the Morse
constant $L$.

Let $g x_0$ and $g'x_0 \in B_X(x, 4 K) \cap \bg_{K, R, v}(\a, \b) x_0$
be two bounded geometry elments with respect to the same boundary
points $(\alpha, \beta)$, and the same element $v$.  We may write $g'
= f g$, for some group element $f \in G$. This is illustrated in
Figure \ref{pic:geodesic} above.  The isometry $f$ moves the point $g
x_0$ distance at most $8 K$.

Now let $\gamma$ be a $(Q,c)$-quasigeodesic joining $\alpha$ to
$\beta$.  By construction, both $\gamma$ and $f\gamma$ have endpoints
in $fS_1$ and $fS_2$, hence they both pass within distance $2K$ from
both $fgx_0$ and $fgvx_0$.  Let us now consider $y := fgvx_0$; we now
show that the isometry $f$ also moves the point $y$ a bounded
distance, which yields the claim by definition of an acylindrical
action.

Let $p$ be a closest point on $\gamma$ to $y$, 
and $p'$ be a closest point on $f \gamma$ to $y$, 
so by construction 
$$d_X(p, p') \le d_X(p, y) + d_X(p', y) \le 4K.$$
As $d_X( g x_0, f g x_0) \le 8 K$, this implies that 
$$\norm{d_X(g  x_0, p) - d_X(f g x_0, p')} \le 12 K.$$
Therefore, since $d_X(fgx_0, fp) = d_X(gx_0, p)$, and both $fp$ and
$p'$ lie on the quasigeodesic $f\gamma$, we have
$$d_X(fp, p') \le 16K + 2L \le 18K$$ 
and so 
$$d_X(y, fy) \le d_X(y, p') + d_X(p', fp) + d_X(fp, fy) \le 22 K.$$
Therefore, $f$ moves each of $x$ and $y$ distance at most $22K$, and so
by acylindricality there are at most $N(22 K)$ possible choices
for $f$, as long as $R \ge R(22 K)$,
as required.
\end{proof}

\begin{proof}[Proof of Proposition \ref{p:linear}] %
Let $\gamma$ be a $(Q,c)$-quasigeodesic connecting $\alpha$ and
$\beta$.  We shall choose the number $K_0$ to be the same as the
number $K_0$ from Lemma \ref{prop:local bound}. Then $K$ is
sufficiently large such that every element of $\bg(\alpha, \beta)$ has
an image in $X$ which lies within distance at most $2K$ of $\gamma$,
and any pair of points $\gamma(n)$ and $\gamma(n+1)$ on the
quasigeodesic are distance at most $2K$ apart. Therefore $\bg(\alpha,
\beta) x_0$ is covered by balls of the form $B_X(\gamma(n), 4K)$ for
$n \in \mathbb{N}$, and the claim follows from applying Lemma
\ref{prop:local bound} to each of these balls.
\end{proof}

\subsection{Recurrence and the strip criterion}

Given a bi-infinite sample path $\seqz{w_n}$, we shall define the
forward and backward limit points to be
\[ \lambda_+(\w) := \lim_{n \to \infty} w_n x_0, \text{ and }
\lambda_-(\w) := \lim_{n \to \infty} w_{-n} x_0.  \]
As the forward and backward random walks converge to the Gromov
boundary, these limit points are defined for $\P$-almost all $\w$, and
the joint distribution of the pair $(\lambda_+(\w), \lambda_-(\w))$ is
$\nu \cross \rnu$.

For any bi-infinite sequence $\w \in \Omega = G^\mathbb{Z}$, we define
$\bg(\w)$ to be the set $\bg(\lambda_+(\w), \lambda_-(\w))$ of bounded
geometry elements determined by the limit point $\lambda_+(\omega)$ of the
forward random walk and the limit point $\lambda_-(\omega)$ of the
backward random walk.

Finally, we show that we can choose $K$, $R$, and $v$ such that the
set of bounded geometry elements is non-empty and locally finite for
$\nu \cross \rnu$-almost all $(\a, \b) \in \Xb \cross \Xb$.

\begin{proposition} \label{prop:non-empty} %
There are constants $K, R$ and a group element $v \in G$ such that the
set
$$\bg_{K, R, v}(\a, \b)$$
of bounded geometry elements has linear growth and is non-empty (in
fact, infinite) for $\nu \cross \rnu$-almost all pairs $(\a, \b)$.
\end{proposition}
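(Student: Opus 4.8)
The plan is to choose the constants $K,R$ and the element $v$ so that Proposition~\ref{p:linear} already delivers linear growth for every pair $(\alpha,\beta)$, and then to prove non-emptiness — in fact infiniteness — for $\nu \cross \rnu$-almost every $(\alpha,\beta)$ by an ergodic‑theoretic transport argument along the sample path: I show that with positive probability the identity element has bounded geometry with respect to $(\lambda_+(\omega),\lambda_-(\omega))$, and then spread this over the orbit using equivariance.

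First I would fix $K := \max\{K_0,\ R_0 + C_1\}$, where $K_0$ is the constant of Proposition~\ref{p:linear}, $R_0$ is the constant of Proposition~\ref{prop:positive} (enlarged so that Proposition~\ref{prop:bhm} also applies with it), and $C_1 = C_1(\delta)$ is large enough that $\overline{S_w(x,R_0)}^\delta \subseteq \operatorname{int}\overline{S_w(x,K)}^\delta$ for all $w,x \in X$, the interior taken in $X\cup\Xb$; this shadow‑enlargement follows from the ball estimates of Proposition~\ref{prop:bhm} (applied after an isometry when the light source is not $x_0$), or equivalently from stability of quasigeodesics. Next I take $R$ at least as large as the threshold Proposition~\ref{p:linear} provides for this $K$, and pick any $v \in G$ with $d_X(x_0,vx_0)\ge R$ — possible since $G$ contains a hyperbolic isometry, so $Gx_0$ is unbounded. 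By Proposition~\ref{p:linear}, $\bg_{K,R,v}(\alpha,\beta)$ then has linear growth for all $\alpha,\beta\in\Xb$, so only infiniteness remains.

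Consider the event $A := \{\omega\in\Omega : 1 \in \bg_{K,R,v}(\lambda_+(\omega),\lambda_-(\omega))\}$. Condition $(1)$ in the definition of bounded geometry holds for $g=1$ by the choice of $v$, while $(2)$ and $(3)$ say exactly that $\lambda_+(\omega)\in\operatorname{int}\overline{S_{vx_0}(x_0,K)}^\delta$ and $\lambda_-(\omega)\in\operatorname{int}\overline{S_{x_0}(vx_0,K)}^\delta$. Since $(\lambda_+(\omega),\lambda_-(\omega))$ has distribution $\nu\cross\rnu$, the two coordinates are independent and
\[ \P(A) = \nu\bigl(\operatorname{int}\overline{S_{vx_0}(x_0,K)}^\delta\bigr)\cdot \rnu\bigl(\operatorname{int}\overline{S_{x_0}(vx_0,K)}^\delta\bigr). \]
By the choice of $K$ the first factor is at least $\nu(\overline{S_{vx_0}(x_0,R_0)}^\delta)$, which is positive by Proposition~\ref{prop:positive} (with $g=1$, $h=v$); the second is at least $\rnu(\overline{S_{x_0}(vx_0,R_0)}^\delta)>0$ by the same proposition applied to the reflected walk, whose step support also generates $G$ as a semigroup. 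Hence $\P(A)>0$.

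It remains to spread $A$ over the orbit. The shift $T$ on $(\Omega,\P)$ is ergodic and measure preserving, so by the pointwise ergodic theorem $\{n\ge 1 : T^n\omega\in A\}$ has density $\P(A)>0$, hence is infinite, for $\P$-almost every $\omega$. Because $\lambda_\pm(T^n\omega)=w_n^{-1}\lambda_\pm(\omega)$ and $\bg$ is $G$-equivariant, the condition $T^n\omega\in A$ is equivalent to $w_n\in\bg_{K,R,v}(\lambda_+(\omega),\lambda_-(\omega))$. Thus, for almost every $\omega$, infinitely many of the $w_n$ lie in $\bg_{K,R,v}(\lambda_+(\omega),\lambda_-(\omega))$; since $w_n x_0\to\lambda_+(\omega)\in\Xb$ by Theorem~\ref{T:conv}, the orbit points $w_n x_0$ leave every bounded set, so these $w_n$ take infinitely many distinct values and $\bg_{K,R,v}(\lambda_+(\omega),\lambda_-(\omega))$ is infinite. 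As $\omega\mapsto(\lambda_+(\omega),\lambda_-(\omega))$ pushes $\P$ forward to $\nu\cross\rnu$, this shows $\bg_{K,R,v}(\alpha,\beta)$ is infinite for $\nu\cross\rnu$-almost every $(\alpha,\beta)$, as required. The step I expect to require the most care is this last one — correctly matching the bounded geometry set of the shifted path with the appropriate translate of the one for $\omega$, and invoking transience — together with the routine but slightly technical shadow‑enlargement used in the choice of $K$.
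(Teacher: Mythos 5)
Your proposal is correct and follows essentially the same argument as the paper: show $\P(1\in\bg(\omega))>0$ via Proposition~\ref{prop:positive} and independence of $\lambda_\pm$, transport by equivariance and shift-invariance, apply the ergodic theorem to get a positive density of return times $w_n\in\bg(\omega)$, and use convergence to the boundary to conclude infiniteness. Your write-up is somewhat more explicit than the paper's on two small points the paper leaves implicit---choosing $v$ with $d_X(x_0,vx_0)\ge R$ so that condition~(1) holds for $g=1$, and the shadow-enlargement ensuring the interiors have positive measure---but the route is the same.
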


\begin{proof}
By Proposition \ref{prop:positive}, we can choose $K$ large enough so
that for any group element $v$, the closure of the shadow $S =
S_{x_0}(v x_0, K)$ has positive $\nu$-measure, and the closure of the
shadow $S' = S_{v x_0}(x_0, K)$ has positive $\rnu$-measure.  Thus,
the probability that the group identity element $1$ lies in $\bg(\w)$
is positive, because
\[ \P \left( \omega \ : \  1 \in \bg(\omega) \right) = \nu(\overline{S}) \rnu(\overline{S'}) = p > 0. \]
Consider the probability that the location of the random walk $w_n$
lies in $\bg(\omega)$, i.e.
\begin{align*}
&
\P \left( \omega \ : \  w_n \in \bg(\lambda_+(\omega), \lambda_-(\omega)) \right).  \\
\intertext{%
By $G$-equivariance, this is equal to
}
& \P \left( \omega  \ : \ 1 \in \bg(w_n^{-1} \lambda_+(\omega) , w_n^{-1} \lambda_-(\omega)) \right), \\
\intertext{%
and by definition of the shift map this is equal to
}
& \P  \left( \omega  \ : \  1 \in \bg(T^n \omega ) \right). \\
\intertext{%
As the shift map preserves the measure $\P$, this is equal
  to
}
& \P \left( \omega  \ : \ 1 \in \bg( \omega ) \right) = p > 0.
\end{align*}
Therefore the events $\{ \w \in \Omega \ : \ w_n \in \bg(\w)\}$ occur
with the same positive probability $p$, though they are not
independent. By ergodicity of the shift map, the proportion of
locations $\{w_1, \ldots w_N \}$ satisfying $w_n \in \bg(\w)$
converges to $p$ as $N$ tends to infinity. As $\seq{ w_n x_0 }$
converges to the boundary $\P$-almost surely, $\bg(\w)$ contains
infinitely many elements $\P$-almost surely.
\end{proof}

We remind the reader of Kaimanovich's strip criterion from
\cite{kaimanovich}*{Theorem 6.4}. We shall write $B_G(1, r)$ for all
group elements whose image in $X$ under the orbit map is distance at
most $r$ from the basepoint $x_0$, i.e.
\[ B_G(1, r) = \{ g \in G \mid d_X(x_0, g x_0) \le r \}.  \]

\begin{theorem}
Let $\mu$ be a probability measure with finite entropy on $G$, and let
$(\partial X , \nu )$ and $(\partial X , \rnu )$ be $\mu$- and
$\rmu$-boundaries, respectively. If there exists a measurable
$G$-equivariant map $S$ assigning to almost every pair of points $(\a
, \b ) \in \partial X \cross \partial X$ a non-empty ``strip'' $S(\a ,
\b ) \subset G$, such that for all $g$
\[ \frac{1}{n} \log \left| S(\a , \b ) g \cap B_G( 1, d_X(x_0, w_n
x_0) )\right| \to 0 \qquad \text{ as } n \to \infty, \]
for $(\nu \cross \rnu )$-almost every $(\a , \b ) \in \partial X
\cross \partial X$, then $(\partial X , \nu )$ and $(\partial X ,
\rnu )$ are the Poisson boundaries of the random walks $(G, \mu)$ and
$(G, \rmu)$, respectively.
\end{theorem}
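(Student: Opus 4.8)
The plan is to prove this via the entropy method of Kaimanovich and Vershik, in the form developed by Kaimanovich \cite{kaimanovich}; the geometric input has been entirely absorbed into the hypothesis, so only the structure needs to be described. Since $H(\mu) < \infty$, the Avez entropy $h(\mu) = \lim_n \tfrac1n H(w_n)$ is finite, and the entropy criterion states that a $\mu$-boundary, with boundary map $\lambda_+ \colon \Omega \to \Xb$, is maximal --- that is, equals the Poisson boundary --- if and only if
\[ \tfrac1n\, H(w_n \mid \lambda_+) \longrightarrow 0 \qquad \text{as } n \to \infty, \]
where $H(\,\cdot \mid \lambda_+)$ denotes conditional Shannon entropy. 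So I would first reduce the theorem to proving this convergence, and symmetrically (interchanging $\mu \leftrightarrow \rmu$ and $\lambda_+ \leftrightarrow \lambda_-$) the analogous statement for $(\Xb, \rnu)$.

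Next I would pass to the bilateral picture, on which the pair $(\lambda_-, \lambda_+)$ has law $\rnu \cross \nu$. The position $w_n$ (for $n \ge 0$) and the forward limit $\lambda_+$ are measurable with respect to the increments $(g_i)_{i \ge 1}$, while $\lambda_-$ is measurable with respect to $(g_i)_{i \le 0}$; as the increments are i.i.d.\ these $\sigma$-algebras are independent, so $\lambda_-$ is independent of $(w_n, \lambda_+)$, whence
\[ H(w_n \mid \lambda_+) = H(w_n \mid \lambda_+, \lambda_-). \]
By Fubini it then suffices to show $\tfrac1n H\big(w_n \mid \lambda_+ = \b,\ \lambda_- = \a\big) \to 0$ for $(\nu \cross \rnu)$-almost every $(\b, \a)$, and integrate.

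Then I would bring in the strips. Fix $(\a, \b)$ in the full-measure set where $S(\a,\b)$ is defined and non-empty. Using $G$-equivariance of $S$ together with the cocycle identity $\lambda_\pm(T^n\omega) = w_n^{-1}\lambda_\pm(\omega)$, which gives $w_n^{-1} S(\lambda_-, \lambda_+) = S\big(\lambda_-(T^n\omega), \lambda_+(T^n\omega)\big)$, Kaimanovich's argument shows that, conditionally on $(\lambda_+, \lambda_-) = (\b, \a)$, the position $w_n$ lies --- along a suitable set of times of positive density --- inside a right translate $S(\a,\b)\,g$ of the strip, for some $g = g(\a,\b)$, hence inside the finite set $E_n := S(\a,\b)\, g \cap B_G\big(1,\ d_X(x_0, w_n x_0)\big)$. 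The hypothesis, stated precisely for the translates $S(\a,\b)g$ and for all $g$, is exactly that $\tfrac1n \log |E_n| \to 0$. Feeding this into the entropy criterion, in the refined form that only requires the conditional law of $w_n$ to charge $E_n$ with probability bounded away from zero along a positive-density set of times, yields $\tfrac1n H(w_n \mid \lambda_+ = \b,\ \lambda_- = \a) \to 0$; dominated convergence then gives $\tfrac1n H(w_n \mid \lambda_+, \lambda_-) \to 0$, hence $\tfrac1n H(w_n\mid\lambda_+)\to 0$, completing the reduction.

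The main obstacle is measure-theoretic rather than geometric. The delicate point is the third step: making rigorous the assertion that $w_n$ sits inside a bounded modification of the equivariant strip --- this holds only in a conditional-probability / positive-density sense, not surely, since $S(\a,\b)$ need not even contain the identity --- and running the entropy criterion in exactly that form. Pinning down the precise statement of the entropy criterion in the non-symmetric setting, justifying the interchange of conditional entropies and the Fubini decomposition over $\Xb \cross \Xb$, and verifying that the ``for all $g$'' quantifier is what absorbs the ambiguity in the choice of $g(\a,\b)$ (and of the $O(\delta)$ fattening implicit in ``bounded modification'') is where the real work lies.
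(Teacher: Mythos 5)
This theorem is not proved in the paper: it is quoted verbatim from Kaimanovich \cite{kaimanovich}*{Theorem 6.4} and invoked as a black box. The paper's own contribution in Section~\ref{section:poisson} is the construction of the bounded-geometry strips verifying its hypothesis under an acylindrical action; the criterion itself is taken as input. So there is no internal proof to compare your sketch against. That said, your outline does trace the shape of Kaimanovich's argument --- the entropy criterion, conditioning on both boundary points via independence of past and future, and the shift-recurrence argument placing $w_n$ in a translate of the strip for a positive density of times --- and you correctly flag the third step as the crux. Two caveats there that your sketch glosses over: the bound $H(w_n\mid\lambda_\pm)\le\log|E_n|$ is not available when $w_n\in E_n$ holds only with positive conditional probability (the complementary event can carry linearly growing entropy), and the set $E_n$ depends on the random radius $d_X(x_0,w_nx_0)$, so it is not measurable with respect to the boundary data alone. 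Kaimanovich navigates both through the almost-sure convergence of $-\tfrac1n\log\P(w_n\mid\lambda_\pm)$ to the limiting conditional entropy rate, which the positive-density localization forces to vanish. Since the paper only cites the result, these points are not discrepancies with the paper, but they are where a full reconstruction of Kaimanovich's proof would live.
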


In order to prove Theorem \ref{theorem:poisson}, we define the strip
$S(\alpha, \beta)$ as the set $\bg_{K, R, v}(\alpha, \beta)$ of
bounded geometry elements. By right multiplication by $g^{-1}$, the
set 
\[ S(\a , \b ) g \cap B_G( 1, d_X(x_0, w_n x_0) ) \]
has the same cardinality as 
\[ S(\a , \b ) \cap B_G( 1, d_X(x_0, w_n x_0) )g^{-1}. \]
Furthermore, 
\[ B_G( 1, d_X(x_0, w_n x_0) )g^{-1} \subset B_G( 1,
d_X(x_0, w_n x_0) + d_X(x_0, g x_0)), \]
and so
\[ \left| S(\a , \b ) g \cap B_G(1 , d_X(x_0, w_n x_0)) \right| \le
\left| S(\a , \b ) \cap B_G(1 , d_X(x_0, w_n x_0) + d_X(x_0, g x_0))
\right|. \]
Proposition \ref{prop:non-empty} shows that there are suitable choices
of $K, R$ and $v$ such that the sets of bounded geometry elements are
non-empty almost surely and have linear growth, so there is a number
$K$ such that
\[ \left| S(\a , \b ) g \cap B_G(1 , d_X(x_0, w_n x_0)) \right| \le K
( d_X(x_0, w_n x_0) +d_X(x_0, g x_0) ). \]
Therefore, it suffices to show that almost surely $\log d_X(x_0, w_n x_0)  / n \to
0$ as $n \to \infty$, and this follows from the fact that $\mu$ has
finite logarithmic moment, as we now briefly explain. Finite
logarithmic moment implies that $\log d_X(x_0, g_n x_0)  / n \to 0$
almost surely, and so for any $\e > 0$, we have $d_X(x_0, g_n x_0) \le
e^{\e n}$ for all $n$ sufficiently large. By the triangle inequality
\[ d_X(x_0, w_n x_0) \le d_X(x_0, g_1 x_0) + \cdots + d_X(x_0, g_n
x_0), \]
and so 
\[ \log  d_X(x_0, w_n x_0)  \le \log n + \e n. \] 
As this holds for all $\e > 0$, this implies that $\log d_X(x_0, w_n
x_0)  / n \to 0$ as $n \to \infty$, as required.

Finally, the statement that the map $S$ is measurable means that for
any $g \in G$, the set
$$\{ (\alpha, \beta) \in \partial X \times \partial X
\mid g \in S(\alpha, \beta ) \}$$
is a Borel set; this holds, since by definition, $g \in S(\alpha,
\beta)$ if and only if $(\alpha, \beta)$ belongs to the product of the
closures of two shadows, which is closed, hence Borel. This completes
the proof of Theorem \ref{theorem:poisson}.

\appendix
\section{Estimating translation length}

In this section we provide a proof of Proposition
\ref{prop:hyperbolic}, which estimates the translation length in terms
of the distance an isometry moves the basepoint, and the Gromov product.

\begin{prop} 
There exists a constant $C_0 >0$, which depends only on $\delta$, such that the following holds.
For any isometry $g$  of a $\delta$-hyperbolic space $X$, if $g$ satisfies the inequality
\begin{equation} 
d_X(x_0, g x_0) \ge 2 \gp{x_0}{g x_0}{g^{-1} x_0} + C_0, 
\end{equation}
then the translation length of $g$ is
\begin{equation} 
\tau(g) = d_X(x_0, g x_0) - 2 \gp{x_0}{ g^{-1} x_0}{g x_0} +
O(\delta).  
\end{equation}
\end{prop}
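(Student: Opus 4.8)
\emph{Proof proposal.} Write $L := d_X(x_0, g x_0)$, which equals $d_X(x_0, g^{-1}x_0)$ since $g$ is an isometry, and $p := \gp{x_0}{g x_0}{g^{-1} x_0}$, so that hypothesis \eqref{eq:hyperbolic} reads $L \ge 2p + C_0$ and the target \eqref{eq:translation} is $\tau(g) = L - 2p + O(\delta)$. Since the Gromov product is $G$-invariant, $d_X(g^i x_0, g^{i+1} x_0) = L$ and $\gp{g^i x_0}{g^{i-1} x_0}{g^{i+1} x_0} = p$ for all $i \in \Z$, and the limit defining $\tau(g)$ exists because $n \mapsto d_X(x_0, g^n x_0)$ is subadditive. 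The plan is to manufacture a coarse axis from the $G$-translates of a single nearest-point projection and to prove that it behaves, up to an additive and a nearly trivial multiplicative error, like a geodesic.

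First I would fix a geodesic $[x_0, g x_0]$, let $m$ be a nearest-point projection of $g^{-1} x_0$ to it, and set $m_i := g^i m$. By \eqref{eq:gp estimate} and the reverse triangle inequality (Proposition \ref{prop:reverse triangle}) one gets $d_X(x_0, m) = p + O(\delta)$ and $d_X(m, g x_0) = L - p + O(\delta)$; applying $g$, the point $g m$ is a nearest-point projection of $x_0$ to $[g x_0, g^2 x_0]$ with $d_X(g x_0, g m) = p + O(\delta)$. Since $[g x_0, x_0]$ and $[g x_0, g^2 x_0]$ fellow-travel for length $\gp{g x_0}{x_0}{g^2 x_0} = p$ while the whole of $[g x_0, g m]$ has length $p + O(\delta)$, this forces $\gp{g x_0}{m}{g m} = p + O(\delta)$, and hence $d_X(m, g m) = d_X(g x_0, m) + d_X(g x_0, g m) - 2\gp{g x_0}{m}{g m} = L - 2p + O(\delta)$. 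Applying the triangle inequality along $x_0, m, m_1, \dots, m_n, g^n x_0$ then gives $d_X(x_0, g^n x_0) \le n(L - 2p) + 2p + O(n\delta)$, so $\tau(g) \le L - 2p + O(\delta)$.

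For the matching lower bound I would show that the chain $(m_i)_{i \in \Z}$, together with the geodesic segments between consecutive points, has \emph{small corners}. A computation of the same type --- comparing $[x_0, g^{-1}m]$, which is a subsegment of $[x_0, g^{-1}x_0]$, with $[x_0, g m]$, which follows $[x_0, g x_0]$ for length $L - p + O(\delta)$ --- gives $\gp{x_0}{g^{-1}m}{g m} = p + O(\delta)$, whence $d_X(g^{-1}m, g m) = 2(L - 2p) + O(\delta)$ and therefore $\gp{m}{g^{-1}m}{g m} = O(\delta)$. Thus $(m_i)$ is a concatenation of geodesic segments of common length $\ell := L - 2p \ge C_0 - O(\delta)$ meeting at corners of size $O(\delta)$; in particular any subpath crossing at most one corner is a $(1, O(\delta))$-quasigeodesic, and consecutive corners are at distance $\ell + O(\delta)$ along the chain. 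Choosing $C_0 = C_0(\delta)$ large at the outset makes $\ell$ large, so by the theory of local (quasi-)geodesics in hyperbolic spaces (cf. \cite{bh}*{III.H.1}) the chain is a $(\lambda, \epsilon)$-quasigeodesic with $\lambda = 1 + O(\delta/\ell)$ and $\epsilon = O(\delta)$. Consequently $d_X(x_0, g^n x_0) \ge d_X(m_0, m_n) - 2p - O(\delta) \ge \lambda^{-1} n\ell - O(n\delta) = n(L - 2p) - O(n\delta)$, so $\tau(g) \ge L - 2p - O(\delta)$. Combined with the previous paragraph this yields $\tau(g) = L - 2p + O(\delta)$, which is \eqref{eq:translation}.

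The main obstacle is the lower bound. One must (i) carry out the two Gromov-product computations carefully enough that every error is genuinely $O(\delta)$, independent of $g$; and (ii) invoke the local-to-global principle in its \emph{quantitative} form, with multiplicative constant tending to $1$ --- an ordinary quasigeodesic estimate with constants merely controlled by $\delta$ does not suffice, because $\ell = L - 2p$ is unbounded and a genuine multiplicative slack would corrupt $\tau(g)$ by an amount proportional to $\ell$. This is exactly why $C_0$ must be fixed large relative to $\delta$ before anything else, so that $\ell$ dominates all corner- and hyperbolicity-errors.
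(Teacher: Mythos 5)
Your argument is correct in substance, and the Gromov--product computations you sketch do go through with errors depending only on $\delta$: for instance $d_X(x_0,m)=\gp{x_0}{gx_0}{g^{-1}x_0}+O(\delta)$ follows from Proposition \ref{prop:reverse triangle}, and the two products $\gp{gx_0}{m}{gm}$ and $\gp{x_0}{g^{-1}m}{gm}$ are pinned down to $p+O(\delta)$ using \eqref{E:gpineq} together with hypothesis \eqref{eq:hyperbolic} (in your notation $L=d_X(x_0,gx_0)$ and $p=\gp{x_0}{gx_0}{g^{-1}x_0}$). But your route is genuinely different from the paper's. You build a broken axis through the translates $m_i=g^i m$ of the projection point, show the segments have length $\ell=L-2p+O(\delta)$ and the corners have Gromov product $O(\delta)$, and then invoke a quantitative local-to-global principle with multiplicative constant $1+O(\delta/\ell)$. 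The paper never leaves the global geodesic $\gamma=[x_0,g^n x_0]$: a furthest-point quadrilateral argument shows that every orbit point $g^k x_0$ lies within $p+O(\delta)$ of $\gamma$, and then monotonicity of the projections $p_k$ along $\gamma$ together with $d_X(p_k,p_{k+1})\ge L-2p-O(\delta)$ gives the lower bound directly, with the upper bound coming from $\tau(g)\le d_X(y,gy)$ for $y$ the midpoint of $[x_0,gx_0]$; no local-to-global machinery is needed. The one soft spot in your write-up is the citation: the statement in \cite{bh}*{III.H.1} in which the multiplicative constant tends to $1$ as the local scale grows is proved for $k$-local \emph{geodesics}, whereas your path is only a $k$-local $(1,O(\delta))$-quasigeodesic because of the corner defects, so you must either adapt that proof or, more simply, replace the local-to-global step by the standard chain lemma: using \eqref{E:gpineq} and $\ell\ge C_0-O(\delta)$, an induction shows $\gp{m_n}{m_0}{m_{n+1}}=O(\delta)$ for all $n$, hence $d_X(m_0,m_n)\ge n\bigl(\ell-O(\delta)\bigr)$ with purely additive loss, after which your error bookkeeping yields $\tau(g)\ge L-2p-O(\delta)$ exactly as you intend.
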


\begin{proof}
We start by showing that if $\gamma$ is a geodesic segment from $x_0$
to $g^n x_0$, then $g^k x_0$ is contained in a bounded neighbourhood of $\gamma$, for all $0 \le k \le n$. We
shall write $x_k$ for $g^k x_0$. Note that, since the action is isometric, we have for each $k$
\begin{equation} \label{eq:indepk}
d_X(x_{k+1}, x_{k+2}) = d_X(x_0, gx_0), \text{ and } 
\gp{x_{k+1}}{x_k}{x_{k+2}} = \gp{x_0}{gx_0}{g^{-1}x_0}. 
\end{equation}

\begin{claim}
Let $\gamma$ be a geodesic from $x_0$ to $x_n$. Then 
\begin{equation} \label{eq:bounded nbd} 
d_X(x_k, \gamma) \le \gp{x_0}{g x_0}{ g^{-1} x_0} + O(\delta), 
\end{equation}
for all $0 \le k \le n$.
\end{claim}

\begin{proof}[Proof (of claim).]
Let $p_k$ be a nearest point on $\gamma$ to $x_k$, and let $x_k$ be an
element of $\{x_k\}_{k=0}^n$ furthest from $\gamma$.  Consider the
quadrilateral formed by $x_{k-1}, x_{k+1}, p_{k+1}$ and $p_{k-1}$, as
illustrated below in Figure \ref{pic:bounded neighbourhood}.

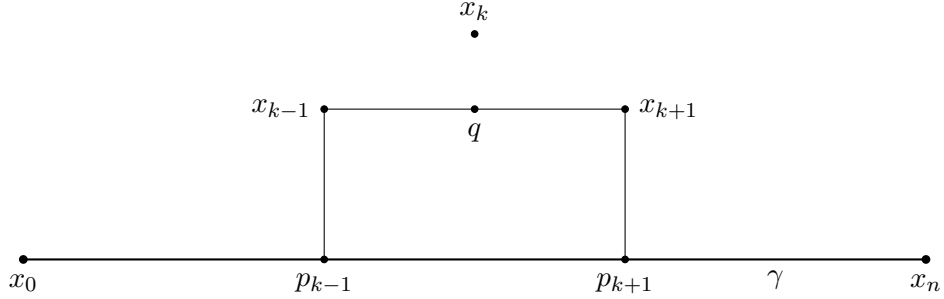
\begin{figure}[H]
\begin{center}
\begin{tikzpicture}

\tikzstyle{point}=[circle, draw, fill=black, inner sep=0pt,
minimum width=2.5pt]

\draw [thick] (-2, 0) node [point, label=below:$x_0$] {} 
              -- (6, 0) 
              -- (10, 0) node [midway, below] {$\gamma$}  node [point, label=below:$x_n$] {};

\draw (2, 0) node [point, label=below:$p_{k-1}$] {};
\draw (6, 0) node [point, label=below:$p_{k+1}$] {};

\draw (2, 0)
      -- (2, 2) node [point, label=left:$x_{k-1}$] {}
      -- (4, 2) node [point, label=below:$q$] {}
      -- (6, 2) node [point, label=right:$x_{k+1}$] {} 
      -- (6, 0);

\draw (4, 3) node [point, label=above:$x_k$] {};

\end{tikzpicture}
\end{center} 
\caption{The points $x_k = g^k x_0$ lie in a bounded neighbourhood of
  $\gamma$.}
\label{pic:bounded neighbourhood}
\end{figure}

Let $q$ be a nearest point to $x_k$ on a geodesic segment from $x_{k-1}$ to $x_{k+1}$.
By the estimate for the Gromov product in terms of distance to a
geodesic \eqref{eq:gp estimate},
\[ \gp{x_k}{ x_{k-1} }{x_{k+1} } = d_X(x_k, [x_{k-1}, x_{k+1}]) + O(\delta), \]
and as $g$ is an isometry, this implies that
\[ d_X(x_k, q) = \gp{x_0}{g x_0}{ g^{-1} x_0} + O(\delta). \]
By thin triangles, the point $q$ lies within distance $2 \delta$ of at
least one of the other three sides of the quadrilateral.  Suppose $q$
lies within $2 \delta$ of a geodesic from $x_{k-1}$ to $p_{k-1}$. 

Since $x_k$ is the furthest point from the geodesic $\gamma$, we have 
$d_X(x_k, \gamma) \ge d_X(x_{k-1}, \gamma)$, and since $q$ lies in a $2\delta$-neighbourhood of 
the geodesic from $x_{k-1}$ to $p_{k-1}$, we have 
\begin{equation}
\label{E:xk}
d_X(x_{k-1}, q) \le d_X(x_k, q) + O(\delta).
\end{equation}

On the other hand, if we now assume \eqref{eq:hyperbolic} and apply the reverse triangle inequality \eqref{eq:reverse triangle},
we get 
\begin{align*}
d_X(x_k, x_{k-1}) & = d_X(x_k, q) + d_X(x_{k-1}, q) + O(\delta) \\ 
d_X(x_k, x_{k-1}) & \ge 2d_X(x_k, q) + C_0 + O(\delta) 
\end{align*}
hence 
$$d_X(x_{k-1}, q) \ge d_X(x_k, q) + C_0 + O(\delta),$$ 
which contradicts \eqref{E:xk} if $C_0$ is large enough (depending only on $\delta$).
The same argument applies if $q$ lies within $2 \delta$ of a
geodesic from $x_{k+1}$ to $p_{k+1}$. Therefore, $q$ lies within $2
\delta$ of $\gamma$, and so $d_X(x_k, \gamma) \le \gp{x_0}{g x_0}{
  g^{-1} x_0} + O(\delta)$, as required.
\end{proof}

Consider a pair of adjacent points $x_k$ and $x_{k+1}$.  Combining
\eqref{eq:hyperbolic}, \eqref{eq:bounded nbd}, and the triangle inequality, gives
\begin{equation} \label{eq:translation2} 
d_X(p_k, p_{k+1}) \ge d_X(x_0, g x_0) - 2\gp{x_0}{g x_0}{g^{-1}
  x_0} + O(\delta).  
\end{equation}

Consider a triple of consecutive points, $x_k, x_{k+1}$ and
$x_{k+2}$. If their corresponding nearest point projections $p_k,
p_{k+1}$ and $p_{k+2}$ to $\gamma$ do not lie in the same order, then
using Proposition \ref{prop:npp2} repeatedly one gets, if $p_{k+2}$ lies in between 
$p_k$ and $p_{k+1}$, the equality
$$\gp{x_{k+1}}{x_k}{x_{k+2}} = d_X(x_{k+1}, x_{k+2}) - d_X(x_{k+2}, p_{k+2}) + O(\delta)$$
which, using \eqref{eq:indepk} and \eqref{eq:bounded nbd}, implies 
$$d_X(x_0, gx_0) - 2\gp{x_0}{gx_0}{g^{-1}x_0} = O(\delta)$$
which
contradicts \eqref{eq:hyperbolic} if $C_0$ is large enough.  The case where $p_{k+1}$ 
lies between $p_k$ and $p_{k+2}$ is completely analogous, therefore
the $p_k$ are monotonically ordered on $\gamma$, and so by
\eqref{eq:translation2}
\[ d_X(p_0, p_k) \ge k ( d_X(x_0, g x_0) - 2\gp{x_0}{g x_0}{g^{-1}
  x_0} + O(\delta) ), \]
which implies, by Proposition \ref{prop:npp2}, a similar bound for $d_X(x_0, x_k)$, and so in fact
$(x_k)_{k \in \N}$ is quasi-geodesic, with $\tau(g) \ge d_X(x_0, g x_0) -
2\gp{x_0}{g x_0}{g^{-1} x_0} + O(\delta)$. 

The upper bound on $\tau(g)$ follows from the triangle inequality; indeed, for each $y \in X$ one has 
$$\tau(g) \le d_X(y, gy)$$
and the desired bound follows by taking as $y$ the midpoint of the geodesic segment between $x_0$ and $g x_0$, 
completing the proof of Proposition \ref{prop:hyperbolic}.
\end{proof}


\begin{bibdiv}
\begin{biblist}
\bibselect{convergence}
\end{biblist}
\end{bibdiv}


\noindent Joseph Maher \\
CUNY College of Staten Island and CUNY Graduate Center \\
\url{joseph.maher@csi.cuny.edu} \\

\noindent Giulio Tiozzo \\
Yale University\\
\url{giulio.tiozzo@yale.edu} \\


\end{document}